\newtheorem{theorem}{Theorem}[section]
\newtheorem{corollary}[theorem]{Corollary}
\newtheorem{lemma}[theorem]{Lemma}
\newtheorem{proposition}[theorem]{Proposition}
\newtheorem{definition-proposition}[theorem]{Definition-Proposition}
\theoremstyle{definition}
\newtheorem{definition}[theorem]{Definition}
\newtheorem{example}[theorem]{Example}
\newtheorem{observation}[theorem]{Observation}
\newtheorem{remark}[theorem]{Remark}
\newcommand{\rmb}{\mathrm{b}}
\newcommand{\rmD}{\mathrm{D}}
\newcommand{\rmH}{\mathrm{H}}
\newcommand{\rmL}{\mathrm{L}}
\newcommand{\rmR}{\mathrm{R}}
\newcommand{\rmZ}{\mathrm{Z}}
\newcommand{\calA}{\mathcal{A}}
\newcommand{\calC}{\mathcal{C}}
\newcommand{\calD}{\mathcal{D}}
\newcommand{\calE}{\mathcal{E}}
\newcommand{\calI}{\mathcal{I}}
\newcommand{\calJ}{\mathcal{J}}
\newcommand{\calK}{\mathcal{K}}
\newcommand{\calM}{\mathcal{M}}
\newcommand{\calO}{\mathcal{O}}
\newcommand{\calP}{\mathcal{P}}
\newcommand{\calT}{\mathcal{T}}
\newcommand{\calX}{\mathcal{X}}
\newcommand{\fkm}{\mathfrak{m}}
\newcommand{\fkp}{\mathfrak{p}}
\newcommand{\fkD}{\mathfrak{D}}
\newcommand{\fkR}{\mathfrak{R}}
\newcommand{\ZZ}{\mathbb{Z}}
\newcommand{\QQ}{\mathbb{Q}}
\newcommand{\RR}{\mathbb{R}}
\newcommand{\CC}{\mathbb{C}}
\newcommand{\VV}{\mathbb{V}}
\newcommand{\TT}{\mathbb{T}}
\newcommand{\kk}{\Bbbk}
\newcommand{\dd}{\underline{d}}
\newcommand{\sfD}{\mathsf{D}}
\newcommand{\sfE}{\mathsf{E}}
\newcommand{\sfR}{\mathsf{R}}
\newcommand{\Spec}{\operatorname{Spec}}
\newcommand{\depth}{\operatorname{depth}}
\newcommand{\degr}{\operatorname{deg}}
\newcommand{\Ext}{\operatorname{Ext}}
\newcommand{\Hom}{\operatorname{Hom}}
\newcommand{\RHom}{\operatorname{\mathbf{R}Hom}}
\newcommand{\Image}{\operatorname{Im}}
\newcommand{\Tr}{\operatorname{Tr}}
\newcommand{\GL}{\operatorname{GL}}
\newcommand{\SL}{\operatorname{SL}}
\newcommand{\Proj}{\operatorname{Proj}}
\newcommand{\End}{\operatorname{End}}
\newcommand{\gldim}{\mathrm{gl.dim}}
\newcommand{\projdim}{\mathrm{proj.dim}}
\newcommand{\add}{\mathsf{add}}
\newcommand{\CM}{\mathsf{CM}}
\newcommand{\sCM}{\underline{\mathsf{CM}}}
\newcommand{\Mod}{\mathsf{Mod}}
\newcommand{\mc}{\mathsf{mod}}
\newcommand{\coh}{\mathsf{coh}}
\newcommand{\per}{\mathsf{per}}
\newcommand{\hd}{\mathsf{hd}}
\newcommand{\tl}{\mathsf{tl}}
\begin{document}

\title[On $2$-representation infinite algebras arising from dimer models]{On $2$-representation infinite algebras arising from \\ dimer models}
\author[Y. Nakajima]{Yusuke Nakajima} 

\address[Y. Nakajima]{Department of Mathematics, Kyoto Sangyo University, Motoyama, Kamigamo, Kita-Ku, Kyoto, 603-8555, Japan}
\email{ynakaji@cc.kyoto-su.ac.jp}


\subjclass[2010]{Primary 16S38; Secondary 14M25, 18E30, 16G20}
\keywords{Dimer models, $2$-representation infinite algebras, Perfect matchings, Mutations, Tilting theory} 

\maketitle

\begin{abstract} 
The Jacobian algebra arising from a consistent dimer model is a bimodule $3$-Calabi-Yau algebra, and its center is a $3$-dimensional Gorenstein toric singularity. 
A perfect matching of a dimer model gives the degree making the Jacobian algebra $\ZZ$-graded. 
It is known that if the degree zero part of such an algebra is finite dimensional, then  it is a $2$-representation infinite algebra which is a generalization of a representation infinite hereditary algebra. 
Internal perfect matchings, which correspond to toric exceptional divisors on a crepant resolution of a $3$-dimensional Gorenstein toric singularity, 
characterize the property that the degree zero part of the Jacobian algebra is finite dimensional. 
Combining this characterization with the theorems due to Amiot-Iyama-Reiten, we show that  the stable category of graded maximal Cohen-Macaulay modules admits a tilting object for any $3$-dimensional Gorenstein toric isolated singularity. 
We then show that all internal perfect matchings corresponding to the same toric exceptional divisor are transformed into each other 
using the mutations of perfect matchings, and this induces derived equivalences of $2$-representation infinite algebras. 
\end{abstract}


\section{\bf Introduction} 

A \emph{dimer model} (or \emph{brane tiling}) is a finite bipartite graph on the real two-torus $\TT$, which induces the polygonal cell decomposition of $\TT$. 
It was introduced in the field of statistical mechanics, and from 2000s string theorists have been used it in the context of quiver gauge theories (see e.g., \cite{FHK,HK,HV}). 
Since then, the relationships between dimer models and many branches of mathematics have been discovered. 
In this paper, we observe the combinatorial structure on dimer models. 
Combining with some known results, it gives rise to several assertions concerning representation theory of algebras. 

\subsection{Background:\,$2$-representation infinite algebras}
\label{subsec_motivation1}

The notion of \emph{$n$-representation infinite algebras} was introduced in \cite{HIO} (see Subsection~\ref{subsec_nrepinfinite} for the precise definition). 
This algebra is a certain analogue of a representation infinite hereditary algebra, and is a distinguished class from the viewpoint of higher dimensional Auslander-Reiten theory. 
This class contains several important examples. 
For example the Beilinson algebra, which arises as the endomorphism ring of the tilting bundle $\bigoplus_{s=0}^n\calO(s)$ on $\mathbb{P}^n$, 
is an $n$-representation infinite algebra (see \cite[Example~2.15]{HIO}). 
Also, it is known that this algebra can be obtained as the degree zero part of a bimodule $(n+1)$-Calabi-Yau algebra of Gorenstein parameter $1$ (see Theorem~\ref{degzero_RI}). 
Some interesting examples of such a construction are given by dimer models as shown in \cite[Section~6]{AIR}. 

Let $\Gamma$ be a dimer model on the real two-torus $\TT$. When we consider the real two-torus $\TT$, we fix the fundamental domain and identify $\TT$ with $\RR^2/\ZZ^2$. 
Since $\Gamma$ is a bipartite graph, the set $\Gamma_0$ of nodes in $\Gamma$ is divided into two parts $\Gamma_0^+, \Gamma_0^-$, and the set $\Gamma_1$ of edges in $\Gamma$ consists of the ones connecting nodes in $\Gamma_0^+$ and those in $\Gamma_0^-$. 
In order to make the situation clear, we color nodes in $\Gamma_0^+$ white, and color nodes in $\Gamma_0^-$ black. 
A connected component of $\TT{\setminus}\Gamma_1$ is called a \emph{face} of $\Gamma$, and we denote by $\Gamma_2$ the set of faces. 
Then, as the dual of a dimer model $\Gamma$, we define the finite connected quiver $Q_\Gamma$ associated with $\Gamma$. 
(If the situation is clear, we simply denote the quiver $Q_\Gamma$ by $Q$.) 
That is, we assign a vertex of $Q$ dual to each face in $\Gamma_2$, an arrow of $Q$ dual to each edge in $\Gamma_1$. 
Here, the orientation of arrows is determined so that the white node is on the right of the arrow. 
For example, Figure~\ref{ex_quiver4a} is a dimer model and the associated quiver, where the outer frame is the fundamental domain of $\TT$. 
In addition, we can obtain the \emph{potential} $W_Q$, which is a linear combination of some cycles in $Q$. 
Using this quiver with potential $(Q,W_Q)$, we can define the \emph{Jacobian algebra} $\calP(Q,W_Q)$, 
which is the path algebra with relations associated to $(Q,W_Q)$ (see Subsection~\ref{subsec_dimer}). 
Under the \emph{consistency condition} (see Definition~\ref{def_consistent}), we see that $\calP(Q,W_Q)$ is a bimodule $3$-Calabi-Yau algebra (see \cite[Theorem~7.1]{Bro}). 

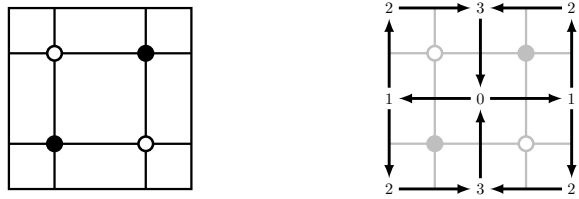
\begin{figure}[H]
\begin{center}
\begin{tikzpicture}
\node (DM) at (0,0) 
{\scalebox{0.6}{
\begin{tikzpicture}
\coordinate (P1) at (1,1); \coordinate (P2) at (3,1); 
\coordinate (P3) at (3,3); \coordinate (P4) at (1,3); 
\draw[line width=0.05cm]  (P1)--(P2)--(P3)--(P4)--(P1);\draw[line width=0.05cm] (0,1)--(P1)--(1,0); \draw[line width=0.05cm]  (4,1)--(P2)--(3,0);
\draw[line width=0.05cm]  (0,3)--(P4)--(1,4);\draw[line width=0.05cm]  (3,4)--(P3)--(4,3);
\filldraw  [ultra thick, fill=black] (P1) circle [radius=0.16] ;\filldraw  [ultra thick, fill=black] (P3) circle [radius=0.16] ;
\draw  [ultra thick,fill=white] (P2) circle [radius=0.16] ;\draw  [ultra thick, fill=white] (P4) circle [radius=0.16] ;

\draw[line width=0.05cm]  (0,0) rectangle (4,4);
\end{tikzpicture}
} }; 

\node (QV) at (5,0) 
{\scalebox{0.6}{
\begin{tikzpicture}[sarrow/.style={black, -latex, very thick}]

\node (Q1) at (2,2){$0$};\node (Q2a) at (0,2){$1$}; \node(Q2b) at (4,2){$1$};\node (Q3a) at (0,0){$2$};
\node(Q3c) at (4,4){$2$};\node(Q3b) at (4,0){$2$};\node(Q3d) at (0,4){$2$};\node (Q4a) at (2,0){$3$};
\node (Q4b) at (2,4){$3$};

\draw[lightgray, line width=0.05cm]  (P1)--(P2)--(P3)--(P4)--(P1);\draw[lightgray, line width=0.05cm] (0,1)--(P1)--(1,0); 
\draw[lightgray, line width=0.05cm]  (4,1)--(P2)--(3,0);
\draw[lightgray, line width=0.05cm]  (0,3)--(P4)--(1,4);\draw[lightgray, line width=0.05cm]  (3,4)--(P3)--(4,3);
\filldraw  [ultra thick, draw=lightgray, fill=lightgray] (P1) circle [radius=0.16] ;\filldraw  [ultra thick, draw=lightgray, fill=lightgray] (P3) circle [radius=0.16] ;
\draw  [ultra thick, draw=lightgray,fill=white] (P2) circle [radius=0.16] ;\draw  [ultra thick, draw=lightgray,fill=white] (P4) circle [radius=0.16] ;

\draw[sarrow, line width=0.064cm] (Q1)--(Q2a);\draw[sarrow, line width=0.064cm] (Q2a)--(Q3a);\draw[sarrow, line width=0.064cm] (Q3a)--(Q4a);
\draw[sarrow, line width=0.064cm] (Q4a)--(Q1);\draw[sarrow, line width=0.064cm] (Q2a)--(Q3d);\draw[sarrow, line width=0.064cm] (Q3d)--(Q4b);
\draw[sarrow, line width=0.064cm] (Q4b)--(Q1);\draw[sarrow, line width=0.064cm] (Q1)--(Q2b);\draw[sarrow, line width=0.064cm] (Q2b)--(Q3b);
\draw[sarrow, line width=0.064cm] (Q3b)--(Q4a);\draw[sarrow, line width=0.064cm] (Q2b)--(Q3c);\draw[sarrow, line width=0.064cm] (Q3c)--(Q4b);
\end{tikzpicture}
} }; 
\end{tikzpicture}
\caption{Dimer model and the associated quiver}
\label{ex_quiver4a}
\end{center}
\end{figure}

The notion of \emph{perfect matching} plays a crucial role in our context. 

\begin{definition}
\label{def_pm}
A \emph{perfect matching} (or \emph{dimer configuration}) of a dimer model $\Gamma$ is a subset $D$ of $\Gamma_1$ such that 
for any node $n\in\Gamma_0$ there is a unique edge in $D$ containing $n$ as the end point. 
For a perfect matching $D$ of $\Gamma$, we denote by $\sfD$ the subset of $Q_1$ obtained as the dual of $D$. We also say that $\sfD$ is a \emph{perfect matching} of $Q$. 
\end{definition}

For a perfect matching $\sfD$ of $Q$, we define the degree $d_{\sfD}$ on each arrow $a\in Q_1$ as 
\begin{equation}
\label{degree_pm}
d_{\sfD}(a)=\begin{cases}
1 \quad\text{if $a\in\sfD$}, \\ 
0 \quad\text{otherwise.}
\end{cases}
\end{equation}
This $d_\sfD$ induces the $\ZZ$-grading on $\calP(Q,W_Q)$, and this makes $\calP(Q,W_Q)$ bimodule $3$-Calabi-Yau with Gorenstein parameter $1$ if $\Gamma$ is consistent (see Subsection~\ref{subsec_2rep_dimer}). 
We define the \emph{truncated Jacobian algebra}, denoted by $\calP(Q,W_Q)_\sfD$, as the degree zero part of the $\ZZ$-graded Jacobian algebra $\calP(Q,W_Q)$ with respect to $d_\sfD$. 
The following is the motivating theorem in this paper. 

\begin{theorem}[{cf. \cite[Corollary~3.6]{AIR},\cite[Theorem~4.12]{MM}}]
\label{motivation_thm1}
Let $\calP(Q,W_Q)$ be the Jacobian algebra associated with a consistent dimer model, and $\sfD$ be a perfect matching of $Q$.  
Then, if the truncated Jacobian algebra $\calP(Q,W_Q)_\sfD$ is finite dimensional, then it is a $2$-representation infinite algebra. 
\end{theorem}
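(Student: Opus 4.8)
Write $A:=\calP(Q,W_Q)$, graded by $d_\sfD$. The plan is to reduce the statement to Theorem~\ref{degzero_RI} by checking that $A$ is a non-negatively $\ZZ$-graded, bimodule $3$-Calabi-Yau algebra of Gorenstein parameter $1$; once this is established and $A_0=\calP(Q,W_Q)_\sfD$ is assumed finite dimensional, the conclusion is a direct application of the general machinery of Amiot--Iyama--Reiten and Minamoto--Mori.

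First I would recall from \cite[Theorem~7.1]{Bro} that consistency of $\Gamma$ makes $A$ bimodule $3$-Calabi-Yau, this property being witnessed by the standard self-dual projective bimodule resolution attached to the quiver with potential $(Q,W_Q)$ (equivalently, by the associated Ginzburg dg-algebra), whose four terms are indexed, respectively, by the vertices of $Q$, the arrows of $Q$, the relations $\partial_a W_Q$, and the potential $W_Q$ itself. Next I would verify that $d_\sfD$ is compatible with all of this data. By \eqref{degree_pm} each arrow has degree $0$ or $1$, so the path algebra of $Q$ is non-negatively $\ZZ$-graded; and because $\sfD$ is a perfect matching, every oriented cycle occurring in $W_Q$---there is exactly one around each node of $\Gamma$---passes through precisely one arrow dual to an edge of the matching, so $W_Q$ is homogeneous of degree $1$. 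Hence every relation $\partial_a W_Q$ is homogeneous, the Jacobian ideal is graded, and $A$ inherits a well-defined non-negative $\ZZ$-grading with $A_0=\calP(Q,W_Q)_\sfD$. Reading the resolution above as one of graded bimodules and tracking the internal degree shifts, the homogeneity of $W_Q$ in degree $1$ is exactly what forces the Calabi-Yau dual $\RHom_{A^{\rme}}(A,A^{\rme})$ to be $A$ shifted internally by $1$ (and homologically by $-3$), i.e. $A$ has Gorenstein parameter $1$; this computation in the dimer setting is carried out in \cite[Section~6]{AIR}.

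Finally, with $A_0$ finite dimensional, I would invoke Theorem~\ref{degzero_RI} in the case $n=2$ (see \cite[Corollary~3.6]{AIR} and \cite[Theorem~4.12]{MM}): a $\ZZ$-graded bimodule $(n+1)$-Calabi-Yau algebra of Gorenstein parameter $1$ with finite-dimensional degree zero part $A_0$ is the $(n+1)$-preprojective algebra of $A_0$, and then $A_0$ is $n$-representation infinite. Applying this with $n=2$ yields that $\calP(Q,W_Q)_\sfD$ is $2$-representation infinite.

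The step I expect to be the main obstacle is the Gorenstein parameter computation: it is not enough to know that $A$ is $\ZZ$-graded and bimodule $3$-Calabi-Yau, one must see that the Calabi-Yau duality shifts the internal degree by exactly $1$, and this rests entirely on $W_Q$ being homogeneous of degree $1$, which in turn is precisely the defining property of a perfect matching (each node of $\Gamma$ is covered exactly once). A minor additional point is to confirm the finiteness and Noetherianity hypotheses needed to run Theorem~\ref{degzero_RI}, but these hold automatically here since $A$ is module-finite over its center, a $3$-dimensional Gorenstein toric singularity, hence Noetherian.
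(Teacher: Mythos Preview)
Your proposal is correct and follows essentially the same route as the paper: establish that the perfect-matching grading makes $\calP(Q,W_Q)$ a non-negatively graded bimodule $3$-Calabi-Yau algebra of Gorenstein parameter $1$ (the paper packages this step as Proposition~\ref{dimer_GP1}, citing \cite[Proposition~6.1]{AIR}), and then invoke Theorem~\ref{degzero_RI} with $n=2$. Your additional explanation of why $W_Q$ is homogeneous of degree $1$ under $d_\sfD$---each small cycle meets the matching exactly once---is precisely the content behind that citation, and your closing remark on module-finiteness over the toric center also covers the locally-finite hypothesis $\dim_\kk A_i<\infty$ implicit in the definition preceding Theorem~\ref{degzero_RI}.
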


A perfect matching $\sfD$ making the truncated Jacobian algebra $\calA_\sfD\coloneqq\calP(Q,W_Q)_\sfD$ finite dimensional 
can be found using toric geometry. 
Precisely, using perfect matchings of a consistent dimer model, we define the perfect matching polygon $\Delta$ (see Subsection~\ref{subsec_pm}) 
and the $3$-dimensional Gorenstein toric ring $R$, which will be called \emph{toric singularity}, associated with the cone over $\Delta$. 
Also, we can assign a lattice point of $\Delta$ to each perfect matching. 
If $\Delta$ contains an interior lattice point (in other words, if the exceptional locus of a crepant resolution of $\Spec R$ has dimension two), 
then a perfect matching $D$ corresponding to that point, which is called an \emph{internal perfect matching}, gives us the desired property as follows. 

\begin{proposition}[{see Proposition~\ref{findim_internal}}]
\label{motivation_prop1_intro}
Let $Q$ be the quiver associated with a consistent dimer model $\Gamma$. 
Then, we see that $\sfD$ is an internal perfect matching of $Q$ if and only if $\calA_\sfD$ is a finite dimensional algebra, in which case 
$\calA_\sfD$ is a $2$-representation infinite algebra. 
\end{proposition}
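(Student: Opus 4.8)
The plan is to establish both implications of the equivalence, the key one being that an internal perfect matching forces $\calA_\sfD$ to be finite dimensional. Throughout I would work with the combinatorial description of the $\ZZ$-grading $d_\sfD$: a path in $\calP(Q,W_Q)$ has degree equal to the number of arrows in it lying in $\sfD$, so $\calA_\sfD = \calP(Q,W_Q)_\sfD$ has basis given by (classes of) paths avoiding $\sfD$ entirely, i.e.\ paths in the subquiver $Q_\sfD$. Hence $\calA_\sfD$ is finite dimensional precisely when there are only finitely many nonzero paths in $Q_\sfD$ modulo the Jacobian relations. The first step is therefore to translate the statement ``$\sfD$ is an internal perfect matching'' into a statement about the lattice-point function on perfect matchings: recall that each perfect matching $D$ determines, via the height-change/winding-number construction on $\TT$, a lattice point $\mathbf{v}(D)\in\Delta$, and that for two perfect matchings $D, D'$ the difference $d_\sfD(a) - d_{\sfD'}(a)$ summed along any cycle of $Q$ records the difference $\mathbf{v}(D) - \mathbf{v}(D')$ paired against the homology class of that cycle. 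I would use this to relate degrees of cycles in $Q$ to the position of $\mathbf{v}(D)$ relative to the corner perfect matchings.

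The heart of the argument is the following. By Broomhead's results on consistent dimer models, $\calP(Q,W_Q)$ is module-finite over its center $R$, which is the $3$-dimensional Gorenstein toric ring attached to the cone over $\Delta$; moreover $R$ is the degree-zero part of $\calP(Q,W_Q)$ under the $\ZZ^2$-grading by homology, and the full center contains, for each vertex $i$, a ``cyclic'' element supported at $i$. Finite dimensionality of $\calA_\sfD$ is equivalent to: the Rees-type algebra picture degenerates, i.e.\ $R$ viewed with the single $\ZZ$-grading $d_\sfD$ has $R_0$ finite dimensional, which (since $R$ is a domain of dimension $3$, finitely generated) happens exactly when the linear functional on the cone $\sigma^\vee$ induced by $d_\sfD$ is strictly positive on $\sigma^\vee \setminus \{0\}$ — equivalently the corresponding point of $\sigma$ lies in the \emph{interior} of $\sigma$. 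Now one shows that the functional induced on $\Delta$ by $\mathbf{v}(D)$ (shifting by a reference corner matching) is, up to the standard identification of $M_{\ZZ}\oplus\ZZ$ with the relevant lattice, strictly positive on the cone over $\Delta$ iff $\mathbf{v}(D)$ is an interior lattice point of $\Delta$; this is exactly the statement that lattice points in the interior of $\Delta$ correspond to the toric exceptional divisors of a crepant resolution that map to the singular point, while boundary lattice points give divisors meeting the strict transform. Assembling these: $\sfD$ internal $\iff$ the $d_\sfD$-grading on $R$ is positively graded with finite-dimensional degree-zero part $\iff$ $\calA_\sfD$ is finite dimensional. Once finite dimensionality is known, the $2$-representation infinite conclusion is immediate from Theorem~\ref{motivation_thm1}.

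For the converse direction I would argue contrapositively and more directly: if $\mathbf{v}(D)$ lies on the boundary of $\Delta$, then there is a primitive homology class $\gamma$ of $Q$ (a loop around $\TT$ in the appropriate direction) along which $d_\sfD$ evaluates to $0$ while some honest nonzero cyclic path in $\calP(Q,W_Q)$ has that homology class — concretely, a corner perfect matching $D'$ ``adjacent'' to $D$ in $\Delta$ gives a cycle lying entirely in $Q_\sfD$ that is nonzero modulo relations and can be traversed arbitrarily many times (its powers are nonzero because they pair nontrivially with an edge of $\Delta$), so $\calA_\sfD$ contains paths of unbounded length and is infinite dimensional. Here one must be slightly careful that such a cycle genuinely survives in the Jacobian algebra; this follows from the fact that under consistency the nonzero paths between two fixed vertices are detected by their $\ZZ^2$-degree together with a bounded ``defect,'' a standard consequence of the results recalled in Subsection~\ref{subsec_dimer}.

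The main obstacle I anticipate is the precise dictionary in the middle step — matching the lattice-theoretic notion ``$\mathbf{v}(D)$ is an interior point of $\Delta$'' with the algebraic notion ``the $d_\sfD$-grading makes $R_0$ finite dimensional'' — since it requires pinning down exactly how the perfect-matching lattice point sits inside the character lattice of the toric singularity and how the Gorenstein parameter $1$ normalization interacts with the choice of $\sfD$. I would isolate this as a lemma about the cone over $\Delta$ and quote the standard toric fact that a grading of a normal affine toric domain has finite-dimensional degree-zero piece iff the grading vector lies in the interior of the defining cone, reducing everything else to bookkeeping with the height function on $\Gamma$.
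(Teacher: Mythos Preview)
Your approach is correct but follows a genuinely different route from the paper's. The paper argues via the moduli space $\calM_\theta(Q)$: choosing $\theta$ so that $\sfD$ is $\theta$-stable, the cosupport of the corresponding representation $V_y$ is $\sfD$; by the orbit--cone correspondence $y$ lies over the toric fixed point if and only if $\sfD$ is internal, and by Lemma~\ref{condition_nilpotent} this happens precisely when $V_y$ is nilpotent, which for dimension vector $(1,\dots,1)$ is equivalent to $Q_\sfD$ being acyclic. The equivalence $Q_\sfD$ acyclic $\Leftrightarrow$ $\calA_\sfD$ finite dimensional is then immediate because no Jacobian relation annihilates a nontrivial cycle.

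Your route instead passes through the centre: reduce finite dimensionality of $\calA_\sfD$ to finite dimensionality of $R_0$ via module-finiteness of $\calP(Q,W_Q)$ over $R$, and then invoke the toric criterion that the degree-zero piece of $\kk[\sigma^\vee\cap\ZZ^3]$ under a linear grading is finite dimensional exactly when the grading vector lies in the interior of $\sigma$. This avoids the moduli machinery (Proposition~\ref{corresp_pm}) and is arguably more elementary, but it does not deliver the intermediate statement that $Q_\sfD$ is acyclic, which the paper uses repeatedly afterwards (existence of strict sources and sinks in Sections~\ref{sec_cluster_eq} and~\ref{sec_mutation_pm}). You would recover acyclicity separately via the observation you make at the start, that cycles in $Q_\sfD$ survive in $\calA_\sfD$. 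The ``dictionary'' step you flag---identifying the grading functional induced by $d_\sfD$ on $R$ with the lattice point $(\mathbf v(D),1)\in\sigma$---is exactly what needs care, and boils down to computing $d_\sfD(c)-d_{\sfD_0}(c)$ as the intersection pairing $[c]\cdot[D-D_0]$ on $\rmH_1(\TT)$; once stated this way it is routine. Your explicit-cycle argument for the boundary case is redundant (the toric criterion already gives both directions) and, as written, is loose about \emph{which} cycle avoids $\sfD$; it would be cleaner to drop it.
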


Although this is the immediate consequence of toric geometry associated to dimer models, 
we give a brief explanation in Subsection~\ref{subsec_2rep_dimer} for the sake of completeness. 

\subsection{Background:\,Triangle equivalences obtained via dimer models} 
\label{subsec_motivation2}

It is known that the center of the Jacobian algebra associated with a consistent dimer model is a $3$-dimensional Gorenstein toric singularity, 
and such a toric singularity can be obtained in this way (see Theorem~\ref{exist_dimer}). 
When we study maximal Cohen-Macaulay (MCM) modules over such a toric singularity, a finite dimensional truncated Jacobian algebra is important as we will see below. 

Let $R$ be a $d$-dimensional Cohen-Macaulay commutative ring. 
We recall that an $R$-module $M$ is an MCM module if $\depth_{R_\fkp}M_\fkp=\dim R_\fkp$ for any $\fkp\in\Spec R$ or $M=0$. 
Let $\CM(R)$ be the category of MCM $R$-modules and $\sCM(R)$ be the stable category of MCM $R$-modules. 
It is known that if $R$ is a $d$-dimensional local Gorenstein isolated singularity, then $\sCM(R)$ is a $(d-1)$-Calabi-Yau triangulated category \cite{Aus}. 
Here, we say that a $\kk$-linear $\Hom$-finite triangulated category $\calT$ is \emph{$n$-Calabi-Yau} (\emph{$n$-CY}) if there is a natural isomorphism 
$$
\Hom_\calT(X,Y)\cong\rmD\Hom_\calT(Y,X[n])
$$
for any $X,Y\in\calT$ and $\rmD(-)=\Hom_\calT(-,\kk)$. 
We note that there exists a triangulated equivalence between $\sCM(R)$ and the \emph{singularity category} $\calD_{\rm sg}(R)$ of $R$ by \cite{Buc}. 
We then introduce a non-commutative crepant resolution in the sense of Van den Bergh \cite{VdB}. 

\begin{definition}
\label{def_NCCR}
Let $R$ be a Gorenstein normal domain, and $M$ be a non-zero reflexive $R$-module. Let $\Lambda\coloneqq\End_R(M)$. 
We say that $\Lambda$ is a \emph{non-commutative crepant resolution} (\emph{NCCR}) of $R$ if $\gldim \Lambda<\infty$ and $\Lambda$ is an MCM $R$-module.
\end{definition}

Let $M$ be an MCM $R$-module that contains no free direct summands. 
If $\End_R(R\oplus M)$ is an NCCR of $R$, then $M$ is a $(d-1)$-cluster tilting object in $\sCM(R)$ \cite[Theorem~5.2.1]{Iya2}. 
We here recall that an object $M$ in a triangulated category $\calT$ is called \emph{$n$-cluster tilting} (\emph{$n$-CT}) object if it satisfies 
\begin{align*}
\add M&=\{X\in\calT \mid \Hom_\calT(X, M[i])=0 \text{\, for all \,} i=1, \dots, n-1 \} \\
&=\{X\in\calT \mid \Hom_\calT(M, X[i])=0 \text{\, for all \,} i=1, \dots, n-1 \}.
\end{align*}

On the other hand, there is an important $n$-CY category admitting an $n$-CT object which is called a \emph{generalized} \emph{$n$-cluster category}. 
This category was introduced in \cite{BMRRT} (see also \cite{CCS}) for a finite dimensional hereditary algebra as the categorification of cluster algebras due to Fomin and Zelevinsky \cite{FZ}, and it was generalized in \cite{Ami,Guo}. 
In light of these facts, it is natural to ask a relationship between these CY categories admitting cluster tilting objects. 
Indeed, for some cases we can obtain a triangulated equivalence between the $(d-1)$-CY category $\sCM(R)$ 
and a generalized $(d-1)$-cluster category. 
An interesting family of such equivalences is also given by dimer models as we will see in Theorem~\ref{motivation_thm2} below. 
That is, using the grading $d_\sfD$ induced from a perfect matching $\sfD$, we have the following equivalences. 
Here, $Q_\sfD$ denotes the subquiver of $Q$ whose set of vertices is the same as $Q$ and set of arrows is $Q_1{\setminus} \sfD$, 
especially $Q_\sfD$ is also a finite connected quiver. 

\begin{theorem}
[{see \cite[Theorem~6.3]{AIR}}]
\label{motivation_thm2}
Let $\Gamma$ be a consistent dimer model, and $(Q,W_Q)$ be the associated quiver with potential. 
Let $\calP(Q,W_Q)$ be the Jacobian algebra, and we denote by $R$ the center of $\calP(Q,W_Q)$. 
If there exists a perfect matching $\sfD$ of $Q$ and a vertex $i$ of $Q$ satisfying the following properties {\rm :} 
\begin{itemize}
\item[\rm (P1)] the truncated Jacobian algebra $\calA_\sfD\coloneqq\calP(Q,W_Q)_\sfD$ is finite dimensional, 
\item[\rm (P2)] $i$ is a source of the quiver $Q_\sfD$, 
\item[\rm (P3)] $\calP(Q,W_Q)/\langle e_i\rangle$ is finite dimensional, where $e_i$ is the primitive idempotent corresponding to $i$, 
\end{itemize}
then $R$ is a $3$-dimensional Gorenstein toric isolated singularity, and for the algebra $\calA_{\sfD,e_i}\coloneqq \calA_\sfD/\langle e_i\rangle$ we have the following triangle equivalences. 
\begin{equation}
\label{motivated_equiv}
\begin{tikzcd}
  \calD^\rmb(\mc\calA_{\sfD,e_i}) \arrow[r, "\cong"] \arrow[d] & \sCM^\ZZ(R) \arrow[d] \\
  \calC_2(\calA_{\sfD,e_i}) \arrow[r,  "\cong" ] &\sCM(R)
\end{tikzcd}
\end{equation}
Here, $\calC_2(\calA_{\sfD,e_i})$ is the generalized $2$-cluster category, which is defined as the triangulated hull of the orbit category $\calD^\rmb(\mc\calA_{\sfD,e_i})/\mathbb{S}{\circ}[-2]$ where $\mathbb{S}$ is the Serre functor of $\calD^\rmb(\mc\calA_{\sfD,e_i})$, see \cite{Ami,AIR}. 
\end{theorem}

By Proposition~\ref{motivation_prop1_intro}, if $\sfD$ is an internal perfect matching, then $\calA_\sfD$ 
satisfies the condition (P1) in Theorem~\ref{motivation_thm2}. 
Moreover, if $R$ is an isolated singularity, we can find a vertex $i\in Q_0$ satisfying the conditions (P2), (P3). 
Thus, it follows from \cite[Theorem~4.1 and 6.3]{AIR} that we have the following statements. 

\begin{corollary}[{see Corollary~\ref{main_cor}}]
\label{cor_intro_clusterequiv}
Let $R$ be a non-regular $3$-dimensional Gorenstein toric isolated singularity that is not the $A_1$-singularity $($i.e., $R\not\cong\kk[x,y,z,w]/(xy-zw)$$)$. 
Then, there exists an internal perfect matching $\sfD$ and the primitive idempotent $e_i$ satisfying {\rm(P1)--(P3)} in Theorem~\ref{motivation_thm2}. 
Thus, we have equivalences {\rm (\ref{motivated_equiv})}. 
\end{corollary}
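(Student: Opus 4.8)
The plan is to reduce the Corollary to the Proposition and to Theorem~\ref{motivation_thm2} by supplying, for a non-regular $3$-dimensional Gorenstein toric isolated singularity $R\not\cong\kk[x,y,z,w]/(xy-zw)$, an internal perfect matching $\sfD$ together with a vertex $i\in Q_0$ satisfying (P1)--(P3). First I would recall that $R$ corresponds to the cone over a lattice polygon $\Delta$, and that $R$ is an \emph{isolated} singularity precisely when every boundary lattice point of $\Delta$ that is not a vertex is a midpoint of an edge whose primitive direction has no further subdivision — equivalently, the only non-smooth cones of the face fan are associated to the vertices, and these are at worst of a controlled type; in particular $R$ is non-regular and isolated forces $\Delta$ to have at least one interior lattice point \emph{unless} $\Delta$ is the unit triangle (regular case) or the polygon giving the $A_1$-singularity. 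This is the combinatorial input that rules out exactly the two excluded cases, so for all remaining $R$ the polygon $\Delta$ has an interior lattice point, hence (by the correspondence in Subsection~\ref{subsec_pm}) the dimer model $\Gamma$ presenting $R$ admits an internal perfect matching $\sfD$. By the Proposition, $\calA_\sfD=\calP(Q,W_Q)_\sfD$ is then finite dimensional, giving (P1).

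Next I would address (P2) and (P3). For (P2), I would use the standard fact that the subquiver $Q_\sfD$ (vertices $Q_0$, arrows $Q_1\setminus\sfD$) obtained by deleting a perfect matching from the dimer quiver is acyclic — indeed the degree function $d_\sfD$ is nonnegative on all arrows and strictly positive on a set hitting every cycle of $W_Q$, and in a consistent dimer model every oriented cycle of $Q$ involves a relation, so $Q_\sfD$ has no oriented cycles; an acyclic connected finite quiver has a source, so some $i\in Q_0$ is a source of $Q_\sfD$. The real content is (P3): I must choose the source $i$ so that $\calP(Q,W_Q)/\langle e_i\rangle$ is finite dimensional. Here I would invoke the characterization that $\calP(Q,W_Q)/\langle e_i\rangle$ is finite dimensional if and only if the vertex $i$ ``touches'' the singularity in the sense that deleting $i$ removes the unique vertex through which infinitely many non-equivalent paths factor; concretely, since $R$ is an \emph{isolated} singularity, for \emph{every} vertex $j$ the algebra $\calP(Q,W_Q)/\langle e_j\rangle$ is finite dimensional — this is exactly the dimer-model translation of the statement that an NCCR of an isolated singularity becomes finite-dimensional after killing any idempotent away from a smooth locus, and it is implicit in the setup of Theorem~\ref{motivation_thm2} in \cite{AIR}. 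Combining: any source $i$ of $Q_\sfD$ works, so (P1)--(P3) all hold simultaneously, and Theorem~\ref{motivation_thm2} yields the equivalences~\eqref{motivated_equiv}.

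I would organize the write-up as: (i) a short lemma on lattice polygons identifying which isolated $R$ fail to have an interior lattice point (the unit triangle and the $A_1$-polygon); (ii) invoke the dictionary ``interior lattice point $\Leftrightarrow$ internal perfect matching exists'' plus the Proposition for (P1); (iii) the acyclicity-of-$Q_\sfD$ argument for (P2); (iv) the ``isolated $\Rightarrow$ $\calP/\langle e_i\rangle$ finite-dimensional for all $i$'' argument for (P3); (v) apply Theorem~\ref{motivation_thm2}.

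The main obstacle I anticipate is step (iv), i.e.\ proving cleanly that for an isolated singularity $R$ the algebra $\calP(Q,W_Q)/\langle e_i\rangle$ is finite dimensional for an appropriate (or every) vertex $i$. The subtlety is that $\calP(Q,W_Q)$ itself is infinite dimensional (it is a $3$-CY algebra with center $R$ of Krull dimension $3$), and finiteness of the quotient is a genuine condition reflecting where the non-isolated locus of $\Spec R$ meets the quiver; one must rule out that the ``bad directions'' survive after deleting $e_i$. I expect to handle this by passing to the completion and using that $R_\fkp$ is regular for all non-maximal primes $\fkp$, so $\calP(Q,W_Q)_\fkp$ is (Morita equivalent to) a regular local ring and hence killing one vertex must kill all but finitely many of the relevant paths through $\fkp$; alternatively, one can use the perfect-matching description of $\Spec R$ together with the fact that an internal perfect matching separates the torus-fixed point from the one-dimensional orbits. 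The polygon lemma (step (i)) is elementary but needs a careful case check to confirm that the unit triangle and the $A_1$-polygon (a triangle with one edge of lattice length $2$, no interior points) are the only interior-point-free lattice polygons whose associated toric threefold is Gorenstein and isolated.
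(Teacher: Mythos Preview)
Your overall strategy is exactly the paper's: a polygon lemma (the paper's Lemma~\ref{interior_isolated}) produces an interior lattice point, Proposition~\ref{findim_internal} gives (P1), a source of $Q_\sfD$ gives (P2), and the paper's Lemma~\ref{isolated_condition} gives (P3). Two steps need correction, though neither is fatal.

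First, your argument for (P2) is wrong as stated. You assert that $Q_\sfD$ is acyclic for \emph{any} perfect matching because ``every oriented cycle of $Q$ involves a relation''. This fails: for a corner or boundary perfect matching, $Q_\sfD$ contains oriented cycles (homologically nontrivial cycles on the torus, e.g.\ paths running alongside a zigzag path, need not meet $\sfD$). Acyclicity of $Q_\sfD$ is \emph{equivalent} to $\sfD$ being internal---that is the content of Proposition~\ref{findim_internal}, which you already cite for (P1). So drop the ad hoc argument and cite the Proposition again for (P2).

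Second, your description of the $A_1$-polygon is wrong: the toric diagram of $\kk[x,y,z,w]/(xy-zw)$ is the unit \emph{square}, not a triangle with an edge of lattice length~$2$. Such a triangle would have an interior lattice point on that edge and hence violate the isolated condition (Lemma~\ref{isolated_condition}(4): no edge of $\Delta$ may contain an interior lattice point). Under the isolated hypothesis, the only interior-point-free polygons are the unimodular triangle (regular) and the unit square ($A_1$); this is Lemma~\ref{interior_isolated}. For (P3), your instinct is right and no subtlety remains: Lemma~\ref{isolated_condition} shows $R$ isolated $\Leftrightarrow$ $\calP(Q,W_Q)/\langle e_i\rangle$ finite dimensional for \emph{every} $i$, via \cite{IW}, so any source of $Q_\sfD$ works.
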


\begin{corollary}[{see Corollary~\ref{existence_tilting}}]
Let $R$ be a $3$-dimensional Gorenstein toric isolated singularity. Let $\Gamma$ be a consistent dimer model associated with $R$. 
Then, there exists a $\ZZ$-grading on $R$ induced from a perfect matching of $\Gamma$ via \eqref{degree_pm} such that 
the stable category $\sCM^\ZZ R$ of $\ZZ$-graded MCM $R$-modules admits a tilting object. 
\end{corollary}

Here, an object $X$ in a triangulated category $\calT$ is said to be \emph{tilting} if $\Hom_\calT(X,X[i])=0$ for all integers $i\neq0$ and 
the smallest thick subcategory containing $X$ is $\calT$. 

Concerning Corollary~\ref{cor_intro_clusterequiv}, 
we note that even if $R$ is the $A_1$-singularity, we have a similar equivalence, see Example~\ref{rem_conifold}. 

\subsection{Main results} 

As we saw in Subsection~\ref{subsec_motivation1} and \ref{subsec_motivation2}, an internal perfect matching plays an important role. 
In general, there are some internal perfect matchings corresponding to the same interior lattice point of the perfect matching polygon $\Delta$, 
thus it is also important to understand relationships among such internal perfect matchings. 
To do this, we consider the notion of the \emph{mutation of perfect matchings} (see Subsection~\ref{sec_mutation_pm}), which is the operation making a given perfect matching a different one corresponding to the same lattice point. Using the mutation, we can obtain the following theorem. 

\begin{theorem}[{see Theorem~\ref{pm_mutation_equiv}}]
\label{intro_mutation_equiv}
Let $\Gamma$ be a consistent dimer model. 
Then, we see that any pair of internal perfect matchings of $\Gamma$ are transformed into each other by repeating mutations of perfect matchings if and only if they correspond to the same interior lattice point of the perfect matching polygon. 
\end{theorem}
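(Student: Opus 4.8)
The plan is to prove the two implications separately, working with the combinatorics of perfect matchings on the torus $\TT$ and the structure of the perfect matching polygon $\Delta$.

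For the ``only if'' direction, I would observe that the mutation of a perfect matching is a local operation: it modifies a perfect matching $\sfD$ only near a chosen node (or a chosen face/zig-zag region), and the key point is that a single mutation does not change the lattice point of $\Delta$ to which the perfect matching is assigned. This last fact should be extracted directly from the definition in Subsection~\ref{sec_mutation_pm}; the lattice point is computed as a ``height'' or ``flux'' class in $H_1(\TT)$ (equivalently, via the pairing of $\sfD-\sfD_0$ with the two generating cycles of $\TT$ for a fixed reference matching $\sfD_0$), and a mutation alters $\sfD$ by a boundary, hence leaves this class fixed. Iterating, any two perfect matchings related by a sequence of mutations lie over the same lattice point. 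One must additionally check that mutation preserves the property of being \emph{internal} (i.e. the lattice point is in the interior of $\Delta$), but this is automatic once we know the lattice point itself is unchanged.

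For the ``if'' direction — which I expect to be the main obstacle — I would argue that the set of internal perfect matchings lying over a fixed interior lattice point $v$ is connected under mutation. The natural approach is to fix one such perfect matching $\sfD_0$ as a reference and show that an arbitrary internal perfect matching $\sfD$ over $v$ can be transported to $\sfD_0$ by mutations. Consider the symmetric difference $\sfD \triangle \sfD_0$: since $\sfD$ and $\sfD_0$ cover each node exactly once, this symmetric difference is a disjoint union of cycles and doubled edges in $\Gamma$, and because $\sfD$ and $\sfD_0$ represent the same class in $H_1(\TT)$, the union of these cycles is null-homologous. One then decomposes this null-homologous $1$-cycle as a sum of boundaries of faces (or of minimal contractible regions), and each such elementary boundary move should be realized — possibly after a preliminary sequence of mutations to put us in a favorable local configuration — by a mutation of perfect matchings as defined in the paper. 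Running an induction on the number of faces enclosed by $\sfD \triangle \sfD_0$ (or on a suitable area/energy functional, e.g. the number of lattice squares in the height-function difference), we reduce $\sfD \triangle \sfD_0$ to the empty set, i.e. $\sfD = \sfD_0$.

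The hard part will be the inductive step in the ``if'' direction: one must guarantee that when $\sfD \triangle \sfD_0 \neq \emptyset$ there is always at least one mutation \emph{available} that strictly decreases the complexity while keeping us within the class of internal perfect matchings over $v$ — in other words, that we never get ``stuck'' with a nontrivial symmetric difference all of whose elementary boundary moves are forbidden by the definition of mutation. Here I would use consistency of the dimer model in an essential way: consistency (Definition~\ref{def_consistent}) controls the behavior of zig-zag paths and rules out the degenerate local configurations that would block a mutation, and the characterization of internal perfect matchings from Proposition~\ref{findim_internal} (equivalently, from the correspondence with toric exceptional divisors) ensures that the region enclosed by $\sfD \triangle \sfD_0$ cannot ``run away'' to a boundary lattice point, so the induction stays inside the desired fiber of $\Delta$. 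I would also treat small/degenerate cases (such as the $A_1$-singularity / conifold, cf. Example~\ref{rem_conifold}) separately, since there the polygon has no interior lattice point and the statement is vacuous or trivial.
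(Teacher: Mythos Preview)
Your overall architecture matches the paper's proof: the ``only if'' direction is exactly Lemma~\ref{basic_mutation_pm2} (a mutation alters $D$ by the boundary of a face, hence fixes the homology class), and for ``if'' the paper also runs an induction on the region enclosed by the symmetric difference $D-D'$, shrinking it one face at a time. So the strategy is right.

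There are, however, two concrete technical points that your proposal glosses over and that the paper has to work for.

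First, the individual cycles in $D-D'$ need not be contractible; only their total class vanishes. The paper's Step~1 separates $D-D'$ into homologically trivial cycles $C_1,\dots,C_s$ and pairs $E_j,E_{t+j}$ of non-contractible cycles with $[E_j]=-[E_{t+j}]$, and the regions you induct over are either the disks bounded by a $C_i$ or the annular strip between some $E_\ell$ and $E_{t+m}$ (Steps~2(b) and~6). Your ``decompose as a sum of face boundaries'' does not by itself organize this.

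Second, your acknowledged ``hard part''---guaranteeing a mutation is always available---is where the paper does the real work, and the mechanism is not zig-zag combinatorics. The key input is Proposition~\ref{findim_internal}: because $\sfD$ is internal, the quiver $Q_\sfD$ (and hence its restriction $Q_\sfD\cap C^\circ$ to the region) is acyclic, so sources and sinks exist. The paper then shows two things you do not anticipate:
\begin{itemize}
\item[(i)] A source $k$ of $Q_\sfD\cap C^\circ$ lying on the boundary of $C$ is a strict source of $(Q,\sfD)$ \emph{or} of $(Q,\sfD')$ (Step~4, a parity argument on the segments of $\partial F_k$). So sometimes you mutate $D$, sometimes $D'$---you do not get to choose in advance which perfect matching you move.
\item[(ii)] If no source of $Q_\sfD\cap C^\circ$ is a boundary vertex, you mutate \emph{both} $D$ and $D'$ simultaneously at the interior sources (Step~3). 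This leaves $D-D'$ unchanged but pushes the sources outward; finiteness of $C^\circ$ guarantees you eventually produce a boundary source.
\end{itemize}
Without (i) and (ii) your induction can genuinely get stuck: an interior source of $Q_\sfD\cap C^\circ$ is a source of the full $Q_\sfD$, but mutating there does not shrink the region, while a boundary source of the restricted quiver need not be a source of $Q_\sfD$ itself. These two tricks are the substance of the argument.
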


Even if $\sfD_i, \sfD_j$ are internal perfect matchings of $Q$ corresponding to the same interior lattice point, $2$-representation infinite algebras $\calA_{\sfD_i}$ and $\calA_{\sfD_j}$ are not isomorphic in general. 
However, the mutation of perfect matchings induces a certain tilting module by \cite{IO}, thus as an application of Theorem~\ref{intro_mutation_equiv}  we have derived equivalences between $2$-representation infinite algebras as in Theorem~\ref{intro_derived_equiv1}. 
We remark that this assertion also follows from \cite[Theorem~7.2 and Remark~7.3]{IU5}, which uses a tilting object in the derived category of coherent sheaves on a $2$-dimensional toric Deligne-Mumford stack. 
Whereas we will give another proof using Theorem~\ref{intro_mutation_equiv} because it is interesting from the viewpoint of representation theory of algebras.

\begin{theorem}[{see Theorem~\ref{derivedequ_pm}}]
\label{intro_derived_equiv1}
Let $\Gamma$ be a consistent dimer model and $Q$ be the associated quiver. 
Let $\sfD_i, \sfD_j$ be internal perfect matchings of $Q$ corresponding to the same interior lattice point of the perfect matching polygon $\Delta$. 
Then, we have an equivalence $\calD^\rmb(\mc\calA_{\sfD_i})\cong\calD^\rmb(\mc\calA_{\sfD_j})$. 
\end{theorem}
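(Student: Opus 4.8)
The plan is to deduce Theorem~\ref{intro_derived_equiv1} from Theorem~\ref{intro_mutation_equiv} by showing that a single mutation of internal perfect matchings induces a derived equivalence between the corresponding truncated Jacobian algebras, and then iterate. Since Theorem~\ref{intro_mutation_equiv} tells us that $\sfD_i$ and $\sfD_j$, corresponding to the same interior lattice point, are connected by a finite chain of mutations of perfect matchings (each term of which is again an internal perfect matching corresponding to that same lattice point, hence each $\calA_{\sfD}$ along the chain is finite dimensional and $2$-representation infinite by Proposition~\ref{findim_internal}), it suffices to treat the case where $\sfD_j$ is obtained from $\sfD_i$ by one mutation. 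The derived equivalence $\calD^\rmb(\mc\calA_{\sfD_i})\cong\calD^\rmb(\mc\calA_{\sfD_j})$ for the whole chain is then the composite.

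First I would recall precisely the combinatorics of a mutation of perfect matchings from Subsection~\ref{sec_mutation_pm}: a mutation is performed at a suitable node (or face) of $\Gamma$ and replaces the edges of $D_i$ incident to a local configuration by their complement, producing $D_j$. I would translate this into the change of the $\ZZ$-grading $d_{\sfD_i}\rightsquigarrow d_{\sfD_j}$ on $\calP(Q,W_Q)$ and hence into an explicit comparison of the two degree-zero subalgebras $\calA_{\sfD_i}=\calP(Q,W_Q)_{\sfD_i}$ and $\calA_{\sfD_j}=\calP(Q,W_Q)_{\sfD_j}$. The key input is the cited result of Iyama--Oppermann \cite{IO}: for $2$-representation infinite algebras, a mutation at a source (or sink) vertex — i.e., an APR-type or reflection-functor-type tilt — produces a new $2$-representation infinite algebra that is derived equivalent via an explicit tilting module. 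The heart of the argument is to check that the change of grading coming from one mutation of perfect matchings is, after identifying $\calA_{\sfD_i}$ and $\calA_{\sfD_j}$ inside the common bimodule $3$-Calabi-Yau algebra $\calP(Q,W_Q)$, exactly such a reflection/tilting operation; concretely, I expect the mutation to toggle which arrows at the mutation vertex lie in degree zero, so that $\calA_{\sfD_j}\cong\End_{\calA_{\sfD_i}}(T)$ for the corresponding (co)reflection tilting $\calA_{\sfD_i}$-module $T$, whence $\calD^\rmb(\mc\calA_{\sfD_i})\cong\calD^\rmb(\mc\calA_{\sfD_j})$.

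For bookkeeping I would set up the statement inductively: given a chain $\sfD_i=\sfE_0,\sfE_1,\dots,\sfE_m=\sfD_j$ of internal perfect matchings with $\sfE_{k+1}$ a mutation of $\sfE_k$, each $\calA_{\sfE_k}$ is finite dimensional (Proposition~\ref{findim_internal}), the one-step equivalence gives $\calD^\rmb(\mc\calA_{\sfE_k})\cong\calD^\rmb(\mc\calA_{\sfE_{k+1}})$, and composing these $m$ equivalences yields the claim. One subtlety to address along the way is that the mutation vertex must genuinely be a source or sink of the relevant subquiver $Q_{\sfE_k}$ so that \cite{IO} applies; this should follow from the local picture of a perfect-matching mutation, but it is worth verifying carefully, and it is the point where the hypothesis that all the $\sfE_k$ are \emph{internal} (as opposed to corner or boundary perfect matchings) is used.

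The main obstacle I anticipate is precisely this last identification: matching the purely combinatorial operation "mutation of perfect matchings on $\Gamma$" with the homological operation "reflection/tilting of the $2$-representation infinite algebra at a source vertex" of \cite{IO}. This requires tracking how the degree-zero part of a fixed path algebra with potential changes when a few arrows move between degree $0$ and degree $1$, and confirming that the resulting algebra is the endomorphism ring of the expected tilting module rather than merely an abstractly derived-equivalent algebra. Once that dictionary is in place the theorem follows formally; and, as remarked in the paper, the conclusion can alternatively be cross-checked against \cite[Theorem~7.2, Remark~7.3]{IU5}, which provides independent confirmation via tilting objects on a $2$-dimensional toric Deligne--Mumford stack.
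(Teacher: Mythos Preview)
Your proposal is correct and follows essentially the same route as the paper: reduce to a single mutation step via Theorem~\ref{pm_mutation_equiv}, observe that the mutation vertex $k$ is a strict source of $(Q,\sfD_i)$ because $\sfD_i$ is internal (hence $Q_{\sfD_i}$ is acyclic), and then invoke the tilting theory of \cite{IO}. The paper makes your anticipated ``dictionary'' precise by using the $2$-APR tilting module $T_k=(\tau_2^-P_k)\oplus U$ associated with the simple projective $P_k$ at $k$, together with \cite[Theorem~3.11]{IO} (and the explicit arrow/relation bookkeeping of Observation~\ref{obs_mutationPM}) to identify $\End_{\calA_{\sfD_i}}(T_k)\cong\calA_{\lambda_k^+(\sfD_i)}$; this is exactly the identification you flagged as the main obstacle.
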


On the other hand, it is known that for any lattice polygon $\Delta$ there exists a consistent dimer model giving $\Delta$ as the perfect matching polygon \cite{Gul,IU3}, 
but such a dimer model is not unique. 
A way to understand a relationship among consistent dimer models having the same perfect matching polygon 
is to consider the \emph{mutation of dimer models}, 
which is obtained as the dual of the \emph{mutation of the quiver with potential} associated with a dimer model, 
because this operation does not change the associated perfect matching polygon (see Subsection~\ref{subsec_mutation_QP}). 
Using the mutation of dimer models and tilting theory, we also have derived equivalences of $2$-representation algebras as follows. 
We remark that this also follows from \cite[Theorem~7.2]{IU5} in the stronger sense (see Remark~\ref{rem_main_derived_eq}). 

\begin{theorem}[{see Theorem~\ref{main_thm_derived_eq}}]
\label{intro_derived_equiv2}
Let $\Gamma, \Gamma^\prime$ be consistent dimer models that can be obtained from each other using the mutations of dimer models $($see Definition~\ref{def_mut_QP}, \ref{def_mutation_dimer}$)$, 
and we assume that the perfect matching polygon $\Delta$ of $\Gamma$ $($and hence $\Gamma^\prime$$)$ contains an interior lattice point. 
Let $(Q,W_Q), (Q^\prime,W_{Q^\prime})$ be the quivers with potential associated with $\Gamma, \Gamma^\prime$ respectively. 
If $D$ and $D^\prime$ are respectively internal perfect matchings of $\Gamma, \Gamma^\prime$ corresponding to the same interior lattice point of $\Delta$, then we have an equivalence 
$$
\calD^\rmb(\mc \,\widehat{\calP}(Q,W_Q)_\sfD)\cong\calD^\rmb(\mc \,\widehat{\calP}(Q^\prime,W_{Q^\prime})_{\sfD^\prime}).
$$
where $\widehat{\calP}(Q,W_Q)$ is the complete Jacobian algebra. 
\end{theorem}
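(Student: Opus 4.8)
The plan is to reduce the statement to the two mutation results already established, namely Theorem~\ref{intro_derived_equiv1} (derived equivalences from mutations of internal perfect matchings) and the behaviour of the Jacobian algebra under mutation of the quiver with potential. First I would recall that a single mutation of dimer models is, by Definition~\ref{def_mut_QP} and Definition~\ref{def_mutation_dimer}, the dual of a mutation $\mu_k(Q,W_Q)=(Q',W_{Q'})$ of quivers with potential at a vertex $k$ which is neither a source nor a sink; since $\Gamma$ and $\Gamma'$ are obtained from one another by a finite sequence of such mutations, and since derived equivalence is transitive, it suffices to treat the case of a single mutation $\mu_k$. So from now on $\Gamma'=\mu_k(\Gamma)$.

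The heart of the argument is then the following: the complete Jacobian algebra $\widehat{\calP}(Q',W_{Q'})$ is, by the general theory of mutation of quivers with potential (Derksen--Weyman--Zelevinsky, used through the excerpt's Subsection~\ref{subsec_mutation_QP}), derived equivalent to $\widehat{\calP}(Q,W_Q)$ via an explicit tilting complex $T_k$ supported at the mutated vertex; moreover this equivalence is compatible with the $\ZZ$-gradings induced by perfect matchings. The key point I must verify is that, under the mutation of dimer models, an internal perfect matching $D$ of $\Gamma$ corresponding to an interior lattice point $v$ of $\Delta$ is sent to \emph{some} internal perfect matching of $\Gamma'$ corresponding to the same $v$ — call it $\mu_k(D)$ — and that the graded tilting complex $T_k$ restricts to a tilting complex over the degree-zero parts, giving
\[
\calD^\rmb(\mc\,\widehat{\calP}(Q,W_Q)_\sfD)\cong\calD^\rmb(\mc\,\widehat{\calP}(Q',W_{Q'})_{\mu_k(\sfD)}).
\]
Here I would use that passing to the degree-zero subalgebra of a $\ZZ$-graded bimodule $3$-CY algebra of Gorenstein parameter $1$ is functorial enough that a graded two-term tilting complex descends, exactly as in the arguments behind Theorem~\ref{motivation_thm1} and Theorem~\ref{motivation_thm2}; the combinatorics of how $\sfD$ transforms under a QP-mutation at a vertex that is a source or sink of $Q_\sfD$ is what guarantees $\mu_k(D)$ is again internal with the same lattice point (this is where one invokes the description of the lattice point of a perfect matching via its class in $H_1$ of the torus, which is mutation-invariant). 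Finally, $D'$ and $\mu_k(\sfD)$ are both internal perfect matchings of $\Gamma'$ attached to the same interior lattice point $v$, so Theorem~\ref{intro_derived_equiv1} applied to $\Gamma'$ gives $\calD^\rmb(\mc\calA_{\mu_k(\sfD)})\cong\calD^\rmb(\mc\calA_{\sfD'})$; combining the two equivalences and chaining over the sequence of mutations finishes the proof.

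The main obstacle I anticipate is the second step: showing that the DWZ tilting complex for $\mu_k$ is genuinely compatible with the perfect-matching grading and that its degree-zero part is still a tilting complex over the truncated algebra. One has to check that the mutation at $k$ can be arranged (after possibly replacing $D$ within its mutation class, using Theorem~\ref{intro_mutation_equiv}) so that $k$ is a source or a sink of $Q_\sfD$, because otherwise the arrows created or reversed at $k$ may carry degree $0$ or $1$ in a way that breaks the two-term structure of $T_k$ in degree zero; handling this compatibility — essentially a bookkeeping of which of the new arrows $[ba]$ lie in $\mu_k(\sfD)$ — is the technical core. Once that is in place, everything else is formal: transitivity of derived equivalence, the already-proved Theorem~\ref{intro_derived_equiv1}, and the mutation-invariance of the perfect matching polygon recorded in Subsection~\ref{subsec_mutation_QP}.
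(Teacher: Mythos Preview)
Your proposal is essentially correct and follows the same architecture as the paper's proof: reduce to a single mutation $\mu_k$, replace $\sfD$ within its mutation-equivalence class so that $k$ becomes a strict source of $(Q,\sfD)$, produce a tilting module over the truncated algebra whose endomorphism ring is the truncated algebra on the mutated side, and finish with Theorem~\ref{intro_derived_equiv1}.

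Two points where the paper is sharper than your sketch. First, you say one should ``arrange $k$ to be a source or a sink of $Q_\sfD$'' by moving $\sfD$ in its mutation class, but you do not say why this is always possible. The paper does this cleanly via moduli: it picks the generic stability parameter $\theta$ with $\theta_k<0$ and $\theta_i>0$ for $i\neq k$, and observes that the $\theta$-stable perfect matching at the given interior lattice point automatically has $k$ as a strict source (since from $k$ every vertex must be reachable through nonzero arrows). This replaces your vague ``possibly replacing $D$'' with a concrete mechanism. Second, your invocation of ``the DWZ tilting complex'' compatible with the grading is really the content of Mizuno's theorem on graded QP mutation (\cite{Miz}, Theorem~3.1), which the paper cites directly: it gives a $1$-APR tilting module over $\calA_\sfD$ whose endomorphism ring is $\widehat{\calP}(\mu_k(Q,W_Q))_{\mu_k(\sfD)}$. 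So your ``bookkeeping of which new arrows lie in $\mu_k(\sfD)$'' is already packaged there; you need not reprove it from DWZ.

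One genuine gap: your claim that the lattice point of $\mu_k(D)$ agrees with that of $D$ because ``the class in $H_1$ of the torus is mutation-invariant'' is too quick. The dimer model itself changes under $\mu_k$, so one cannot literally compare $[D-D_0]$ and $[\mu_k(D)-D_0']$ in the same $H_1$ without an argument identifying reference perfect matchings on both sides. The paper handles this in a separate lemma (Lemma~\ref{lem_same_pt}) by choosing adjacent corner perfect matchings $D_1,D_2$ on $\Gamma$ and $D_1',D_2'$ on $\mu_k(\Gamma)$ determined by a zigzag path through $k$ and its spider-moved counterpart, and then checking pictorially that $[D_1-D]=[D_1'-\mu_k(D)]$. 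This is short but not automatic, and your sketch should acknowledge it as an actual verification rather than an invariance principle.
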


\medskip

The structure of this paper is as follows. In Section~\ref{sec_pre}, we prepare some notation and facts regarding dimer models. 
In Section~\ref{section_2RI}, we introduce $n$-representation infinite algebras (we are especially interested in the case of $n=2$) and discuss which perfect matching makes the degree zero part of the graded Jacobian algebra $2$-representation infinite. 
As a conclusion, we show that an internal perfect matching gives the desired answer (see Proposition~\ref{findim_internal}). 
In Section~\ref{sec_cluster_eq}, we consider the stable categories of (graded) MCM modules for $3$-dimensional Gorenstein toric isolated singularities, and show equivalences (\ref{motivated_equiv}) using the results in the previous section (see Corollary~\ref{main_cor}). 
Moreover, we show the existence of tilting object for this stable category (see Corollary~\ref{existence_tilting}). 
In Section~\ref{sec_mutation_pm}, we focus on internal perfect matchings and their mutations. 
In particular, we show that all internal perfect matchings corresponding to the same interior lattice point are successive mutations of one another (see Theorem~\ref{pm_mutation_equiv}). 
In Section~\ref{sec_derived_eq_2rep}, we turn our attention to the bounded derived categories of finitely generated modules over $2$-representation infinite algebras, and show their equivalences (see Theorem~\ref{derivedequ_pm} and \ref{main_thm_derived_eq}) using APR tilting modules and the mutations of (graded) quivers with potentials. 

\subsection{Notation and Conventions} 
Throughout this paper, $\kk$ is an algebraically closed field of characteristic zero. 
In this paper, all modules are left modules. 

Let $A$ be a $\kk$-algebra. 
We denote by $\Mod A$ the category of $A$-modules, by $\mc A$ the category of finitely generated $A$-modules. 
We also denote by $\add_AM$ the full subcategory consisting of direct summands of finite direct sums of some copies of $M\in \mc A$. 
We denote by $A^{\rm op}$ the opposite algebra of $A$, and let $A^e\coloneqq A\otimes_\kk A^{\rm op}$ be the \emph{enveloping algebra} of $A$. 
In particular, we can consider a left $A^e$-module as an $(A,A)$-bimodule. We denote by $\per A$ the thick subcategory generated by $A$. 
Let $B$ be a $\ZZ$-graded $\kk$-algebra. 
We denote by $\Mod^\ZZ B$ the category of $\ZZ$-graded $B$-modules, by $\mc^\ZZ B$ the category of finitely generated $\ZZ$-graded $B$-modules. 

When we consider a composition of morphism, $fg$ means we first apply $f$ then $g$. 
With this convention, $\Hom_A(M, X)$ is an $\End_A(M)$-module and $\Hom_A(X, M)$ is an $\End_A(M)^{\rm op}$-module. 
Similarly, when we consider a quiver, a path $ab$ means $a$ is followed by $b$. 

For an abelian category $\calC$, we denote by $\calK(\calC)$ the homotopy category and by $\calD(\calC)$ the derived category. 
We denote by $\calK^\rmb(\calC)$ the bounded homotopy category and by $\calD^\rmb(\calC)$ the bounded derived category. 
For $X,Y\in\calC$, let $\calP(X,Y)$ (resp. $\calI(X,Y)$) be the subset of $\Hom_\calC(X,Y)$ consisting of morphisms factoring through a projective (resp. injective) object in $\calC$. 
We denote the \emph{stable} (resp. \emph{costable}) \emph{category} of $\calC$ by $\underline{\calC}$ (resp. $\overline{\calC}$), 
that is, it has the same objects as $\calC$, and the morphism spaces are defined as $\Hom_\calC(X,Y)/\calP(X,Y)$ (resp. $\Hom_\calC(X,Y)/\calI(X,Y)$). 

\section{\bf Preliminaries on dimer models} 
\label{sec_pre}

We here introduce some notions related to dimer models. This section is mostly for a review and fixing notation. 
Some important ideas and concepts appearing in this section are derived from theoretical physics (see e.g., \cite{FHK,HK,HV}), 
and have been developed in the various references cited throughout this section.  

\subsection{Jacobian algebras associated with dimer models} 
\label{subsec_dimer}

Let $Q=(Q_0,Q_1)$ be the quiver obtained as the dual of a dimer model, where $Q_0$ is the set of vertices and $Q_1$ is the set of arrows. 
Let $\hd, \tl:Q_1\rightarrow Q_0$ be maps sending an arrow $a\in Q_1$ to the head of $a$ and the tail of $a$ respectively. 
A \emph{nontrivial path} is a finite sequence of arrows $a=a_1\cdots a_r$ with $\hd(a_\ell)=\tl(a_{\ell+1})$ for $\ell=1, \dots, r-1$. 
We say that a path $a$ is a \emph{cycle} if $\hd(a)=\tl(a)$. 
We define the \emph{length of path} $a=a_1\cdots a_r$ as $r \,(\ge 1)$, and denote by $Q_r$ the set of paths of length $r$. 
We consider each vertex $i\in Q_0$ as a trivial path $e_i$ with $\hd(e_i)=\tl(e_i)=i$, and define the length of $e_i$ as $0$. 
Extending the maps $\hd, \tl$, we define $\tl(a)=\tl(a_1), \hd(a)=\hd(a_r)$ for a path $a=a_1\cdots a_r$. 

Then, we consider the \emph{path algebra} $\kk Q$, which is the $\kk$-algebra whose $\kk$-basis consists of paths in $Q$. 
The multiplication of $\kk Q$ is defined as 
$a\cdot b=ab$ (resp. $a\cdot b=0$) if $\hd(a)=\tl(b)$ (resp. $\hd(a)\neq \tl(b)$) for paths $a, b$, and we extend this multiplication linearly. 
We then denote by $[\kk Q, \kk Q]$ the $\kk$-vector space generated by all commutators in $\kk Q$ and set the vector space $\kk Q_{\mathrm{cyc}}\coloneqq \kk Q/[\kk Q, \kk Q]$. 
The space $\kk Q_{\mathrm{cyc}}$ has a basis consisting of cycles in $Q$. 
By the definition of the quiver associated with a dimer model, for each node $n\in\Gamma_0$ on a dimer model, we have the cycle $\omega_n\in\kk Q_{\mathrm{cyc}}$ that is obtained as the product of all arrows around the node $n$. We call such a cycle \emph{small cycle}. 
In particular, small cycles dual to white (resp. black) nodes are oriented clockwise (resp. anti-clockwise). 
A \emph{potential} of $Q$ is a linear combination $W\in\kk Q_{\mathrm{cyc}}$ of cycles in $Q$ having the length at least $2$, and call a pair $(Q,W)$ a \emph{quiver with potential} (= \emph{QP}). 
For the quiver $Q$ associated with a dimer model, we define the potential $W_Q$ as 
\[
W_Q\coloneqq \sum_{n\in \Gamma^+_0}\omega_n-\sum_{n\in \Gamma^-_0}\omega_n. 
\]
For each small cycle $\omega_n$, we choose an arrow $a\in \omega_n$ and consider $\hd(a)$ as the starting point of $\omega_n$. 
Then, we may write $e_{\hd(a)}\omega_ne_{\hd(a)}\coloneqq a_1\cdots a_ra$ using some path $a_1\cdots a_r$. 
We define the partial derivative of $\omega_n$ with respect to $a$ by $\partial_a\omega_n\coloneqq a_1\cdots a_r$. 
We note that $\partial_a \omega_n = 0$ for an arrow $a$ that is not contained in $\omega_n$. 
Extending this derivative linearly, we also define $\partial_a W_Q$ for any $a\in Q_1$.
Then, we consider the two-sided ideal $\calJ_Q\coloneqq \langle\partial_aW_Q\mid a\in Q_1\rangle\subset\kk Q$. 
We define the \emph{Jacobian algebra} of a dimer model as $\calP(Q, W_Q)\coloneqq \kk Q/\calJ_Q$.
By construction, $\partial_a W_Q$ gives the relation in $Q$ for each arrow $a\in Q_1$. 
Here, a \emph{relation} in $Q$ is a $\kk$-linear combination of paths of length at least $2$ having the same head and tail. 
Namely, for each arrow $a\in Q_1$, there are precisely two oppositely oriented small cycles containing the arrow $a$ as a boundary. 
Let $p_a^{\pm}$ be the paths from $\hd(a)$ around the boundary of such small cycles to $\tl(a)$ (see Figure~\ref{relation}). 
Then, $\partial_a W_Q$ is described as $\partial_aW_Q=p_a^+-p_a^-$, and this is a relation in $Q$. 
Thus, for the set of relations $\fkR=\{p_a^+-p_a^- \mid a\in Q_1\}$ in $Q$, we have $\calJ_Q=\langle \fkR\rangle$. 

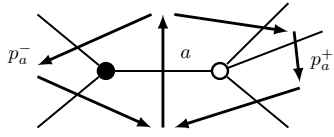
\begin{figure}[H] 
\[\scalebox{0.75}{
\begin{tikzpicture}[sarrow/.style={black, -latex, very thick}]
\node (N1) at (0,0){$$}; \node (N2) at (2,0){$$}; 
\draw[line width=0.035cm]  (N1)--(N2); \draw[line width=0.035cm]  (N1)--(-1.2,1); \draw[line width=0.035cm]  (N1)--(-1.2,-1);
\draw[line width=0.035cm]  (N2)--(3.2,1.2);  \draw[line width=0.035cm]  (N2)--(3.8,0.7); \draw[line width=0.035cm]  (N2)--(3.2,-1); 
\filldraw  [very thick, fill=black] (0,0) circle [radius=0.14] ;
\draw  [very thick, fill=white] (2,0) circle [radius=0.14] ;
\draw[sarrow, line width=0.05cm]  (1,-1)--(1,1); \draw[sarrow, line width=0.05cm]  (0.8,1)--(-1.2,0.1); 
\draw[sarrow, line width=0.05cm]  (-1.2,-0.1)--(0.8,-1); \draw[sarrow, line width=0.05cm]  (1.2,1)--(3.2,0.7); 
\draw[sarrow, line width=0.05cm]  (3.3,0.7)--(3.4, -0.2); 
\draw[sarrow, line width=0.05cm]  (3.4,-0.3)--(1.2,-1); 

\node  at (1.4,0.3) {$a$} ;\node  at (-1.5,0.3) {$p^-_a$} ;\node  at (3.8,0.2) {$p^+_a$} ;
\end{tikzpicture}
}\]
\caption{An example of $p_a^+$ and $p_a^-$}
\label{relation}
\end{figure}

\begin{remark}
\label{rem_bivalent}
We say that a node on a dimer model is \emph{$n$-valent} if the number of edges incident to that node is $n$. 
If a dimer model contains a $2$-valent node, then we may remove it using the join move which will be defined below, because this operation does not change the Jacobian algebra up to isomorphism. 
Thus, in the rest of this paper, we assume that dimer models do not contain $2$-valent nodes unless otherwise mentioned, 
and hence the length of a small cycle is at least three. 

Here, we define the join move and the split move. 
The \emph{join move} is an operation removing a $2$-valent node and joining two distinct nodes connected to it as shown in Figure~\ref{split_join}. 
On the other hand, the \emph{split move} is an operation inserting a $2$-valent node. 
In general, there are several way to insert a $2$-valent node. Figure~\ref{split_join} is an example of the split move. 

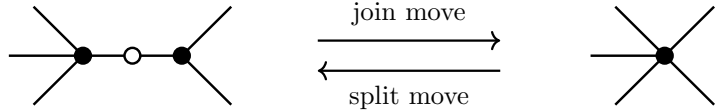
\begin{figure}[H]
\begin{center}
{\scalebox{1}{
\begin{tikzpicture} 

\node at (3.8,0.55) {join move} ; \node at (3.8,-0.55) {split move} ; 
\draw[->, line width=0.03cm] (2.6,0.2)--(5,0.2); 
\draw[<-, line width=0.03cm] (2.6,-0.2)--(5,-0.2); 

\node (Dimer_a) at (0,0)
{\scalebox{0.65}{
\begin{tikzpicture} 
\coordinate (B1) at (-1,0); \coordinate (B2) at (1,0); 

\coordinate (W1) at (-2,1); \coordinate (W2) at (-2.5,0); \coordinate (W3) at (-2,-1); 
\coordinate (W4) at (0,0); \coordinate (W5) at (2,1); \coordinate (W6) at (2,-1); 

\draw[line width=0.05cm] (W1)--(B1); \draw[line width=0.05cm] (W2)--(B1);  
\draw[line width=0.05cm] (W3)--(B1); \draw[line width=0.05cm] (B1)--(W4); \draw[line width=0.05cm] (W4)--(B2);   
\draw[line width=0.05cm] (W5)--(B2); \draw[line width=0.05cm] (W6)--(B2);

\filldraw  [line width=0.05cm, fill=black] (B1) circle [radius=0.16] ; \filldraw  [line width=0.05cm, fill=black] (B2) circle [radius=0.16] ;

\draw [line width=0.05cm, fill=white] (W4) circle [radius=0.16] ;
\end{tikzpicture} }} ;

\node (Dimer_b) at (7,0)
{\scalebox{0.65}{
\begin{tikzpicture} 
\coordinate (B1) at (0,0); 

\coordinate (W1) at (-1,1); \coordinate (W2) at (-1.5,0); \coordinate (W3) at (-1,-1); 
\coordinate (W5) at (1,1); \coordinate (W6) at (1,-1); 

\draw[line width=0.05cm] (W1)--(B1); \draw[line width=0.05cm] (W2)--(B1);  
\draw[line width=0.05cm] (W3)--(B1); \draw[line width=0.05cm] (W5)--(B1); \draw[line width=0.05cm] (W6)--(B1);

\filldraw  [line width=0.05cm, fill=black] (B1) circle [radius=0.16] ; 
\end{tikzpicture} }} ;

\end{tikzpicture} 
}}
\caption{Join move and split move}
\label{split_join}
\end{center}
\end{figure}
\end{remark}

In what follows, we will impose the extra condition called \emph{consistency condition} on a dime model. 
Under this condition, a dimer model gives a crepant resolution and an NCCR of a $3$-dimensional Gorenstein toric singularity 
(see Subsection~\ref{CCR_NCCR}). 
In the literature, there are several consistency conditions (see e.g., \cite{Boc_consist,Bro,Dav,Gul,HV,IU2,KS,MR}). 
The relationships between those conditions were studied in \cite{IU2,Boc_consist}, and almost all conditions are equivalent under mild assumptions. 
We thus note one of them. 

\begin{definition}[{cf.\cite{HV}}]
\label{def_consistent}
Let $Q$ be the quiver associated with a dimer model $\Gamma$. 
We say that a dimer model $\Gamma$ is \emph{consistent} if it admits a positive grading $\sfR:Q_1\rightarrow \RR_{>0}$ satisfying the following conditions: 
\begin{itemize}
\item [(1)] $\displaystyle\sum_{a\in \omega_n}\sfR(a)=2$ \;for any $n\in\Gamma_0$, 
\item [(2)] $\displaystyle\sum_{\hd(a)=i}(1-\sfR(a))+\displaystyle\sum_{\tl(a)=i}(1-\sfR(a))=2$ \;for any $i\in Q_0$,  
\end{itemize}
in which case $\sfR$ is called a \emph{consistent $\sfR$-charge}. 
\end{definition} 

\subsection{Perfect matchings of dimer models} 
\label{subsec_pm}

In this subsection, we focus on perfect matchings (see Definition~\ref{def_pm}). 
In general, every dimer model does not necessarily have a perfect matching. 
If a dimer model is consistent, then it has a perfect matching and every edge is contained in some perfect matchings (see e.g., \cite[Proposition~8.1]{IU2}). 

For each edge contained in a perfect matching on $\Gamma$, we give the orientation from a white node to a black node. 
We fix a perfect matching $D_0$, which will be called the \emph{reference perfect matching}. 
For any perfect matching $D$, the difference of two perfect matchings $D-D_0$ forms a $1$-cycle, 
and hence we consider it as an element in the homology group $\rmH_1(\TT)\cong\ZZ^2$. 
When we consider $D-D_0$ as the element of $\rmH_1(\TT)$, we denote it by $[D-D_0]$. 
We then define the lattice polygon 
$$
\Delta_\Gamma\coloneqq\text{\rm conv}\{[D-D_0]\in\ZZ^2 \mid D \text{ is a perfect matching of $\Gamma$}\}
$$
as the convex hull of lattice points associated to perfect matchings. 
We call $\Delta_\Gamma$ the \emph{perfect matching $($PM$)$ polygon} (or \emph{characteristic polygon}) of $\Gamma$, and sometimes we simply  denote this by $\Delta$. 
Although this lattice polygon depends on the reference perfect matching, it is determined up to translations. 

\begin{definition}
Fix a perfect matching $D_0$. We say that a perfect matching $D$ is 
\begin{itemize}
\item a \emph{corner} (or \emph{extremal}) \emph{perfect matching} if $[D-D_0]$ lies on a vertex of $\Delta$, 
\item a \emph{boundary} (or \emph{external}) \emph{perfect matching} if $[D-D_0]$ lies on the boundary of $\Delta$, 
\item an \emph{internal} \emph{perfect matching} if $[D-D_0]$ lies on an interior lattice point of $\Delta$. 
\end{itemize}
\end{definition}

Since $[D_i-D_j]=[D_i-D_k]-[D_j-D_k]$ for any perfect matchings $D_i, D_j, D_k$, we see that corner, boundary, and internal perfect matchings do not depend on a choice of the reference one. 
If a dimer model is consistent, 
then there exists a unique corner perfect matching corresponding to each vertex of $\Delta$ (see e.g., \cite[Corollary~4.27]{Bro}, \cite[Proposition~9.2]{IU2}). 
Thus, we can give a cyclic order to corner perfect matchings along the corresponding vertices of $\Delta$ in the anti-clockwise direction. 
In addition, we say that two corner perfect matchings are \emph{adjacent} if they are adjacent with respect to the given cyclic order. 
For example,  Figure~\ref{pm_4a} is the list of perfect matchings of the dimer model given in Figure~\ref{ex_quiver4a}. 

\newcommand{\basicdimerex}{

\newcommand{\edgewidth}{0.05cm} 
\newcommand{\nodewidth}{0.045cm} 
\newcommand{\noderad}{0.18} 

\coordinate (P1) at (1,1); \coordinate (P2) at (3,1); 
\coordinate (P3) at (3,3); \coordinate (P4) at (1,3); 
\draw[line width=\edgewidth]  (0,0) rectangle (4,4);
\draw[line width=\edgewidth]  (P1)--(P2)--(P3)--(P4)--(P1); \draw[line width=\edgewidth] (0,1)--(P1)--(1,0); \draw[line width=\edgewidth]  (4,1)--(P2)--(3,0);
\draw[line width=\edgewidth]  (0,3)--(P4)--(1,4); \draw[line width=\edgewidth]  (3,4)--(P3)--(4,3);
\draw  [line width=\nodewidth, fill=black] (P1) circle [radius=\noderad] ; \draw  [line width=\nodewidth, fill=black] (P3) circle [radius=\noderad] ;
\draw  [line width=\nodewidth, fill=white] (P2) circle [radius=\noderad] ; \draw  [line width=\nodewidth, fill=white] (P4) circle [radius=\noderad] ;
}

\begin{figure}[H]
\begin{center}
{\scalebox{0.8}{
\begin{tikzpicture} 

\newcommand{\pmwidth}{0.35cm} 
\newcommand{\pmcolor}{gray} 

\node at (0,-1.5) {$D_1$}; \node at (3.3,-1.5) {$D_2$}; \node at (6.6,-1.5) {$D_3$}; \node at (9.9,-1.5) {$D_4$};
\node at (0,-4.7) {$D_5$}; \node at (3.3,-4.7) {$D_6$}; \node at (6.6,-4.7) {$D_7$}; \node at (9.9,-4.7) {$D_8$}; 

\node (PM1) at (0,0) 
{\scalebox{0.55}{
\begin{tikzpicture}
\draw[line width=\pmwidth,color=\pmcolor] (P3)--(P4);\draw[line width=\pmwidth,color=\pmcolor] (P1)--(0,1);\draw[line width=\pmwidth,color=\pmcolor] (P2)--(4,1);
\basicdimerex
\end{tikzpicture} }}; 

\node (PM2) at (3.3,0) 
{\scalebox{0.55}{
\begin{tikzpicture}
\draw[line width=\pmwidth,color=\pmcolor] (P3)--(P2);\draw[line width=\pmwidth,color=\pmcolor] (P4)--(1,4);\draw[line width=\pmwidth,color=\pmcolor] (P1)--(1,0);
\basicdimerex
\end{tikzpicture} }} ;  

\node (PM3) at (6.6,0) 
{\scalebox{0.55}{
\begin{tikzpicture}
\draw[line width=\pmwidth,color=\pmcolor] (P2)--(P1);\draw[line width=\pmwidth,color=\pmcolor] (P4)--(0,3);\draw[line width=\pmwidth,color=\pmcolor] (P3)--(4,3);
\basicdimerex
\end{tikzpicture} }}; 

\node (PM4) at (9.9,0) 
{\scalebox{0.55}{
\begin{tikzpicture}
\draw[line width=\pmwidth,color=\pmcolor] (P4)--(P1);\draw[line width=\pmwidth,color=\pmcolor] (P3)--(3,4);\draw[line width=\pmwidth,color=\pmcolor] (P2)--(3,0);
\basicdimerex
\end{tikzpicture} }} ;

\node (PM5) at (0,-3.2) 
{\scalebox{0.55}{
\begin{tikzpicture}
\draw[line width=\pmwidth,color=\pmcolor] (P1)--(P4);\draw[line width=\pmwidth,color=\pmcolor] (P2)--(P3);
\basicdimerex
\end{tikzpicture}
 }}; 
 
 \node (PM6) at (3.3,-3.2) 
{\scalebox{0.55}{
\begin{tikzpicture}
\draw[line width=\pmwidth,color=\pmcolor] (P1)--(P2);\draw[line width=\pmwidth,color=\pmcolor] (P3)--(P4);
\basicdimerex
\end{tikzpicture}
 }}; 
 
  \node (PM7) at (6.6,-3.2) 
{\scalebox{0.55}{
\begin{tikzpicture}
\draw[line width=\pmwidth,color=\pmcolor] (P1)--(1,0);\draw[line width=\pmwidth,color=\pmcolor] (P2)--(3,0); 
\draw[line width=\pmwidth,color=\pmcolor] (P3)--(3,4);\draw[line width=\pmwidth,color=\pmcolor] (P4)--(1,4);
\basicdimerex
\end{tikzpicture}
 }}; 

 \node (PM8) at (9.9,-3.2) 
{\scalebox{0.55}{
\begin{tikzpicture}
\draw[line width=\pmwidth,color=\pmcolor] (P1)--(0,1);\draw[line width=\pmwidth,color=\pmcolor] (P2)--(4,1); 
\draw[line width=\pmwidth,color=\pmcolor] (P3)--(4,3);\draw[line width=\pmwidth,color=\pmcolor] (P4)--(0,3);
\basicdimerex
\end{tikzpicture}
 }}; 

\end{tikzpicture}
}}
\caption{Perfect matchings of the dimer model given in Figure~\ref{ex_quiver4a}}
\label{pm_4a}
\end{center}
\end{figure}
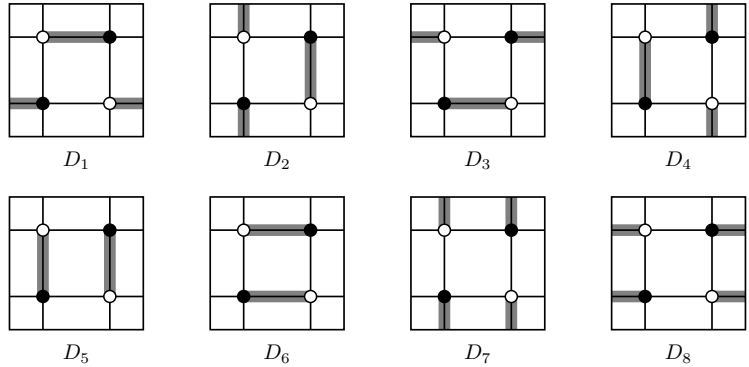

We fix $D_5$ as the reference perfect matching. 
We see that $D_1,\dots,D_4$ correspond to $(1,0),(0,1),(-1,0),(0,-1)$ respectively (see Figure~\ref{difference_4a}). 
We also see that  $D_5,\dots,D_8$ correspond to $(0,0)$, and hence the PM polygon is given as shown in the rightmost of Figure~\ref{difference_4a}. 
Therefore, $D_1,\dots,D_4$ are corner perfect matchings and $D_5,\dots,D_8$ are internal ones. 

\begin{figure}[H]
\begin{center}
{\scalebox{0.8}{
\begin{tikzpicture} 
\node at (0,-1.5) {$D_1-D_5$}; \node at (3.3,-1.5) {$D_2-D_5$}; \node at (6.6,-1.5) {$D_3-D_5$}; \node at (9.9,-1.5) {$D_4-D_5$};

\node (DF1) at (0,0) 
{\scalebox{0.55}{
\begin{tikzpicture}
\basicdimerex
\draw [->, line width=0.2cm, rounded corners, color=red] (0,1)--(P1)--(P4)--(P3)--(P2)--(4,1) ; 
\end{tikzpicture} }}; 

\node (DF2) at (3.3,0) 
{\scalebox{0.55}{
\begin{tikzpicture}
\basicdimerex
\draw [->, line width=0.18cm, rounded corners, color=red] (1,0)--(P1)--(P4)--(1,4) ; 
\end{tikzpicture} }} ;  

\node (DF3) at (6.6,0) 
{\scalebox{0.55}{
\begin{tikzpicture}
\basicdimerex
\draw [<-, line width=0.18cm, rounded corners, color=red] (0,3)--(P4)--(P1)--(P2)--(P3)--(4,3) ; 
\end{tikzpicture} }}; 

\node (DF4) at (9.9,0) 
{\scalebox{0.55}{
\begin{tikzpicture}
\basicdimerex
\draw [<-, line width=0.18cm, rounded corners, color=red] (3,0)--(P2)--(P3)--(3,4) ; 
\end{tikzpicture} }} ;

\node at (14,0) 
{\scalebox{0.85}{
\begin{tikzpicture} 
\draw [step=1,thin, gray] (-1.5,-1.5) grid (1.5,1.5); 
\filldraw [fill=black] (0,0) circle [radius=0.07] ; \filldraw [fill=black] (1,0) circle [radius=0.07] ; \filldraw [fill=black] (0,1) circle [radius=0.07] ; 
\filldraw [fill=black] (-1,0) circle [radius=0.07] ; \filldraw [fill=black] (0,-1) circle [radius=0.07] ; 
\draw [line width=0.05cm] (1,0)--(0,1)--(-1,0)--(0,-1)--(1,0) ; 
\node [red] at (1.3,0.3) {\small$D_1$} ; \node [red] at (-0.3,1.3) {\small$D_2$} ;
\node [red] at (-1.3,0.3) {\small$D_3$} ; \node [red] at (-0.3,-1.3) {\small$D_4$} ; 
\node [red] at (2.2,1.3) {\small$D_5, D_6, D_7, D_8$} ;

\draw[->, line width=0.03cm,red] (1.1,1.3) to [bend right] (0,0.15);  
\end{tikzpicture} 
}} ;

\end{tikzpicture}
}}
\caption{$1$-cycles obtained as the differences of perfect matchings, 
and the PM polygon of the dimer model given in Figure~\ref{ex_quiver4a}}
\label{difference_4a}
\end{center}
\end{figure}

Next, we define the cone $\sigma$ whose section on the hyperplane $z=1$ is $\Delta$. 
That is, let $v_1,\dots,v_n\in\ZZ^2$ be vertices of $\Delta$. We add the third coordinate $z=1$ to each vector $v_i$ 
and define the cone $\sigma$ as 
\[
\sigma\coloneqq\RR_{\ge0}(v_1,1)+\cdots +\RR_{\ge0}(v_n,1)\subset\RR^3.
\]
Then, we consider the dual cone 
\[
\sigma^\vee\coloneqq\{x\in\RR^3 \mid \langle x,(v_i,1)\rangle\ge0 \text{ for any } i=1,\dots,n \}, 
\]
where $\langle - ,- \rangle$ is the natural inner product. 
This $\sigma^\vee\cap\ZZ^3$ is a positive affine normal semigroup, and hence we can define the \emph{toric ring} (\emph{toric singularity}) $R$ as 
\[
R\coloneqq \kk[\sigma^\vee\cap\ZZ^3]=\kk[t_1^{a_1}t_2^{a_2} t_3^{a_3}\mid (a_1,a_2,a_3)\in\sigma^\vee\cap\ZZ^3]. 
\] 
By construction, $R$ is Gorenstein in dimension three. 
Also, this toric singularity $R$ is isomorphic to the center $\rmZ(\calP(Q,W_Q))$ of the Jacobian algebra of $\Gamma$ (see \cite[Chapter~5]{Bro}). 
Let $\tau$ be a strongly convex rational polyhedral cone in $\RR^3$ which defines a $3$-dimensional Gorenstein toric singularity. 
Then, it is known that, after appropriate unimodular transformations, we have the lattice polygon as the intersection of $\tau$ and the hyperplane at height one. 
Thus, we can assign a certain lattice polygon $\Delta$ to each $3$-dimensional Gorenstein toric singularity. 
We call this $\Delta$ the \emph{toric diagram} of $R$. 
By Theorem~\ref{exist_dimer} below, we see that any $3$-dimensional Gorenstein toric singularity $R$ can be obtained from a consistent dimer model $\Gamma$ using the above construction, in which case $R$ is said to be the \emph{toric singularity associated with $\Gamma$}. 
However, such a dimer model is not unique in general. 

\begin{theorem}[{see e.g., \cite{Gul,IU3}}] 
\label{exist_dimer}
For any $3$-dimensional Gorenstein toric singularity $R$, there exists a consistent dimer model whose PM polygon coincides with the toric diagram of $R$. 
\end{theorem}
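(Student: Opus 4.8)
The plan is to follow the approach of \cite{Gul,IU3}, which reconstructs a consistent dimer model from the combinatorics of its \emph{zig-zag paths}. Recall that a zig-zag path of a dimer model is a path on the edges of $\Gamma$ that turns maximally to one side at every white node and maximally to the other side at every black node; each such path is a closed curve on $\TT$ and so carries a homology class in $\rmH_1(\TT)\cong\ZZ^2$. The structural input one needs is that for a consistent dimer model the homology classes of the zig-zag paths, read off in the cyclic order in which the paths wind around $\TT$, are exactly the primitive outer normal vectors of the edges of the PM polygon $\Delta_\Gamma$, each repeated as many times as the lattice length of the corresponding edge. Accordingly the argument splits into a purely combinatorial step (extracting from a given lattice polygon the correct cyclically ordered multiset of vectors) and a topological construction (realizing that multiset by an honest consistent dimer model), after which one checks that the PM polygon of the model so produced is the polygon one started with.

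First I would attach to the prescribed lattice polygon $\Delta$ --- recall that, after the unimodular normalisation explained above, every $3$-dimensional Gorenstein toric singularity has such a $\Delta$ as its toric diagram, and conversely every lattice polygon occurs --- its list of edge data: traversing $\partial\Delta$ counterclockwise and writing the $i$-th edge as $\ell_i v_i$ with $v_i\in\ZZ^2$ primitive and $\ell_i\in\ZZ_{>0}$, one obtains a cyclic sequence of primitive vectors, namely the $90^\circ$-rotations of the $v_i$ with multiplicities $\ell_i$, which sums to zero and is weakly monotone in argument. This cyclically ordered multiset is precisely the data that the construction of \cite{IU3} (and Gulotta's ``properly ordered'' condition in \cite{Gul}) is designed to realize.

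Next comes the geometric construction. On $\TT=\RR^2/\ZZ^2$ I would place a family of oriented simple closed curves whose homology classes are the vectors just listed --- curves sharing a class taken mutually parallel and disjoint, curves of distinct classes arranged to cross transversally in the minimal number of points --- in such a way that (a) the union of the curves cuts $\TT$ into polygonal cells, and (b) the cyclic order in which the curves wrap around $\TT$ agrees with the prescribed one. The dimer model $\Gamma$ is then the dual: its nodes sit in the cells, coloured black or white according to the orientation pattern of the curves bounding the cell, and its edges correspond to the crossings. One must verify that this is a bona fide dimer model (correct valencies, a consistent two-colouring, simply connected faces), that its zig-zag paths are exactly the curves one drew, and that it is consistent in the sense of Definition~\ref{def_consistent}; the required positive $\sfR$-charge can be manufactured from the angular coordinates of the curve directions, which is essentially why the monotonicity of the cyclic order forces consistency. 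Degenerate features such as bivalent nodes are cleaned up using the join/split moves of Remark~\ref{rem_bivalent}, which do not affect the Jacobian algebra or the PM polygon.

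The main obstacle is exactly this realization-plus-consistency step: showing that \emph{every} cyclically ordered, weakly monotone, zero-sum multiset of primitive vectors can be realized by a curve arrangement that is simultaneously generic (a genuine cell decomposition), correctly ordered (so that no zig-zag path self-intersects on the universal cover and zig-zag paths of different classes meet minimally), and whose dual admits the black/white two-colouring --- this is the technical heart of \cite{Gul,IU3}. Granting it, the equality $\Delta_\Gamma=\Delta$ is then a consequence of the zig-zag/perfect-matching dictionary: the corner perfect matchings of a consistent dimer model are indexed by the ``gaps'' between consecutive zig-zag classes, so the set of primitive normals --- hence the polygon up to translation, which is all that is claimed --- is recovered from the curve directions fed into the construction.
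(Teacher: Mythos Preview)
The paper does not supply its own proof of this theorem: it is stated with the attribution ``see \cite{Gul,IU3}'' and immediately followed by the next paragraph, so there is nothing to compare against in the paper itself. Your proposal is therefore not competing with an in-paper argument but is a sketch of what the cited references do.

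As such a sketch it is broadly faithful to the Ishii--Ueda construction (realize the prescribed zig-zag data by an oriented curve arrangement on $\TT$ and dualize), and you are right to flag the realization-plus-consistency step as the real content. Two points worth tightening. First, in the dualization not every complementary cell becomes a node: only the cells around which the bounding curves circulate coherently (all clockwise or all counterclockwise) receive black/white nodes, while the remaining cells become the faces of $\Gamma$; your phrasing ``its nodes sit in the cells'' elides this trichotomy. Second, Gulotta's paper \cite{Gul} actually proceeds differently --- by an inductive edge/corner removal starting from a dimer model for a large rectangle --- rather than by the direct curve-arrangement method you describe, so if you want to invoke \cite{Gul} specifically you should adjust the narrative. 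Either route is acceptable for the purpose here, since the paper only needs the existence statement as a black box.
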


\subsection{Zigzag paths of dimer models} 
\label{subsec_zigzag}

To investigate a dimer model and its perfect matchings, the notion of zigzag path is also important. 
We say that a path on a dimer model is a \emph{zigzag path} if it makes a maximum turn to the right on a white node and a maximum turn to the left on a black node. 
For example, Figure~\ref{zigzag_4a} shows the zigzag paths of the dimer model given in Figure~\ref{ex_quiver4a}. 

Considering the universal cover $\RR^2\rightarrow\TT$, we define the universal cover of a dimer model $\Gamma$, which will be denoted by $\widetilde{\Gamma}$. 
Also, we can lift a zigzag path to $\widetilde{\Gamma}$. 
Note that a zigzag path on the universal cover $\widetilde{\Gamma}$ is either periodic or infinite in both directions. 

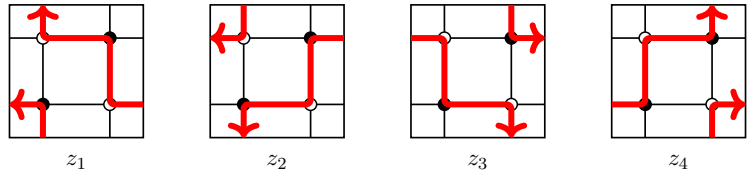
\begin{figure}[H]
\begin{center}
{\scalebox{0.8}{
\begin{tikzpicture} 
\node at (0,-1.5) {\large$z_1$}; \node at (3.3,-1.5) {\large$z_2$}; \node at (6.6,-1.5) {\large$z_3$}; \node at (9.9,-1.5) {\large$z_4$};

\node (ZZ1) at (0,0) 
{\scalebox{0.55}{
\begin{tikzpicture}
\basicdimerex
\draw [->, line width=0.18cm, rounded corners, color=red] (4,1)--(P2)--(P3)--(P4)--(1,4) ; 
\draw [->, line width=0.18cm, rounded corners, color=red] (1,0)--(P1)--(0,1) ; 
\end{tikzpicture} }}; 

\node (ZZ2) at (3.3,0) 
{\scalebox{0.55}{
\begin{tikzpicture}
\basicdimerex
\draw [->, line width=0.18cm, rounded corners, color=red] (4,3)--(P3)--(P2)--(P1)--(1,0) ; 
\draw [->, line width=0.18cm, rounded corners, color=red] (1,4)--(P4)--(0,3) ; 
\end{tikzpicture} }} ;  

\node (ZZ3) at (6.6,0) 
{\scalebox{0.55}{
\begin{tikzpicture}
\basicdimerex
\draw [->, line width=0.18cm, rounded corners, color=red] (0,3)--(P4)--(P1)--(P2)--(3,0) ; 
\draw [->, line width=0.18cm, rounded corners, color=red] (3,4)--(P3)--(4,3) ; 
\end{tikzpicture} }}; 

\node (ZZ4) at (9.9,0) 
{\scalebox{0.55}{
\begin{tikzpicture}
\basicdimerex
\draw [->, line width=0.18cm, rounded corners, color=red] (0,1)--(P1)--(P4)--(P3)--(3,4) ; 
\draw [->, line width=0.18cm, rounded corners, color=red] (3,0)--(P2)--(4,1) ; 
\end{tikzpicture} }} ;

\end{tikzpicture}
}}
\caption{Zigzag paths of the dimer model given in Figure~\ref{ex_quiver4a}}
\label{zigzag_4a}
\end{center}
\end{figure}

For a zigzag path $z$, we say that an edge of $z$ is a \emph{zig} (resp. \emph{zag}) of $z$ if $z$ passes through that edge in the direction from black to white (resp. white to black). 
Considering a zigzag path $z$ on $\Gamma$ as a $1$-cycle on $\TT$, we have the homology class $[z]\in\rmH_1(\TT)\cong\ZZ^2$. We call this element $[z]$ the \emph{slope} of $z$. 
We easily see that even if we apply the join moves and split moves to nodes contained in a zigzag path, the slope is preserved. 
Using zigzag paths, we also define another consistency condition as in \cite[Definition~3.5]{IU2}, and it is equivalent to Definition~\ref{def_consistent} (see \cite{Boc_consist,IU2}). 
In particular, we see that if $\Gamma$ is a consistent dimer model, then the slope $[z]$ of any zigzag path $z$ is not trivial, and any zigzag path does not have a self-intersection on the universal cover, in which case the slope is a primitive element. 
Considering the non-trivial slope $[z]=(a,b)\in\ZZ^2$ as the element of the unit circle, (i.e., $\frac{(a,b)}{\sqrt{a^2+b^2}}\in S^1$), we can give the cyclic order to such slopes in the anti-clockwise direction. 
We say that two zigzag paths are \emph{adjacent} if their slopes are adjacent with respect to this cyclic order. 
The following property of zigzag paths is important in the rest of this paper. 

\begin{proposition}[{see \cite[Theorem~3.3, Corollary~3.8]{Gul}, \cite[Proposition~9.2 (Step1), Corollary~9.3]{IU2}}]
\label{zigzag_sidepolygon}
Let $\Gamma$ be a consistent dimer model. 
Then, there exists a one-to-one correspondence between the set of slopes of zigzag paths on $\Gamma$ and 
the set of primitive side segments of the PM polygon of $\Gamma$. 

On the other hand, zigzag paths having the same slope arise as the difference of two corner perfect matchings that are adjacent. 
More precisely, if $z_1,\dots,z_n$ are all zigzag paths having the same slope, then there are corner perfect matchings $D_1,D_2$ 
such that they are adjacent and $D_1-D_2$ consists of $z_1,\dots,z_n$, in which case all zags $($resp. all zigs$)$ of $z_1,\dots,z_n$ are contained in $D_1$ $($resp. $D_2$$)$. 
\end{proposition}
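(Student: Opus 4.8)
\emph{Proof proposal.} The plan is to reduce everything to the analysis of the symmetric difference of two perfect matchings, and in particular to the special behaviour of \emph{corner} perfect matchings along zigzag paths. I would start from the elementary observation that for any two perfect matchings $D, D'$ of $\Gamma$ the set $D\triangle D'$ is a disjoint union of simple closed curves on $\TT$ whose edges alternate between $D$ and $D'$, and that the sum of their homology classes --- each curve oriented so that its $D$-edges run from white to black --- equals $[D]-[D']\in\rmH_1(\TT)\cong\ZZ^2$. The whole statement then follows once one identifies, for $D, D'$ adjacent corner perfect matchings, the components of $D\triangle D'$ with zigzag paths of a single common slope.

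The key step I would isolate first is the following ``extremality'' lemma: \emph{if $D$ is a corner perfect matching and $z$ is any zigzag path, then $D\cap z$ equals either the set $Z^{+}(z)$ of all zigs of $z$ or the set $Z^{-}(z)$ of all zags of $z$} --- equivalently, $D$ uses no foreign edge at any node lying on $z$, since $Z^{+}(z)$ and $Z^{-}(z)$ are the only two ways to match the nodes on $z$ using edges of $z$. To prove it I would argue by contradiction: if $D$ did use a foreign edge at some node of $z$, then $D$ would fail to be ``monotone'' along $z$, and a local modification of $D$ in a neighbourhood of $z$ --- controlled for instance by a height function measuring $D$ against the reference matching $D_0$, or by the consistent $\sfR$-charge --- would produce a perfect matching $D'$ with $[D']$ lying strictly outside the supporting half-plane of $\Delta$ at the vertex $[D]$, contradicting extremality. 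Consistency is used essentially here: it guarantees that $[z]$ is nonzero and primitive and that $z$ has no self-intersection on $\widetilde\Gamma$, which is precisely what localizes the modification and allows it to be carried out.

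Granting the lemma, I would attach to each zigzag path $z$ a sign $\varepsilon_z(D)\in\{+,-\}$ recording whether $D\cap z=Z^{+}(z)$ or $Z^{-}(z)$. Since $\varepsilon_z(D)\neq\varepsilon_z(D')$ exactly when $z$ occurs as a component of $D\triangle D'$ --- so that $[z]$ appears as a summand of $[D]-[D']$ --- convexity of $\Delta$ forces $\varepsilon_z$, restricted to the vertices of $\Delta$, to be cut out by a linear functional and to change value precisely across the unique edge of $\Delta$ whose primitive direction vector is $[z]$. Applying this to adjacent corner perfect matchings $D_1, D_2$ at vertices $v_1, v_2$, with edge vector $v_1-v_2$ of primitive direction $s$: the zigzag paths $z$ with $\varepsilon_z(D_1)\neq\varepsilon_z(D_2)$ are exactly those of slope $s$, and these are exactly the components of $D_1\triangle D_2$; reading off the alternating cycle then shows that the $D_1$-edges are the zags and the $D_2$-edges the zigs of these paths, and taking homology gives $v_1-v_2=\sum_i[z_i]=\#\{z_i\}\cdot s$, so the lattice length of that edge equals the number of zigzag paths of slope $s$. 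This proves the refined assertion in the second part of the statement; and since the monotonicity forces each $\varepsilon_z$ to flip somewhere along the cyclic list of corner perfect matchings, every zigzag path occurs, which gives the bijection in the first part: the distinct slopes correspond to the maximal sides of $\Delta$, equivalently the zigzag paths correspond bijectively to the primitive (lattice-length-one) segments into which the sides of $\Delta$ decompose.

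The step I expect to be the main obstacle is the extremality lemma, specifically making the ``local modification near $z$'' precise. A node of $z$ may be incident to further edges that $D$ uses, so one cannot simply toggle $Z^{+}(z)$ against $Z^{-}(z)$ --- doing so would leave the opposite endpoints of those foreign edges uncovered. Instead one must organize the failure of monotonicity into a bounded configuration attached to $z$, flip that configuration, and check both that it always closes up into a genuine perfect matching and that the class $[D]$ moves only in the direction $[z]$; this is exactly where consistency (primitivity and absence of self-intersections of zigzag paths, and the existence of the $\sfR$-charge) is indispensable. Once this is settled, the remaining steps are formal, using only the decomposition of symmetric differences into alternating cycles, convexity of $\Delta$, and the homology bookkeeping above.
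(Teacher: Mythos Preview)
The paper does not give its own proof of this proposition; it is quoted from \cite{Gul} and \cite{IU2}, so there is no in-paper argument to compare against line by line. Your outline is essentially sound and correctly isolates the decisive point: a corner perfect matching must restrict to either all zigs or all zags along every zigzag path (your ``extremality lemma''), after which the convexity and homology bookkeeping you describe goes through.

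Where your route departs from the cited references is in the direction of the argument. Gulotta and Ishii--Ueda proceed \emph{constructively}: given a vertex direction of $\Delta$, they build the corresponding corner perfect matching by declaring, for each zigzag path $z$, whether to take $Z^{+}(z)$ or $Z^{-}(z)$ according to which side of the supporting line $[z]$ lies on, and then verify (using consistency, in the form of the properly-ordered/zigzag conditions) that these local choices patch to a genuine perfect matching; uniqueness at each vertex is then a separate short step. Your approach runs the implication the other way---from ``$[D]$ is extremal'' to ``$D$ is monotone on every $z$''---via a local-modification contradiction. This is valid, but it is the harder direction to make fully rigorous: your sketch says the modified matching would land ``strictly outside the supporting half-plane of $\Delta$ at $[D]$'', yet a single swap along $z$ moves $[D]$ by $\pm[z]$, a direction tangent to the boundary of $\Delta$, not transverse to a supporting half-plane. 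What one actually needs is that non-monotonicity along $z$ allows swaps in \emph{both} directions $\pm[z]$, exhibiting $[D]$ as a nontrivial midpoint and hence not a vertex; organising this carefully (so the two swaps close up to honest perfect matchings) is exactly the technical content you flag as the main obstacle. The constructive approach in \cite{Gul,IU2} sidesteps this by never needing the reverse implication, at the cost of a patching verification; your approach is conceptually cleaner once the lemma is nailed down.
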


\subsection{Crepant resolutions and NCCRs arising from dimer models}
\label{CCR_NCCR}

In this subsection, we will see that the quiver associated with a consistent dimer model gives a crepant resolution and an NCCR 
for a $3$-dimensional Gorenstein toric singularity. 

Let $Q$ be the quiver associated with a consistent dimer model. In particular, $Q$ is finite and connected. 
Also, this equips relations $p_a^+=p_a^-$ for any $a\in Q_1$ introduced in Subsection~\ref{subsec_dimer}, and we denote these relations by $\calJ_Q$. 
In the following, we denote this quiver with relations by $(Q,\calJ_Q)$. 
A \emph{representation of quiver} $(Q,\calJ_Q)$ consists of a $\kk$-vector space $V_i$ for each $i\in Q_0$ and 
a $\kk$-linear map $\varphi_a:V_{\tl(a)}\rightarrow V_{\hd(a)}$ for each $a\in Q_1$ that satisfies the relation corresponding to $\calJ_Q$, 
that is, $\varphi_{p_a^+}=\varphi_{p_a^-}$ for any $a\in Q_1$. 
Here, for a path $p=a_1\cdots a_r$, the map $\varphi_p$ denotes the composite $\varphi_{a_1}\cdots\varphi_{a_r}$ of $\kk$-linear maps. 
A representation $V=((V_i)_{i\in Q_0}, (\varphi_a)_{a\in Q_1})$ of $(Q,\calJ_Q)$ is called \emph{finite dimensional} 
if $\dim_\kk V_i$ is finite for all $i\in Q_0$. 
For a finite dimensional representation $V$, we define the \emph{dimension vector} of $V$ as ${\dd}\coloneqq(\dim_\kk V_i)_{i\in Q_0}$. 
We say that a representation $V$ is \emph{nilpotent} if there is $n\in\ZZ_{\ge0}$ such that any path with length greater than $n$ acts on $V$ by zero. 
Let $V,V^\prime$ be representations of $(Q,\calJ_Q)$. 
A morphism between $V$ and $V^\prime$ is a family of $\kk$-linear maps $\{f_i:V_i\rightarrow V_i^\prime\}_{i\in Q_0}$ such that 
$\varphi_af_{\hd(a)}=f_{\tl(a)}\varphi_a^\prime$ for any arrow $a:i\rightarrow j$. 
We say that representations $V$ and $V^\prime$ are \emph{isomorphic} if $f_i$ is an isomorphism of vector space for all $i\in Q_0$, 
in which case we denote $V\cong V^\prime$. 

In the rest, we consider a representation $V=((V_i)_{i\in Q_0}, (\varphi_a)_{a\in Q_1})$ with $\dd=(1,\dots,1)$. 
Taking a basis of each $1$-dimensional vector space $V_i$, we identify $V_i\cong\kk$. The representation space 
consists of $\kk$-valued points of 
$$
\VV(J_Q)=\Spec(\kk[x_a \mid a\in Q_1]/J_Q)
$$
where $J_Q\coloneqq\langle\,\prod_{a\in p_a^+}x_a- \prod_{a\in p_a^-}x_a \mid a\in Q_1\,\rangle$. 
The changes of basis give isomorphism classes of $V$, 
thus we can consider the isomorphism classes of representations as the orbits by the action of $(\kk^\times)^{Q_0}\cong\prod_{i\in Q_0}\GL(V_i)$. 
Here, an element $g=(g_i)_{i\in Q_0}$ acts on $\varphi=(\varphi_a)_{a\in Q_1}$ as $(g\cdot \varphi)_a=g_{\tl(a)}^{-1}\varphi_ag_{\hd(a)}$. 

\medskip

Since an element in the diagonal subgroup $\kk^\times$, which takes the form $(\alpha,\dots,\alpha)\in(\kk^\times)^{Q_0}$ 
acts trivially, $G\coloneqq (\kk^\times)^{Q_0}/\kk^\times$ acts faithfully on $\VV(J_Q)$. 
We then define a weight $\theta=(\theta_i)_{i\in Q_0}\in\ZZ^{Q_0}$. 
This $\theta$ induces the character 
\[
\widetilde{\chi_\theta}:(\kk^\times)^{Q_0}\rightarrow\kk^\times \quad \Big((g_i)_{i\in Q_0}\mapsto\prod_{i\in Q_0}g_i^{\theta_i}\Big),
\] 
and the character $\widetilde{\chi_\theta}$ descends to the character $\chi_\theta:G\rightarrow\kk^\times$ under 
the condition $\sum_{i\in Q_0}\theta_i=0$. 
In what follows, we consider the weight space 
$\Theta\coloneqq\{\theta\in\ZZ^{Q_0} \mid \sum_{i\in Q_0}\theta_i=0\}$ 
and let $\Theta_\RR\coloneqq\Theta\otimes_\ZZ\RR$. 
We call an element $\theta\in\Theta_\RR$ a \emph{stability parameter}. 
For a subrepresentation $W$ of $V$, we define $\theta(W)\coloneqq\sum_{i\in Q_0}\theta_i(\dim_\kk W_i)$, and hence $\theta(V)=0$ in particular. 
Using a stability parameter $\theta\in\Theta_\RR$, we introduce the notion of \emph{stability conditions} as follows. 

\begin{definition}[{see \cite{Kin}}]
For $\theta\in\Theta_\RR$, we say that a representation  
$V$ is \emph{$\theta$-semistable} if $\theta(W)\ge0$ for any subrepresentation $W$ of $V$, and 
$V$ is \emph{$\theta$-stable} if $\theta(W)>0$ for any non-zero proper subrepresentation $W$ of $V$. 
In addition, we say that $\theta$ is \emph{generic} if every $\theta$-semistable representation is $\theta$-stable. 
\end{definition}

For a character $\chi_\theta$, we say that $f\in S\coloneqq\kk[x_a \mid a\in Q_1]/J_Q$ is a \emph{$\chi_\theta$-semi-invariant} if $f(g\cdot x)=\chi_\theta(g)f(x)$ for any $g\in G$. 
Let $S_{\chi_\theta}$ be the vector space of all $\chi_\theta$-semi-invariants. 
Then, we define the \emph{GIT quotient} with respect to $\chi_\theta$ as 
\[
\calM_\theta(Q)\coloneqq \Proj\Big(\bigoplus_{j\ge0}S_{\chi_\theta^j}\Big). 
\] 
For the trivial character which is given by ${\bf 0}=(0,\dots,0)$, we see that any representation is $\theta$-semistable and 
$\calM_{\bf 0}(Q)$ is the categorical quotient $\Spec S^G$. We remark that $S^G$ is isomorphic to the $3$-dimensional Gorenstein toric singularity constructed in Subsection~\ref{subsec_pm} (see e.g., \cite[Proposition~6.3]{IU1}).
By \cite[Proposition~5.3]{Kin}, for a generic parameter $\theta$, $\calM_\theta(Q)$ is a fine moduli space parametrizing $\theta$-stable representations of $(Q, \calJ_Q)$. 
Moreover, this moduli space gives a crepant resolution. 

\begin{theorem}[{see \cite[Theorem~6.3 and 6.4]{IU1}}] 
\label{thm_crepant_dimer}
Let $\Gamma$ be a consistent dimer model, and $Q$ be the associated quiver. 
Let $R$ be the $3$-dimensional Gorenstein toric singularity associated with $\Gamma$. 
Then, $\calM_\theta(Q)\rightarrow\Spec R$ is a crepant resolution for a generic parameter $\theta$. 
\end{theorem}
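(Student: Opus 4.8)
The plan is to recast the statement entirely in toric geometry and then check the three defining properties of a crepant resolution---properness and birationality, smoothness, and crepancy---in turn, the decisive input being the consistency of $\Gamma$. \emph{Step 1 (toric reduction).} First I would use the dimer-model/toric dictionary to see that $\calM_\theta(Q)$ is a normal toric variety and that $\calM_\theta(Q)\to\calM_0(Q)=\Spec R$ is a toric morphism: the ideal $J_Q=\langle\prod_{a\in p_a^+}x_a-\prod_{a\in p_a^-}x_a\mid a\in Q_1\rangle$ is $G$-homogeneous, so $S=\kk[x_a\mid a\in Q_1]/J_Q$ carries a $\ZZ^{Q_0}$-grading and $\calM_\theta(Q)=\Proj\big(\bigoplus_{j\ge 0}S_{\chi_\theta^j}\big)$ is a GIT quotient of the affine toric scheme $\VV(J_Q)$ by the subtorus $G$, hence normal toric, with structure morphism to $\Spec S^G=\Spec R$ projective by construction. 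For a generic parameter $\theta$ this morphism is moreover birational: a generic representation of dimension vector $(1,\dots,1)$ has only scalar endomorphisms, hence is $\theta$-stable with trivial stabiliser, and such representations form a single free $G$-orbit over the generic point of $\Spec R$, so the generic fibre is a reduced point and $\dim\calM_\theta(Q)=\dim R=3$.

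\emph{Step 2 (the fan via perfect matchings).} Next I would identify the fan $\Sigma_\theta$ of $\calM_\theta(Q)$. The torus-fixed points of $\calM_\theta(Q)$ are the $\theta$-stable monomial representations (every $\varphi_a$ equal to $0$ or $1$), and the set of arrows acting by $0$ on such a representation is dual to a perfect matching of $\Gamma$; dually, each torus-invariant prime divisor of $\calM_\theta(Q)$ is recorded by a perfect matching. Feeding this bookkeeping into Proposition~\ref{zigzag_sidepolygon} (zigzag paths $\leftrightarrow$ primitive side segments of $\Delta$, adjacent corner matchings differing by zigzags of a fixed slope), one obtains that $\Sigma_\theta$ is a subdivision of the cone $\sigma=\RR_{\ge0}(v_1,1)+\cdots+\RR_{\ge0}(v_n,1)$ all of whose rays are generated by lattice points $(v,1)$ with $v\in\Delta\cap\ZZ^2$, i.e.\ sit on the height-one hyperplane. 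This last fact is what makes crepancy automatic in Step 4.

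\emph{Step 3 (smoothness).} I would then show that for generic $\theta$ every maximal cone of $\Sigma_\theta$ is unimodular, so that $\calM_\theta(Q)$ is smooth; this is the heart of the argument and the one place where consistency is essential. At a torus-fixed point $p$, corresponding to a $\theta$-stable monomial representation $V_p$, one analyses the moduli directions: one checks that the tangent space $\Ext^1(V_p,V_p)$ over the Jacobian algebra is $3$-dimensional and, using the structure of zigzag paths on the universal cover $\widetilde{\Gamma}$ (which, by consistency, have primitive non-trivial slopes and no self-intersections), exhibits three arrows whose coordinates $x_a$ serve as local coordinates near $p$, equivalently that the maximal cone of $\Sigma_\theta$ at $p$ is spanned by a $\ZZ$-basis of $\ZZ^3$. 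Without consistency one can produce non-simplicial or non-unimodular cones and $\calM_\theta(Q)$ need not be smooth; I expect verifying this unimodularity to be the main obstacle.

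\emph{Step 4 (crepancy).} Finally, crepancy is formal once Steps 1--3 are in place. Since $R$ is a $3$-dimensional Gorenstein toric singularity whose toric diagram $\Delta$ sits at height one, the lattice point $m=(0,0,1)$ satisfies $\langle(v,1),m\rangle=1$ for every vertex $v$ of $\Delta$, so $m$ represents $-K_{\Spec R}$; for a toric resolution the discrepancy along the divisor attached to a ray with primitive generator $w$ equals $\langle w,m\rangle-1$. By Step 2 every such $w$ has height one, so $\langle w,m\rangle=1$ and all discrepancies vanish, i.e.\ $K_{\calM_\theta(Q)}$ is the pullback of $K_{\Spec R}$. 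Combining this with Steps 1 and 3 shows that $\calM_\theta(Q)\to\Spec R$ is a crepant resolution for a generic parameter $\theta$.
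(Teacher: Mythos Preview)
The paper does not prove this theorem; it is simply quoted from Ishii--Ueda \cite[Theorems~6.3 and 6.4]{IU1} with no argument supplied, so there is no ``paper's own proof'' to compare against. Your outline---toric reduction, identification of the fan via perfect matchings, smoothness from consistency, crepancy from the height-one condition---is broadly the strategy of the cited source.

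Two points on the sketch itself. In Step~2 you invoke Proposition~\ref{zigzag_sidepolygon} to conclude that every ray of $\Sigma_\theta$ lies at height one, but that proposition is about the bijection between zigzag-path slopes and primitive side segments of $\Delta$, and about how adjacent corner perfect matchings differ by a zigzag path; it says nothing about the rays of $\Sigma_\theta$. The height-one property comes instead from the definition of the PM polygon (each perfect matching $D$ gives a lattice point $[D-D_0]\in\Delta$, and $\sigma$ is the cone over $\Delta\times\{1\}$) combined with Proposition~\ref{corresp_pm}, which matches $\theta$-stable perfect matchings with rays. In Step~3 your description of smoothness via $\Ext^1(V_p,V_p)$ and zigzag paths on $\widetilde{\Gamma}$ is too compressed to be a proof: the actual argument in \cite{IU1} shows that for generic $\theta$ the $\theta$-semistable locus inside $\VV(J_Q)$ is already smooth and the $G$-action on it is free, and both verifications make essential and non-formal use of consistency. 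You correctly flag this step as the main obstacle, but as written it is a gap rather than an argument.
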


\begin{remark}
\label{rem_crepant_resol}
We note that for a generic parameter $\theta$, $\calM_\theta(Q)$ is a smooth toric variety, thus this can be realized by using a smooth toric fan. 
More precisely, there is a certain smooth subdivision $\Sigma$ of the cone $\sigma$ defining $R=\kk[\sigma^\vee\cap\ZZ^3]$ such that the smooth toric variety $X_\Sigma$ associated to $\Sigma$ is isomorphic to $\calM_\theta(Q)$ (see e.g., \cite[Chapter~11]{CLS}). 
On the other hand, there is a certain way to correspond $\theta$-stable representations to cones in $\Sigma$  (see Subsection~\ref{subsec_cone_pm}). 
\end{remark}

Since $\calM_\theta(Q)$ is a fine moduli space for a generic parameter $\theta$, there is the universal vector bundle $\calE$, which is called \emph{tautological bundle}. 
This $\calE$ is a tilting bundle on $\calM_\theta(Q)$ and satisfies $\calP(Q,W_Q)\cong\End(\calE)$ (see \cite[Theorem~1.4]{IU2}), 
and hence we have an equivalence $\calD^\rmb(\coh\,\calM_\theta(Q))\cong\calD^\rmb(\mc\,\calP(Q,W_Q))$, see \cite{Bon,Ric}. 
Furthermore, this equivalence also implies that $\calP(Q,W_Q)$ is an NCCR for a $3$-dimensional Gorenstein toric singularity $R$ (see \cite[Corollary~4.15]{IW2}). 
This was also proved in \cite{Bro} using another method. 
Since any $3$-dimensional Gorenstein toric singularity can be constructed from a consistent dimer model (see Subsection~\ref{subsec_pm}), it admits an NCCR arising from a consistent dimer model.

\subsection{Perfect matchings corresponding to torus orbits}
\label{subsec_cone_pm}

Let $R$ be a $3$-dimensional Gorenstein toric singularity associated with a consistent dimer model $\Gamma$, 
in which case the toric diagram of $R$ coincides with the PM polygon $\Delta$ of $\Gamma$ (see Theorem~\ref{exist_dimer}). 
As we saw in Section~\ref{CCR_NCCR}, we can construct a crepant resolution $\calM_\theta(Q)$ of $\Spec R$, 
and $\calM_\theta(Q)$ parametrizes $\theta$-stable representations. 
We denote the $\theta$-stable representation corresponding to a point $y\in\calM_\theta(Q)$ by $V_y$. 

On the other hand, considering the intersection of the fan $\Sigma$ of $\calM_\theta(Q)$ (see Remark~\ref{rem_crepant_resol}) and the hyperplane at height one, we have the triangulation of $\Delta$. 
That is, we identify 
\begin{itemize}
\item one-dimensional cones (= \emph{rays}) in $\Sigma$ with lattice points in the triangulation of $\Delta$, 
\item two-dimensional cones in $\Sigma$ with line segments in the triangulation of $\Delta$, 
\item three-dimensional cones in $\Sigma$ with triangles in the triangulation of $\Delta$. 
\end{itemize}
We denote the set of $r$-dimensional cones in $\Sigma$ by $\Sigma(r)$ where $r=1,2,3$. 
By the Orbit-Cone correspondence (see e.g., \cite[Chapter~3]{CLS}), an $r$-dimensional cone $\sigma\in\Sigma(r)$ corresponds to a $3-r$ dimensional 
torus orbit in $X_\Sigma\cong\calM_\theta(Q)$, which we will denote by $\calO_\sigma$.  
Thus, we can assign $\theta$-stable representations to cones in the fan $\Sigma$ of $\calM_\theta(Q)$ (and hence to torus orbits). 
In particular, we will see in Proposition~\ref{corresp_pm} that each ray (and hence each lattice point in $\Delta$) corresponds to a $\theta$-stable representation arising from a perfect matching. 
In order to describe this correspondence, we define the \emph{support} of a representation $V=((V_i)_{i\in Q_0}, (\varphi_a)_{a\in Q_1})$, denoted by $\mathrm{Supp}V$, as the set $\{a\in Q_1 \mid \varphi_a\neq0\}$ of arrows whose corresponding linear maps are not zero. 
We also define the \emph{cosupport} of a representation $V$ as the complement of $\mathrm{Supp}V$. 

A perfect matching $\sfD$ of $Q$ is called \emph{$\theta$-stable} if the set of arrows in $\sfD$ is the cosupport of a $\theta$-stable representation. 
By the construction of the PM polygon $\Delta$ (see Subsection~\ref{subsec_pm}), 
each perfect matching of $\Gamma$ corresponds to a lattice point in $\Delta$. 
Then, we can obtain the following relationships between perfect matchings and $\theta$-stable representations. 

\begin{proposition}[{see \cite[Section~6]{IU1},\cite[Proposition~4.15]{Moz}}]
\label{corresp_pm}
Let the notation be the same as above. 
\begin{enumerate}[\rm (1)]
\item For a generic parameter $\theta$, we consider a two dimensional torus orbit $Z$ $($i.e., $Z=\calO_\sigma$ with $\sigma\in\Sigma(1)$$)$ of $\calM_\theta(Q)$. 
For any $y\in Z$, the cosupport of the $\theta$-stable representation $V_y$ is a perfect matching of $Q$, and this perfect matching does not depends on a choice of $y\in Z$. 
\item Let $V_\sfD$ be a representation of $Q$ whose cossuport is a perfect matching $\sfD$ of $Q$. 
Then, there exists a generic parameter $\theta$ such that $V_\sfD$ is $\theta$-stable. 
\end{enumerate}
\end{proposition}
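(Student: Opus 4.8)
The plan is to carefully unwind what it means for a $1$-dimensional representation $V=((V_i),(\varphi_a))$ with $\dd=(1,\dots,1)$ to be $\theta$-stable, and to match the combinatorics of subrepresentations with the combinatorics of perfect matchings. First I would fix notation: after identifying each $V_i\cong\kk$, a representation is just a point $(x_a)_{a\in Q_1}\in\VV(J_Q)$, and $\mathrm{Supp}V=\{a\mid x_a\neq 0\}$. A subset $S\subseteq Q_0$ of vertices spans a subrepresentation precisely when no arrow of $\mathrm{Supp}V$ leaves $S$; thus $\theta$-stability of $V$ says: for every proper nonempty $S\subseteq Q_0$ that is ``$\mathrm{Supp}V$-closed'' (no supported arrow exits $S$) we have $\theta(S)\coloneqq\sum_{i\in S}\theta_i>0$. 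The cosupport $\sfD=Q_1\setminus\mathrm{Supp}V$ is a set of arrows, and the relations in $\calJ_Q$ (each of the form $p_a^+=p_a^-$, i.e.\ equality of the two paths around the two small cycles bordering $a$) force strong constraints on which subsets of arrows can be the support/cosupport of a point of $\VV(J_Q)$ — this is exactly where the structure of the dimer model enters.

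For part~(1), I would argue as follows. Take $\sigma\in\Sigma(1)$ a ray of the fan of $\calM_\theta(Q)$, corresponding to an interior or boundary lattice point $v$ of the triangulation of $\Delta$, and let $Z=\calO_\sigma$ be the corresponding $2$-dimensional torus orbit. A point $y\in Z$ is a torus-fixed-up-to-the-$2$-torus point, so the $\theta$-stable representation $V_y$ is (up to the $G$-action) a ``monomial'' representation: each $\varphi_a$ is either $0$ or $1$, by a standard toric/degeneration argument (the orbit closure $\overline{\calO_\sigma}$ limits toward a torus-fixed point of $X_\Sigma$, and the tautological representation degenerates to a $0/1$ representation; along the orbit $Z$ itself the \emph{set} of vanishing coordinates is constant because $G$ acts by rescaling arrows, so $\mathrm{Supp}V_y$ and hence the cosupport $\sfD_y$ is independent of $y\in Z$). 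It then remains to show $\sfD_y$ is a perfect matching, i.e.\ that around every node $n$ of $\Gamma$ exactly one incident edge lies in $\sfD_y$. One inclusion: if two arrows in the same small cycle $\omega_n$ were both in the cosupport, then using the Jacobian relation $p_a^+=p_a^-$ attached to a neighbouring arrow one shows the representation $V_y$ would fail to be $\theta$-stable (a vertex separates into a proper closed subset with $\theta\le 0$), contradiction; at least one inclusion: if \emph{no} arrow of $\omega_n$ were in the cosupport, then all arrows of $\omega_n$ are supported, and again the relations force a cycle of supported arrows that destabilizes $V_y$ — more precisely one shows $\theta$-stability forces the support to be ``acyclic enough'' that each small cycle is broken exactly once. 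This is the content that is essentially spelled out in \cite[Section~6]{IU1} and \cite[Proposition~4.15]{Moz}, and I would cite the precise combinatorial lemma from there (the correspondence between perfect matchings and $T$-fixed points / rays) rather than reprove it from scratch.

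For part~(2), given an arbitrary perfect matching $\sfD$ of $Q$, I want to produce a generic $\theta$ for which the $0/1$ representation $V_\sfD$ (all arrows outside $\sfD$ act by $1$, all arrows in $\sfD$ act by $0$) is $\theta$-stable. The key point is that $V_\sfD$ is a genuine point of $\VV(J_Q)$: each Jacobian relation $p_a^+=p_a^-$ is satisfied because each of the two small cycles bordering $a$ contains exactly one edge of $\sfD$ (as $\sfD$ is a perfect matching, every small cycle $\omega_n$ meets $\sfD$ in exactly one arrow, since $\omega_n$ is the set of edges at $n$ and a perfect matching picks exactly one), so both paths $p_a^\pm$ — which together with $a$ make up those small cycles — evaluate to $0$ unless $a\in\sfD$, in which case one checks both sides again vanish or both equal the same monomial; in all cases the two sides agree. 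Now consider the $G$-orbit of $V_\sfD$ and its closure; the combinatorial recipe of part~(1) (run in reverse) assigns to $V_\sfD$ a cone, and by the theory of the GIT chamber decomposition (Remark~\ref{rem_crepant_resol}(2)) and the fact that $\calM_\theta(Q)$ is a crepant resolution for \emph{every} generic $\theta$, some crepant resolution has a ray whose associated perfect matching is $\sfD$ — equivalently, one picks $\theta$ in the interior of a chamber whose fan $\Sigma$ contains the lattice point $v=[D-D_0]$ as a ray and checks via part~(1) that the stable representation over $\calO_v$ has cosupport $\sfD$. Concretely I would take $\theta$ dual to a height function on $\Delta$ that is ``tight'' at $v$, making $V_\sfD$ the (unique up to iso) stable representation over the corresponding ray.

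The main obstacle I expect is the destabilization argument in part~(1): showing that $\theta$-stability of $V_y$ forces the cosupport to meet each small cycle in \emph{exactly} one edge. Producing the destabilizing subrepresentation (a suitable $\mathrm{Supp}$-closed proper subset $S\subseteq Q_0$ with $\theta(S)\le 0$) when a small cycle is met in two or zero edges requires tracking how the Jacobian relations propagate vanishing or non-vanishing of the $x_a$ through the $\TT$-planar structure of the quiver, and then using genericity of $\theta$ together with the known correspondence between $T$-fixed points of $\calM_\theta(Q)$ and triangulations of $\Delta$. Rather than rebuild this, I intend to extract it as a black box from \cite[Section~6]{IU1} and \cite[Proposition~4.15]{Moz}, quoting the statement that rays of $\Sigma$ biject with (a subset of) perfect matchings via the cosupport construction, and limit the new work to assembling parts~(1) and~(2) from that.
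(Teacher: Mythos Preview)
The paper does not prove this proposition at all: it is stated with the attribution ``see \cite[Section~6]{IU1}, \cite[Proposition~4.15]{Moz}'' and no argument is given. Your plan to extract the core combinatorial correspondence between rays and perfect matchings as a black box from those same references is therefore exactly what the paper does, and is the right call.

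Your surrounding heuristics are mostly sound but contain one small slip worth fixing before you write anything down. In part~(2), when you verify that the $0/1$ representation $V_\sfD$ satisfies the Jacobian relations: if $a\in\sfD$, then $a$ is the unique edge of $\sfD$ in each of the two small cycles through $a$, so the complementary paths $p_a^\pm$ consist entirely of arrows \emph{outside} $\sfD$ and hence both evaluate to $1$, not $0$. (Your phrasing ``both sides again vanish or both equal the same monomial'' is ambiguous here.) Conversely if $a\notin\sfD$ then each $p_a^\pm$ contains the unique $\sfD$-edge of its small cycle, and both sides vanish. Also, the destabilization sketch in part~(1) (``two cosupport arrows in one small cycle contradicts $\theta$-stability'') is not really how the argument goes in the references --- the point is rather that codimension-one torus orbits correspond to minimal nonempty cosupports compatible with the relations, and those are precisely perfect matchings --- so if you keep that paragraph, flag it clearly as intuition rather than proof.
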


Here, we remark that for any interior lattice point $p$ of $\Delta$, there is more than one perfect matching corresponding to $p$. 
(We will observe the relationship among those perfect matchings in Section~\ref{sec_mutation_pm}.) 
However, for a given generic parameter $\theta$, only one of those perfect matchings is $\theta$-stable. 
In other words, for a fixed generic parameter $\theta$, we have a bijection between lattice points of $\Delta$ and $\theta$-stable perfect matchings. 

\begin{remark}[{see \cite[the last part of Section~4]{Moz}}]
\label{method_make_triangulation}
Let $\theta$ be a generic parameter. 
There is a certain way to find the triangulation of $\Delta$ whose associated toric variety $X_\Sigma$ is isomorphic to $\calM_\theta(Q)$. 
To do this, we first assign a $\theta$-stable perfect matching to each lattice point of $\Delta$. 
Then, for any pair of $\theta$-stable perfect matchings $(\sfD,\sfD^\prime)$ of $Q$, we check whether $\sfD\cup \sfD^\prime$ is the cosupport of a $\theta$-stable representation or not. 
If it is so, we connect lattice points corresponding to $\sfD$ and $\sfD^\prime$, thus we have the line segment. 
Repeating these arguments, we have the triangulation of $\Delta$, and it gives the fan $\Sigma$ whose associated toric variety $X_\Sigma$ is isomorphic to $\calM_\theta(Q)$. 
\end{remark}

\section{\bf Perfect matchings giving $2$-representation infinite algebras} 
\label{section_2RI}

\subsection{$n$-representation infinite algebras} 
\label{subsec_nrepinfinite}

Let $n\ge 1$ be a positive integer. 
In this paper, the notion of $n$-representation infinite algebras plays an important role (especially the case of $n=2$). 
This is a certain generalization of representation infinite hereditary algebras. 
In order to define this algebra, we first introduce the Nakayama functor. 
Let $\Lambda$ be a finite dimensional $\kk$-algebra of $\gldim\Lambda\le n$. We define functors: 
$$\nu\coloneqq\rmD\RHom_\Lambda(-,\Lambda):\calD^\rmb(\mc\Lambda)\rightarrow\calD^\rmb(\mc\Lambda),$$
$$\nu^-\coloneqq\RHom_{\Lambda^{\rm op}}(\rmD-,\Lambda):\calD^\rmb(\mc\Lambda)\rightarrow\calD^\rmb(\mc\Lambda).$$
The functor $\nu$ is called the \emph{Nakayama functor}, and it is known that $\nu$ is the Serre functor of $\calD^\rmb(\mc\Lambda)$ (see \cite{BK}). 
We then define the autoequivalences 
$$
\nu_n\coloneqq\nu\circ[-n]:\calD^\rmb(\mc\Lambda)\rightarrow\calD^\rmb(\mc\Lambda), 
$$
$$
\nu_n^-\coloneqq\nu^-\circ[n]:\calD^\rmb(\mc\Lambda)\rightarrow\calD^\rmb(\mc\Lambda). 
$$

\begin{definition}[{see \cite[Definition~2.7 and Proposition~2.9]{HIO}}]
Let $n\ge 1$ be a positive integer, and $\Lambda$ be a finite dimensional algebra with $\gldim\Lambda\le n$. 
We say that $\Lambda$ is \emph{$n$-representation infinite} if $\nu_n^{-i}(\Lambda)\in\mc\Lambda$ for all $i\ge0$ (this means $\nu_n^{-i}(\Lambda)$ is quasi-isomorphic to a complex concentrated in the degree zero part), 
in which case its global dimension is precisely $n$, because $\Ext^n_\Lambda(\rmD\Lambda,\Lambda)=\nu_n^{-1}(\Lambda)\neq 0$. 
\end{definition}

If $n=1$, then $1$-representation infinite algebras coincide with representation infinite hereditary algebras. 
In the context of non-commutative algebraic geometry, $n$-representation infinite algebras are called \emph{quasi $n$-Fano algebras} and investigated in \cite{Min,MM}. In particular, the following results also hold in such a context using different methods. 

One of the methods to obtain a $n$-representation infinite algebra is to consider a bimodule $(n+1)$-Calabi-Yau algebra of Gorenstein parameter $1$ whose degree zero part is finite dimensional as we will see below. 
Here, we say that a graded algebra $B=\oplus_{i\ge 0}B_i$ with $\dim_\kk B_i<\infty$ for any $i\ge 0$ is \emph{bimodule $n$-Calabi-Yau of Gorenstein parameter} $1$ if $B\in\per B^e$ and 
$\RHom_{B^e}(B,B^e)[n](-1)\cong B$ in $\calD(\Mod^\ZZ B^e)$. 

\begin{theorem}[{see \cite[Corollary~3.6]{AIR}, \cite[Theorem~4.12]{MM}}] 
\label{degzero_RI}
Let $B=\oplus_{i\ge 0}B_i$ be a positively $\ZZ$-graded algebra that is bimodule $(n+1)$-Calabi-Yau of Gorenstein parameter $1$. 
If $\dim_\kk B_0$ is finite, then $B_0$ is an $n$-representation infinite algebra. 
\end{theorem}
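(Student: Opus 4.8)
The plan is to show that $B_0$ has finite global dimension equal to $n$, and then verify the defining Nakayama-functor condition $\nu_n^{-i}(B_0) \in \mc B_0$ for all $i \ge 0$ by relating the action of $\nu_n^-$ on $\calD^\rmb(\mc B_0)$ to the shift-by-one (twist) on the graded Calabi-Yau algebra $B$. First I would recall that for a positively graded algebra $B = \oplus_{i\ge0}B_i$ with $B_0$ semisimple—or more generally so that $B$ is projective as a graded $B_0$-module on each side—there is a standard way to pass between graded $B$-modules and complexes over $B_0$: the functors $-\otimes_{B_0}B$ and $\Hom_{B_0}(B,-)$, together with the grading, let one truncate and extract degree-$0$ pieces. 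The Calabi-Yau property $\RHom_{B^e}(B,B^e)[n+1](-1)\cong B$ is exactly what controls the homological behavior of these truncations.

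The key steps, in order, are: (1) From $B\in\per B^e$ and the $(n+1)$-Calabi-Yau property of Gorenstein parameter $1$, deduce that $B$ has a finite graded projective bimodule resolution, and that $\gldim B_0 \le n$; the bound comes from the length of the (linear, because the Gorenstein parameter is $1$) bimodule resolution of $B$, whose degree-$0$ part gives a projective $B_0$-resolution of each simple. (2) Identify the Serre functor / Nakayama functor of $\calD^\rmb(\mc B_0)$ with the derived functor induced by the bimodule $\RHom_{B^e}(B,B^e)$ suitably truncated in degree; concretely, $\nu_n^-$ on $\calD^\rmb(\mc B_0)$ corresponds to applying $-\otimes^{\mathbf L}_{B_0}(\text{top nonzero graded piece of the dualizing bimodule})$, which by the Calabi-Yau identity is the shift $(1)$ on the graded side. (3) Use this to compute $\nu_n^{-i}(B_0)$: it becomes $\big(e B (i)\big)_0$-type objects, i.e. the degree-$i$ part $B_i$ viewed as a $B_0$-module (up to a projective/homological correction), which is concentrated in homological degree $0$ precisely because $B$ is a genuine (non-negatively) graded algebra with each $B_i$ an honest module. (4) Conclude $\nu_n^{-i}(B_0)\in\mc B_0$ for all $i\ge0$, hence $B_0$ is $n$-representation infinite, with $\gldim B_0 = n$ exactly because $\Ext^n_{B_0}(\rmD B_0, B_0) = \nu_n^{-1}(B_0)\neq0$ (the degree-$1$ piece $B_1$ is nonzero, as $B$ is generated in positive degrees over $B_0$ and is infinite-dimensional).

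I expect the main obstacle to be step (2): making precise the dictionary between the graded bimodule $\RHom_{B^e}(B,B^e)$ and the Nakayama functor on $\calD^\rmb(\mc B_0)$, and in particular tracking how the internal degree shift $(-1)$ in the Gorenstein-parameter-$1$ hypothesis translates into the homological shift $[-n]$ defining $\nu_n$. This requires a careful bookkeeping argument—presumably passing through the ($A_\infty$- or dg-) Koszul-type duality between $B$ and $\Ext^*_B(B_0,B_0)$, or directly through an explicit minimal graded bimodule resolution of $B$—to see that the degree-$0$ component of the CY duality is exactly the statement that $B_0$ has a Serre functor of the required form. Once that dictionary is in place, steps (3) and (4) are essentially formal: the positivity of the grading forces every $\nu_n^{-i}(B_0)$ into a single homological degree, which is the whole content of $n$-representation infiniteness. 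I would also isolate as a lemma the claim that $\dim_\kk B_0 < \infty$ together with $B \in \per B^e$ forces $B_0$ to be finite-dimensional of finite global dimension, so that the Nakayama functor is defined at all.
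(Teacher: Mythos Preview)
The paper does not actually prove this theorem: it is quoted verbatim from \cite[Corollary~3.6]{AIR} and \cite[Theorem~4.12]{MM}, with no argument supplied. So there is no ``paper's own proof'' to compare your proposal against.

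That said, your outline is broadly in line with how the result is established in those references. The essential content is the bimodule isomorphism $B_i \cong \nu_n^{-i}(B_0)$ for all $i\ge 0$, which is exactly your step~(3); this follows from the graded Calabi--Yau condition together with the identification of the $(n+1)$-preprojective algebra $\Pi_{n+1}(B_0)\coloneqq \bigoplus_{i\ge 0}\nu_n^{-i}(B_0)$ with $B$ (see \cite[Theorem~4.36]{HIO}, \cite{Kel}, and the discussion after the theorem in the present paper). Your step~(2) is indeed where the work lies: the clean way to do it is not via Koszul duality but by directly computing $\RHom_{B_0}(\rmD B_0,B_0)$ from the graded bimodule resolution of $B$ and using that the CY shift $\RHom_{B^e}(B,B^e)\cong B[-n-1](1)$ forces the degree-$1$ piece of that resolution to realise $\nu_n^{-1}$. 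One caution: in your step~(4) you assert $B_1\neq 0$ because ``$B$ is generated in positive degrees over $B_0$ and is infinite-dimensional''; neither of these is part of the hypotheses, and in fact $B_1=0$ would force $B=B_0$, contradicting the CY condition in dimension $n+1\ge 2$. You should derive $B_1\neq 0$ from the CY property itself rather than from an unstated generation assumption.
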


On the other hand, a bimodule $(n+1)$-Calabi-Yau algebra of Gorenstein parameter $1$ can be obtained as the \emph{$(n+1)$-preprojective algebra} 
of an $n$-representation infinite algebra (see e.g., \cite[Theorem~4.36]{HIO}, \cite[Theorem~4.8]{Kel}). 

\subsection{$2$-representation infinite algebras arising from dimer models} 
\label{subsec_2rep_dimer}

In this subsection, we construct $2$-representation infinite algebras using consistent dimer models and their perfect matchings. 
In this construction, the degree $d_\sfD$ associated with a perfect matching $\sfD$ defined in (\ref{degree_pm}) plays a crucial role. 

\begin{proposition}[{see \cite[Proposition~6.1]{AIR}}]
\label{dimer_GP1}
Let $(Q,W_Q)$ be the QP associated with a consistent dimer model, and $\calP(Q,W_Q)$ be the Jacobian algebra. 
Then, for any perfect matching $\sfD$ of $Q$, the graded Jacobian algebra $\calP(Q,W_Q)$ with respect to $d_\sfD$ is a bimodule $3$-Calabi-Yau algebra of Gorenstein parameter $1$. 
\end{proposition}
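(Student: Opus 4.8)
The plan is to verify the two defining conditions of a bimodule $3$-Calabi-Yau algebra of Gorenstein parameter $1$ directly for the $\ZZ$-graded Jacobian algebra $B\coloneqq\calP(Q,W_Q)$, where the grading is $d_\sfD$. Recall that we must check (i) $B\in\per B^e$ and (ii) $\RHom_{B^e}(B,B^e)[3](-1)\cong B$ in $\calD(\Mod^\ZZ B^e)$. Condition (i) and the ungraded version of (ii) are already known: by \cite[Theorem~7.1]{Bro} a consistent dimer model yields a bimodule $3$-Calabi-Yau Jacobian algebra, so $B\in\per B^e$ and $\RHom_{B^e}(B,B^e)[3]\cong B$ as ungraded bimodules. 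Thus the entire content of the proposition is to upgrade this ungraded isomorphism to a \emph{graded} isomorphism with the correct internal degree shift $(-1)$, i.e. to compute that the Gorenstein parameter equals $1$ with respect to $d_\sfD$.

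The key tool is the explicit bimodule resolution of a Jacobian algebra coming from a quiver with potential. For a (complete) Jacobian algebra $\calP(Q,W)$ there is the standard length-$4$ complex of $B$-$B$-bimodules
\[
0\longrightarrow B\otimes_S B \longrightarrow \bigoplus_{a\in Q_1} B\otimes_S B \longrightarrow \bigoplus_{a\in Q_1} B\otimes_S B \longrightarrow B\otimes_S B \longrightarrow B\longrightarrow 0,
\]
where $S=\kk Q_0$, the middle terms are indexed by arrows, the two outer non-trivial terms by vertices, and the differentials are built from the arrows, the relations $\partial_a W$, and the cyclic derivative of $W$ (this is the resolution used in the proof of the $3$-Calabi-Yau property; see \cite{Bro} and also \cite{Gin}). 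First I would put a $\ZZ$-grading on this whole complex by declaring each generator of $B\otimes_S B$ in the term indexed by an arrow $a$ to sit in degree $d_\sfD(a)$, each generator in the two vertex-indexed terms in degree $0$ and degree $=\deg W=1$ respectively, and then check that all differentials are homogeneous of degree $0$. The crucial numerical input is that the potential $W_Q$ is homogeneous of degree $1$ with respect to $d_\sfD$: this holds because every small cycle $\omega_n$ contains exactly one edge of the perfect matching $D$ (by the defining property of a perfect matching, each node is covered exactly once), so $d_\sfD(\omega_n)=1$ for every $n\in\Gamma_0$, and hence $d_\sfD(W_Q)=1$. Granting this, dualizing the resolution over $B^e$ and using self-duality of the complex (which is exactly what gives the $3$-Calabi-Yau property) produces $\RHom_{B^e}(B,B^e)\cong B[-3](1)$, i.e. the Gorenstein parameter is the degree of the top term of the resolution, which is $\deg W_Q=1$.

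So concretely the steps are: (1) record that $d_\sfD(W_Q)=1$, using that $D$ meets each node exactly once; (2) recall the bimodule resolution of $B$ and observe it can be made $\ZZ$-graded with the top term $B\otimes_S B$ placed in internal degree $1$ (coming from $\deg W_Q$), all differentials homogeneous; (3) apply $\RHom_{B^e}(-,B^e)$ and invoke the self-duality of this complex (already established in the ungraded setting in \cite{Bro}) to read off $\RHom_{B^e}(B,B^e)\cong B[-3](1)$, equivalently $\RHom_{B^e}(B,B^e)[3](-1)\cong B$ in $\calD(\Mod^\ZZ B^e)$; (4) note $B\in\per B^e$ follows from the boundedness of the same resolution, and that $\dim_\kk B_i<\infty$ for all $i$ because $B$ is a finitely generated module over its center $R$, which is a finitely generated graded $\kk$-algebra with $R_0=\kk$. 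The main obstacle — and really the only subtle point — is step (3): making the identification of $\RHom_{B^e}(B,B^e)$ with a shift of $B$ \emph{equivariant for the grading}, i.e. tracking the internal degrees through the dualization carefully so that the shift $(-1)$ (or $(1)$, depending on conventions) comes out exactly right rather than off by the total degree of some term; this is bookkeeping rather than a conceptual difficulty, since the ungraded statement is already in the literature. Alternatively, one can bypass the resolution entirely by citing that $B$ is the $3$-preprojective algebra of the (possibly infinite-dimensional) truncated algebra and invoking \cite[Theorem~4.36]{HIO} or \cite[Theorem~4.8]{Kel}, which build in the Gorenstein parameter $1$ by construction; I would mention this as a remark but carry out the direct resolution argument as the main proof.
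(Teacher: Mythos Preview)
The paper does not give its own proof of this proposition; it is stated with a direct citation to \cite[Proposition~6.1]{AIR}. Your sketch reconstructs essentially the argument behind that reference: the decisive observation is that every small cycle $\omega_n$ contains exactly one edge of the perfect matching, so $W_Q$ is homogeneous of degree $1$ under $d_\sfD$; plugging this into the self-dual bimodule (Ginzburg) resolution of the Jacobian algebra and tracking internal degrees through the duality gives $\RHom_{B^e}(B,B^e)\cong B[-3](1)$, i.e.\ Gorenstein parameter $1$. That part is correct and is the standard route.

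One point needs correction. In your step~(4) you argue $\dim_\kk B_i<\infty$ via ``$R_0=\kk$''. This is false for a general perfect matching: $B_0=\calA_\sfD$ is finite dimensional if and only if $\sfD$ is internal (this is exactly Proposition~\ref{findim_internal}), and for boundary or corner perfect matchings $Q_\sfD$ contains nontrivial cycles, so $B_0$ and $R_0$ are infinite dimensional over $\kk$. The statement of Proposition~\ref{dimer_GP1} should therefore be read as asserting the two structural conditions $B\in\per B^e$ and $\RHom_{B^e}(B,B^e)[3](-1)\cong B$ in $\calD(\Mod^\ZZ B^e)$; the local finiteness of the grading is a separate issue, only available in the internal case, and is supplied later when Theorem~\ref{degzero_RI} is invoked. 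Drop the $R_0=\kk$ claim and your argument is sound.
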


Next, we consider the \emph{truncated Jacobian algebra} $\calA_\sfD\coloneqq\calP(Q,W_Q)_\sfD$, 
which is the degree zero part of the graded Jacobian algebra $\calP(Q,W_Q)$ with respect to $d_\sfD$. 
In particular, we can describe this algebra as 
$$
\calA_\sfD=\calP(Q,W_Q)/\langle\sfD\rangle=\kk Q_\sfD/\langle\partial_aW_Q \mid a\in \sfD\rangle. 
$$
Here, we recall that $Q_\sfD$ is the subquiver of $Q$ whose set of vertices is the same as $Q$ and set of arrows is $Q_1{\setminus} \sfD$, 
especially $Q_\sfD$ is finite and connected. 
By Theorem~\ref{degzero_RI} and Proposition~\ref{dimer_GP1}, $\calA_\sfD$ will be a $2$-representation infinite algebra if it is finite dimensional. 
However, the degree $d_\sfD$ does not necessarily make $\calA_\sfD$ finite dimensional. 
Thus, in the rest we will find perfect matchings making $\calA_\sfD$ finite dimensional. 
Let $R$ be a $3$-dimensional Gorenstein toric singularity associated with a consistent dimer model $\Gamma$, 
and $Q$ be the quiver associated with $\Gamma$. 
By Theorem~\ref{thm_crepant_dimer}, we have a crepant resolution $\pi_\theta :\calM_\theta(Q)\rightarrow\Spec R$ for a generic parameter $\theta$. 
Since $\calM_\theta(Q)$ is a fine moduli space parametrizing $\theta$-stable representations, 
we have a $\theta$-stable representation $V_y$ parametrized by a point $y\in\calM_\theta(Q)$. 
Then, we have the following lemma, which is well-known for experts. 

\begin{lemma}
\label{condition_nilpotent}
Let $V_y=((V_i)_{i\in Q_0}, (\varphi_a)_{a\in Q_1})$ be the $\theta$-stable representation parametrized by $y\in\calM_\theta(Q)$. 
For the unique torus invariant point $x_0\in\Spec R$, we see that $y\in\calM_\theta(Q)$ lies in $\pi^{-1}_\theta(x_0)$ if and only if $V_y$ is a nilpotent representaion. 
\end{lemma}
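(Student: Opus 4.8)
The statement is an ``if and only if'' relating a geometric condition (the point $y$ lies in the fiber over the torus-fixed point $x_0\in\Spec R$) to an algebraic one (nilpotency of the corresponding $\theta$-stable representation $V_y$). The plan is to exploit the description of $\Spec R$ as the categorical quotient $\calM_0(Q)=\Spec S^G$ together with the fact that the crepant resolution morphism $\pi_\theta$ is, by construction, the canonical map $\calM_\theta(Q)\to\calM_0(Q)$ that forgets stability and sends a $\theta$-stable point to the closed point of its closure under the $G$-action (equivalently, it is induced by the inclusion $S^G\hookrightarrow S$). Under this identification, a point of $\calM_\theta(Q)$ maps to $x_0$ precisely when every $G$-invariant regular function that vanishes at $x_0$ also vanishes on $V_y$. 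Since $x_0$ is the unique torus-invariant point, the maximal ideal $\fkm_{x_0}\subset S^G$ is spanned by all non-constant monomials $t_1^{a_1}t_2^{a_2}t_3^{a_3}$ in $R$; pulled back to $S$, these are exactly the cycle-functions $\prod_{a\in c}x_a$ running over cycles $c$ in $Q$ (up to the Jacobian relations), because $S^G$ is spanned by such cycles as explained in Subsection~\ref{subsec_pm} and \cite{IU1}. Hence $y\in\pi_\theta^{-1}(x_0)$ if and only if $\varphi_c=0$ for \emph{every} cycle $c$ in $Q$.

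First I would make precise the claim that $y\in\pi_\theta^{-1}(x_0)$ is equivalent to $\varphi_c=0$ for all cycles $c$. For the forward direction, each cycle function $x_c=\prod_{a\in c}x_a$ lies in $S^G$ and vanishes at $x_0$, so it must vanish at $y$; evaluating $x_c$ at the representation $V_y$ gives $\varphi_c$ (after the chosen trivialization $V_i\cong\kk$), so $\varphi_c=0$. Conversely, if all cycles act as zero then every element of $\fkm_{x_0}$ (which is generated by cycle-monomials) vanishes at $y$, forcing $\pi_\theta(y)=x_0$. The second step is to upgrade ``all cycles act as zero'' to ``$V_y$ is nilpotent.'' Here I would use the standard fact about quivers with potential coming from consistent dimer models: in the Jacobian algebra $\calP(Q,W_Q)$, any sufficiently long path is equivalent, modulo the relations $\calJ_Q$, to a path that contains a cycle as a subpath --- concretely, each vertex $i$ has a central element (the ``small cycle'' through $i$, all of which coincide in the center), and every path of length $\ge N$, for $N$ bounded in terms of $|Q_0|$ and the girth, factors through some power of such a central cycle. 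Therefore if $\varphi_c=0$ for every cycle $c$, then $\varphi_p=0$ for every path $p$ of length $\ge N$, which is exactly nilpotency of $V_y$. The reverse implication is immediate: a nilpotent representation kills all long paths, in particular all cycles of length $\ge N$, and since a power of any cycle is again a cycle this forces $\varphi_c$ to be nilpotent as an endomorphism of the one-dimensional space $V_{\hd(c)}$, hence $\varphi_c=0$.

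The main obstacle I anticipate is the step that turns ``all cycles act by zero'' into ``all long paths act by zero,'' i.e. showing that one cannot have an arbitrarily long non-zero \emph{acyclic} path in $Q$ once all cycles vanish. This needs a genuine input about consistent dimer models: either the finiteness of $\dim_\kk \calP(Q,W_Q)/\langle\text{cycles}\rangle$, or equivalently that the quiver $Q$ has no arbitrarily long paths avoiding the Jacobian relations without repeating a vertex in a way that produces a relation-equivalent cycle. I would handle this by invoking the explicit structure from Subsection~\ref{subsec_dimer}: for the quiver of a consistent dimer model, the relations $p_a^+=p_a^-$ let one rewrite any path, and since $\calP(Q,W_Q)$ is module-finite over its center $R$ (which follows from it being an NCCR, Subsection~\ref{CCR_NCCR}), the quotient of $\calP(Q,W_Q)$ by the ideal generated by the images of $\fkm_{x_0}$ is finite dimensional; this quotient has $\kk$-basis the paths not relation-equivalent to one through a cycle, so there are only finitely many, giving the desired uniform bound $N$. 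I would also remark that the hypothesis that $\Gamma$ is consistent is used precisely here (to ensure $\calP(Q,W_Q)$ is a finitely generated $R$-module), and that the identification $\pi_\theta(y)=x_0 \iff \fkm_{x_0}\subseteq \ker(\text{evaluation at }y)$ uses only that $x_0$ is a \emph{closed} point and $\pi_\theta$ is the affinization map, so no genericity of $\theta$ beyond what guarantees $\calM_\theta(Q)$ is a fine moduli space is needed.
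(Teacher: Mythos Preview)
Your proof is correct and follows the same overall arc as the paper's: first identify $\pi_\theta(y)=x_0$ with the vanishing of $\varphi_c$ for every cycle $c$, then pass between ``all cycles vanish'' and nilpotency.

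The difference lies in the step you flag as the ``main obstacle,'' namely deducing that all long paths vanish once all cycles do. You invoke module-finiteness of $\calP(Q,W_Q)$ over its center (via the NCCR property) to conclude that $\calP(Q,W_Q)/\fkm_{x_0}\calP(Q,W_Q)$ is finite dimensional. This works, but it is far heavier than necessary. The paper dispenses with this in one line of pure combinatorics: since $Q$ is a finite quiver, any path of length at least $|Q_0|$ visits some vertex twice and therefore contains a cycle as a \emph{literal subpath}. Hence $\varphi_p$ factors through $\varphi_c$ for that sub-cycle and vanishes. No relations, no consistency, no NCCR input is needed here; the bound $N=|Q_0|$ is immediate from the pigeonhole principle. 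Your worry about ``arbitrarily long acyclic paths'' cannot arise in a finite quiver.

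So your argument is valid but circuitous at the crucial point; the paper's route is shorter and shows that consistency of the dimer model is not actually used in this lemma (it enters later, in Proposition~\ref{findim_internal}, to guarantee the existence of suitable perfect matchings and the crepant resolution).
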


\begin{proof}
Since $R\cong e_i\calP(Q,W_Q)e_i$ for any $i\in Q_0$ (see e.g., \cite[Chapter~5]{Bro}), we see that $\pi_\theta(y)=x_0$ if and only if for any nontrivial element in 
$e_i\calP(Q,W_Q)e_i$ the corresponding cycle $\omega$ satisfies $\varphi_\omega=0$. 
Also, since $Q$ is a finite connected quiver, a path of length at least $|Q_0|$ contains a cycle. 
Therefore, if $\pi_\theta(y)=x_0$ then $V_y$ is nilpotent. On the other hand, if $\pi_\theta(y)\neq x_0$, then there exists a nontrivial cycle $c$ such that 
$\varphi_c\neq 0$. Thus, the action of $c^n$ is not zero for any $n\ge 1$, and hence $V_y$ is not nilpotent. 
\end{proof}

Then, we can obtain the following characterization, which was also discussed in \cite[Lemma~1.44]{Boc_ABC}, and this plays an important role in this paper. 

\begin{proposition}
\label{findim_internal}
Let $Q$ be the quiver associated with a consistent dimer model $\Gamma$. 
Then, for a perfect matching $\sfD$ of $Q$, the following conditions are equivalent. 
\begin{enumerate}[\rm (1)]
\item $\sfD$ is an internal perfect matching. 
\item $Q_\sfD$ is an acyclic quiver. 
\item $\calA_\sfD$ is a finite dimensional algebra. 
\end{enumerate}
When this is the case, $\calA_\sfD$ is a $2$-representation infinite algebra. 
\end{proposition}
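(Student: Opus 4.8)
The plan is to prove the chain of equivalences $(1)\Leftrightarrow(2)\Leftrightarrow(3)$, and then deduce the final assertion from Theorem~\ref{degzero_RI} and Proposition~\ref{dimer_GP1}. The implication $(2)\Rightarrow(3)$ is essentially automatic: if $Q_\sfD$ is acyclic then, since $\calA_\sfD=\kk Q_\sfD/\langle\partial_aW_Q\mid a\in\sfD\rangle$ is a quotient of the path algebra of a finite acyclic quiver, it is finite dimensional. Conversely, for $(3)\Rightarrow(2)$, I would argue contrapositively: if $Q_\sfD$ contains an oriented cycle $c$, I need to show $\calA_\sfD$ is infinite dimensional, i.e.\ that no power $c^m$ becomes zero in $\calA_\sfD$. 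This is where one uses properties of consistent dimer models — concretely, that in the (graded) Jacobian algebra $\calP(Q,W_Q)$ the relations $p_a^+=p_a^-$ preserve degree with respect to $d_\sfD$, so a cycle supported entirely on arrows of $Q_1\setminus\sfD$ has $d_\sfD$-degree $0$, hence lies in $\calA_\sfD$; and one must know it is nonzero there. For this I would invoke the description $R\cong e_i\calP(Q,W_Q)e_i$ together with the fact that $R$ has Krull dimension $3$, so the image of $c^m$ inside $e_{\tl(c)}\calP(Q,W_Q)e_{\tl(c)}=R$ is nonzero for all $m$; since $c$ has degree zero, these elements already live in $\calA_\sfD$, giving infinitely many linearly independent elements.

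The heart of the matter is the equivalence $(1)\Leftrightarrow(2)$, which is where Lemma~\ref{condition_nilpotent}, Proposition~\ref{corresp_pm} and the geometry of $\calM_\theta(Q)$ enter. For $(1)\Rightarrow(2)$: suppose $\sfD$ is an internal perfect matching, corresponding to an interior lattice point $p$ of $\Delta$. By Proposition~\ref{corresp_pm}(2) there is a generic parameter $\theta$ such that the representation $V_\sfD$ with cosupport $\sfD$ is $\theta$-stable; because $p$ is an \emph{interior} point, the corresponding ray of $\Sigma$ is an \emph{exceptional} ray of the crepant resolution $\pi_\theta\colon\calM_\theta(Q)\to\Spec R$, so the associated torus orbit $\calO_\sigma$ lies in the fiber $\pi_\theta^{-1}(x_0)$ over the toric point. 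By Lemma~\ref{condition_nilpotent}, $V_\sfD$ is then a nilpotent representation. The support of $V_\sfD$ is exactly $Q_1\setminus\sfD$, i.e.\ the arrow set of $Q_\sfD$; nilpotency of a representation that is nonzero (generically rank-one) on every arrow of $Q_\sfD$ forces $Q_\sfD$ to have no oriented cycle — otherwise a cycle would act invertibly on that one-dimensional vector space, contradicting nilpotency. For $(2)\Rightarrow(1)$: if $Q_\sfD$ is acyclic, the representation $V_\sfD$ (with all maps on $Q_\sfD$-arrows set to $1$, all maps on $\sfD$-arrows set to $0$) is automatically nilpotent, and by Proposition~\ref{corresp_pm}(2) it is $\theta$-stable for some generic $\theta$; by Lemma~\ref{condition_nilpotent} the corresponding point lies over $x_0$, so its ray is exceptional, so $\sfD$ corresponds to an interior lattice point of $\Delta$, i.e.\ $\sfD$ is internal. (One should check that a perfect matching which is a corner or boundary, i.e.\ on $\partial\Delta$, yields a non-nilpotent $V_\sfD$, equivalently $Q_\sfD$ has a cycle — this follows by running the same correspondence in reverse, since non-exceptional rays map to $\Spec R\setminus\{x_0\}$.)

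Finally, once $(1)\Leftrightarrow(2)\Leftrightarrow(3)$ is established, the last sentence is immediate: by Proposition~\ref{dimer_GP1} the graded Jacobian algebra $\calP(Q,W_Q)$ with grading $d_\sfD$ is bimodule $3$-Calabi-Yau of Gorenstein parameter $1$, and it is positively graded with $\calP(Q,W_Q)_0=\calA_\sfD$; when $\sfD$ is internal, condition $(3)$ gives $\dim_\kk\calA_\sfD<\infty$, so Theorem~\ref{degzero_RI} (with $n=2$) applies and shows $\calA_\sfD$ is a $2$-representation infinite algebra.

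I expect the main obstacle to be the careful bookkeeping in the step identifying "$\sfD$ internal" with "the $\theta$-stable representation $V_\sfD$ lies over the torus-fixed point $x_0$". The subtlety is that internality of $\sfD$ is intrinsic to the dimer model and its PM polygon, whereas nilpotency of $V_\sfD$ is read off from a choice of generic $\theta$; bridging the two requires knowing that rays of $\Sigma$ coming from \emph{interior} lattice points of $\Delta$ are precisely the exceptional rays contracted by $\pi_\theta$ (true for any crepant resolution of a Gorenstein toric threefold, since non-exceptional rays correspond to vertices/boundary-lattice-points of the toric diagram), plus the fact — from Proposition~\ref{corresp_pm} — that a given perfect matching is $\theta$-stable for \emph{some} generic $\theta$, so that one is free to pick the $\theta$ that makes the argument go. A secondary point requiring care is verifying that in $(3)\Rightarrow(2)$ the nonzero cycle $c^m$ indeed survives to $\calA_\sfD$ rather than merely to $\calP(Q,W_Q)$: here one uses that $c$ lies in $Q_\sfD$ so $d_\sfD(c)=0$, hence $d_\sfD(c^m)=0$, placing $c^m$ in the degree-zero part by definition.
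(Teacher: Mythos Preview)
Your proposal is correct and follows essentially the same route as the paper: both use Proposition~\ref{corresp_pm}(2) to choose a generic $\theta$ making $\sfD$ stable, invoke the Orbit--Cone correspondence to identify ``$\sfD$ internal'' with ``$V_\sfD$ lies in $\pi_\theta^{-1}(x_0)$'', apply Lemma~\ref{condition_nilpotent} to convert this into nilpotency, and then observe that for a representation with dimension vector $(1,\dots,1)$ supported on $Q_\sfD$, nilpotency is equivalent to $Q_\sfD$ being acyclic. Your treatment of $(3)\Rightarrow(2)$ via $e_{\tl(c)}\calP(Q,W_Q)e_{\tl(c)}\cong R$ is slightly more explicit than the paper's one-line remark that ``there are no relations in $Q$ making cycles to be zero'', but the content is the same; the final appeal to Proposition~\ref{dimer_GP1} and Theorem~\ref{degzero_RI} is identical.
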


\begin{proof}
Fix a generic parameter $\theta$ such that $\sfD$ is $\theta$-stable (see Proposition~\ref{corresp_pm}(2)).  
Then, there is a point $y\in\calM_\theta(Q)$ such that the cosupport of $\theta$-stable representation $V_y$ coincides with $\sfD$ by Proposition~\ref{corresp_pm}(1), 
in which case $y$ lies in a toric divisor. 
Therefore, $\sfD$ is internal if and only if $y$ lies in an exceptional one. 
This is equivalent to the condition that $V_y$ is nilpotent by Lemma~\ref{condition_nilpotent}. 
Since the dimension vector of $V_y$ is $(1,\dots,1)$, each non-zero map is given by an element in $\kk^\times$. 
Since $Q_\sfD$ is a finite connected quiver, $Q_\sfD$ is acyclic if and only if $V_y$ is nilpotent, and hence we have $(1)\Leftrightarrow(2)$. 
$(2)\Rightarrow(3)$ is trivial. 
On the other hand, we assume that $Q_\sfD$ contains a cycle. 
Then $\calA_\sfD$ is not finite dimensional, since the relations on $Q$ only make two paths equal, and hence they do not make a path to be zero. 
Thus we have $(3)\Rightarrow(2)$. 

By Theorem~\ref{degzero_RI} and Proposition~\ref{dimer_GP1}, 
$\calA_\sfD$ is a $2$-representation infinite algebra if $\sfD$ is internal. 
\end{proof}

An $n$-representation infinite algebra is called \emph{$n$-representation tame} if its $(n+1)$-preprojective algebra $\Lambda$ is a Noetherian $A$-algebra, that is, 
$\Lambda$ is a finitely generated module over a commutative ring $A$ (see \cite[Definition~6.10]{HIO}). 
In our situation, the $3$-preprojective algebra of a $2$-representation infinite algebra $\calA_\sfD=\calP(Q,W_Q)_\sfD$ associated with an internal perfect matching $\sfD$ is the Jacobian algebra $\calP(Q,W_Q)$ (see e.g., \cite[Theorem~4.36]{HIO}). 
Since $\calP(Q,W_Q)$ is an NCCR of a $3$-dimensional Gorenstein toric singularity $R=\rmZ(\calP(Q,W_Q))$, there exists a reflexive $R$-module $M$ such that $\calP(Q,W_Q)\cong\End_R(M)$, and hence this is finitely generated as an $R$-module. 
As a conclusion, $\calA_\sfD$ is $2$-representation tame.  

\section{\bf Stable categories of (graded) MCM modules and tilting objects}
\label{sec_cluster_eq}

One of the purposes in this section is to show equivalences as in Theorem~\ref{motivation_thm2} 
for a $3$-dimensional Gorenstein isolated singularity $R$ that is not the $A_1$-singularity (see Corollary~\ref{main_cor}). 
Here, we recall sufficient conditions for giving such equivalences: 
\begin{itemize}
\item[\rm (P1)] the truncated Jacobian algebra $\calA_\sfD$ is finite dimensional, 
\item[\rm (P2)] $i$ is a source of the quiver $Q_\sfD$, 
\item[\rm (P3)] the algebra $\calP(Q,W_Q)/\langle e_i\rangle$ is finite dimensional. 
\end{itemize}
By Proposition~\ref{findim_internal}, if $\sfD$ is an internal perfect matching, then $\calA_\sfD$ satisfies the condition (P1). 
Thus, in what follows, we discuss other conditions. 

\begin{lemma}
\label{isolated_condition}
Let $(Q,W_Q)$ be the QP associated with a consistent dimer model $\Gamma$. 
Then, the following conditions are equivalent. 
\begin{enumerate}[\rm(1)] 
\item The $3$-dimensional Gorenstein toric singularity $R=\rmZ(\calP(Q,W_Q))$ is isolated. 
\item $\calP(Q,W_Q)/\langle e_i\rangle$ is finite dimensional for any vertex $i\in Q_0$. 
\item $\calP(Q,W_Q)/\langle e_i\rangle$ is finite dimensional for some vertex $i\in Q_0$. 
\item Any edge of the PM polygon $\Delta$ does not have an interior lattice point. 
\end{enumerate}
\end{lemma}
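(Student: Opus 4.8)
The plan is to establish two separate equivalences: $(1)\Leftrightarrow(4)$, which is purely toric, and $(1)\Leftrightarrow(2)\Leftrightarrow(3)$, which I would deduce from the fact that $\calP(Q,W_Q)$ is an NCCR of $R$; the implication $(2)\Rightarrow(3)$ is of course trivial.

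\emph{The toric equivalence.} Since $R$ is a $3$-dimensional normal domain, $R_\fkp$ is regular for every prime $\fkp$ with $\height\fkp\le 1$, so $R$ is an isolated singularity exactly when $R_\fkp$ is regular for every height $2$ prime. By the Orbit--Cone correspondence for $U_\sigma=\Spec R$, where $\sigma$ is the cone over $\Delta$ placed at height $1$, the height $2$ primes are precisely the $\fkp_\tau$ for $\tau$ a $2$-dimensional face of $\sigma$, and these faces correspond bijectively to the edges of $\Delta$: the edge with endpoints $v,w$ gives $\tau=\RR_{\ge0}(v,1)+\RR_{\ge0}(w,1)$. The local ring $R_{\fkp_\tau}$ is regular iff $\tau$ is a smooth cone, i.e.\ iff the primitive generators $(v,1)$ and $(w,1)$ extend to a $\ZZ$-basis of $\ZZ^3$; since the gcd of the $2\times2$ minors of the integer matrix with rows $(v,1)$ and $(w,1)$ equals $\gcd(v-w)$, which is $1$ precisely when $[v,w]$ has no interior lattice point, this gives $(1)\Leftrightarrow(4)$.

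\emph{The NCCR side.} Write $\calP\coloneqq\calP(Q,W_Q)\cong\End_R(M)$ with $M=\bigoplus_{i\in Q_0}M_i$, where $M_i=e_{i_0}\calP e_i$ for a fixed vertex $i_0$; then $M_{i_0}\cong e_{i_0}\calP e_{i_0}\cong R$, each $M_i$ is a reflexive $R$-module, and each $M_i$ has rank $1$ over $R$ because it is the pushforward of the tautological summand $\calE_i$ on $\calM_\theta(Q)$, which is a line bundle as $\calM_\theta(Q)$ parametrizes representations of dimension vector $(1,\dots,1)$. Under this identification $\langle e_i\rangle$ is the two-sided ideal of endomorphisms of $M$ factoring through $\add M_i$, so $(\calP/\langle e_i\rangle)_\fkp=0$ iff $M_\fkp\in\add_{R_\fkp}M_{i,\fkp}$; and since $\calP$ is module-finite over $R$, the algebra $\calP/\langle e_i\rangle$ is finite-dimensional over $\kk$ iff it has finite length iff its localization vanishes at every non-maximal prime. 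If $R$ is isolated, then for non-maximal $\fkp$ the ring $R_\fkp$ is regular and $M_\fkp$, being reflexive of rank $N\coloneqq|Q_0|$ over a regular local ring of dimension $\le 2$, is free, so $\calP_\fkp$ is an $N\times N$ matrix ring over $R_\fkp$ and the nonzero idempotent $(e_i)_\fkp$ is full; thus $(\calP/\langle e_i\rangle)_\fkp=0$, giving $(1)\Rightarrow(2)$. Conversely, if $\calP/\langle e_i\rangle$ is finite-dimensional for some $i$ and $\fkp$ is any non-maximal prime, then $M_\fkp\in\add M_{i,\fkp}$; but $M_{i,\fkp}$ is a rank-one reflexive module over the normal local domain $R_\fkp$, hence indecomposable with $\End_{R_\fkp}(M_{i,\fkp})\cong R_\fkp$, and since $R_\fkp$ is a summand of $M_\fkp$ (because $M_{i_0}\cong R$) Krull--Schmidt forces $M_{i,\fkp}\cong R_\fkp$; then $\calP_\fkp$ is again a matrix ring over $R_\fkp$, and $\gldim\calP<\infty$ forces $\gldim R_\fkp<\infty$, so $R_\fkp$ is regular. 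As $\fkp$ was arbitrary, $R$ is isolated, proving $(3)\Rightarrow(1)$.

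\emph{The main obstacle.} The delicate implication is $(3)\Rightarrow(1)$: one must exclude the possibility that a single vertex summand $M_i$ already generates $\add M$ after localization at a singular prime, and this is exactly where it is essential that the dimension vector is $(1,\dots,1)$ --- equivalently that each $\calE_i$ is a \emph{line} bundle, so each $M_i$ has rank one --- since this makes $M_{i,\fkp}$ indecomposable and hence forces $M_{i,\fkp}\cong R_\fkp$. The remaining ingredients are standard (maximal Cohen--Macaulay over a regular local ring is free; a nonzero idempotent in a matrix ring over a commutative local ring is full; $\End_R(L)\cong R$ and Krull--Schmidt hold for a rank-one reflexive $L$ over a normal local domain). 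A purely combinatorial alternative, which I would not pursue here, is to show that $\calP/\langle e_i\rangle$ is finite-dimensional iff no directed cycle of $Q$ avoids $i$, and then to translate this via zigzag paths into the condition $(4)$ on $\Delta$.
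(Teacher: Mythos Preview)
Your proof is correct, but it takes a genuinely different route from the paper's for the hardest implication. The paper argues in a cycle $(1)\Rightarrow(2)\Rightarrow(3)\Rightarrow(4)\Rightarrow(1)$: for $(1)\Rightarrow(2)$ it simply cites \cite[Lemma~6.19(3) and Remark~6.15]{IW}, and its $(4)\Rightarrow(1)$ is the same toric computation you give. The substantive difference is the step out of $(3)$. The paper proves $(3)\Rightarrow(4)$ \emph{combinatorially}: if an edge $E$ of $\Delta$ has $r\ge 2$ primitive segments, then by Proposition~\ref{zigzag_sidepolygon} there are $r$ zigzag paths with the same slope, any two of which share no node (properly-ordered condition), so the two cycles in $Q$ running along their left sides pass through disjoint vertex sets; hence for every $i\in Q_0$ at least one of these cycles survives in $\calP(Q,W_Q)/\langle e_i\rangle$, which is therefore infinite dimensional. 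This is precisely the ``purely combinatorial alternative'' you mention in your last sentence but chose not to pursue.

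Your route $(3)\Rightarrow(1)$ is instead module-theoretic and exploits the NCCR structure together with the fact---specific to dimer models---that the summands $M_i$ all have rank one. The argument is clean and has the virtue of applying to any NCCR $\End_R(\bigoplus M_i)$ with all $M_i$ rank-one reflexive over a normal Gorenstein domain, not just those coming from dimer models; it also makes explicit why the dimension vector $(1,\dots,1)$ matters. The paper's zigzag argument, by contrast, is more elementary and self-contained (no localization, no Krull--Schmidt, no global-dimension transfer), and it produces an explicit infinite family of nonzero paths, which is closer in spirit to the combinatorial techniques used elsewhere in the paper. Both approaches are valid; yours is more conceptual, the paper's more hands-on.
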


\begin{proof}
First, $(1)\Rightarrow(2)$ follows from \cite[Lemma~6.19(3) and Remark~6.15]{IW}, and $(2)\Rightarrow(3)$ is trivial. 

Let $E$ be an edge of $\Delta$.  To show $(3)\Rightarrow(4)$, we assume that the number of lattice line segments of $E$ is $r\ge 2$. 
By Proposition~\ref{zigzag_sidepolygon}, there exist zigzag paths $z_1,\dots,z_r$ corresponding to lattice line segments of $E$, and they satisfy $[z_1]=\cdots=[z_r]$. 
Since $r\ge 2$, we consider two of them and denote by $z,z^\prime$. 
Then, these $z$ and $z^\prime$ do not share a common node (see the conditions of a \emph{properly ordered dimer model} defined in \cite[Section~3.1]{Gul}, which is equivalent to Definition~\ref{def_consistent}). 
We then consider the paths $p_z,p_{z^\prime}$ on $Q$ going along the left side of $z, z^\prime$ respectively, and these are cycles on $\TT$ in particular. 
Also, we see that $p_z$ and $p_{z^\prime}$ do not factor through a common vertex of $Q$. 
Thus, even if we divide $\calP(Q,W_Q)$ by the ideal $\langle e_i\rangle$, the underlying quiver contains at least one of these cycles $p_z,p_{z^\prime}$, and hence $\calP(Q,W_Q)/\langle e_i\rangle$ is not finite dimensional for any $i\in Q_0$. 

Next, we show $(4)\Rightarrow(1)$. 
As we mentioned, the toric singularity $R$ is constructed from the strongly convex rational polyhedral cone $\sigma$ in $\RR^3$ defined by putting the PM polygon $\Delta$ on the hyperplane at height one. 
Since $R$ is normal, it is regular in codimension one. 
Thus, we consider the toric ring $R_\tau=\kk[\tau^\vee\cap\ZZ^3]$ associated to a facet $\tau$ of $\sigma$. 
If we assume the condition $(4)$, then $\tau$ is a smooth cone and hence $R_\tau$ is regular. Therefore, we have the assertion. 
\end{proof}

\begin{lemma}
\label{interior_isolated}
Let $R$ be a non-regular $3$-dimensional Gorenstein toric isolated singularity. 
If $R$ is not the $A_1$-singularity $($i.e., $R\not\cong\kk[x,y,z,w]/(xy-zw)$$)$, then the toric diagram of $R$ contains an interior lattice point. 
\end{lemma}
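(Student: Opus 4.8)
The statement is a purely combinatorial fact about lattice polygons, so the plan is to reformulate it in terms of the toric diagram $\Delta$ of $R$ and argue by a case analysis on the possible shapes of $\Delta$. First I would recall the dictionary: $R$ is a $3$-dimensional Gorenstein toric singularity, so (after a unimodular change of coordinates) it is the cone over a lattice polygon $\Delta$ placed at height one; $R$ is regular exactly when $\Delta$ is a unimodular triangle (the standard $2$-simplex), and $R$ is an isolated singularity exactly when $\Delta$ has no lattice point in the relative interior of any of its edges, by Lemma~\ref{isolated_condition}. So the task is: if $\Delta$ is a lattice polygon with no lattice points on the interior of its edges, $\Delta$ is not the standard unimodular triangle, and $\Delta$ is not the square $\mathrm{conv}\{(0,0),(1,0),(0,1),(1,1)\}$ (which gives the $A_1$-singularity $xy-zw$), then $\Delta$ must contain an interior lattice point.

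The key step is to bound the number of boundary lattice points. Since no edge of $\Delta$ has an interior lattice point, every primitive edge segment of $\Delta$ joins two consecutive vertices, so the number of boundary lattice points $b$ equals the number of vertices of $\Delta$. If $\Delta$ has no interior lattice point, then by Pick's theorem its area is $\mathrm{Area}(\Delta) = b/2 - 1$, and the classification of lattice polygons with no interior points (Koelman, or the elementary fact that such a polygon has at most $4$ vertices — the only possibilities are the standard triangle, triangles equivalent to $\mathrm{conv}\{(0,0),(2,0),(0,1)\}$ or $\mathrm{conv}\{(0,0),(1,0),(0,2)\}$, and quadrilaterals) severely restricts $\Delta$. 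Among these, the triangles $\mathrm{conv}\{(0,0),(2,0),(0,1)\}$ and its mirror have a lattice point in the interior of an edge, hence are excluded by the isolated hypothesis; the only surviving quadrilateral with no edge-interior lattice points is the unit square, which is excluded as the $A_1$-case; and the standard $2$-simplex is excluded since $R$ is not regular. Therefore no case without an interior lattice point survives, which is exactly the claim.

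Concretely I would carry out the argument in this order: (i) translate "$R$ isolated" and "$R$ regular" into the edge-interior-free and standard-simplex conditions on $\Delta$; (ii) observe that edge-interior-free forces $b = \#\{\text{vertices}\}$, so $b \le$ some small bound via the classification of interior-point-free lattice polygons (alternatively, derive the bound directly: if $\Delta$ has five or more vertices one checks it must contain an interior point, and a clean way is to note that after a unimodular transformation any interior-point-free polygon fits in a strip of lattice width $\le 2$); (iii) enumerate the finitely many interior-point-free polygons up to unimodular equivalence; (iv) discard those with an edge-interior lattice point (not isolated), the unit square (the $A_1$-singularity), and the standard simplex (regular); (v) conclude the remaining polygons — none — all contain an interior lattice point, so $\Delta$ must after all have one.

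The main obstacle is step (ii)/(iii): making the enumeration of lattice polygons without interior lattice points both correct and self-contained. I expect to invoke the known classification (the list is classical and short), but if a self-contained treatment is wanted, the cleanest route is the lattice-width argument — an interior-point-free lattice polygon has lattice width at most $2$ in some direction, so it lies between two parallel lattice lines at distance $\le 2$; slicing into the width-$1$ and width-$2$ sub-cases and using the no-edge-interior-point hypothesis quickly pins down all possibilities. The rest is bookkeeping and checking the two excluded polygons ($xy-zw$ and the smooth case) against this list.
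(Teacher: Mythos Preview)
Your proposal is correct and follows essentially the same route as the paper: translate ``isolated'' into ``no edge has an interior lattice point'' via Lemma~\ref{isolated_condition}, then invoke the classification of lattice polygons with no interior lattice point to see that the only such $\Delta$ with all edges primitive is (up to unimodular equivalence) the standard simplex or the unit square, which are respectively the regular and $A_1$ cases. The paper simply cites Rabinowitz \cite{Rab} for this classification, whereas you sketch how to recover it self-containedly via Pick's theorem and a lattice-width bound; that is extra detail but not a different argument.
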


\begin{proof}
Let $\Delta$ be the toric diagram of $R$. Since $R$ is an isolated singularity, $\Delta$ satisfies the condition given in Lemma~\ref{isolated_condition}(4). 
Then, we see that $\Delta$ does not contain an interior lattice point if and only if $\Delta$ is unimodular equivalent to 
\raisebox{-0.2cm}{\scalebox{0.5}{\begin{tikzpicture} \coordinate (00) at (0,0); \coordinate (10) at (1,0); \coordinate (01) at (0,1); 
\draw [line width=0.05cm, fill=black] (00) circle [radius=0.08] ; \draw [line width=0.05cm, fill=black] (10) circle [radius=0.08] ; \draw [line width=0.05cm, fill=black] (01) circle [radius=0.08] ;
\draw [line width=0.05cm] (00)--(10)--(01)--(00); \end{tikzpicture}}} 
or 
\raisebox{-0.2cm}{\scalebox{0.5}{\begin{tikzpicture} \coordinate (11) at (1,1); 
\draw [line width=0.05cm, fill=black] (00) circle [radius=0.08] ; \draw [line width=0.05cm, fill=black] (10) circle [radius=0.08] ; \draw [line width=0.05cm, fill=black] (11) circle [radius=0.08] ;  \draw [line width=0.05cm, fill=black] (01) circle [radius=0.08] ;
\draw [line width=0.05cm] (00)--(10)--(11)--(01)--(00); \end{tikzpicture}}} (see e.g., \cite[Theorem~1]{Rab}). 
The former case gives a regular toric ring, and we easily see that the later one gives the $A_1$-singularity $\kk[x,y,z,w]/(xy-zw)$. 

As a conclusion, we see that a non-regular $3$-dimensional Gorenstein toric isolated singularity not having an interior lattice point is isomorphic to the $A_1$-singularity. 
\end{proof}

\begin{corollary}
\label{main_cor}
Let $R$ be a non-regular $3$-dimensional Gorenstein toric isolated singularity that is not the $A_1$-singularity. 
Let $\Gamma$ be a consistent dimer model associated with $R$, and $(Q,W_Q)$ be the QP obtained as the dual of $\Gamma$. 
Let $\sfD$ be an internal perfect matching of $Q$, and let $\calA_\sfD=\calP(Q,W_Q)_\sfD$. 
Then, there exists a triangle equivalences\,$:$ 
$$
\begin{tikzcd}
  \calD^\rmb(\mc\calA_{\sfD,e_i}) \arrow[r, "\cong"] \arrow[d] & \sCM^\ZZ(R) \arrow[d] \\
  \calC_2(\calA_{\sfD,e_i}) \arrow[r,  "\cong" ] &\sCM(R)
\end{tikzcd}
$$
where $\calA_{\sfD,e_i}\coloneqq \calA_\sfD/\langle e_i\rangle$ and $i\in Q_0$ is a source of $Q_\sfD$. 
\end{corollary}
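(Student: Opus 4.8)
The plan is to verify the three hypotheses (P1)--(P3) of Theorem~\ref{motivation_thm2} for a carefully chosen internal perfect matching $\sfD$ and a carefully chosen vertex $i\in Q_0$, and then simply invoke that theorem to obtain the commutative square of triangle equivalences. By Proposition~\ref{findim_internal}, since $\sfD$ is internal, $\calA_\sfD$ is finite dimensional; this is exactly (P1). Also, since $R$ is an isolated singularity, Lemma~\ref{isolated_condition} gives that $\calP(Q,W_Q)/\langle e_j\rangle$ is finite dimensional for \emph{every} vertex $j\in Q_0$, so (P3) holds automatically no matter which vertex we eventually select. The only real work is (P2): we must exhibit a vertex $i$ that is a \emph{source} of the quiver $Q_\sfD$, and we must make sure the internal matching can be chosen (or used) so that such a source exists.

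First I would recall that, by Proposition~\ref{findim_internal}, the hypothesis that $\sfD$ is internal is equivalent to $Q_\sfD$ being an acyclic finite connected quiver. A finite acyclic quiver always has at least one source (take any vertex maximal in the partial order generated by the arrows; since the quiver is finite and acyclic this order is well-founded, so a maximal element — equivalently a vertex with no incoming arrows — exists). Choose $i\in Q_0$ to be any such source of $Q_\sfD$. Then (P2) holds by construction. Note that $R$ being non-regular and not the $A_1$-singularity guarantees, via Lemma~\ref{interior_isolated}, that the toric diagram $\Delta$ of $R$ has an interior lattice point, so that an internal perfect matching of $\Gamma$ actually exists; this is what allows us to pick $\sfD$ in the first place, using Theorem~\ref{exist_dimer} to produce a consistent dimer model $\Gamma$ with PM polygon $\Delta$.

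Having fixed $\sfD$ internal and $i$ a source of $Q_\sfD$, the three bullet conditions of Theorem~\ref{motivation_thm2} are all met: (P1) by Proposition~\ref{findim_internal}, (P2) by the choice of $i$, and (P3) by Lemma~\ref{isolated_condition}(1)$\Rightarrow$(2). Theorem~\ref{motivation_thm2} then yields that $R$ is a $3$-dimensional Gorenstein toric isolated singularity (which we already knew) and, for $\calA_{\sfD,e_i}=\calA_\sfD/\langle e_i\rangle$, the commutative diagram of triangle equivalences
\[
\begin{tikzcd}
  \calD^\rmb(\mc\calA_{\sfD,e_i}) \arrow[r, "\cong"] \arrow[d] & \sCM^\ZZ(R) \arrow[d] \\
  \calC_2(\calA_{\sfD,e_i}) \arrow[r,  "\cong" ] &\sCM(R)
\end{tikzcd}
\]
which is precisely the assertion. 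The main obstacle — and the only place the hypotheses ``non-regular'', ``isolated'', ``not $A_1$'' are genuinely used — is ensuring the existence of an internal perfect matching, i.e.\ that $\Delta$ has an interior lattice point; this is handled entirely by Lemma~\ref{interior_isolated}, so in fact the corollary is essentially a bookkeeping assembly of the preceding results rather than requiring new arguments.
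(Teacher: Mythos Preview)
Your proof is correct and follows essentially the same approach as the paper: verify (P1)--(P3) of Theorem~\ref{motivation_thm2} via Proposition~\ref{findim_internal} (for (P1) and the acyclicity of $Q_\sfD$ yielding a source for (P2)) and Lemma~\ref{isolated_condition} (for (P3)), then invoke that theorem. The only cosmetic difference is that the paper spells out, via Proposition~\ref{corresp_pm}, why an internal perfect matching corresponding to a given interior lattice point actually exists, whereas you take $\sfD$ as given by hypothesis and merely remark that Lemma~\ref{interior_isolated} makes this non-vacuous.
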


\begin{proof}
By Theorem~\ref{motivation_thm2}, we only check that there actually exists a perfect matching $\sfD$ and a vertex $i\in Q_0$ satisfying the conditions (P1)--(P3). 

Let $\Delta$ be the toric diagram of $R$, which contains an interior lattice point (see Lemma~\ref{interior_isolated}). 
Then, for each interior lattice point $p\in\Delta$, there exists an internal perfect matching corresponding to $p$. 
Indeed, taking a generic parameter $\theta$, we have a crepant resolution $\calM_\theta(Q)$ of $\Spec R$ (see Theorem~\ref{thm_crepant_dimer}). 
For the two dimensional torus orbit $Z_p$ in $\calM_\theta(Q)$ that corresponds to the lattice point $p$ (see Subsection~\ref{subsec_cone_pm}), 
we have a $\theta$-stable representation contained in $Z_p$, 
and the cosupport of this representation is our desired perfect matching of $Q$ (see Proposition~\ref{corresp_pm}). 
We denote this perfect matching by $\sfD$. 
Then, by Proposition~\ref{findim_internal}, we have the acyclic quiver $Q_\sfD$ and the finite dimensional algebra $\calA_\sfD$. 
For the idempotent $e_i$ corresponding to a source $i$ of $Q_\sfD$, we see that $\calP(Q,W_Q)/\langle e_i\rangle$ is finite dimensional by Lemma~\ref{isolated_condition}. 
\end{proof}

\begin{remark}
By Corollary~\ref{main_cor}, for any internal perfect matching $\sfD$, the generalized cluster category $\calC_2(\calA_{\sfD,e_i})$ is triangle equivalent to $\sCM(R)$. Thus, for internal perfect matchings $\sfD,\sfD^\prime$, the generalized cluster categories $\calC_2(\calA_{\sfD,e_i})$ and $\calC_2(\calA_{\sfD^\prime,e_j})$ are triangle equivalent where $i\in Q_0$ (resp. $j\in Q_0$) is a source of $Q_\sfD$ (resp. $Q_{\sfD^\prime}$). 
In other words, they are \emph{cluster equivalent} in the sense of \cite{AO}. 
However, as shown in \cite[5.4]{AIR}, $\calD^\rmb(\mc\calA_{\sfD,e_i})$ and $\calD^\rmb(\mc\calA_{\sfD^\prime,e_j})$ are not derived equivalent in general. 
\end{remark}

Even if $R$ is the $A_1$-singularity, we have similar equivalences as follows. 

\begin{example}
\label{rem_conifold}
Let $R=\kk[x,y,z,w]/(xy-zw)$. A consistent dimer model associated with $R$ and the associated quiver $Q$ (with potential) takes the following form. 

\begin{center}
\begin{tikzpicture}

\node at (0,0) {
\scalebox{0.6}{
\begin{tikzpicture}
\coordinate (B1) at (1,1); \coordinate (W1) at (3,3); 
\draw[line width=0.05cm] (0,0) rectangle (4,4);
\draw[line width=0.05cm]  (B1)--(W1); \draw[line width=0.05cm]  (B1)--(0,0);  \draw[line width=0.05cm]  (B1)--(2,0);  \draw[line width=0.05cm]  (B1)--(0,2); 
\draw[line width=0.05cm]  (W1)--(4,4);  \draw[line width=0.05cm]  (W1)--(2,4);  \draw[line width=0.05cm]  (W1)--(4,2); 
\draw [line width=0.05cm, fill=black] (B1) circle [radius=0.2] ;
\draw  [line width=0.05cm,fill=white] (W1) circle [radius=0.2] ;
\end{tikzpicture} }}; 

\node at (5,0) {
\scalebox{0.6}{
\begin{tikzpicture}[sarrow/.style={black, -latex}]
\coordinate (B1) at (1,1); \coordinate (W1) at (3,3); 
\draw[line width=0.05cm, lightgray] (0,0) rectangle (4,4);
\draw[line width=0.05cm, lightgray] (B1)--(W1); \draw[line width=0.05cm, lightgray] (B1)--(0,0);  \draw[line width=0.05cm, lightgray] (B1)--(2,0);  
\draw[line width=0.05cm, lightgray] (B1)--(0,2); \draw[line width=0.05cm, lightgray] (W1)--(4,4);  \draw[line width=0.05cm, lightgray] (W1)--(2,4);  
\draw[line width=0.05cm, lightgray] (W1)--(4,2); 
\draw [line width=0.05cm, lightgray, fill=lightgray] (B1) circle [radius=0.2] ;
\draw  [line width=0.05cm, lightgray, fill=white] (W1) circle [radius=0.2] ; 
\node (V0) at (3,1) {\large$0$}; \node (V1) at (1,3) {\large$1$}; 
\draw[sarrow, line width=0.064cm] (V0)--(V1) node[midway,xshift=0cm,yshift=0.3cm] {\Large $a$}; 
\draw[sarrow, line width=0.064cm] (V1)--(2,4) node[midway,xshift=0.1cm,yshift=-0.2cm] {\Large $b$}; 
\draw[sarrow, line width=0.064cm] (2,0)--(V0) node[midway,xshift=-0.3cm,yshift=0.1cm] {\Large $b$}; 
\draw[sarrow, line width=0.064cm] (V0)--(4,0) node[midway,xshift=0.1cm,yshift=0.2cm] {\Large $c$}; 
\draw[sarrow, line width=0.064cm] (0,4)--(V1) node[midway,xshift=-0.15cm,yshift=-0.2cm] {\Large $c$}; 
\draw[sarrow, line width=0.064cm] (V1)--(0,2) node[midway,xshift=0.15cm,yshift=-0.15cm] {\Large $d$}; 
\draw[sarrow, line width=0.064cm] (4,2)--(V0) node[midway,xshift=-0.1cm,yshift=0.3cm] {\Large $d$}; 
\end{tikzpicture} }}; 

\end{tikzpicture}
\end{center}
Indeed, the center of the Jacobian algebra, which is generated by cycles $x\coloneqq ab, y\coloneqq cd, z\coloneqq cb, w\coloneqq ad$, is isomorphic to the $A_1$-singularity $R=\kk[x,y,z,w]/(xy-zw)\cong \kk[ab,cd,cb,ad]$. 
We then consider the degree $d_\sfD$ induced by the perfect matching $\sfD=\{d\}$ of $Q$.
Then, this gives the degree $\degr\,x=\degr\,z=0$, $\degr\,y=\degr\,w=1$ which makes $R$ a $\ZZ$-graded ring. 
We note that $\sfD$ is a corner perfect matching, and hence the degree zero part of the Jacobian algebra is not finite dimensional (see Proposition~\ref{findim_internal}). 

Let $\fkm$ be the irrelevant homogeneous ideal of $R$. 
It is well-known that the completion $\widehat{R}$ at $\fkm$ has only finitely many indecomposable objects in $\CM(\widehat{R})$ up to isomorphism (see e.g., \cite[Chapter~9 and 12]{Yos}). 
Therefore, $R$ has only finitely many indecomposable objects in $\CM^\ZZ(R)$ up to isomorphism and the degree shift \cite{AR}, and 
the indecomposable objects are $R$, $M\coloneqq(x,z)$, $N\coloneqq(x,w)$ and their degree shifts. 
Thus, we see that the Auslander-Reiten quiver of $\CM^\ZZ(R)$ can be described as the repetition of the quiver shown in Figure~\ref{ARquiver_A1}, where the dotted arrow stands for the Auslander-Reiten translation $\tau$. 

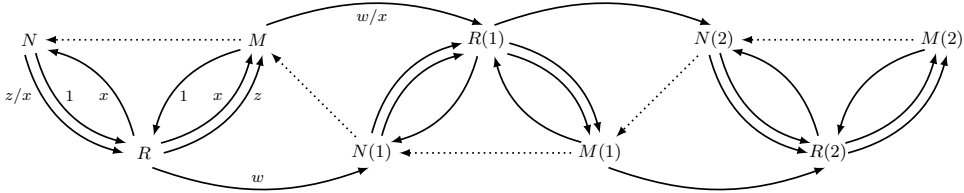
\begin{figure}[H]
\begin{center}
{\scalebox{0.75}{
\begin{tikzpicture}[sarrow/.style={black, -latex},tarrow/.style={black, latex-}]
\node (R0) at (0,0) {\small$R$}; \node (M0) at (2,2) {\small$M$}; \node (N0) at (-2,2) {\small$N$}; 
\node (R+) at (6,2) {\small$R(1)$}; \node (M+) at (8,0) {\small$M(1)$}; \node (N+) at (4,0) {\small$N(1)$}; 
\node (R++) at (12,0) {\small$R(2)$}; \node (M++) at (14,2) {\small$M(2)$}; \node (N++) at (10,2) {\small$N(2)$}; 
\path (R0) ++(180:0.35cm) coordinate (R01); \path (R0) ++(150:0.35cm) coordinate (R02); 
\path (R0) ++(30:0.35cm) coordinate (R03); \path (R0) ++(0:0.35cm) coordinate (R04); 
\path (R0) ++(120:0.35cm) coordinate (R05); \path (R0) ++(60:0.35cm) coordinate (R06); 
\path[tarrow, line width=0.03cm] (R01) edge[bend left] ++(135:2.45cm); 
\path[tarrow, line width=0.03cm] (R02) edge[bend left] ++(135:2.25cm); 
\path[sarrow, line width=0.03cm] (R03) edge[bend right] ++(45:2.25cm); 
\path[sarrow, line width=0.03cm] (R04) edge[bend right] ++(45:2.45cm); 
\path[sarrow, line width=0.03cm] (R05) edge[bend right] ++(135:2.15cm); 
\path[tarrow, line width=0.03cm] (R06) edge[bend left] ++(45:2.15cm); 
\node at (-1.3,1) {\footnotesize $1$}; \node at (-2.2,1) {\footnotesize $z/x$}; \node at (-0.7,1) {\footnotesize $x$}; 
\node at (0.7,1) {\footnotesize $1$}; \node at (1.3,1) {\footnotesize $x$}; \node at (2,1) {\footnotesize $z$}; 

\path (N+) ++(90:0.35cm) coordinate (N+1); \path (N+) ++(60:0.35cm) coordinate (N+2); \path (N+) ++(30:0.38cm) coordinate (N+3); 
\path (M+) ++(90:0.35cm) coordinate (M+1); \path (M+) ++(120:0.35cm) coordinate (M+2); \path (M+) ++(150:0.38cm) coordinate (M+3); 
\path[tarrow, line width=0.03cm] (M+1) edge[bend right] ++(135:2.25cm); 
\path[tarrow, line width=0.03cm] (M+2) edge[bend right] ++(135:2.05cm); 
\path[sarrow, line width=0.03cm] (M+3) edge[bend left] ++(135:2.15cm); 
\path[sarrow, line width=0.03cm] (N+1) edge[bend left] ++(45:2.25cm); 
\path[sarrow, line width=0.03cm] (N+2) edge[bend left] ++(45:2.05cm); 
\path[tarrow, line width=0.03cm] (N+3) edge[bend right] ++(45:2.15cm); 

\path (R0) ++(300:0.3cm) coordinate (R07); 
\path[sarrow, line width=0.03cm] (R07) edge[bend right=20] ++(0:3.8cm); \node at (2,-0.45) {\footnotesize $w$}; 
\path (M0) ++(60:0.3cm) coordinate (M01); 
\path[sarrow, line width=0.03cm] (M01) edge[bend left=20] ++(0:3.8cm); \node at (4,2.4) {\footnotesize $w/x$}; 

\path (R++) ++(180:0.35cm) coordinate (R++1); \path (R++) ++(150:0.35cm) coordinate (R++2); 
\path (R++) ++(30:0.35cm) coordinate (R++3); \path (R++) ++(0:0.35cm) coordinate (R++4); 
\path (R++) ++(120:0.35cm) coordinate (R++5); \path (R++) ++(60:0.35cm) coordinate (R++6); 
\path[tarrow, line width=0.03cm] (R++1) edge[bend left] ++(135:2.45cm); 
\path[tarrow, line width=0.03cm] (R++2) edge[bend left] ++(135:2.25cm); 
\path[sarrow, line width=0.03cm] (R++3) edge[bend right] ++(45:2.25cm); 
\path[sarrow, line width=0.03cm] (R++4) edge[bend right] ++(45:2.45cm); 
\path[sarrow, line width=0.03cm] (R++5) edge[bend right] ++(135:2.15cm); 
\path[tarrow, line width=0.03cm] (R++6) edge[bend left] ++(45:2.15cm); 

\path (R+) ++(60:0.3cm) coordinate (R+7); 
\path[sarrow, line width=0.03cm] (R+7) edge[bend left=20] ++(0:3.8cm); 
\path (M+) ++(300:0.3cm) coordinate (M+1); 
\path[sarrow, line width=0.03cm] (M+1) edge[bend right=20] ++(0:3.8cm); 

\draw[sarrow, line width=0.03cm, dotted] (M0)--(N0); \draw[sarrow, line width=0.03cm, dotted] (N+)--(M0); 
\draw[sarrow, line width=0.03cm, dotted] (M+)--(N+); 
\draw[sarrow, line width=0.03cm, dotted] (M++)--(N++); \draw[sarrow, line width=0.03cm, dotted] (N++)--(M+); 
\end{tikzpicture}
} }
\end{center}
\caption{The Auslander-Reiten quiver of $\CM^\ZZ(R)$ for the $A_1$-singularity}
\label{ARquiver_A1}
\end{figure}

In particular, any arrow factors through a projective object. 
Thus, the stable category $\sCM^\ZZ(R)$, which is a triangulated category whose shift functor is the cosyzygy functor $\Omega^{-1}$, can be described by deleting all solid arrows from Figure~\ref{ARquiver_A1}. 
We note that $\tau$ coincides with the syzygy functor $\Omega$ in our situation. 
Therefore, we can see that $M$ is a tilting object in $\sCM^\ZZ(R)$, and hence it induces an equivalence $\sCM^\ZZ(R)\cong\calD^\rmb(\mc\underline{\End}_R(M))\cong\calD^\rmb(\mc\,\kk)$. 

On the other hand, forgetting the grading on $\sCM^\ZZ(R)$ we have the stable category $\sCM(R)$ which can be considered as the orbit category $\sCM^\ZZ(R)/(1)$. 
Here, the degree shift $(1)$ coincides with $\tau^{-1}{\circ}[1]=\Omega^{-2}$, which is the shift functor $[2]$ in $\sCM^\ZZ(R)\cong\calD^\rmb(\mc\,\kk)$. 
Thus, we have an equivalence $\sCM(R)\cong\calC_2(\kk)$ (see also \cite[Example~5.2.2.]{TV}). 
\end{example}

Using these observations, we have the following corollary. 

\begin{corollary}
\label{existence_tilting}
Let $R$ be a $3$-dimensional Gorenstein toric isolated singularity. Let $\Gamma$ be a consistent dimer model associated with $R$. 
Then, there exists a $\ZZ$-grading on $R$ induced from a perfect matching of $\Gamma$ via \eqref{degree_pm} such that 
the stable category $\sCM^\ZZ R$ of $\ZZ$-graded MCM $R$-modules admits a tilting object. 
\end{corollary}

\begin{proof}
The case where $R$ is the $A_1$-singularity follows from Example~\ref{rem_conifold}. 
If $R$ is not the $A_1$-singularity, then we have the tilting object $e_i\calP(Q,W_Q)$ in $\sCM^\ZZ R$ by \cite[Theorem~4.1(a)]{AIR}, 
because there exists a perfect matching $\sfD$ and an idempotent $e_i$ satisfying (P1)--(P3) in this situation. 
\end{proof}

\section{\bf Mutations of perfect matchings} 
\label{sec_mutation_pm}

In Section~\ref{section_2RI} and \ref{sec_cluster_eq}, we saw that an internal perfect matching gives a $2$-representation infinite algebra (see Proposition~\ref{findim_internal}) and several triangle equivalences (see Corollary~\ref{main_cor}). 
In general, there are some internal perfect matchings corresponding to the same interior lattice point. 
Thus, in this section we investigate the relationship among such internal perfect matchings. 

First, we note that a perfect matching of $Q$ can be considered as a \emph{cut} in the sense of \cite{HI,IO}. 
In particular, by Proposition~\ref{findim_internal} we see that any internal perfect matching is \emph{algebraic} (see \cite[Definition~3.2]{HI}). 
To understand the relationship among cuts, the \emph{mutation of cuts}, which was also introduced in \cite{HI,IO}, is important. 
In the following, we introduce this notion in terms of perfect matchings, and call it the \emph{mutation of perfect matchings}. 

We say that a vertex $k\in Q_0$ is a \emph{strict source} (resp. \emph{strict sink}) of $(Q,\sfD)$ if all arrows ending (resp. starting) at $k$ belong to $\sfD$ and all arrows starting (resp. ending) at $k$ do not belong to $\sfD$. 
Namely, a strict source (resp. strict sink) is a source (resp. sink) of the quiver $Q_\sfD$. 

\begin{definition}
\label{def_mutation_pm}
Let $Q$ be the quiver associated with a dimer model, and $\sfD$ be a perfect matching of $Q$. 
\begin{enumerate}[(1)]
\item We assume that $k\in Q_0$ is a strict source of $(Q,\sfD)$. We define a subset $\lambda^+_k(\sfD)$ of $Q_1$ by removing all arrows in $Q$ ending at $k$ from $\sfD$ and adding all arrows in $Q$ starting at $k$ to $\sfD$. 
\item Dually, we assume that $k\in Q_0$ is a strict sink of $(Q,\sfD)$, and define a subset $\lambda^-_k(\sfD)$ of $Q_1$ by removing all arrows in $Q$ starting at $k$ from $\sfD$ and adding all arrows in $Q$ ending at $k$ to $\sfD$. 
\end{enumerate}
\end{definition}

The following properties follow from the definition. 

\begin{lemma}
\label{basic_mutation_pm}
Let $\sfD$ be a perfect matching of $Q$. For a strict source $($resp. strict sink$)$ $k\in Q_0$ of $(Q,\sfD)$, we have the following\,$:$ 
\begin{enumerate}[\rm(a)]
\item $\lambda^+_k(\sfD)$ $($resp. $\lambda^-_k(\sfD)$$)$ is a perfect matching of $Q$. 
\item $k$ is a strict sink of $(Q,\lambda^+_k(\sfD))$ $($resp. a strict source of $(Q,\lambda^-_k(\sfD))$$)$. 
\item We have $\lambda^-_k(\lambda^+_k(\sfD))=\sfD$ $($resp. $\lambda^+_k(\lambda^-_k(\sfD))=\sfD$$)$. 
\end{enumerate}
\end{lemma}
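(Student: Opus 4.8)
The plan is to reduce everything to one combinatorial characterization: a subset $\sfD\subseteq Q_1$ is a perfect matching of $Q$ if and only if every small cycle $\omega_n$ ($n\in\Gamma_0$) contains exactly one arrow of $\sfD$. This is just Definition~\ref{def_pm} transported through the duality between $\Gamma$ and $Q$, since the arrows occurring in $\omega_n$ are precisely the duals of the edges of $\Gamma$ incident to $n$, each edge giving one arrow and occurring once in $\omega_n$ (so, in particular, $\omega_n$ traverses no arrow twice). I would prove only the ``strict source'' halves of (a)--(c); the ``strict sink'' halves follow by applying those to the opposite quiver $Q^{\mathrm{op}}$ --- again the quiver of a dimer model, with the two colours of nodes interchanged --- under which $\lambda^-_k$ turns into $\lambda^+_k$ and a strict sink turns into a strict source.

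So fix a strict source $k$ of $(Q,\sfD)$ and let $E^{\mathrm{in}}$ (resp.\ $E^{\mathrm{out}}$) denote the set of arrows of $Q$ with head (resp.\ tail) $k$. First I would record the elementary consequences of the definition: $E^{\mathrm{in}}\subseteq\sfD$, $E^{\mathrm{out}}\cap\sfD=\varnothing$, and $E^{\mathrm{in}}\cap E^{\mathrm{out}}=\varnothing$ (a loop at $k$ would have to lie in $\sfD$ and also not lie in $\sfD$), so that $\lambda^+_k(\sfD)=(\sfD\setminus E^{\mathrm{in}})\cup E^{\mathrm{out}}$ is a disjoint union. For (a), fix $n\in\Gamma_0$ and write $\{s\}=\omega_n\cap\sfD$. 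If $k$ is not a vertex of $\omega_n$, then $\omega_n$ meets neither $E^{\mathrm{in}}$ nor $E^{\mathrm{out}}$, hence $\omega_n\cap\lambda^+_k(\sfD)=\{s\}$. If $k$ is a vertex of $\omega_n$, then each arrow of $\omega_n$ with head $k$ lies in $\sfD$, hence in $\{s\}$; since $\omega_n$ repeats no arrow, distinct passages through $k$ would contribute distinct such arrows, so $\omega_n$ passes through $k$ exactly once, with incoming arrow $s\in E^{\mathrm{in}}$ and a single outgoing arrow $t\in E^{\mathrm{out}}$ (its successor in $\omega_n$), and $t\ne s$, $t\notin\sfD$. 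Then $\omega_n\cap\lambda^+_k(\sfD)=(\{s\}\setminus E^{\mathrm{in}})\cup(\omega_n\cap E^{\mathrm{out}})=\varnothing\cup\{t\}=\{t\}$. In every case $|\omega_n\cap\lambda^+_k(\sfD)|=1$, so $\lambda^+_k(\sfD)$ is a perfect matching of $Q$.

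For (b), the disjoint description $\lambda^+_k(\sfD)=(\sfD\setminus E^{\mathrm{in}})\cup E^{\mathrm{out}}$ shows at once that $E^{\mathrm{out}}\subseteq\lambda^+_k(\sfD)$ and $E^{\mathrm{in}}\cap\lambda^+_k(\sfD)=\varnothing$, i.e.\ $k$ is a strict sink of $(Q,\lambda^+_k(\sfD))$. For (c), part (b) makes $\lambda^-_k$ applicable at $k$; using $E^{\mathrm{out}}\cap(\sfD\setminus E^{\mathrm{in}})=\varnothing$ and $E^{\mathrm{in}}\subseteq\sfD$ I would then compute
\[
\lambda^-_k\bigl(\lambda^+_k(\sfD)\bigr)=\bigl(\lambda^+_k(\sfD)\setminus E^{\mathrm{out}}\bigr)\cup E^{\mathrm{in}}=(\sfD\setminus E^{\mathrm{in}})\cup E^{\mathrm{in}}=\sfD .
\]

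The statement is essentially bookkeeping, so I do not anticipate a genuine obstacle; the one step that requires attention is the count in (a) for a small cycle $\omega_n$ through $k$. There it is exactly the strict-source hypothesis --- together with the fact that $\omega_n$ traverses no arrow twice --- that forces $\omega_n$ to pass through $k$ only once and the unique $\sfD$-arrow of $\omega_n$ to be the one deleted by $\lambda^+_k$ while the newly inserted arrow was previously absent, so that the count is preserved, $1-1+1=1$.
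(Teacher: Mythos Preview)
Your proof is correct; the paper does not give a proof at all, merely noting that the statements ``follow from the definition,'' and your argument is exactly the routine verification this phrase is pointing to. The one place you took care---showing that a small cycle $\omega_n$ through $k$ meets $E^{\mathrm{in}}$ in exactly one arrow, hence passes through $k$ only once---is indeed the only step with content, and your use of $|\omega_n\cap\sfD|=1$ together with $E^{\mathrm{in}}\subseteq\sfD$ handles it cleanly.
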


Since $\lambda^\pm_k(\sfD)$ are again perfect matchings, we call these the \emph{mutations of a perfect matching} $\sfD$ of $Q$ at $k\in Q_0$. 
We also denote by $\lambda^+_k(D)$ (resp. $\lambda^-_k(D)$) the perfect matching of a dimer model $\Gamma$ obtained as the dual of $\lambda^+_k(\sfD)$ (resp. $\lambda^-_k(\sfD)$), and call this the \emph{mutation of a perfect matching} $D$ of $\Gamma$ at $k\in \Gamma_2$. 
We say that two perfect matchings are \emph{mutation equivalent} if they are connected by repeating the mutations of perfect matchings. 
We here note the behavior of the associated quiver under these mutations. 

\begin{observation}
\label{obs_mutationPM}
Let $Q=(Q_0,Q_1)$ be the quiver associated with a dimer model, and $\fkR=\{\partial_aW_Q\mid a\in Q_1\}$ be the set of relations in $Q$ (see Subsection~\ref{subsec_dimer}). 
Recall that for a perfect matching $\sfD_i$, the quiver $Q_{\sfD_i}$ is defined as $(Q_{\sfD_i})_0=Q_0$ and $(Q_{\sfD_i})_1=Q_1{\setminus}\{a\in Q_1\mid a\in\sfD_i\}$. 
Considering the set $\fkR_{\sfD_i}=\{\partial_aW_Q \mid a\in\sfD_i\}$ in $Q_{\sfD_i}$, we have $\calA_{\sfD_i}=\kk Q_{\sfD_i}/\langle\fkR_{\sfD_i} \rangle$. 

Now, we assume that for a perfect matching $\sfD_i$, there is a strict source $k$ of $(Q,\sfD_i)$. 
Let $\lambda_k^+(\sfD_i)=\sfD_j$. Then, by definition we see that the quiver $Q_{\sfD_j}$ is given by $(Q_{\sfD_j})_0=Q_0$ and 
$$
(Q_{\sfD_j})_1=\{a\in(Q_{\sfD_i})_1 \mid \tl(a)\neq k\}\sqcup\{r^*:\hd(r)\rightarrow k \mid r\in\fkR_{\sfD_i}, \tl(r)=k\}, 
$$
where $r^*$ is the new arrow associated to $r\in\fkR_{\sfD_i}$ with $\tl(r)=k$, and especially we have 
$\{r^*:\hd(r)\rightarrow k \mid r\in\fkR_{\sfD_i}, \tl(r)=k\}=\{ a\in\sfD_i \mid \hd(a)=k\}$. 
Then, the set 
$$
\{r\in\fkR_{\sfD_i} \mid \tl(r)\neq k\}\sqcup\{\partial_aW_Q:\hd(a)\rightarrow k \mid a\in (Q_{\sfD_i})_1, \tl(a)=k\} 
$$
of relations in $Q_{\sfD_j}$ coincides with $\fkR_{\sfD_j}=\{\partial_aW_Q \mid a\in\sfD_j\}$ and $\calA_{\sfD_j}=\kk Q_{\sfD_j}/\langle\fkR_{\sfD_j} \rangle$. 

We have a similar observation for the case $\lambda_k^-(\sfD_i)$ with a strict sink $k$ of $(Q,\sfD_i)$. 
\end{observation}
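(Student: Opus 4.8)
The statement is a bookkeeping identity, so the plan is to unwind the definition of $\lambda^+_k$ directly and then read off the quiver and the relations of $\calA_{\sfD_j}$, finally matching the outcome with the description of the reflection functor (APR tilt) at a source. First I would record the consequences of $k$ being a strict source of $(Q,\sfD_i)$: every arrow of $Q$ with head $k$ belongs to $\sfD_i$, every arrow of $Q$ with tail $k$ does not, and no arrow both starts and ends at $k$ (such a loop would be forced simultaneously into and out of $\sfD_i$). Setting $\sfD_i^{\prime}\coloneqq\{a\in\sfD_i\mid \hd(a)\neq k,\ \tl(a)\neq k\}$, this yields the disjoint decompositions $\sfD_i=\{a\in Q_1\mid \hd(a)=k\}\sqcup\sfD_i^{\prime}$ and, by Definition~\ref{def_mutation_pm}, $\sfD_j=\lambda^+_k(\sfD_i)=\sfD_i^{\prime}\sqcup\{a\in Q_1\mid \tl(a)=k\}$; by Lemma~\ref{basic_mutation_pm}, $\sfD_j$ is again a perfect matching and $k$ is a strict sink of $(Q,\sfD_j)$.

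The two displayed equalities then follow by complementation, together with the fact built into the construction of $\calP(Q,W_Q)$ that $\partial_aW_Q=p_a^+-p_a^-$ is a path-combination from $\hd(a)$ to $\tl(a)$, i.e.\ $\tl(\partial_aW_Q)=\hd(a)$ and $\hd(\partial_aW_Q)=\tl(a)$. For the arrows: $(Q_{\sfD_j})_1=Q_1\setminus\sfD_j$ consists of the arrows that are neither in $\sfD_i^{\prime}$ nor start at $k$; splitting according to whether the head is $k$, and using that every arrow into $k$ lies in $\sfD_i$ (hence outside $(Q_{\sfD_i})_1$), one gets exactly $\{a\in(Q_{\sfD_i})_1\mid \tl(a)\neq k\}\sqcup\{a\in\sfD_i\mid \hd(a)=k\}$, where each $a$ in the second summand is matched with the new arrow $r^*=(\partial_aW_Q)^*\colon\hd(r)=\tl(a)\to k$ attached to the relation $r=\partial_aW_Q\in\fkR_{\sfD_i}$ (which has $\tl(r)=\hd(a)=k$), so the second summand is $\{r^*\mid r\in\fkR_{\sfD_i},\ \tl(r)=k\}$. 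For the relations: $\fkR_{\sfD_j}=\{\partial_aW_Q\mid a\in\sfD_j\}=\{\partial_aW_Q\mid a\in\sfD_i^{\prime}\}\sqcup\{\partial_aW_Q\mid \tl(a)=k\}$; since $\tl(\partial_aW_Q)=\hd(a)$, the first set is $\{r\in\fkR_{\sfD_i}\mid \tl(r)\neq k\}$, and the second consists of relations running $\hd(a)\to k$ indexed by the arrows of $Q_{\sfD_i}$ out of $k$ (all arrows of $Q$ with tail $k$ lying in $Q_{\sfD_i}$). The identity $\calA_{\sfD_j}=\kk Q_{\sfD_j}/\langle\fkR_{\sfD_j}\rangle$ is then the description of the truncated Jacobian algebra from Subsection~\ref{subsec_2rep_dimer} applied to $\sfD_j$.

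The only step that is not pure symbol-chasing is the identification of the arrows into $k$ that re-appear in $Q_{\sfD_j}$ with the new arrows $r^*$, i.e.\ that $a\mapsto\partial_aW_Q$ is a bijection between the arrows of $Q$ into $k$ and the relations $r\in\fkR_{\sfD_i}$ with $\tl(r)=k$ respecting heads; this is where consistency of $\Gamma$ enters, and it is precisely the point identifying $\lambda^+_k$ with the mutation of cuts of \cite{HI,IO} (so that $\calA_{\sfD_j}$ is an APR tilt of $\calA_{\sfD_i}$ at the source $k$). The case of $\lambda^-_k$ at a strict sink needs no separate argument: reversing all arrows of $Q$ interchanges the colours of the nodes of $\Gamma$, giving a consistent dimer model $\Gamma^{\mathrm{op}}$ with potential $W_{Q^{\mathrm{op}}}$, under which a strict sink becomes a strict source and $\lambda^-_k$ becomes $\lambda^+_k$, so the dual statement follows by applying the above to $\Gamma^{\mathrm{op}}$.
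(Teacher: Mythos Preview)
Your argument is correct and matches the paper's approach: in the paper this is stated as an Observation with the phrase ``by definition we see that'', and you have simply written out the complementation and indexing that the paper leaves implicit.

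One small over-caution: you flag the bijection $a\mapsto\partial_aW_Q$ between arrows into $k$ and relations $r\in\fkR_{\sfD_i}$ with $\tl(r)=k$ as the place ``where consistency of $\Gamma$ enters''. In fact consistency is not needed here. The set $\fkR_{\sfD_i}$ is, by construction, indexed by the arrows $a\in\sfD_i$, and the condition $\tl(\partial_aW_Q)=k$ is equivalent to $\hd(a)=k$; so the identification $r^*\leftrightarrow a$ is tautological at the level of indexed sets. The Observation is purely combinatorial and holds for any dimer model (not just consistent ones), which is why the paper does not impose consistency in its hypotheses. Consistency only becomes relevant later, when one wants $\calA_{\sfD_i}$ to be finite-dimensional in order to invoke the APR-tilting result of \cite{IO}.
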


Let $Q$ be the quiver associated with a consistent dimer model. 
If $\sfD$ is an internal perfect matching of $Q$, then $Q_\sfD$ is acyclic (see Proposition~\ref{findim_internal}), 
thus there exists a strict source and sink of $(Q,\sfD)$. 
Thus, we can apply these operations $\lambda^\pm_k$ to all internal perfect matchings. 
In particular, we have the following. 

\begin{lemma}
\label{basic_mutation_pm2}
Let $\Gamma$ be a consistent dimer model, and $Q$ be the associated quiver. 
For an internal perfect matching $D$ of $\Gamma$, we assume that $k$ is a strict sink of $(Q,\sfD)$ $($resp. strict source of $(Q,\sfD)$$)$. 
Then, $\lambda^+_k(D)$ $($resp. $\lambda^-_k(D)$$)$ is also an internal perfect matching corresponding to the same interior lattice point of 
the PM polygon $\Delta$ of $\Gamma$. 
\end{lemma}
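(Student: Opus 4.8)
The plan is to prove the two claims separately: that $\lambda^{\pm}_k(D)$ lies at the same lattice point of $\Delta$ as $D$, and that this point is interior. By Lemma~\ref{basic_mutation_pm}(a) the mutation is again a perfect matching; write $\sfD'$ for the resulting subset of $Q_1$ and $D'$ for the corresponding perfect matching of $\Gamma$. It is enough to treat one of the two dual situations, so I take $\sfD'=\lambda^+_k(\sfD)$ with $k$ a strict source of $(Q,\sfD)$ (this is the situation in which $\lambda^+_k$ is defined, cf. Definition~\ref{def_mutation_pm}); the case of $\lambda^-_k$ at a strict sink is entirely symmetric.

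For the first claim I would work with the face $f$ of $\Gamma$ dual to $k$. The arrows of $Q$ incident to $k$ are precisely the duals of the edges on the boundary of $f$, and as one goes around this boundary the nodes alternate in colour, so the corresponding dual arrows alternate between pointing into $k$ and pointing out of $k$. Because $k$ is a strict source of $(Q,\sfD)$, the arrows into $k$ all lie in $\sfD$ while the arrows out of $k$ all lie outside $\sfD$; hence $D$ meets the boundary of $f$ in exactly the edges dual to the in-arrows. The mutation $\lambda^+_k$ exchanges these two classes of boundary edges and leaves $D$ unchanged elsewhere, so the $1$-chain $D'-D$, with each edge oriented from its white node to its black node, equals $\pm$ the boundary of the $2$-cell $f$. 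In particular $[D'-D]=0$ in $\rmH_1(\TT)$, whence $[D'-D_0]=[D-D_0]$ and $D'$ corresponds to the same lattice point of $\Delta$ as $D$.

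For the second claim it suffices, by Proposition~\ref{findim_internal}, to show that $Q_{\sfD'}$ is acyclic. Since $D$ is internal, $Q_\sfD$ is acyclic, and $k$ is a source of $Q_\sfD$. By Observation~\ref{obs_mutationPM}, $Q_{\sfD'}$ is obtained from $Q_\sfD$ by deleting all arrows with tail $k$ and inserting a new arrow $r^*\colon\hd(r)\to k$ for each relation $r\in\fkR_\sfD$ with $\tl(r)=k$; every new arrow has head $k$, and by Lemma~\ref{basic_mutation_pm}(b) the vertex $k$ is a sink of $Q_{\sfD'}$. If $Q_{\sfD'}$ had an oriented cycle, that cycle could not pass through the sink $k$, so it would use only arrows of $Q_{\sfD'}$ not incident to $k$; but by the description above every such arrow already belongs to $Q_\sfD$, so the cycle would be an oriented cycle of $Q_\sfD$, a contradiction. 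Hence $Q_{\sfD'}$ is acyclic, $D'$ is an internal perfect matching, and combining with the previous paragraph it corresponds to the same interior lattice point of $\Delta$ as $D$.

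Most of this is combinatorial bookkeeping that follows directly from the results already recorded (Lemma~\ref{basic_mutation_pm}, Observation~\ref{obs_mutationPM}, Proposition~\ref{findim_internal}). The one step that needs genuine care is the homological identity $D'-D=\pm\,\partial f$: one has to check, against the white-node-on-the-right convention for the dual quiver, that the edges of the boundary of $f$ dual to arrows into $k$ are exactly the ones traversed from a white node to a black node (up to a single global sign) when the boundary is oriented as the boundary of $f$, so that the two halves exchanged by $\lambda^+_k$ really assemble into $\pm$ the $2$-boundary rather than some other integral combination of boundary edges. Once this sign bookkeeping is settled the argument closes as above; alternatively one may deduce the acyclicity of $Q_{\sfD'}$ from the fact that mutation preserves algebraicity of cuts in the sense of \cite{HI,IO} together with Proposition~\ref{findim_internal}.
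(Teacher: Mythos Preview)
Your argument is correct, and the first part (showing $[D'-D]=0$) is exactly the paper's approach, only spelled out in more detail: the paper simply asserts that $D-D'$ is a homologically trivial cycle on $\Gamma$ and concludes $[D-D_0]=[D'-D_0]$.

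Where you diverge is in the second claim. The paper does not argue acyclicity of $Q_{\sfD'}$ at all; instead it observes that ``internal'' is purely a property of the lattice point (by definition a perfect matching is internal iff its lattice point lies in the interior of $\Delta$), so once $[D'-D_0]=[D-D_0]$ the internality of $D'$ is immediate. Your route via Observation~\ref{obs_mutationPM} and Proposition~\ref{findim_internal} is valid and the cycle-through-a-sink argument is clean, but it is doing more work than necessary. Likewise, your caution about the sign in $D'-D=\pm\partial f$ is harmless but not really needed: for the homology statement one only uses that the support of $D-D'$ is the boundary of the face dual to $k$, which bounds a disk on $\TT$ and is therefore null-homologous regardless of orientation conventions.
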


\begin{proof}
We set $D^\prime\coloneqq\lambda_k^+(D)$. 
Then, by definition we see that $D-D^\prime$ is a homologically trivial cycle on $\Gamma$, and hence $[D-D^\prime]=(0,0)$. 
Fixing the reference perfect matching $D_0$ of $\Gamma$, we have $(0,0)=[D-D^\prime]=[D-D_0]-[D^\prime-D_0]$.
Therefore, they determine the same lattice point of the PM polygon, and hence $D^\prime$ is internal. 
The case of $\lambda_k^-(D)$ is similar. 
\end{proof}

\begin{remark}
\label{rem_mut_pm}
For a perfect matching $\sfD$ of $Q$, there is a generic parameter $\theta$ such that $\sfD$ is a $\theta$-stable perfect matching (see Proposition~\ref{corresp_pm}(2)). 
However, the mutated ones $\lambda_k^\pm(\sfD)$ are not $\theta$-stable. 
Indeed, let $\theta=(\theta_i)_{i\in Q_0}\in\Theta_\RR$. 
We assume that an internal perfect matching $\sfD$ is $\theta$-stable and $k$ is a strict source of $(Q,\sfD)$. 
Thus, a representation $V=(V_i)_{i\in Q_0}$ such that $V_k=0$ and $V_i\cong\kk$ ($i\neq k$) is a subrepresentation of $V_\sfD$. 
Then we have $\theta(V)=\sum_{i\neq k}\theta_i=-\theta_k>0$. 
On the other hand, since $k$ is a strict sink of $(Q,\lambda_k^+(\sfD))$, 
the simple representation $S_k$ corresponding to $k$ is a subrepresentation of $V_{\lambda_k^+(\sfD)}$. 
Since $\theta(S_k)=\theta_k<0$, we see that $\lambda_k^+(\sfD)$ is not $\theta$-stable. 
We also see that $\lambda_k^-(\sfD)$ is not $\theta$-stable by the argument dual to the above one. 
However, for each of $\lambda_k^+(\sfD), \lambda_k^-(\sfD)$, there respectively exists a generic parameter making them stable (see Proposition~\ref{corresp_pm}(2)). 
\end{remark}

In Figure~\ref{pm_4a}, the perfect matchings $D_5,D_6,D_7,D_8$ are internal, and they correspond to the same interior lattice point. 
By considering the mutations at appropriate faces, we see that these are mutation equivalent. 
This property holds for a more general situation as follows. 

\begin{theorem}
\label{pm_mutation_equiv}
Let $\Gamma$ be a consistent dimer model and $Q$ be the associated quiver. 
Let $D, D^\prime$ be internal perfect matchings of $\Gamma$. 
Then, $D$ and $D^\prime$ are mutation equivalent if and only if $D$ and $D^\prime$ correspond to the same interior lattice point of the PM polygon of $\Gamma$. 
\end{theorem}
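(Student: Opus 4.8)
The plan is to prove the two implications separately, the forward one being routine and the converse carrying all the content.

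For the \emph{only if} direction I would argue that a mutation never changes the associated lattice point. Along any chain of mutations connecting $D$ to $D'$ every intermediate perfect matching is internal: this follows inductively from Lemma~\ref{basic_mutation_pm2}, using that an internal perfect matching always admits a strict source and a strict sink because $Q_\sfD$ is acyclic by Proposition~\ref{findim_internal}, so the operations in Definition~\ref{def_mutation_pm} are available at every step. Lemma~\ref{basic_mutation_pm2} moreover says each term of the chain corresponds to the same interior lattice point of $\Delta$, hence so do $D$ and $D'$.

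For the \emph{if} direction, assume $D\neq D'$ with $[D-D_0]=[D'-D_0]$ for a fixed reference perfect matching $D_0$. I would pass to the symmetric difference: viewing $D-D'$ as a $1$-cycle on $\TT$ (edges of $D$ oriented white$\to$black, edges of $D'$ black$\to$white), it is a disjoint union of coherently oriented closed curves along which edges of $D$ and of $D'$ alternate, and it is null-homologous since $[D-D']=0$ in $\rmH_1(\TT)\cong\ZZ^2$; therefore $D-D'=\partial\big(\sum_{k\in\Gamma_2}f(k)\,k\big)$ for a function $f\colon\Gamma_2\to\ZZ$, unique up to an additive constant, which is $1$-Lipschitz for $Q$-adjacency (each edge is crossed at most once) and nonconstant exactly because $D\neq D'$. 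I would then induct on the pair $(\max f-\min f,\ \#\{k:f(k)=\max f\})$ ordered lexicographically. The bridge to mutations is that a face $k$ is a strict source or strict sink of $(Q,\sfD)$ precisely when the edges of $D$ lying on $\partial k$ form one of the two alternating classes of $\partial k$ (equivalently, every node of $\partial k$ is $D$-matched within $\partial k$), in which case $\lambda^{\pm}_k$ swaps the two classes; since these two classes differ by $\pm\partial k$ as $1$-chains, the mutation changes $f$ by $\mp1$ at $k$ and nowhere else, and a short count shows it decreases $|D\triangle D'|$ exactly when more than half of $\partial k$ lies in $D\triangle D'$. By Lemma~\ref{basic_mutation_pm2} the mutant is again internal at the same point, so the induction hypothesis applies. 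When the maximal level set of $f$ is a single face $k$ a step is completed at once: all of $\partial k$ then lies in $D\triangle D'$, the cycle structure forces $D\cap\partial k$ to be an alternating class, so $k$ is a strict source or sink, and the descending mutation makes $D$ agree with $D'$ along $\partial k$, strictly simplifying $f$.

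The hard part will be the case of a \emph{thick} maximal level set — for instance an annular region winding around $\TT$ — where no face is a strict local maximum of $f$. Here I would need to show that the maximal region can nonetheless be eroded by mutating a boundary face in the descending direction (possibly after some ``lateral'' mutations that keep the lexicographic measure unchanged), exploiting that $D$ and $D'$ are genuine perfect matchings (no two adjacent edges of a face are both in $D$, which strongly constrains the local picture), that $Q_\sfD$ is acyclic so strict sources/sinks are always available, and — for consistent dimer models — the rigidity of zigzag paths together with Proposition~\ref{zigzag_sidepolygon}, which restricts the homology classes of the components of $D\triangle D'$ when $D$ and $D'$ lie over the same lattice point. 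Once a measure-decreasing mutation is shown to exist in every case, the induction terminates at $f$ constant, i.e.\ $D=D'$, proving mutation equivalence. As a cross-check I would also record the alternative geometric route: by Proposition~\ref{corresp_pm}(2) both $D$ and $D'$ are $\theta$-stable for suitable generic parameters, and joining the two parameters by a generic path in $\Theta_\RR$ and tracking the $\theta$-stable perfect matching over the interior lattice point across each GIT wall — where the crepant resolution flops, cf.\ Remark~\ref{rem_mut_pm} — should again exhibit the change as a single mutation, the wall-crossing analysis being the obstacle on that side.
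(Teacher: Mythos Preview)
Your forward direction is correct and matches the paper. Your height-function framework for the converse is equivalent to the paper's region-by-region approach (the paper's regions are precisely the super-level sets of your $f$), and your treatment of the case where the maximal level set is a single face is fine.

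The gap is exactly where you say it is, and you do not close it. When the maximal plateau of $f$ is thick, a boundary face $k$ of the plateau need not have all of $\partial k$ inside $D\triangle D'$, so there is no reason for $k$ to be a strict source or sink of $(Q,\sfD)$. Your suggestion of ``lateral'' mutations is the right instinct but is not an argument; the GIT wall-crossing alternative would require proving that each wall-crossing changes the $\theta$-stable perfect matching by a single $\lambda_k^\pm$, which is not available in the paper and is itself nontrivial. The paper resolves the thick-plateau case with two concrete ideas you are missing. First (its Step~3), one restricts to the acyclic subquiver $Q_\sfD\cap C^\circ=Q_{\sfD'}\cap C^\circ$ inside the plateau interior $C^\circ$; if no source of this subquiver is a boundary vertex, then every such source is a genuine source of both $Q_\sfD$ and $Q_{\sfD'}$, so one mutates it \emph{simultaneously in $D$ and $D'$}. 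This leaves $D-D'$ unchanged but strictly shrinks the set of unprocessed interior vertices, so after finitely many steps some source of the restricted quiver \emph{is} a boundary vertex. Second (its Step~4), a local parity count along $\partial F_k$ — splitting it into alternating segments $B_\alpha$ on $\partial C$ and $I_\alpha$ in $C^\circ$ and using that each $I_\alpha$ has odd length with its first and last edge in neither $D$ nor $D'$ — shows that a boundary vertex which is a source of $Q_\sfD\cap C^\circ$ is automatically a source of $Q_\sfD$ or of $Q_{\sfD'}$. One then mutates whichever of $D,D'$ admits the mutation, which genuinely shrinks the plateau. This Step~4 argument is the crux you are missing; without it your induction cannot advance in the thick case.
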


\begin{proof}
Let $\sfD,\sfD^\prime$ be perfect matchings of $Q$ corresponding to $D, D^\prime$ respectively. 
The ``only if" part follows from Lemma~\ref{basic_mutation_pm2}. 
Thus, we show the other direction. 

\medskip

\noindent{\bf(Step1)} 
If $D$ and $D^\prime$ correspond to the same lattice point, then we have $[D-D^\prime]=(0,0)$. 
Then, $D-D^\prime$ consists of cycles that can be divided into the following two types: 
\begin{itemize}
\item cycles $C_1,\dots, C_s$ that are homologically trivial, that is, $[C_i]=(0,0)$ for any $i=1,\dots,s$, 
\item cycles $E_1,\dots,E_t,E_{t+1},\dots, E_{2t}$ that are not homologically trivial, and $[E_j]=-[E_{t+j}]=v$ 
for any $j=1,\dots,t$, where $v$ is a certain primitive vector. 
\end{itemize}
Indeed, the former case is trivial, and the later case can be verified as follows. 
Let us consider the cycles $E_1,\dots,E_u$ in $D-D^\prime$ that are not homologically trivial. 
Since they are obtained as the difference of two perfect matchings, 
they do not have self-intersections, and any pair of these cycles do not have an intersection. 
Thus, the slopes of any pair of these cycles are linearly dependent. 
Let $v\coloneqq[E_1]$. Since the slopes of these cycles are primitive, we see that each slope is either $v$ or $-v$. 
Since $[D-D^\prime]=(0,0)$, the number of cycles whose slope is $v$ coincides with that of cycles whose slope is $-v$. 
Thus, we have $u=2t$ for some integer $t>0$, and by renumbering subscripts (if necessary) we have the assertion. 

\medskip

\noindent{\bf(Step2)} 
In this step, we fix the notation. 
Let $C$ be a region on the two-torus $\TT$ that takes the form either 
\begin{enumerate}[\rm (a)]
\setlength{\leftskip}{0.5cm}
\item the region determined as the inside of some homologically trivial cycle $C_i$ 
(assume that $C_i$ is contained in $C$ as the boundary, and there is no homologically trivial cycle in $C$ except $C_i$), or 
\item the region $C_{\ell m}$ determined as follows. 
\end{enumerate}
We consider the cycles $E_1,\dots,E_t,E_{t+1},\dots, E_{2t}$. 
Since $[E_j]=-[E_{t+j}]=v$, there are cycles $E_\ell$ and $E_{t+m}$ with $v=[E_\ell]=-[E_{t+m}]$ such that 
there is no cycle whose homology class is either $v$ or $-v$ in the region determined as the intersection of 
the right of $E_\ell$ and the right of $E_{t+m}$, and we denote this region by $C_{\ell m}$. 
Furthermore, we assume that there is no homologically trivial cycle in $C_{\ell m}$, 
indeed our problem will be reduced to such a situation in (Step5) below. 

Let $C^\circ$ be the strict interior of $C$, and $\partial C$ be the boundary of $C$. 
In particular, $\partial C$ is either $C_i$ or $E_\ell\sqcup E_{t+m}$ in our situation. 
By the assumption, we see that edges appearing on $\partial C$ are alternately contained in $D$ and $D^\prime$, and 
edges appearing in $C^\circ$ are contained in both $D$ and $D^\prime$ or contained in neither $D$ nor $D^\prime$. 
Let $Q_\sfD\cap C^\circ$ be the subquiver of $Q_\sfD$ in which vertices are the ones corresponding to the faces of $\Gamma$ contained in $C^\circ$, and arrows  are the ones of $Q_\sfD$ appearing in $C^\circ$ (especially, we do not contain the arrows crossing over $\partial C$). 
We note that $Q_\sfD\cap C^\circ=Q_{\sfD^\prime}\cap C^\circ$ holds. 
Since $D$ is internal, $Q_\sfD$ is acyclic (see Proposition~\ref{findim_internal}) and so is the subquiver $Q_\sfD\cap C^\circ$.  
Also, let $Q_\sfD\cap \partial C$ be the subquiver of $Q_\sfD$ consisting of the arrows of $Q_\sfD$ crossing over $\partial C$. 

\medskip

\noindent{\bf(Step3)} 
Let us consider the case $\partial C=C_i$. 
We then show that we can take $D$ so that a source of $Q_\sfD\cap C^\circ$ is also a source of $Q_\sfD$. 
Indeed, since the edges appearing on $\partial C$ are alternately contained in $D$ and $D^\prime$, 
we see that all the arrows in $Q_\sfD\cap \partial C$ point in the same direction 
(from the inside of $C$ to the outside, or the opposite direction) and 
all the arrows in $Q_{\sfD^\prime}\cap \partial C$ point in the direction opposite to those in $Q_\sfD\cap \partial C$. 
We choose $D$ so that the arrows in $Q_\sfD\cap \partial C$ point from the inside of $C$ to the outside, 
in which case the edges in $D\cap\partial C$ are the dual of arrows pointing from the outside of $C$ to the inside. 
By this choice of $D$, we see that a source of $Q_\sfD\cap C^\circ$ is also a source of $Q_\sfD$. 

For the case $\partial C=E_\ell\sqcup E_{t+m}$, we also see that any source of $Q_\sfD\cap C^\circ$ is a source of $Q_\sfD$. 
In this case, all the edges in $D\cap\partial C$ (resp. $D^\prime\cap\partial C$) are directed from white to black (resp. from black to white). 
Since $[E_\ell]=-[E_{t+m}]$ and $C^\circ$ is determined as the intersection of the right hand side of $E_\ell$ and that of $E_{t+m}$, 
we see that the edges in $D\cap\partial C$ are the dual of arrows pointing from the outside of $C$ to the inside. 
Thus, we have the assertion. 

\medskip

\noindent{\bf(Step4)} 
We consider the perfect matching $D$ chosen in (Step3). 
Let $V$ be the set of vertices of the quiver $Q_\sfD\cap C^\circ$. Since $Q_\sfD\cap C^\circ$ is acyclic, 
we can define a total order $<_\sfD$ on $V$ such that $\tl(a)<_\sfD\hd(a)$ for any arrow $a$ of $Q_\sfD\cap C^\circ$. 
We will call such an order a \emph{total order compatible with $\sfD$}. 
For simplicity, we let $V\coloneqq\{k_1,\dots, k_n\}$ and suppose that $k_1<_\sfD\cdots <_\sfD k_n$. 
Since the vertex $k_1$ is the minimal element of $V$ with respect to the order $<_\sfD$, it is a source of $Q_\sfD\cap C^\circ$. 
By (Step3), $k_1$ is also a source of $Q_\sfD$, and hence we can apply the mutation $\lambda_{k_1}^+$ to $D$. 
Let $D_1\coloneqq \lambda_{k_1}^+(D)$. Note that the quiver $Q_{\sfD_1}\cap C^\circ$ is acyclic and the set of vertices of this quiver coincides with $V$. 
We define a total order $<_{\sfD_1}$ on $V$ such that $k_2<_{\sfD_1}\cdots <_{\sfD_1} k_n<_{\sfD_1} k_1$. 
Since applying the mutation $\lambda_{k_1}^+$ makes $k_1$ a sink of $Q_{\sfD_1}\cap C^\circ$ (see Lemma~\ref{basic_mutation_pm}) and 
preserves the arrows of $Q_\sfD\cap C^\circ$ not incident to $k_1$, we see that $<_{\sfD_1}$ is a total order compatible with $\sfD_1$. 
By the same argument as above, we see that the vertex $k_2$, which is the minimal element  of $V$ with respect to the order $<_{\sfD_1}$, 
is a source of $Q_{\sfD_1}$. 
Thus, we can define $D_2\coloneqq \lambda_{k_2}^+(D_1)=\lambda_{k_2}^+\lambda_{k_1}^+(D)$ 
and a total order $<_{\sfD_2}$ on $V$ compatible with $\sfD_2$ 
such that $k_3<_{\sfD_2}\cdots <_{\sfD_2} k_n<_{\sfD_2} k_1<_{\sfD_2} k_2$. 

Repeating the same argument, we have the perfect matching $D_r\coloneqq \lambda_{k_r}^+\cdots\lambda_{k_1}^+(D)$ 
and a total order $<_{\sfD_r}$ on $V$ compatible with $\sfD_r$ for $r\ge 3$. 
Then, we finally have the perfect matching $D_n\coloneqq \lambda_{k_n}^+\cdots\lambda_{k_1}^+(D)$ and stop these procedures here. 
We note that we apply mutations of perfect matchings to $D$ exactly once at each vertex in $V$ through the sequence of mutations producing $D_n$. 

Let $a$ be an arrow of $Q_\sfD\cap C^\circ$, and hence we have $a\not\in\sfD$ and $a\not\in\sfD^\prime$. 
Let $\fkD\coloneqq\{D_1,\dots,D_n\}$. 
We suppose that $D_p\in\fkD$ is the perfect matching obtained after applying the mutation at $\tl(a)$, 
which means $\tl(a)$ is a source of $Q_{\sfD_{p-1}}\cap C^\circ$. 
Since $\tl(a)$ is a sink of $Q_{\sfD_p}\cap C^\circ$, we have $a\in\sfD_p$. 
We then suppose that $D_q\in\fkD$ is the perfect matching obtained after applying the mutation at $\hd(a)$, especially $p<q$ by construction. 
Since $\hd(a)$ is a sink of $Q_{\sfD_q}\cap C^\circ$, we have $a\not\in\sfD_q$. 
After this, we do not apply the mutation at $\tl(a), \hd(a)$, thus we have $a\not\in\sfD_n$. 
We then let $b$ be an arrow of $Q_\sfD\cap \partial C$. 
If $b\not\in\sfD$ (equivalently $b\in\sfD^\prime$), then $\tl(b)$ is a vertex of $Q_\sfD\cap C^\circ$, but $\hd(b)$ is not a vertex of $Q_\sfD\cap C^\circ$. 
We suppose that $D_{p^\prime}\in\fkD$ is the perfect matching obtained after applying the mutation at $\tl(b)$. 
Since $\tl(b)$ is a sink of $Q_{\sfD_{p^\prime}}\cap C^\circ$, we have $b\in\sfD_{p^\prime}$. 
After this, we do not apply the mutation at $\tl(b)$, thus we have $b\in\sfD_n$. 
By a similar argument, if $b\in\sfD$ (equivalently $b\not\in\sfD^\prime$), then we have $b\not\in\sfD_n$. 
Thus, we see that $D_n$ coincides with $D^\prime$ in the region $C$, thus $C$ vanishes in $D_n-D^\prime$.

\medskip

\noindent{\bf(Step5)} 
We consider the cycles $C_1,\dots, C_s$, $E_1,\dots,E_t,E_{t+1},\dots, E_{2t}$ discussed in (Step1). 
Applying the arguments in (Step4) to $C_1,\dots, C_s$, 
we have an internal perfect matching mutation equivalent to $D$ and the difference from $D^\prime$ 
consists of the cycles $E_1,\dots, E_{2t}$. 
Thus, we may assume that $D-D^\prime$ consists of cycles $E_1, \dots, E_{2t}$, and we can consider the region $C_{\ell m}$ as in (b) of (Step2). 
By applying the arguments in (Step4) to $C_{\ell m}$, we erase $E_\ell$ and $E_{t+m}$ in the difference of two perfect matchings. 
Thus, repeating this argument, we eventually have the perfect matching that coincide with $D^\prime$, 
which means $D$ and $D^\prime$ are mutation equivalent. 
\end{proof}

\begin{remark}
\label{combinatorics_PM}
In many contexts of mathematics and physics, finding all perfect matchings of a dimer model is the fundamental problem. 
For a consistent dimer model, boundary perfect matchings can be determined by using zigzag paths (see e.g., \cite[Subsection~4.6,4.7]{Bro}, \cite[Subsection~3.4]{Gul}), especially corner ones are determined uniquely. 
On the other hand, by Theorem~\ref{pm_mutation_equiv} if we find an internal perfect matching $\sfD$, 
then we can obtain all internal perfect matchings corresponding to the same interior lattice point as $\sfD$ using the mutations. 
Thus, the mutations give an effective way to list all perfect matchings. 
\end{remark}

\begin{example}
\label{ex_mut_pm1}
We consider the following consistent dimer model and the associated quiver $Q$. 
Here, the rightmost figure is the PM polygon $\Delta$, and the triangulation given in this figure induces a crepant resolution of the associated toric singularity. 
In particular, using the method in Remark~\ref{method_make_triangulation}, we see that this crepant resolution is isomorphic to $\calM_\theta(Q)$ with the generic parameter $\theta=(-7,1, \dots,1)$. 

\newcommand{\edgewidth}{0.1cm} 
\newcommand{\nodewidth}{0.1cm} 
\newcommand{\noderad}{0.35} 

\newcommand{\dimerexampleB}{
\coordinate (B1) at (1,3); \coordinate (B2) at (7,3); 
\coordinate (B3) at (1,7); \coordinate (B4) at (7,7); 
\coordinate (B5) at (1,11); \coordinate (B6) at (7,11); 

\coordinate (W1) at (4,1); \coordinate (W2) at (10,1); 
\coordinate (W3) at (4,5); \coordinate (W4) at (10,5); 
\coordinate (W5) at (4,9); \coordinate (W6) at (10,9); 
\draw[line width=0.1cm]  (0,0) rectangle (12,12);

\node [font=\fontsize{40pt}{0pt}\selectfont] (V0) at (4,7) {\scalebox{1.2}{$0$}} ; 
\node [font=\fontsize{40pt}{0pt}\selectfont] (V1) at (7,5) {\scalebox{1.2}{$1$}} ; 
\node [font=\fontsize{40pt}{0pt}\selectfont] (V2) at (10,7) {\scalebox{1.2}{$2$}} ; 
\node [font=\fontsize{40pt}{0pt}\selectfont] (V3) at (5,10) {\scalebox{1.2}{$3$}} ; 
\node [font=\fontsize{40pt}{0pt}\selectfont] (V4) at (1,9) {\scalebox{1.2}{$4$}} ; 
\node [font=\fontsize{40pt}{0pt}\selectfont] (V5a) at (1,1) {\scalebox{1.2}{$5$}} ; 
\node [font=\fontsize{40pt}{0pt}\selectfont] (V5b) at (10,11) {\scalebox{1.2}{$5$}} ; 
\node [font=\fontsize{40pt}{0pt}\selectfont] (V6a) at (0.5,4.5) {\scalebox{1.2}{$6$}} ; 
\node [font=\fontsize{40pt}{0pt}\selectfont] (V6b) at (11.5,3.5) {\scalebox{1.2}{$6$}} ; 
\node [font=\fontsize{40pt}{0pt}\selectfont] (V7) at (5,2) {\scalebox{1.2}{$7$}} ; 

\draw[line width=\edgewidth] (B1)--(W1); \draw[line width=\edgewidth] (B1)--(W3); 
\draw[line width=\edgewidth] (B2)--(W2); \draw[line width=\edgewidth] (B2)--(W3); \draw[line width=\edgewidth] (B2)--(W4); 
\draw[line width=\edgewidth] (B3)--(W3); \draw[line width=\edgewidth] (B3)--(W5); 
\draw[line width=\edgewidth] (B4)--(W3); \draw[line width=\edgewidth] (B4)--(W4); 
\draw[line width=\edgewidth] (B4)--(W5); \draw[line width=\edgewidth] (B4)--(W6); 
\draw[line width=\edgewidth] (B5)--(W5); \draw[line width=\edgewidth] (B6)--(W6); 

\draw[line width=\edgewidth] (W1)--(2.5,0); \draw[line width=\edgewidth] (B5)--(2.5,12); 
\draw[line width=\edgewidth] (W1)--(5.5,0); \draw[line width=\edgewidth] (B6)--(5.5,12); 
\draw[line width=\edgewidth] (W2)--(12,2.3334); \draw[line width=\edgewidth] (B1)--(0,2.3334); 
\draw[line width=\edgewidth] (W2)--(8.5,0); \draw[line width=\edgewidth] (B6)--(8.5,12); 
\draw[line width=\edgewidth] (W4)--(12,6.3334); \draw[line width=\edgewidth] (B3)--(0,6.3334);
\draw[line width=\edgewidth] (W6)--(12,7.6666); \draw[line width=\edgewidth] (B3)--(0,7.6666); 
\draw[line width=\edgewidth] (W6)--(12,10.3334); \draw[line width=\edgewidth] (B5)--(0,10.3334); 

\draw  [line width=\nodewidth, fill=black] (B1) circle [radius=\noderad] ; \draw  [line width=\nodewidth, fill=black] (B2) circle [radius=\noderad] ;
\draw  [line width=\nodewidth, fill=black] (B3) circle [radius=\noderad] ; \draw  [line width=\nodewidth, fill=black] (B4) circle [radius=\noderad] ;
\draw  [line width=\nodewidth, fill=black] (B5) circle [radius=\noderad] ; \draw  [line width=\nodewidth, fill=black] (B6) circle [radius=\noderad] ;
\draw [line width=\nodewidth, fill=white] (W1) circle [radius=\noderad] ; \draw [line width=\nodewidth, fill=white] (W2) circle [radius=\noderad] ;
\draw [line width=\nodewidth, fill=white] (W3) circle [radius=\noderad] ; \draw [line width=\nodewidth, fill=white] (W4) circle [radius=\noderad] ;
\draw [line width=\nodewidth, fill=white] (W5) circle [radius=\noderad] ; \draw [line width=\nodewidth, fill=white] (W6) circle [radius=\noderad] ;
}

\begin{center}
\begin{tikzpicture}

\node (DM1) at (0,0) 
{\scalebox{0.2}{
\begin{tikzpicture}
\dimerexampleB
\end{tikzpicture}
} }; 

\node (DM2) at (4,0) 
{\scalebox{0.2}{
\begin{tikzpicture}[sarrow/.style={black, -latex, very thick}, ssarrow/.style={black, latex-, very thick}]

\newcommand{\nodecolor}{lightgray} 
\newcommand{\arrowwidth}{0.15cm} 
\draw[line width=0.1cm, \nodecolor]  (0,0) rectangle (12,12); 

\node [font=\fontsize{40pt}{0pt}\selectfont] (V0) at (4,7) {\scalebox{1.2}{$0$}} ; 
\node [font=\fontsize{40pt}{0pt}\selectfont] (V1) at (7,5) {\scalebox{1.2}{$1$}} ; 
\node [font=\fontsize{40pt}{0pt}\selectfont] (V2a) at (0,7) {\scalebox{1.2}{$2$}} ; 
\node [font=\fontsize{40pt}{0pt}\selectfont] (V2b) at (12,7) {\scalebox{1.2}{$2$}} ; 
\node [font=\fontsize{40pt}{0pt}\selectfont] (V3) at (5.4,10) {\scalebox{1.2}{$3$}} ; 
\node [font=\fontsize{40pt}{0pt}\selectfont] (V4a) at (0,9) {\scalebox{1.2}{$4$}} ; 
\node [font=\fontsize{40pt}{0pt}\selectfont] (V4b) at (12,9) {\scalebox{1.2}{$4$}} ; 
\node [font=\fontsize{40pt}{0pt}\selectfont] (V5a) at (0,0) {\scalebox{1.2}{$5$}} ; 
\node [font=\fontsize{40pt}{0pt}\selectfont] (V5b) at (12,0) {\scalebox{1.2}{$5$}} ; 
\node [font=\fontsize{40pt}{0pt}\selectfont] (V5c) at (12,12) {\scalebox{1.2}{$5$}} ; 
\node [font=\fontsize{40pt}{0pt}\selectfont] (V5d) at (0,12) {\scalebox{1.2}{$5$}} ; 
\node [font=\fontsize{40pt}{0pt}\selectfont] (V6a) at (0,4.5) {\scalebox{1.2}{$6$}} ; 
\node [font=\fontsize{40pt}{0pt}\selectfont] (V6b) at (12,3.5) {\scalebox{1.2}{$6$}} ; 
\node [font=\fontsize{40pt}{0pt}\selectfont] (V7) at (5,2) {\scalebox{1.2}{$7$}} ; 

\draw[line width=\edgewidth, \nodecolor] (B1)--(W1); \draw[line width=\edgewidth, \nodecolor] (B1)--(W3); 
\draw[line width=\edgewidth, \nodecolor] (B2)--(W2); \draw[line width=\edgewidth, \nodecolor] (B2)--(W3); 
\draw[line width=\edgewidth, \nodecolor] (B2)--(W4); 
\draw[line width=\edgewidth, \nodecolor] (B3)--(W3); \draw[line width=\edgewidth, \nodecolor] (B3)--(W5); 
\draw[line width=\edgewidth, \nodecolor] (B4)--(W3); \draw[line width=\edgewidth, \nodecolor] (B4)--(W4); 
\draw[line width=\edgewidth, \nodecolor] (B4)--(W5); \draw[line width=\edgewidth, \nodecolor] (B4)--(W6); 
\draw[line width=\edgewidth, \nodecolor] (B5)--(W5); \draw[line width=\edgewidth, \nodecolor] (B6)--(W6); 

\draw[line width=\edgewidth, \nodecolor] (W1)--(2.5,0); \draw[line width=\edgewidth, \nodecolor] (B5)--(2.5,12); 
\draw[line width=\edgewidth, \nodecolor] (W1)--(5.5,0); \draw[line width=\edgewidth, \nodecolor] (B6)--(5.5,12); 
\draw[line width=\edgewidth, \nodecolor] (W2)--(12,2.3334); \draw[line width=\edgewidth, \nodecolor] (B1)--(0,2.3334); 
\draw[line width=\edgewidth, \nodecolor] (W2)--(8.5,0); \draw[line width=\edgewidth, \nodecolor] (B6)--(8.5,12); 
\draw[line width=\edgewidth, \nodecolor] (W4)--(12,6.3334); \draw[line width=\edgewidth, \nodecolor] (B3)--(0,6.3334);
\draw[line width=\edgewidth, \nodecolor] (W6)--(12,7.6666); \draw[line width=\edgewidth, \nodecolor] (B3)--(0,7.6666); 
\draw[line width=\edgewidth, \nodecolor] (W6)--(12,10.3334); \draw[line width=\edgewidth, \nodecolor] (B5)--(0,10.3334); 

\draw  [line width=\nodewidth, fill=\nodecolor, \nodecolor] (B1) circle [radius=\noderad] ; 
\draw  [line width=\nodewidth, fill=\nodecolor, \nodecolor] (B2) circle [radius=\noderad] ;
\draw  [line width=\nodewidth, fill=\nodecolor, \nodecolor] (B3) circle [radius=\noderad] ; 
\draw  [line width=\nodewidth, fill=\nodecolor, \nodecolor] (B4) circle [radius=\noderad] ;
\draw  [line width=\nodewidth, fill=\nodecolor, \nodecolor] (B5) circle [radius=\noderad] ; 
\draw  [line width=\nodewidth, fill=\nodecolor, \nodecolor] (B6) circle [radius=\noderad] ;
\draw [line width=\nodewidth, \nodecolor, fill=white] (W1) circle [radius=\noderad] ; 
\draw [line width=\nodewidth, \nodecolor, fill=white] (W2) circle [radius=\noderad] ;
\draw [line width=\nodewidth, \nodecolor, fill=white] (W3) circle [radius=\noderad] ; 
\draw [line width=\nodewidth, \nodecolor, fill=white] (W4) circle [radius=\noderad] ;
\draw [line width=\nodewidth, \nodecolor, fill=white] (W5) circle [radius=\noderad] ; 
\draw [line width=\nodewidth, \nodecolor, fill=white] (W6) circle [radius=\noderad] ;

\draw [sarrow, line width=\arrowwidth] (V0)--(V1); \draw [sarrow, line width=\arrowwidth] (V0)--(V4a); 
\draw [sarrow, line width=\arrowwidth] (V3)--(V0); \draw [sarrow, line width=\arrowwidth] (V6a)--(V0); 
\draw [sarrow, line width=\arrowwidth] (V1)--(V2b); \draw [sarrow, line width=\arrowwidth] (V1)--(V7); 
\draw [sarrow, line width=\arrowwidth] (V6b)--(V1); \draw [sarrow, line width=\arrowwidth] (V2b)--(V3); 
\draw [sarrow, line width=\arrowwidth] (V2b)--(V6b); \draw [sarrow, line width=\arrowwidth] (V4a)--(V3); 
\draw [sarrow, line width=\arrowwidth] (V3)--(V5c); \draw [sarrow, line width=\arrowwidth] (V5a)--(V7); 
\draw [sarrow, line width=\arrowwidth] (V6a)--(V5a); \draw [sarrow, line width=\arrowwidth] (V7)--(V6a); 
\draw [sarrow, line width=\arrowwidth] (V7)--(V6b); \draw [sarrow, line width=\arrowwidth] (V2a)--(V6a); 
\draw [sarrow, line width=\arrowwidth] (V4a)--(V2a); \draw [sarrow, line width=\arrowwidth] (V4b)--(V2b); 
\draw [sarrow, line width=\arrowwidth] (V6b)--(V5b); 
\draw [sarrow, line width=\arrowwidth] (V5d)--(V4a); \draw [sarrow, line width=\arrowwidth] (V5c)--(V4b); 
\draw [sarrow, line width=\arrowwidth] (V3)--(V5d); \draw [sarrow, line width=\arrowwidth] (V5b)--(V7); 
\draw [sarrow, line width=\arrowwidth] (V7)--(5.2,0); \draw [sarrow, line width=\arrowwidth] (5.2,12)--(V3); 
\end{tikzpicture}
} }; 

\node at (8,0) {
\scalebox{0.6}{
\begin{tikzpicture}
\coordinate (00) at (0,0); \coordinate (10) at (1,0); \coordinate (01) at (0,1); \coordinate (-10) at (-1,0); \coordinate (-11) at (-1,1);  
\coordinate (20) at (2,0); \coordinate (-1-1) at (-1,-1); \coordinate (0-1) at (0,-1);   

\draw [step=1, gray] (-2.5,-2.5) grid (3.5,2.5);
\draw [line width=0.06cm] (20)--(01)--(-11)--(-1-1)--(0-1)--(20) ;  

\draw [line width=0.05cm, fill=black] (00) circle [radius=0.08] ; \draw [line width=0.05cm, fill=black] (10) circle [radius=0.08] ; 
\draw [line width=0.05cm, fill=black] (01) circle [radius=0.08] ; \draw [line width=0.05cm, fill=black] (-10) circle [radius=0.08] ; 
\draw [line width=0.05cm, fill=black] (-11) circle [radius=0.08] ; \draw [line width=0.05cm, fill=black] (20) circle [radius=0.08] ; 
\draw [line width=0.05cm, fill=black] (-1-1) circle [radius=0.08] ; \draw [line width=0.05cm, fill=black] (0-1) circle [radius=0.08] ; 

\draw [line width=0.035cm] (00)--(-10) ; \draw [line width=0.035cm] (00)--(-11) ; \draw [line width=0.035cm] (00)--(0-1) ; 
\draw [line width=0.035cm] (00)--(10) ; \draw [line width=0.035cm] (-10)--(0-1) ; \draw [line width=0.035cm] (0-1)--(10) ; 
\draw [line width=0.035cm] (-11)--(10) ; \draw [line width=0.035cm] (-11)--(20) ; \draw [line width=0.035cm] (10)--(20) ; 

\node [red] at (2.5,0) {\Large$D_1$} ; 
\node [red] at (0,1.5) {\Large$D_2$} ; \node [red] at (-1,1.5) {\Large$D_3$} ; 
\node [red] at (-1,-1.5) {\Large$D_4$} ; \node [red] at (0,-1.5) {\Large$D_5$} ; 
\node [red] at (-1.5,0) {\Large$D_6$} ; 
\node [red] at (1.3,-1.3) {\Large$D_7$} ; \draw[->, line width=0.05cm,red] (1.05,-1.3) to [bend left] (0.15,-0.15);  
\node [red] at (1.8,1.3) {\Large$D_8$} ; \draw[->, line width=0.05cm,red] (1.5,1.3) to [bend right] (1,0.2); 

\end{tikzpicture}
}} ;

\end{tikzpicture}
\end{center}

The following figures are $\theta$-stable perfect matchings. 
In particular, $D_1, \dots, D_5$ are corner perfect matchings, $D_6$ is a boundary one, and $D_7,D_8$ are internal ones. 
These correspond to lattice points in $\Delta$ as shown in the above figure. 

\begin{center}
\begin{tikzpicture}
\newcommand{\pmwidth}{0.8cm} 
\newcommand{\pmcolor}{gray} 

\node at (0,-1.3) {\small$D_1$}; \node at (3,-1.3) {\small$D_2$}; \node at (6,-1.3) {\small$D_3$}; \node at (9,-1.3) {\small$D_4$}; 
\node at (0,-4.3) {\small$D_5$}; \node at (3,-4.3) {\small$D_6$}; \node at (6,-4.3) {\small$D_7$}; \node at (9,-4.3) {\small$D_8$}; 

\node (PM1) at (0,0) 
{\scalebox{0.15}{
\begin{tikzpicture}
\draw[line width=\pmwidth, color=\pmcolor] (W1)--(5.5,0); \draw[line width=\pmwidth, color=\pmcolor] (B6)--(5.5,12); 
\draw[line width=\pmwidth, color=\pmcolor] (B2)--(W3); \draw[line width=\pmwidth, color=\pmcolor] (B4)--(W5); 
\draw[line width=\pmwidth, color=\pmcolor] (W2)--(12,2.3334); \draw[line width=\pmwidth, color=\pmcolor] (B1)--(0,2.3334); 
\draw[line width=\pmwidth, color=\pmcolor] (B3)--(0,6.3334); \draw[line width=\pmwidth, color=\pmcolor] (W4)--(12,6.3334); 
\draw[line width=\pmwidth, color=\pmcolor] (B5)--(0,10.3334); \draw[line width=\pmwidth, color=\pmcolor] (W6)--(12,10.3334); 
\dimerexampleB
\end{tikzpicture}
} }; 

\node (PM2) at (3,0) 
{\scalebox{0.15}{
\begin{tikzpicture}
\draw[line width=\pmwidth, color=\pmcolor] (W1)--(B1); \draw[line width=\pmwidth, color=\pmcolor] (W2)--(B2); 
\draw[line width=\pmwidth, color=\pmcolor] (W3)--(B4); 
\draw[line width=\pmwidth, color=\pmcolor] (W5)--(B5); \draw[line width=\pmwidth, color=\pmcolor] (W6)--(B6); 
\draw[line width=\pmwidth, color=\pmcolor] (B3)--(0,6.3334); \draw[line width=\pmwidth, color=\pmcolor] (W4)--(12,6.3334); 
\dimerexampleB
\end{tikzpicture}
} }; 

\node (PM3) at (6,0) 
{\scalebox{0.15}{
\begin{tikzpicture}
\draw[line width=\pmwidth, color=\pmcolor] (W1)--(B1); \draw[line width=\pmwidth, color=\pmcolor] (W2)--(B2); 
\draw[line width=\pmwidth, color=\pmcolor] (W3)--(B3); \draw[line width=\pmwidth, color=\pmcolor] (W4)--(B4); 
\draw[line width=\pmwidth, color=\pmcolor] (W5)--(B5); \draw[line width=\pmwidth, color=\pmcolor] (W6)--(B6); 
\dimerexampleB
\end{tikzpicture}
} }; 

\node (PM4) at (9,0) 
{\scalebox{0.15}{
\begin{tikzpicture}
\draw[line width=\pmwidth, color=\pmcolor] (W3)--(B1); \draw[line width=\pmwidth, color=\pmcolor] (W4)--(B2); 
\draw[line width=\pmwidth, color=\pmcolor] (W5)--(B3); \draw[line width=\pmwidth, color=\pmcolor] (W6)--(B4); 
\draw[line width=\pmwidth, color=\pmcolor] (W1)--(2.5,0); \draw[line width=\pmwidth, color=\pmcolor] (B5)--(2.5,12); 
\draw[line width=\pmwidth, color=\pmcolor] (W2)--(8.5,0); \draw[line width=\pmwidth, color=\pmcolor] (B6)--(8.5,12); 
\dimerexampleB
\end{tikzpicture}
} }; 

\node (PM5) at (0,-3) 
{\scalebox{0.15}{
\begin{tikzpicture}
\draw[line width=\pmwidth, color=\pmcolor] (W3)--(B1); \draw[line width=\pmwidth, color=\pmcolor] (W4)--(B2); 
\draw[line width=\pmwidth, color=\pmcolor] (W5)--(B4); 
\draw[line width=\pmwidth, color=\pmcolor] (B3)--(0,7.6666); \draw[line width=\pmwidth, color=\pmcolor] (W6)--(12,7.6666); 
\draw[line width=\pmwidth, color=\pmcolor] (W1)--(2.5,0); \draw[line width=\pmwidth, color=\pmcolor] (B5)--(2.5,12); 
\draw[line width=\pmwidth, color=\pmcolor] (W2)--(8.5,0); \draw[line width=\pmwidth, color=\pmcolor] (B6)--(8.5,12); 
\dimerexampleB
\end{tikzpicture}
} }; 

\node (PM6) at (3,-3) 
{\scalebox{0.15}{
\begin{tikzpicture}
\draw[line width=\pmwidth, color=\pmcolor] (W1)--(B1); \draw[line width=\pmwidth, color=\pmcolor] (W4)--(B2); 
\draw[line width=\pmwidth, color=\pmcolor] (W3)--(B3); \draw[line width=\pmwidth, color=\pmcolor] (W6)--(B4); 
\draw[line width=\pmwidth, color=\pmcolor] (W5)--(B5); 
\draw[line width=\pmwidth, color=\pmcolor] (W2)--(8.5,0); \draw[line width=\pmwidth, color=\pmcolor] (B6)--(8.5,12); 
\dimerexampleB
\end{tikzpicture}
} }; 

\node (PM7) at (6,-3) 
{\scalebox{0.15}{
\begin{tikzpicture}
\draw[line width=\pmwidth, color=\pmcolor] (B1)--(W1); \draw[line width=\pmwidth, color=\pmcolor] (B2)--(W4); 
\draw[line width=\pmwidth, color=\pmcolor] (B3)--(W3); \draw[line width=\pmwidth, color=\pmcolor] (B4)--(W5); 
\draw[line width=\pmwidth, color=\pmcolor] (W2)--(8.5,0); \draw[line width=\pmwidth, color=\pmcolor] (B6)--(8.5,12); 
\draw[line width=\pmwidth, color=\pmcolor] (B5)--(0,10.3334); \draw[line width=\pmwidth, color=\pmcolor] (W6)--(12,10.3334); 
\dimerexampleB
\end{tikzpicture}
} }; 

\node (PM8) at (9,-3) 
{\scalebox{0.15}{
\begin{tikzpicture}
\draw[line width=\pmwidth, color=\pmcolor] (W1)--(5.5,0); \draw[line width=\pmwidth, color=\pmcolor] (B6)--(5.5,12); 
\draw[line width=\pmwidth, color=\pmcolor] (B2)--(W4); 
\draw[line width=\pmwidth, color=\pmcolor] (B3)--(W3); \draw[line width=\pmwidth, color=\pmcolor] (B4)--(W5); 
\draw[line width=\pmwidth, color=\pmcolor] (W2)--(12,2.3334); \draw[line width=\pmwidth, color=\pmcolor] (B1)--(0,2.3334); 
\draw[line width=\pmwidth, color=\pmcolor] (B5)--(0,10.3334); \draw[line width=\pmwidth, color=\pmcolor] (W6)--(12,10.3334); 
\dimerexampleB
\end{tikzpicture}
} }; 
\end{tikzpicture}
\end{center}

Then, we consider the mutations of the perfect matching $D_7$. 
Since $0\in Q_0$ is a strict source of $(Q,\sfD_7)$, and $5\in Q_0$ is a strict sink of $(Q,\sfD_7)$, we can apply the mutations $\lambda^+_0$ and $\lambda^-_5$ to $D_7$. 
Repeating these arguments, we have the exchange graph of mutations as shown in Figure~\ref{ex_graph_pm}. 
In this figure, each edge is indexed by the mutable vertex. 
We note that the mutated perfect matchings correspond to the same interior lattice point in $\Delta$ by Theorem~\ref{pm_mutation_equiv}. 
However, the mutated ones are not $\theta$-stable (see Remark~\ref{rem_mut_pm}). 
For example, the bottom perfect matching in Figure~\ref{ex_graph_pm} is not $\theta$-stable, but $\theta^\prime$-stable where 
$\theta^\prime=(1,1,1,-7,1,1,1,1)$. 
Similarly, by applying the mutations to $D_8$, we can obtain perfect matchings mutation equivalent to $D_8$ 
which correspond to the same interior lattice point as $D_8$. 

We further remark that as we mentioned $D_7$ and $D_8$ are $\theta$-stable perfect matchings, and $5\in Q_0$ (resp. $6\in Q_0$) is a strict sink of $(Q,\sfD_7)$ (resp. $(Q,\sfD_8)$). 
These vertices are \emph{essential vertices} in the sense of \cite{CIK}, and these are related to ``Reid's recipe" (see e.g., \cite{BCQV,Cra,Tap}). 

\begin{figure}[H]
\begin{center}
\scalebox{0.85}{
\begin{tikzpicture}

\newcommand{\pmwidth}{0.8cm} 
\newcommand{\pmcolor}{gray} 

\node (PM15) at (2.5,2) 
{\scalebox{0.12}{
\begin{tikzpicture}
\draw[line width=\pmwidth, color=\pmcolor] (B1)--(W1); \draw[line width=\pmwidth, color=\pmcolor] (B2)--(W3); 
\draw[line width=\pmwidth, color=\pmcolor] (B3)--(0,6.3334); \draw[line width=\pmwidth, color=\pmcolor] (W4)--(12,6.3334); 
\draw[line width=\pmwidth, color=\pmcolor] (B5)--(W5); \draw[line width=\pmwidth, color=\pmcolor] (W2)--(8.5,0);  
\draw[line width=\pmwidth, color=\pmcolor] (B6)--(8.5,12); \draw[line width=\pmwidth, color=\pmcolor] (W6)--(B4); 
\dimerexampleB
\end{tikzpicture}
} }; 

\node (PM13) at (7.5,2) 
{\scalebox{0.12}{
\begin{tikzpicture}
\draw[line width=\pmwidth, color=\pmcolor] (B1)--(W1); \draw[line width=\pmwidth, color=\pmcolor] (B2)--(W4); 
\draw[line width=\pmwidth, color=\pmcolor] (B4)--(W3); \draw[line width=\pmwidth, color=\pmcolor] (B5)--(W5); 
\draw[line width=\pmwidth, color=\pmcolor] (W2)--(8.5,0); \draw[line width=\pmwidth, color=\pmcolor] (B6)--(8.5,12); 
\draw[line width=\pmwidth, color=\pmcolor] (B3)--(0,7.6666); \draw[line width=\pmwidth, color=\pmcolor] (W6)--(12,7.6666); 
\dimerexampleB
\end{tikzpicture}
} };

\node (PM7) at (12.5,2) 
{\scalebox{0.12}{
\begin{tikzpicture}
\draw[line width=\pmwidth, color=\pmcolor] (B1)--(W1); \draw[line width=\pmwidth, color=\pmcolor] (B2)--(W4); 
\draw[line width=\pmwidth, color=\pmcolor] (B3)--(W3); \draw[line width=\pmwidth, color=\pmcolor] (B4)--(W5); 
\draw[line width=\pmwidth, color=\pmcolor] (W2)--(8.5,0); \draw[line width=\pmwidth, color=\pmcolor] (B6)--(8.5,12); 
\draw[line width=\pmwidth, color=\pmcolor] (B5)--(0,10.3334); \draw[line width=\pmwidth, color=\pmcolor] (W6)--(12,10.3334); 
\dimerexampleB
\end{tikzpicture}
} }; 

\node (PM19) at (0,0) 
{\scalebox{0.12}{
\begin{tikzpicture}
\draw[line width=\pmwidth, color=\pmcolor] (B1)--(W3); \draw[line width=\pmwidth, color=\pmcolor] (B5)--(W5); 
\draw[line width=\pmwidth, color=\pmcolor] (B2)--(W2); \draw[line width=\pmwidth, color=\pmcolor] (B4)--(W6); 
\draw[line width=\pmwidth, color=\pmcolor] (W1)--(5.5,0); \draw[line width=\pmwidth, color=\pmcolor] (B6)--(5.5,12); 
\draw[line width=\pmwidth, color=\pmcolor] (B3)--(0,6.3334); \draw[line width=\pmwidth, color=\pmcolor] (W4)--(12,6.3334); 
\dimerexampleB
\end{tikzpicture}
} }; 

\node (PM14) at (5,0) 
{\scalebox{0.12}{
\begin{tikzpicture}
\draw[line width=\pmwidth, color=\pmcolor] (B1)--(W1); \draw[line width=\pmwidth, color=\pmcolor] (B2)--(W3); 
\draw[line width=\pmwidth, color=\pmcolor] (B4)--(W4); \draw[line width=\pmwidth, color=\pmcolor] (B5)--(W5); 
\draw[line width=\pmwidth, color=\pmcolor] (W2)--(8.5,0); \draw[line width=\pmwidth, color=\pmcolor] (B6)--(8.5,12); 
\draw[line width=\pmwidth, color=\pmcolor] (B3)--(0,7.6666); \draw[line width=\pmwidth, color=\pmcolor] (W6)--(12,7.6666); 
\dimerexampleB
\end{tikzpicture}
} }; 

\node (PM9) at (10,0) 
{\scalebox{0.12}{
\begin{tikzpicture}
\draw[line width=\pmwidth, color=\pmcolor] (B1)--(W1); \draw[line width=\pmwidth, color=\pmcolor] (B2)--(W4); 
\draw[line width=\pmwidth, color=\pmcolor] (B4)--(W3); \draw[line width=\pmwidth, color=\pmcolor] (B3)--(W5); 
\draw[line width=\pmwidth, color=\pmcolor] (W2)--(8.5,0); \draw[line width=\pmwidth, color=\pmcolor] (B6)--(8.5,12); 
\draw[line width=\pmwidth, color=\pmcolor] (B5)--(0,10.3334); \draw[line width=\pmwidth, color=\pmcolor] (W6)--(12,10.3334); 
\dimerexampleB
\end{tikzpicture}
} }; 

\node (PM10) at (15,0) 
{\scalebox{0.12}{
\begin{tikzpicture}
\draw[line width=\pmwidth, color=\pmcolor] (B2)--(W4); \draw[line width=\pmwidth, color=\pmcolor] (B3)--(W3); 
\draw[line width=\pmwidth, color=\pmcolor] (B4)--(W5); \draw[line width=\pmwidth, color=\pmcolor] (B6)--(W6); 
\draw[line width=\pmwidth, color=\pmcolor] (W1)--(2.5,0); \draw[line width=\pmwidth, color=\pmcolor] (B5)--(2.5,12); 
\draw[line width=\pmwidth, color=\pmcolor] (W2)--(12,2.3334); \draw[line width=\pmwidth, color=\pmcolor] (B1)--(0,2.3334); 
\dimerexampleB
\end{tikzpicture}
} }; 

\node (PM18) at (2.5,-2) 
{\scalebox{0.12}{
\begin{tikzpicture}
\draw[line width=\pmwidth, color=\pmcolor] (B1)--(W3); \draw[line width=\pmwidth, color=\pmcolor] (B5)--(W5); 
\draw[line width=\pmwidth, color=\pmcolor] (B2)--(W2); \draw[line width=\pmwidth, color=\pmcolor] (B4)--(W4); 
\draw[line width=\pmwidth, color=\pmcolor] (W1)--(5.5,0); \draw[line width=\pmwidth, color=\pmcolor] (B6)--(5.5,12); 
\draw[line width=\pmwidth, color=\pmcolor] (B3)--(0,7.6666); \draw[line width=\pmwidth, color=\pmcolor] (W6)--(12,7.6666); 
\dimerexampleB
\end{tikzpicture}
} }; 

\node (PM11) at (7.5,-2) 
{\scalebox{0.12}{
\begin{tikzpicture}
\draw[line width=\pmwidth, color=\pmcolor] (B1)--(W1); \draw[line width=\pmwidth, color=\pmcolor] (B3)--(W5); 
\draw[line width=\pmwidth, color=\pmcolor] (B2)--(W3); \draw[line width=\pmwidth, color=\pmcolor] (B4)--(W4); 
\draw[line width=\pmwidth, color=\pmcolor] (W2)--(8.5,0); \draw[line width=\pmwidth, color=\pmcolor] (B6)--(8.5,12); 
\draw[line width=\pmwidth, color=\pmcolor] (B5)--(0,10.3334); \draw[line width=\pmwidth, color=\pmcolor] (W6)--(12,10.3334); 
\dimerexampleB
\end{tikzpicture}
} }; 

\node (PM12) at (12.5,-2) 
{\scalebox{0.12}{
\begin{tikzpicture}
\draw[line width=\pmwidth, color=\pmcolor] (B2)--(W4); \draw[line width=\pmwidth, color=\pmcolor] (B4)--(W3); 
\draw[line width=\pmwidth, color=\pmcolor] (B3)--(W5); \draw[line width=\pmwidth, color=\pmcolor] (B6)--(W6); 
\draw[line width=\pmwidth, color=\pmcolor] (W1)--(2.5,0); \draw[line width=\pmwidth, color=\pmcolor] (B5)--(2.5,12); 
\draw[line width=\pmwidth, color=\pmcolor] (W2)--(12,2.3334); \draw[line width=\pmwidth, color=\pmcolor] (B1)--(0,2.3334); 
\dimerexampleB
\end{tikzpicture}
} };

\node (PM16) at (10,-4) 
{\scalebox{0.12}{
\begin{tikzpicture}
\draw[line width=\pmwidth, color=\pmcolor] (B4)--(W4); \draw[line width=\pmwidth, color=\pmcolor] (B2)--(W3); 
\draw[line width=\pmwidth, color=\pmcolor] (B3)--(W5); \draw[line width=\pmwidth, color=\pmcolor] (B6)--(W6); 
\draw[line width=\pmwidth, color=\pmcolor] (W1)--(2.5,0); \draw[line width=\pmwidth, color=\pmcolor] (B5)--(2.5,12); 
\draw[line width=\pmwidth, color=\pmcolor] (W2)--(12,2.3334); \draw[line width=\pmwidth, color=\pmcolor] (B1)--(0,2.3334); 
\dimerexampleB
\end{tikzpicture}
} }; 

\node (PM17) at (5,-4) 
{\scalebox{0.12}{
\begin{tikzpicture}
\draw[line width=\pmwidth, color=\pmcolor] (B1)--(W3); \draw[line width=\pmwidth, color=\pmcolor] (B3)--(W5); 
\draw[line width=\pmwidth, color=\pmcolor] (B2)--(W2); \draw[line width=\pmwidth, color=\pmcolor] (B4)--(W4); 
\draw[line width=\pmwidth, color=\pmcolor] (W1)--(5.5,0); \draw[line width=\pmwidth, color=\pmcolor] (B6)--(5.5,12); 
\draw[line width=\pmwidth, color=\pmcolor] (B5)--(0,10.3334); \draw[line width=\pmwidth, color=\pmcolor] (W6)--(12,10.3334); 
\dimerexampleB
\end{tikzpicture}
} };

\node (PM20) at (7.5,5.7) 
{\scalebox{0.12}{
\begin{tikzpicture}
\draw[line width=\pmwidth, color=\pmcolor] (B1)--(W3); \draw[line width=\pmwidth, color=\pmcolor] (B2)--(W2); 
\draw[line width=\pmwidth, color=\pmcolor] (B4)--(W5); \draw[line width=\pmwidth, color=\pmcolor] (B6)--(W6); 
\draw[line width=\pmwidth, color=\pmcolor] (W1)--(2.5,0); \draw[line width=\pmwidth, color=\pmcolor] (B5)--(2.5,12); 
\draw[line width=\pmwidth, color=\pmcolor] (W4)--(12,6.3334); \draw[line width=\pmwidth, color=\pmcolor] (B3)--(0,6.3334); 
\dimerexampleB
\end{tikzpicture}
} }; 

\node (PM21) at (7.5,-5.7) 
{\scalebox{0.12}{
\begin{tikzpicture}
\draw[line width=\pmwidth, color=\pmcolor] (B3)--(W3); \draw[line width=\pmwidth, color=\pmcolor] (B5)--(W5); 
\draw[line width=\pmwidth, color=\pmcolor] (B2)--(W4); \draw[line width=\pmwidth, color=\pmcolor] (B4)--(W6); 
\draw[line width=\pmwidth, color=\pmcolor] (W1)--(5.5,0); \draw[line width=\pmwidth, color=\pmcolor] (B6)--(5.5,12); 
\draw[line width=\pmwidth, color=\pmcolor] (W2)--(12,2.3334); \draw[line width=\pmwidth, color=\pmcolor] (B1)--(0,2.3334); 
\dimerexampleB
\end{tikzpicture}
} }; 

\draw[line width=0.02cm] (PM7)--(PM9) node[fill=white,inner sep=0.5pt, circle, midway,xshift=0cm,yshift=0cm] {\small $0$} ; 
\draw[line width=0.02cm] (PM7)--(PM10) node[fill=white,inner sep=0.5pt, circle, midway,xshift=0cm,yshift=0cm] {\small $5$} ; 
\draw[line width=0.02cm] (PM9)--(PM12) node[fill=white,inner sep=0.5pt, circle, midway,xshift=0cm,yshift=0cm] {\small $5$} ; 
\draw[line width=0.02cm] (PM10)--(PM12) node[fill=white,inner sep=0.5pt, circle, midway,xshift=0cm,yshift=0cm] {\small $0$} ; 
\draw[line width=0.02cm] (PM9)--(PM13) node[fill=white,inner sep=0.5pt, circle, midway,xshift=0cm,yshift=0cm] {\small $4$} ; 
\draw[line width=0.02cm] (PM9)--(PM11) node[fill=white,inner sep=0.5pt, circle, midway,xshift=0cm,yshift=0cm] {\small $1$} ; 
\draw[line width=0.02cm] (PM12)--(PM16) node[fill=white,inner sep=0.5pt, circle, midway,xshift=0cm,yshift=0cm] {\small $1$} ; 
\draw[line width=0.02cm] (PM11)--(PM16) node[fill=white,inner sep=0.5pt, circle, midway,xshift=0cm,yshift=0cm] {\small $5$} ; 
\draw[line width=0.02cm] (PM14)--(PM13) node[fill=white,inner sep=0.5pt, circle, midway,xshift=0cm,yshift=0cm] {\small $1$} ; 
\draw[line width=0.02cm] (PM14)--(PM11) node[fill=white,inner sep=0.5pt, circle, midway,xshift=0cm,yshift=0cm] {\small $4$} ; 
\draw[line width=0.02cm] (PM14)--(PM15) node[fill=white,inner sep=0.5pt, circle, midway,xshift=0cm,yshift=0cm] {\small $2$} ; 
\draw[line width=0.02cm] (PM14)--(PM18) node[fill=white,inner sep=0.5pt, circle, midway,xshift=0cm,yshift=0cm] {\small $7$} ; 
\draw[line width=0.02cm] (PM17)--(PM11) node[fill=white,inner sep=0.5pt, circle, midway,xshift=0cm,yshift=0cm] {\small $7$} ; 
\draw[line width=0.02cm] (PM17)--(PM18) node[fill=white,inner sep=0.5pt, circle, midway,xshift=0cm,yshift=0cm] {\small $4$} ; 
\draw[line width=0.02cm] (PM19)--(PM15) node[fill=white,inner sep=0.5pt, circle, midway,xshift=0cm,yshift=0cm] {\small $7$} ; 
\draw[line width=0.02cm] (PM19)--(PM18) node[fill=white,inner sep=0.5pt, circle, midway,xshift=0cm,yshift=0cm] {\small $2$} ; 

\draw[line width=0.02cm, bend left] (PM10) to node[fill=white,inner sep=0.5pt, circle, midway,xshift=0cm,yshift=0cm] {\small $3$} (PM21) ; 
\draw[line width=0.02cm, bend right] (PM19) to node[fill=white,inner sep=0.5pt, circle, midway,xshift=0cm,yshift=0cm] {\small $6$} (PM21) ; 
\draw[line width=0.02cm, bend left=35] (PM19) to node[fill=white,inner sep=0.5pt, circle, midway,xshift=0cm,yshift=0cm] {\small $3$} (PM20) ; 
\draw[line width=0.02cm, bend right=35] (PM10) to node[fill=white,inner sep=0.5pt, circle, midway,xshift=0cm,yshift=0cm] {\small $6$} (PM20) ; 
\end{tikzpicture}
}
\end{center}
\caption{The exchange graph of mutations of perfect matchings}
\label{ex_graph_pm}
\end{figure}
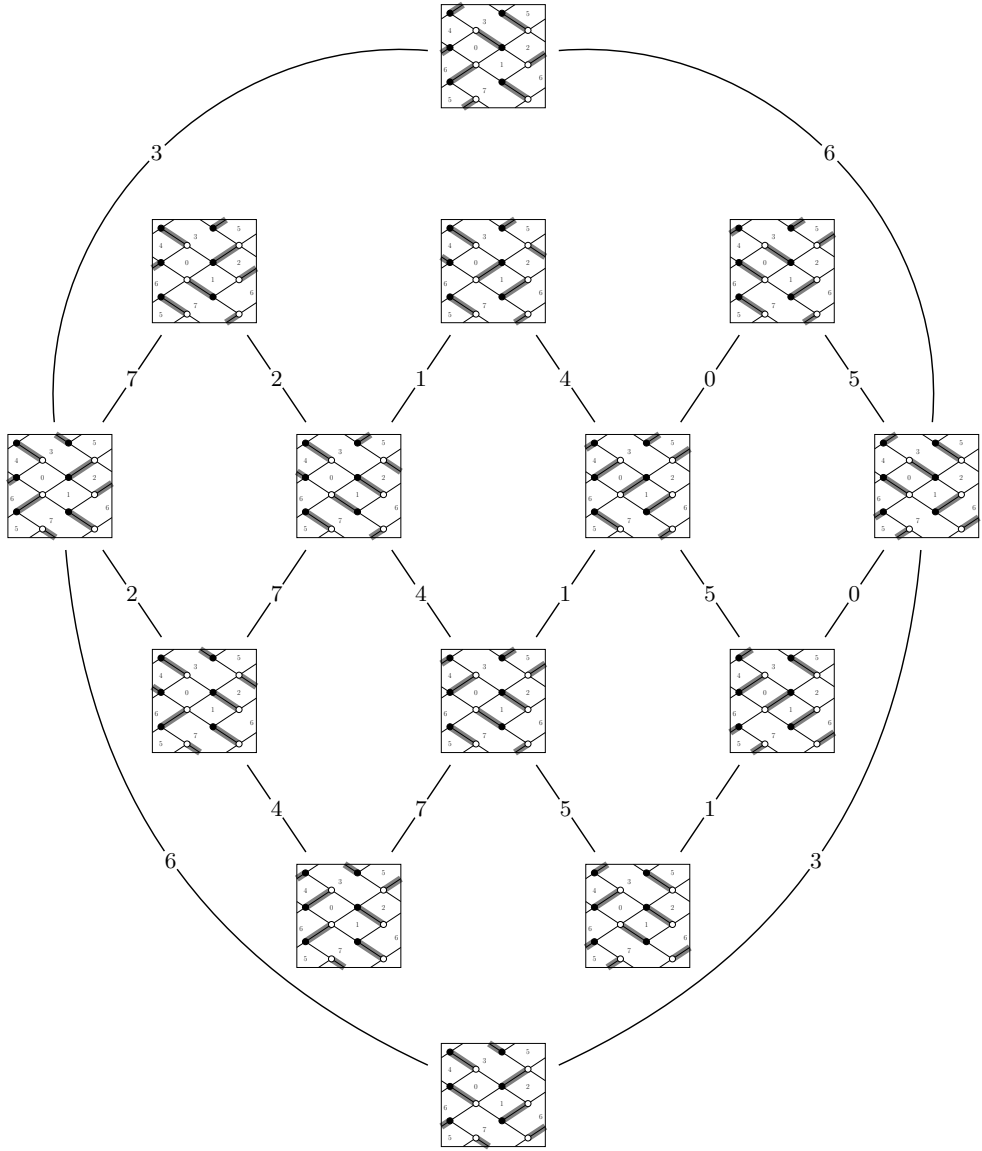
\end{example}

\section{\bf Derived equivalences of $2$-representation infinite algebras} 
\label{sec_derived_eq_2rep}

\subsection{Derived equivalences obtained via mutations of perfect matchings} 

We then turn our attention to the bounded derived category $\calD^\rmb(\mc\calA_{\sfD})$ arising from an internal perfect matching $\sfD$. 
If the PM polygon $\Delta$ of a consistent dimer model contains an interior lattice point, then we can define the toric weak Fano Deligne-Mumford stack $\calX_{\bf \Sigma}$ associated to the stacky fan ${\bf \Sigma}$ arising from $\Delta$ (see e.g., \cite{IU5}). 
Moreover, thanks to \cite[Theorem~7.2]{IU5}, we see that the $2$-representation infinite algebra $\calA_{\sfD}$ is isomorphic to the endomorphism ring of a tilting object in $\calD^\rmb(\coh\calX_{\bf \Sigma})$, where $\sfD$ is an internal perfect matching corresponding to the interior lattice point of $\Delta$ fixed as the origin. 
Thus, such a tilting object induces an equivalence $\calD^\rmb(\coh\calX_{\bf \Sigma})\cong\calD^\rmb(\mc\calA_{\sfD})$, and we have the following theorem 
which is pointed out in \cite[Remark~7.3]{IU5}.

\begin{theorem}
\label{derivedequ_pm}
Let $\Gamma$ be a consistent  dimer model, and $\Delta$ be the PM polygon of $\Gamma$. 
Let $D_i,D_j$ be internal perfect matchings of $\Gamma$ corresponding to the same interior lattice point of $\Delta$. 
Then, we have an equivalence $\calD^\rmb(\mc\calA_{\sfD_i})\cong\calD^\rmb(\mc\calA_{\sfD_j})$. 
\end{theorem}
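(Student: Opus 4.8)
The plan is to combine Theorem~\ref{pm_mutation_equiv} with the fact that a single mutation of perfect matchings induces a derived equivalence of the corresponding truncated Jacobian algebras, and then to iterate.

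First I would invoke Theorem~\ref{pm_mutation_equiv}: since $D_i$ and $D_j$ correspond to the same interior lattice point of $\Delta$, they are mutation equivalent, so there is a finite chain of internal perfect matchings $D_i=D^{(0)},D^{(1)},\dots,D^{(m)}=D_j$ in which each $D^{(\ell+1)}$ is obtained from $D^{(\ell)}$ by a single mutation at a strict source or a strict sink of $(Q,\sfD^{(\ell)})$. By Lemma~\ref{basic_mutation_pm2} every $D^{(\ell)}$ is again internal, and so by Proposition~\ref{findim_internal} each $\calA_{\sfD^{(\ell)}}=\calP(Q,W_Q)_{\sfD^{(\ell)}}$ is a finite-dimensional algebra of global dimension $2$; in particular every $\sfD^{(\ell)}$ is an algebraic cut of $(Q,W_Q)$ in the sense of \cite{HI}. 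It therefore suffices to prove $\calD^\rmb(\mc\calA_\sfD)\cong\calD^\rmb(\mc\calA_{\sfD'})$ for a single mutation $\sfD'=\lambda^{\pm}_k(\sfD)$ at a strict source or strict sink $k$.

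For this single step I would appeal to the mutation theory of cuts of \cite{IO}. Since $\sfD$ is an algebraic cut and $k$ is a source (resp.\ sink) of the acyclic quiver $Q_\sfD$, the operation $\lambda^{+}_k$ (resp.\ $\lambda^{-}_k$) is exactly the mutation of the cut $\sfD$ at $k$; by \cite{IO} this produces a tilting $\calA_\sfD$-module $T$ with $\pd_{\calA_\sfD}T\le 2$ (a higher analogue of an APR tilting module) whose endomorphism algebra is the truncated Jacobian algebra of the mutated cut. To make the identification $\End_{\calA_\sfD}(T)^{\mathrm{op}}\cong\calA_{\sfD'}$ explicit, one matches the standard quiver-with-relations presentation of the endomorphism algebra of such a tilt — the arrows incident to $k$ are reversed and the relations through $k$ are modified accordingly — with the description of $Q_{\sfD'}$ and $\calA_{\sfD'}=\kk Q_{\sfD'}/\langle\fkR_{\sfD'}\rangle$ recorded in Observation~\ref{obs_mutationPM}: the new arrows $r^{\ast}\colon\hd(r)\to k$ attached to the relations $r\in\fkR_\sfD$ with $\tl(r)=k$ are precisely the arrows created by the tilt, and the new relations are the $\partial_a W_Q$ with $a\in\sfD'$. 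The tilting module $T$ then gives a derived equivalence $\calD^\rmb(\mc\calA_\sfD)\cong\calD^\rmb(\mc\End_{\calA_\sfD}(T)^{\mathrm{op}})\cong\calD^\rmb(\mc\calA_{\sfD'})$.

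Iterating this equivalence along the chain $D^{(0)},\dots,D^{(m)}$ yields $\calD^\rmb(\mc\calA_{\sfD_i})\cong\calD^\rmb(\mc\calA_{\sfD_j})$. I expect the main obstacle to be the algebra identification $\End_{\calA_\sfD}(T)^{\mathrm{op}}\cong\calA_{\sfD'}$ in the single-step case — one must check that the tilting module supplied by \cite{IO} really has the truncated Jacobian algebra of $\lambda^{\pm}_k(\sfD)$ as its endomorphism ring, a careful bookkeeping of arrows and relations at the mutated vertex. This is where Observation~\ref{obs_mutationPM} carries the weight; once that combinatorial matching is in place, the derived equivalence is formal.
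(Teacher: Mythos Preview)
Your proposal is correct and follows essentially the same route as the paper: reduce via Theorem~\ref{pm_mutation_equiv} to a single mutation step, then use the $2$-APR tilting module from \cite{IO} (with the endomorphism-ring identification supplied by \cite[Theorem~3.11]{IO} together with Observation~\ref{obs_mutationPM}) to obtain the derived equivalence. The only cosmetic difference is that the paper names the tilting module explicitly as $T_k=(\tau_2^-P_k)\oplus U$ and invokes the fact that $\calA_{\sfD_i}$ is $2$-representation infinite (so the hypotheses of Definition~\ref{mAPR_def} are automatically met, cf.\ Remark~\ref{mAPR_rem}(3)).
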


For example, since all perfect matchings appearing in Figure~\ref{ex_graph_pm} correspond to the same interior lattice point, the associated $2$-representation infinite algebras are derived equivalent. 

In the rest of this section, we give another proof of this theorem using Theorem~\ref{pm_mutation_equiv} and $2$-APR tilting modules. 
We first fix a positive integer $n$. In order to control equivalences of derived categories, tilting theory is so important. 
Thus, we first recall the definition of a tilting module. 

\begin{definition}
\label{def_tilting}
For an algebra $\Lambda$, we say that a $\Lambda$-module $T$ is \emph{tilting} if it satisfies the conditions: 
\begin{itemize}
\item[(T1)] $\projdim_\Lambda T\le n$, 
\item[(T2)] $\Ext^i_\Lambda(T,T)=0$ for any $i>0$, 
\item[(T3)] there is an exact sequence $0\rightarrow\Lambda\rightarrow T_0\rightarrow\cdots\rightarrow T_n\rightarrow 0$ such that $T_i\in\add_\Lambda T$. 
\end{itemize}
\end{definition}

A tilting $\Lambda$-module $T$ induces a derived equivalence $\calD^\rmb(\mc\Lambda)\cong\calD^\rmb(\mc\End_\Lambda(T))$ (see e.g., \cite{Hap,Ric}). 
In this paper, we mainly consider a special tilting module called \emph{higher APR tilting module}, which is named after the work due to Auslander-Platzeck-Reiten \cite{APR}. 
In order to define this tilting module, we prepare some terminologies from higher dimensional Auslander-Reiten theory developed in \cite{Iya1}. 
Let $\Lambda$ be a finite dimensional algebra. 
We denote by $\Tr:\underline{\mc}\Lambda\xrightarrow{\simeq}\underline{\mc}\Lambda^{\rm op}$ the\emph{ Auslander-Bridger transpose} (see e.g., \cite{ASS,ARS}), 
and denote by $\Omega:\underline{\mc}\Lambda\rightarrow\underline{\mc}\Lambda$ (resp. $\Omega^-:\overline{\mc}\Lambda\rightarrow\overline{\mc}\Lambda$) the syzygy (resp. cosyzygy) functor. 
We consider functors: 
\[
\tau_n\coloneqq\rmD\Tr\Omega^{n-1}:\underline{\mc}\Lambda\xrightarrow{\Omega^{n-1}}\underline{\mc}\Lambda\xrightarrow{\Tr}\underline{\mc}\Lambda^{\rm op}\xrightarrow{\rmD}\overline{\mc}\Lambda
\]
\[
\tau_n^-\coloneqq\Tr\rmD\Omega^{-(n-1)}:\overline{\mc}\Lambda\xrightarrow{\Omega^{-(n-1)}}\overline{\mc}\Lambda\xrightarrow{\rmD}\overline{\mc}\Lambda^{\rm op}\xrightarrow{\Tr}\underline{\mc}\Lambda. 
\]
These $\tau_n$ and $\tau_n^-$ are called the \emph{$n$-Auslander-Reiten translations}. 
We remark that these are the usual Auslander-Reiten translations (see e.g., \cite{ASS,ARS}) when $n=1$. 

\medskip

In the following, we assume that $\gldim\Lambda\le n$. Let $P$ be a simple projective $\Lambda$-module 
satisfying $\Ext^i_\Lambda(\rmD\Lambda,P)=0$ for any $0\le i\le n-1$. 
We decompose $\Lambda=P\oplus U$. 
We then have the following exact sequence (see \cite[Proof of Theorem~3.2]{IO}): 
$$
0\rightarrow P\xrightarrow{f_0} U_1\xrightarrow{f_1} \cdots\xrightarrow{f_{n-1}} U_n\xrightarrow{f_n}\tau_n^-P\rightarrow 0, 
$$
where $U_i\in\add_\Lambda U$. Using this sequence, we define an $m$-APR tilting module as follows. 

\begin{definition}[{see \cite{MY,IO}}]
\label{mAPR_def}
With the notation as above, let $K_m\coloneqq \Image f_m$ for $m=0,\dots,n$. (We remark that $K_0=P$ and $K_n=\tau_n^-P$.) 
We call $U\oplus K_m$ the \emph{$m$-APR tilting module} associated with $P$. 
\end{definition}

\begin{remark}
\label{mAPR_rem}
\begin{enumerate}[(1)]
\item By the arguments in \cite[Proof of Theorem~3.2]{IO}, we see that $U\oplus K_m$ is actually a tilting $\Lambda$-module of projective dimension $m$. 
\item If $n=1$, then the $1$-APR tilting module $U\oplus K_1$ is simply the APR tilting module studied in \cite{APR}. 
\item In this paper,  we are interested in $n$-representation infinite algebras, especially the case $n=2$. 
For an $n$-representation infinite algebra $\Lambda$ and a simple projective $\Lambda$-module $P$, we always have the condition $\Ext^i_\Lambda(\rmD\Lambda,P)=0$ for any $0\le i\le n-1$. 
Thus, we can define the $m$-APR tilting module for any simple projective module. 
Furthermore, $m$-APR tilting preserves $n$-representation infiniteness \cite[Theorem~3.1]{MY}. 
\end{enumerate}
\end{remark}

We are now ready to proof Theorem~\ref{derivedequ_pm}. 

\begin{proof}[Proof of Theorem~\ref{derivedequ_pm}]
Since $D_i$ and $D_j$ correspond to the same interior lattice point, they are mutation equivalent (see Theorem~\ref{pm_mutation_equiv}). 
Since $\sfD_i$ is an internal perfect matching, $Q_{\sfD_i}$ is acyclic, thus there is a strict source of $(Q,\sfD_i)$. 
We denote this strict source by $k\in Q_0$. Then, we may assume that $\lambda_k^+(\sfD_i)=\sfD_j$. 
Then, we decompose $\calA_{\sfD_i}=P_k\oplus U$ where $P_k$ is a simple projective $\calA_{\sfD_i}$-module corresponding to $k$. 
Since $\calA_{\sfD_i}$ is a $2$-representation infinite algebra, we have the $2$-APR tilting $\calA_{\sfD_i}$-module $T_k\coloneqq (\tau^-_2P_k)\oplus U$  associated with $P_k$ (see Remark~\ref{mAPR_rem}). 
Furthermore, by \cite[Theorem~3.11]{IO} (see also Observation~\ref{obs_mutationPM}), we have $\End_{\calA_{\sfD_i}}(T_k)\cong\calA_{\sfD_j}$. 
Since $T_k$ is a tilting module, this induces $\calD^\rmb(\mc\calA_{\sfD_i})\cong\calD^\rmb(\mc\calA_{\sfD_j})$. 
\end{proof}

On the other hand, even if $2$-representation infinite algebras $\calA_{\sfD_i}$ and $\calA_{\sfD_j}$ associated with internal perfect matchings $\sfD_i, \sfD_j$ are derived equivalent, they do not necessarily correspond to the same interior lattice point as shown in the following example. 

\begin{example}
We consider the quotient singularity $R$ associated with the finite abelian group $G=\frac{1}{7}(1,2,4)\subset\SL(3,\kk)$, which is generated by ${\rm diag}(\omega,\omega^2,\omega^4)$ with $\omega^7=1$. 
In this case, a consistent dimer model associated with $R$ is unique and hexagonal (see \cite{UY,IN}). 
In particular, it takes the form as shown in the leftmost figure below, where the red line stands for the fundamental domain. 
We remark that the associated quiver (the center of the following figure) coincides with the McKay quiver of $G$. 
The rightmost figure is the PM polygon $\Delta$, and this has three interior lattice points (these correspond to elements in $G$ with ``age $1$", see \cite{ItoReid}). 
Furthermore, the triangulation given in the rightmost figure induces the crepant resolution of $\Spec R$ which is isomorphic to $\calM_\theta(Q)$ with the generic parameter $\theta=(-6,1,\dots,1)$, and this also coincides with the $G$-Hilbert scheme. 

\newcommand{\dimerquotient}{
\newcommand{\edgewidth}{0.06cm} 
\newcommand{\nodewidth}{0.06cm} 
\newcommand{\noderad}{0.18} 

\node (V5) at (0,0) {{\LARGE$5$}}; 
\coordinate (e05) at (0:1cm); \coordinate (e15) at (60:1cm); \coordinate (e25) at (120:1cm); 
\coordinate (e35) at (180:1cm); \coordinate (e45) at (240:1cm); \coordinate (e55) at (300:1cm); 

\path (e15) ++(1, 0) coordinate (V3); \node at (V3) {{\LARGE$3$}}; 
\path (V3) ++(0:1cm) coordinate (e03); \path (V3) ++(60:1cm) coordinate (e13); \path (V3) ++(120:1cm) coordinate (e23);
\path (V3) ++(180:1cm) coordinate (e33); \path (V3) ++(240:1cm) coordinate (e43); \path (V3) ++(300:1cm) coordinate (e53);

\path (e55) ++(1, 0) coordinate (V6); \node at (V6) {{\LARGE$6$}}; 
\path (V6) ++(0:1cm) coordinate (e06); \path (V6) ++(60:1cm) coordinate (e16); \path (V6) ++(120:1cm) coordinate (e26);
\path (V6) ++(180:1cm) coordinate (e36); \path (V6) ++(240:1cm) coordinate (e46); \path (V6) ++(300:1cm) coordinate (e56);

\path (e23) ++(-1, 0) coordinate (V2); \node at (V2) {{\LARGE$2$}}; 
\path (V2) ++(0:1cm) coordinate (e02); \path (V2) ++(60:1cm) coordinate (e12); \path (V2) ++(120:1cm) coordinate (e22);
\path (V2) ++(180:1cm) coordinate (e32); \path (V2) ++(240:1cm) coordinate (e42); \path (V2) ++(300:1cm) coordinate (e52);

\path (e46) ++(-1, 0) coordinate (V1); \node at (V1) {{\LARGE$1$}};
\path (V1) ++(0:1cm) coordinate (e01); \path (V1) ++(60:1cm) coordinate (e11); \path (V1) ++(120:1cm) coordinate (e21);
\path (V1) ++(180:1cm) coordinate (e31); \path (V1) ++(240:1cm) coordinate (e41); \path (V1) ++(300:1cm) coordinate (e51);

\path (e25) ++(-1, 0) coordinate (V4); \node at (V4) {{\LARGE$4$}};
\path (V4) ++(0:1cm) coordinate (e04); \path (V4) ++(60:1cm) coordinate (e14); \path (V4) ++(120:1cm) coordinate (e24);
\path (V4) ++(180:1cm) coordinate (e34); \path (V4) ++(240:1cm) coordinate (e44); \path (V4) ++(300:1cm) coordinate (e54);

\path (e45) ++(-1, 0) coordinate (V0); \node at (V0) {{\LARGE$0$}};
\path (V0) ++(0:1cm) coordinate (e00); \path (V0) ++(60:1cm) coordinate (e10); \path (V0) ++(120:1cm) coordinate (e20);
\path (V0) ++(180:1cm) coordinate (e30); \path (V0) ++(240:1cm) coordinate (e40); \path (V0) ++(300:1cm) coordinate (e50);

\path (e13) ++(1, 0) coordinate (V1a); \node at (V1a) {{\LARGE$1$}};
\path (V1a) ++(0:1cm) coordinate (e01a); \path (V1a) ++(60:1cm) coordinate (e11a); \path (V1a) ++(120:1cm) coordinate (e21a);
\path (V1a) ++(180:1cm) coordinate (e31a); \path (V1a) ++(240:1cm) coordinate (e41a); \path (V1a) ++(300:1cm) coordinate (e51a);

\path (e53) ++(1, 0) coordinate (V4a); \node at (V4a) {{\LARGE$4$}};
\path (V4a) ++(0:1cm) coordinate (e04a); \path (V4a) ++(60:1cm) coordinate (e14a); \path (V4a) ++(120:1cm) coordinate (e24a);
\path (V4a) ++(180:1cm) coordinate (e34a); \path (V4a) ++(240:1cm) coordinate (e44a); \path (V4a) ++(300:1cm) coordinate (e54a);

\path (e56) ++(1, 0) coordinate (V0a); \node at (V0a) {{\LARGE$0$}};
\path (V0a) ++(0:1cm) coordinate (e00a); \path (V0a) ++(60:1cm) coordinate (e10a); \path (V0a) ++(120:1cm) coordinate (e20a);
\path (V0a) ++(180:1cm) coordinate (e30a); \path (V0a) ++(240:1cm) coordinate (e40a); \path (V0a) ++(300:1cm) coordinate (e50a);

\path (e24) ++(-1, 0) coordinate (V3a); \node at (V3a) {{\LARGE$3$}};
\path (V3a) ++(0:1cm) coordinate (e03a); \path (V3a) ++(60:1cm) coordinate (e13a); \path (V3a) ++(120:1cm) coordinate (e23a);
\path (V3a) ++(180:1cm) coordinate (e33a); \path (V3a) ++(240:1cm) coordinate (e43a); \path (V3a) ++(300:1cm) coordinate (e53a);

\path (e44) ++(-1, 0) coordinate (V6a); \node at (V6a) {{\LARGE$6$}};
\path (V6a) ++(0:1cm) coordinate (e06a); \path (V6a) ++(60:1cm) coordinate (e16a); \path (V6a) ++(120:1cm) coordinate (e26a);
\path (V6a) ++(180:1cm) coordinate (e36a); \path (V6a) ++(240:1cm) coordinate (e46a); \path (V6a) ++(300:1cm) coordinate (e56a);

\path (e40) ++(-1, 0) coordinate (V2a); \node at (V2a) {{\LARGE$2$}};
\path (V2a) ++(0:1cm) coordinate (e02a); \path (V2a) ++(60:1cm) coordinate (e12a); \path (V2a) ++(120:1cm) coordinate (e22a);
\path (V2a) ++(180:1cm) coordinate (e32a); \path (V2a) ++(240:1cm) coordinate (e42a); \path (V2a) ++(300:1cm) coordinate (e52a);

\draw [line width=\edgewidth]  (e00)--(e10)--(e20)--(e30)--(e40)--(e50)--(e00) ;
\draw [line width=\edgewidth]  (e51)--(e01)--(e11)--(e21)--(e31)--(e41) ;
\draw [line width=\edgewidth]  (e02)--(e12);
\draw [line width=\edgewidth]  (e22)--(e32)--(e42)--(e52)--(e02) ;
\draw [line width=\edgewidth]  (e03)--(e13)--(e23)--(e33)--(e43)--(e53)--(e03) ; 
\draw [line width=\edgewidth]  (e04)--(e14)--(e24)--(e34)--(e44)--(e54)--(e04) ;
\draw [line width=\edgewidth]  (e05)--(e15)--(e25)--(e35)--(e45)--(e55)--(e05) ;
\draw [line width=\edgewidth]  (e06)--(e16)--(e26)--(e36)--(e46)--(e56)--(e06) ;

\draw [line width=\edgewidth]  (e21a)--(e31a) ;
\draw [line width=\edgewidth]  (e30a)--(e40a) ;
\draw [line width=\edgewidth]  (e03a)--(e13a) ;
\draw [line width=\edgewidth]  (e23a)--(e33a)--(e43a)--(e53a)--(e03a) ;
\draw [line width=\edgewidth]  (e06a)--(e16a)--(e26a)--(e36a)--(e46a)--(e56a)--(e06a) ;
\draw [line width=\edgewidth]  (e52a)--(e02a)--(e12a)--(e22a)--(e32a)--(e42a) ; 

\draw [line width=\edgewidth] (e33a)-- +(-0.5,0) ; \draw [line width=\edgewidth] (e36a)-- +(-0.5,0) ; \draw [line width=\edgewidth] (e32a)-- +(-0.5,0) ;
\draw [line width=\edgewidth] (e03)-- +(0.5,0) ; \draw [line width=\edgewidth] (e06)-- +(0.5,0) ; 

\path (e22) ++(-1, 0) coordinate (V1b); \node at (V1b) {{\LARGE$1$}};
\path (e22a) ++(-1, 0) coordinate (V1c); \node at (V1c) {{\LARGE$1$}};
\path (e26a) ++(-1, 0) coordinate (V5a); \node at (V5a) {{\LARGE$5$}};
\path (e12) ++(1,0) coordinate (V0b); \node (n0b) at (V0b) {{\LARGE$0$}};

\filldraw [line width=\edgewidth, fill=black] (e05) circle [radius=\noderad]; \filldraw [line width=\edgewidth, fill=black] (e25) circle [radius=\noderad]; 
\filldraw [line width=\edgewidth, fill=black] (e45) circle [radius=\noderad];
\filldraw [line width=\edgewidth, fill=white] (e15) circle [radius=\noderad]; \filldraw [line width=\edgewidth, fill=white] (e35) circle [radius=\noderad]; 
\filldraw [line width=\edgewidth, fill=white] (e55) circle [radius=\noderad]; 
\filldraw [line width=\edgewidth, fill=black] (e03) circle [radius=\noderad]; \filldraw [line width=\edgewidth, fill=black] (e23) circle [radius=\noderad]; 
\filldraw [line width=\edgewidth, fill=white] (e13) circle [radius=\noderad]; \filldraw [line width=\edgewidth, fill=white] (e53) circle [radius=\noderad]; 
\filldraw [line width=\edgewidth, fill=black] (e24) circle [radius=\noderad]; \filldraw [line width=\edgewidth, fill=black] (e44) circle [radius=\noderad];
\filldraw [line width=\edgewidth, fill=white] (e14) circle [radius=\noderad]; \filldraw [line width=\edgewidth, fill=white] (e34) circle [radius=\noderad]; 
\filldraw [line width=\edgewidth, fill=black] (e40) circle [radius=\noderad];
\filldraw [line width=\edgewidth, fill=white] (e30) circle [radius=\noderad]; \filldraw [line width=\edgewidth, fill=white] (e50) circle [radius=\noderad]; 
\filldraw [line width=\edgewidth, fill=black] (e06) circle [radius=\noderad]; \filldraw [line width=\edgewidth, fill=black] (e46) circle [radius=\noderad]; 
\filldraw [line width=\edgewidth, fill=white] (e56) circle [radius=\noderad]; 
\filldraw [line width=\edgewidth, fill=black] (e26a) circle [radius=\noderad]; \filldraw [line width=\edgewidth, fill=black] (e46a) circle [radius=\noderad]; 
\filldraw [line width=\edgewidth, fill=white] (e36a) circle [radius=\noderad]; 
\filldraw [line width=\edgewidth, fill=white] (e33a) circle [radius=\noderad]; \filldraw [line width=\edgewidth, fill=white] (e32a) circle [radius=\noderad]; 

\draw[line width=0.08cm, red]  (V1)--(V1a)--(V1b)--(V1c)--(V1) ; 
}

\begin{center}
\begin{tikzpicture}

\node at (-5,0) {
\scalebox{0.45}{
\begin{tikzpicture}
\dimerquotient
\end{tikzpicture}
} } ; 

\node at (0,0) {
\scalebox{0.45}{
\begin{tikzpicture}[sarrow/.style={black, -latex, very thick}, ssarrow/.style={black, latex-, very thick}]
\newcommand{\edgewidth}{0.06cm} 
\newcommand{\nodewidth}{0.06cm} 
\newcommand{\noderad}{0.18} 
\newcommand{\arrowwidth}{0.07cm} 

\coordinate (e05) at (0:1cm); \coordinate (e15) at (60:1cm); \coordinate (e25) at (120:1cm); 
\coordinate (e35) at (180:1cm); \coordinate (e45) at (240:1cm); \coordinate (e55) at (300:1cm); 
\node (V5) at (0,0) {}; 
\node (n5) at (V5) {{\LARGE$5$}}; 
\coordinate (e05) at (0:1cm); \coordinate (e15) at (60:1cm); \coordinate (e25) at (120:1cm); 
\coordinate (e35) at (180:1cm); \coordinate (e45) at (240:1cm); \coordinate (e55) at (300:1cm); 
\path (e15) ++(1, 0) coordinate (V3); \node (n3) at (V3) {{\LARGE$3$}}; 
\path (V3) ++(0:1cm) coordinate (e03); \path (V3) ++(60:1cm) coordinate (e13); \path (V3) ++(120:1cm) coordinate (e23);
\path (V3) ++(180:1cm) coordinate (e33); \path (V3) ++(240:1cm) coordinate (e43); \path (V3) ++(300:1cm) coordinate (e53);

\path (e55) ++(1, 0) coordinate (V6); \node (n6) at (V6) {{\LARGE$6$}}; 
\path (V6) ++(0:1cm) coordinate (e06); \path (V6) ++(60:1cm) coordinate (e16); \path (V6) ++(120:1cm) coordinate (e26);
\path (V6) ++(180:1cm) coordinate (e36); \path (V6) ++(240:1cm) coordinate (e46); \path (V6) ++(300:1cm) coordinate (e56);

\path (e23) ++(-1, 0) coordinate (V2); \node (n2) at (V2) {{\LARGE$2$}}; 
\path (V2) ++(0:1cm) coordinate (e02); \path (V2) ++(60:1cm) coordinate (e12); \path (V2) ++(120:1cm) coordinate (e22);
\path (V2) ++(180:1cm) coordinate (e32); \path (V2) ++(240:1cm) coordinate (e42); \path (V2) ++(300:1cm) coordinate (e52);

\path (e46) ++(-1, 0) coordinate (V1); \node (n1) at (V1) {{\LARGE$1$}};
\path (V1) ++(0:1cm) coordinate (e01); \path (V1) ++(60:1cm) coordinate (e11); \path (V1) ++(120:1cm) coordinate (e21);
\path (V1) ++(180:1cm) coordinate (e31); \path (V1) ++(240:1cm) coordinate (e41); \path (V1) ++(300:1cm) coordinate (e51);

\path (e25) ++(-1, 0) coordinate (V4); \node (n4) at (V4) {{\LARGE$4$}};
\path (V4) ++(0:1cm) coordinate (e04); \path (V4) ++(60:1cm) coordinate (e14); \path (V4) ++(120:1cm) coordinate (e24);
\path (V4) ++(180:1cm) coordinate (e34); \path (V4) ++(240:1cm) coordinate (e44); \path (V4) ++(300:1cm) coordinate (e54);

\path (e45) ++(-1, 0) coordinate (V0); \node (n0) at (V0) {{\LARGE$0$}};
\path (V0) ++(0:1cm) coordinate (e00); \path (V0) ++(60:1cm) coordinate (e10); \path (V0) ++(120:1cm) coordinate (e20);
\path (V0) ++(180:1cm) coordinate (e30); \path (V0) ++(240:1cm) coordinate (e40); \path (V0) ++(300:1cm) coordinate (e50);

\path (e13) ++(1, 0) coordinate (V1a); \node (n1a) at (V1a) {{\LARGE$1$}};
\path (V1a) ++(0:1cm) coordinate (e01a); \path (V1a) ++(60:1cm) coordinate (e11a); \path (V1a) ++(120:1cm) coordinate (e21a);
\path (V1a) ++(180:1cm) coordinate (e31a); \path (V1a) ++(240:1cm) coordinate (e41a); \path (V1a) ++(300:1cm) coordinate (e51a);

\path (e53) ++(1, 0) coordinate (V4a); \node (n4a) at (V4a) {{\LARGE$4$}};
\path (V4a) ++(0:1cm) coordinate (e04a); \path (V4a) ++(60:1cm) coordinate (e14a); \path (V4a) ++(120:1cm) coordinate (e24a);
\path (V4a) ++(180:1cm) coordinate (e34a); \path (V4a) ++(240:1cm) coordinate (e44a); \path (V4a) ++(300:1cm) coordinate (e54a);

\path (e56) ++(1, 0) coordinate (V0a); \node (n0a) at (V0a) {{\LARGE$0$}};
\path (V0a) ++(0:1cm) coordinate (e00a); \path (V0a) ++(60:1cm) coordinate (e10a); \path (V0a) ++(120:1cm) coordinate (e20a);
\path (V0a) ++(180:1cm) coordinate (e30a); \path (V0a) ++(240:1cm) coordinate (e40a); \path (V0a) ++(300:1cm) coordinate (e50a);

\path (e24) ++(-1, 0) coordinate (V3a); \node (n3a) at (V3a) {{\LARGE$3$}};
\path (V3a) ++(0:1cm) coordinate (e03a); \path (V3a) ++(60:1cm) coordinate (e13a); \path (V3a) ++(120:1cm) coordinate (e23a);
\path (V3a) ++(180:1cm) coordinate (e33a); \path (V3a) ++(240:1cm) coordinate (e43a); \path (V3a) ++(300:1cm) coordinate (e53a);

\path (e44) ++(-1, 0) coordinate (V6a); \node (n6a) at (V6a) {{\LARGE$6$}};
\path (V6a) ++(0:1cm) coordinate (e06a); \path (V6a) ++(60:1cm) coordinate (e16a); \path (V6a) ++(120:1cm) coordinate (e26a);
\path (V6a) ++(180:1cm) coordinate (e36a); \path (V6a) ++(240:1cm) coordinate (e46a); \path (V6a) ++(300:1cm) coordinate (e56a);

\path (e40) ++(-1, 0) coordinate (V2a); \node (n2a) at (V2a) {{\LARGE$2$}};
\path (V2a) ++(0:1cm) coordinate (e02a); \path (V2a) ++(60:1cm) coordinate (e12a); \path (V2a) ++(120:1cm) coordinate (e22a);
\path (V2a) ++(180:1cm) coordinate (e32a); \path (V2a) ++(240:1cm) coordinate (e42a); \path (V2a) ++(300:1cm) coordinate (e52a);

\draw [line width=\edgewidth, lightgray]  (e00)--(e10)--(e20)--(e30)--(e40)--(e50)--(e00) ;
\draw [line width=\edgewidth, lightgray]  (e51)--(e01)--(e11)--(e21)--(e31)--(e41) ;
\draw [line width=\edgewidth, lightgray]  (e02)--(e12);
\draw [line width=\edgewidth, lightgray]  (e22)--(e32)--(e42)--(e52)--(e02) ;
\draw [line width=\edgewidth, lightgray]  (e03)--(e13)--(e23)--(e33)--(e43)--(e53)--(e03) ; 
\draw [line width=\edgewidth, lightgray]  (e04)--(e14)--(e24)--(e34)--(e44)--(e54)--(e04) ;
\draw [line width=\edgewidth, lightgray]  (e05)--(e15)--(e25)--(e35)--(e45)--(e55)--(e05) ;
\draw [line width=\edgewidth, lightgray]  (e06)--(e16)--(e26)--(e36)--(e46)--(e56)--(e06) ;

\draw [line width=\edgewidth, lightgray]  (e21a)--(e31a) ;
\draw [line width=\edgewidth, lightgray]  (e30a)--(e40a) ;
\draw [line width=\edgewidth, lightgray]  (e03a)--(e13a) ;
\draw [line width=\edgewidth, lightgray]  (e23a)--(e33a)--(e43a)--(e53a)--(e03a) ;
\draw [line width=\edgewidth, lightgray]  (e06a)--(e16a)--(e26a)--(e36a)--(e46a)--(e56a)--(e06a) ;
\draw [line width=\edgewidth, lightgray]  (e52a)--(e02a)--(e12a)--(e22a)--(e32a)--(e42a) ; 

\draw [line width=\edgewidth, lightgray] (e33a)-- +(-0.5,0) ; \draw [line width=\edgewidth, lightgray] (e36a)-- +(-0.5,0) ; \draw [line width=\edgewidth, lightgray] (e32a)-- +(-0.5,0) ;
\draw [line width=\edgewidth, lightgray] (e03)-- +(0.5,0) ; \draw [line width=\edgewidth, lightgray] (e06)-- +(0.5,0) ; 

\path (e22) ++(-1, 0) coordinate (V1b); \node (n1b) at (V1b) {{\LARGE$1$}};
\path (e22a) ++(-1, 0) coordinate (V1c); \node (n1c) at (V1c) {{\LARGE$1$}};
\path (e26a) ++(-1, 0) coordinate (V5a); \node (n5a) at (V5a) {{\LARGE$5$}};

\filldraw [line width=\nodewidth, draw=lightgray, fill=lightgray] (e05) circle [radius=\noderad]; \filldraw [line width=\nodewidth, draw=lightgray, fill=lightgray] (e25) circle [radius=\noderad]; 
\filldraw [line width=\nodewidth, draw=lightgray, fill=lightgray] (e45) circle [radius=\noderad];
\filldraw [line width=\nodewidth, draw=lightgray, fill=white] (e15) circle [radius=\noderad]; \filldraw [line width=\nodewidth, draw=lightgray, fill=white] (e35) circle [radius=\noderad]; 
\filldraw [line width=\nodewidth, draw=lightgray, fill=white] (e55) circle [radius=\noderad]; 
\filldraw [line width=\nodewidth, draw=lightgray, fill=lightgray] (e03) circle [radius=\noderad]; \filldraw [line width=\nodewidth, draw=lightgray, fill=lightgray] (e23) circle [radius=\noderad]; 
\filldraw [line width=\nodewidth, draw=lightgray, fill=white] (e13) circle [radius=\noderad]; \filldraw [line width=\nodewidth, draw=lightgray, fill=white] (e53) circle [radius=\noderad]; 
\filldraw [line width=\nodewidth, draw=lightgray, fill=lightgray] (e24) circle [radius=\noderad]; \filldraw [line width=\nodewidth, draw=lightgray, fill=lightgray] (e44) circle [radius=\noderad];
\filldraw [line width=\nodewidth, draw=lightgray, fill=white] (e14) circle [radius=\noderad]; \filldraw [line width=\nodewidth, draw=lightgray, fill=white] (e34) circle [radius=\noderad]; 
\filldraw [line width=\nodewidth, draw=lightgray, fill=lightgray] (e40) circle [radius=\noderad];
\filldraw [line width=\nodewidth, draw=lightgray, fill=white] (e30) circle [radius=\noderad]; \filldraw [line width=\nodewidth, draw=lightgray, fill=white] (e50) circle [radius=\noderad]; 
\filldraw [line width=\nodewidth, draw=lightgray, fill=lightgray] (e06) circle [radius=\noderad]; \filldraw [line width=\nodewidth, draw=lightgray, fill=lightgray] (e46) circle [radius=\noderad]; 
\filldraw [line width=\nodewidth, draw=lightgray, fill=white] (e56) circle [radius=\noderad]; 
\filldraw [line width=\nodewidth, draw=lightgray, fill=lightgray] (e26a) circle [radius=\noderad]; \filldraw [line width=\nodewidth, draw=lightgray, fill=lightgray] (e46a) circle [radius=\noderad]; 
\filldraw [line width=\nodewidth, draw=lightgray, fill=white] (e36a) circle [radius=\noderad]; 
\filldraw [line width=\nodewidth, draw=lightgray, fill=white] (e33a) circle [radius=\noderad]; \filldraw [line width=\nodewidth, draw=lightgray, fill=white] (e32a) circle [radius=\noderad]; 

\draw[sarrow, line width=\arrowwidth] (n0)--(n1); \draw[sarrow, line width=\arrowwidth] (n0)--(n4); \draw[sarrow, line width=\arrowwidth] (n0)--(n2a);  
\draw[sarrow, line width=\arrowwidth] (n5)--(n0); \draw[sarrow, line width=\arrowwidth] (n5)--(n2); \draw[sarrow, line width=\arrowwidth] (n5)--(n6);
\draw[sarrow, line width=\arrowwidth] (n6a)--(n0); \draw[sarrow, line width=\arrowwidth] (n6a)--(n3a); \draw[sarrow, line width=\arrowwidth] (n6a)--(n1c);
\draw[sarrow, line width=\arrowwidth] (n4)--(n5); \draw[sarrow, line width=\arrowwidth] (n4)--(n6a); \draw[sarrow, line width=\arrowwidth] (n4)--(n1b); 
\draw[sarrow, line width=\arrowwidth] (n2)--(n3); \draw[sarrow, line width=\arrowwidth] (n2)--(n4); \draw[sarrow, line width=\arrowwidth] (n1)--(n5); 
\draw[sarrow, line width=\arrowwidth] (n3)--(n5); \draw[sarrow, line width=\arrowwidth] (n3)--(n4a); 
\draw[sarrow, line width=\arrowwidth] (n6)--(n1); \draw[sarrow, line width=\arrowwidth] (n6)--(n3); \draw[sarrow, line width=\arrowwidth] (n6)--(n0a);
\draw[sarrow, line width=\arrowwidth] (n1b)--(n2); \draw[sarrow, line width=\arrowwidth] (n1b)--(n3a); \draw[sarrow, line width=\arrowwidth] (n4a)--(n6); 
\draw[sarrow, line width=\arrowwidth] (n3a)--(n4); \draw[sarrow, line width=\arrowwidth] (n3a)--(n5a); \draw[sarrow, line width=\arrowwidth] (n1c)--(n2a); 
\draw[sarrow, line width=\arrowwidth] (n1c)--(n5a); \draw[sarrow, line width=\arrowwidth] (n5a)--(n6a); \draw[sarrow, line width=\arrowwidth] (n2a)--(n6a); 
\draw[sarrow, line width=\arrowwidth] (n4a)--(n1a); \draw[sarrow, line width=\arrowwidth] (n1a)--(n3); \draw[sarrow, line width=\arrowwidth] (n0a)--(n4a); 

\path (e12) ++(1,0) coordinate (V0b); \node (n0b) at (V0b) {{\LARGE$0$}};
\draw[sarrow, line width=\arrowwidth] (n3)--(n0b); \draw[sarrow, line width=\arrowwidth] (n0b)--(n2); \draw[sarrow, line width=\arrowwidth] (n0b)--(n1a);
\draw[ssarrow, line width=\arrowwidth] (n6)--++(0,-1.2); \draw[ssarrow, line width=\arrowwidth] (n0)--++(0,-1.2); \draw[sarrow, line width=\arrowwidth] (n2)--++(0,1.2); 
\draw[sarrow, line width=\arrowwidth] (n3a)--++(0,1.2); \draw[ssarrow, line width=\arrowwidth] (n3a)--++(150:1.5cm);
\draw[line width=0.08cm, red]  (V1)--(V1a)--(V1b)--(V1c)--(V1) ; 
\end{tikzpicture}
} } ;

\node at (5,0) {
\scalebox{0.55}{
\begin{tikzpicture}
\coordinate (00) at (0,0); \coordinate (10) at (1,0); \coordinate (01) at (0,1); \coordinate (-10) at (-1,0); \coordinate (1-1) at (1,-1);  
\coordinate (-20) at (-2,0); \coordinate (-11) at (-1,1); \coordinate (-12) at (-1,2); \coordinate (1-1) at (1,-1);   

\draw [step=1, gray] (-3.5,-1.5) grid (2.5,2.5);
\draw [line width=0.06cm] (1-1)--(-12)--(-20)--(1-1) ;  

\draw [line width=0.05cm, fill=black] (00) circle [radius=0.12] ; \draw [line width=0.05cm, fill=black] (-10) circle [radius=0.12] ; 
\draw [line width=0.05cm, fill=black] (-20) circle [radius=0.12] ; \draw [line width=0.05cm, fill=black] (1-1) circle [radius=0.12] ; 
\draw [line width=0.05cm, fill=black] (-11) circle [radius=0.12] ; \draw [line width=0.05cm, fill=black] (-12) circle [radius=0.12] ; 

\draw [line width=0.035cm] (-12)--(-10) ; \draw [line width=0.035cm] (-20)--(00) ; \draw [line width=0.035cm] (1-1)--(-11) ; 
\draw [line width=0.035cm] (1-1)--(-10) ; \draw [line width=0.035cm] (-20)--(-11) ; \draw [line width=0.035cm] (-12)--(00) ; 

\node [red] at (-1,2.5) {\Large$D_1$} ; \node [red] at (-2.5,0) {\Large$D_2$} ;
\node [red] at (1.4,-1.4) {\Large$D_3$} ; 
\node [red] at (1,0.7) {\Large$D_4$} ; \draw[->, line width=0.05cm,red] (0.7,0.7) to [bend right] (0,0.2);  
\node [red] at (0.7,1.75) {\Large$D_5$} ; \draw[->, line width=0.05cm,red] (0.4,1.75) to [bend right] (-1,1.2); 
\node [red] at (-1.8,-0.8) {\Large$D_6$} ; \draw[->, line width=0.05cm,red] (-1.5,-0.8) to [bend right] (-1,-0.1);  

\end{tikzpicture}
} } ; 

\end{tikzpicture}
\end{center}

The following figures are $\theta$-stable perfect matchings. 
In particular, $D_1,D_2,D_3$ are corner perfect matchings, and $D_4,D_5,D_6$ are internal ones. 
We easily see that $\calA_{\sfD_4}\cong\calA_{\sfD_5}\cong\calA_{\sfD_6}$, but these do not correspond to the same lattice point. 

\begin{center}
\begin{tikzpicture}
\node at (0,-1.38) {$D_1$}; \node at (4.5,-1.38) {$D_2$}; \node at (9,-1.38) {$D_3$}; 
\node at (0,-4.38) {$D_4$}; \node at (4.5,-4.38) {$D_5$}; \node at (9,-4.38) {$D_6$}; 

\newcommand{\pmwidth}{0.4cm} 
\newcommand{\pmcolor}{gray} 

\node (PM1) at (0,0) {
\scalebox{0.4}{
\begin{tikzpicture}
\draw[line width=\pmwidth, color=\pmcolor] (e20)--(e30); \draw[line width=\pmwidth, color=\pmcolor] (e21)--(e31);
\draw[line width=\pmwidth, color=\pmcolor] (e22)--(e32); \draw[line width=\pmwidth, color=\pmcolor] (e23)--(e33);
\draw[line width=\pmwidth, color=\pmcolor] (e24)--(e34); \draw[line width=\pmwidth, color=\pmcolor] (e25)--(e35);
\draw[line width=\pmwidth, color=\pmcolor] (e26)--(e36); 
\draw[line width=\pmwidth, color=\pmcolor] (e20a)--(e30a); \draw[line width=\pmwidth, color=\pmcolor] (e21a)--(e31a);
\draw[line width=\pmwidth, color=\pmcolor] (e22a)--(e32a); \draw[line width=\pmwidth, color=\pmcolor] (e23a)--(e33a);
\draw[line width=\pmwidth, color=\pmcolor] (e24a)--(e34a); \draw[line width=\pmwidth, color=\pmcolor] (e26a)--(e36a); 
\draw[line width=\pmwidth, color=\pmcolor] (e51)--(e01); \draw[line width=\pmwidth, color=\pmcolor] (e52a)--(e02a); 

\dimerquotient
\end{tikzpicture}
} } ; 

\node (PM2) at (4.5,0) {
\scalebox{0.4}{
\begin{tikzpicture}
\draw[line width=\pmwidth, color=\pmcolor] (e10)--(e20); \draw[line width=\pmwidth, color=\pmcolor] (e11)--(e21);
\draw[line width=\pmwidth, color=\pmcolor] (e13)--(e23);
\draw[line width=\pmwidth, color=\pmcolor] (e14)--(e24); \draw[line width=\pmwidth, color=\pmcolor] (e15)--(e25);
\draw[line width=\pmwidth, color=\pmcolor] (e16)--(e26); 
\draw[line width=\pmwidth, color=\pmcolor] (e40)--(e50); \draw[line width=\pmwidth, color=\pmcolor] (e46)--(e56); 
\draw[line width=\pmwidth, color=\pmcolor] (e12a)--(e22a); \draw[line width=\pmwidth, color=\pmcolor] (e16a)--(e26a); 
\draw [line width=\pmwidth, color=\pmcolor] (e33a)-- +(-0.5,0) ; \draw [line width=\pmwidth, color=\pmcolor] (e36a)-- +(-0.5,0) ; \draw [line width=\pmwidth, color=\pmcolor] (e32a)-- +(-0.5,0) ;
\draw [line width=\pmwidth, color=\pmcolor] (e03)-- +(0.5,0) ; \draw [line width=\pmwidth, color=\pmcolor] (e06)-- +(0.5,0) ; 

\dimerquotient
\end{tikzpicture}
} } ; 

\node (PM3) at (9,0) {
\scalebox{0.4}{
\begin{tikzpicture}
\draw[line width=\pmwidth, color=\pmcolor] (e00)--(e10); \draw[line width=\pmwidth, color=\pmcolor] (e01)--(e11);
\draw[line width=\pmwidth, color=\pmcolor] (e02)--(e12); \draw[line width=\pmwidth, color=\pmcolor] (e03)--(e13);
\draw[line width=\pmwidth, color=\pmcolor] (e04)--(e14); \draw[line width=\pmwidth, color=\pmcolor] (e05)--(e15);
\draw[line width=\pmwidth, color=\pmcolor] (e06)--(e16); 
\draw[line width=\pmwidth, color=\pmcolor] (e02a)--(e12a); \draw[line width=\pmwidth, color=\pmcolor] (e03a)--(e13a);
\draw[line width=\pmwidth, color=\pmcolor] (e06a)--(e16a); 
\draw[line width=\pmwidth, color=\pmcolor] (e30a)--(e40a); \draw[line width=\pmwidth, color=\pmcolor] (e31)--(e41); 
\draw[line width=\pmwidth, color=\pmcolor] (e32a)--(e42a); \draw[line width=\pmwidth, color=\pmcolor] (e33a)--(e43a);
\draw[line width=\pmwidth, color=\pmcolor] (e36a)--(e46a); 

\dimerquotient
\end{tikzpicture}
} } ; 

\node (PM4) at (0,-3) {
\scalebox{0.4}{
\begin{tikzpicture}
\draw[line width=\pmwidth, color=\pmcolor] (e00)--(e10); \draw[line width=\pmwidth, color=\pmcolor] (e20)--(e30); 
\draw[line width=\pmwidth, color=\pmcolor] (e40)--(e50); 
\draw[line width=\pmwidth, color=\pmcolor] (e13)--(e23); \draw[line width=\pmwidth, color=\pmcolor] (e33)--(e43); 
\draw[line width=\pmwidth, color=\pmcolor] (e53)--(e03); \draw[line width=\pmwidth, color=\pmcolor] (e33a)--(e43a); 
\draw[line width=\pmwidth, color=\pmcolor] (e04)--(e14); \draw[line width=\pmwidth, color=\pmcolor] (e24)--(e34); 
\draw[line width=\pmwidth, color=\pmcolor] (e36)--(e46); \draw[line width=\pmwidth, color=\pmcolor] (e56)--(e06); 
\draw[line width=\pmwidth, color=\pmcolor] (e36a)--(e46a); \draw[line width=\pmwidth, color=\pmcolor] (e32a)--(e42a); 

\dimerquotient
\end{tikzpicture}
} } ; 

\node (PM5) at (4.5,-3) {
\scalebox{0.4}{
\begin{tikzpicture}
\draw[line width=\pmwidth, color=\pmcolor] (e00)--(e10); \draw[line width=\pmwidth, color=\pmcolor] (e20)--(e30); 
\draw[line width=\pmwidth, color=\pmcolor] (e40)--(e50); 
\draw[line width=\pmwidth, color=\pmcolor] (e01)--(e51); \draw[line width=\pmwidth, color=\pmcolor] (e22)--(e32); 
\draw[line width=\pmwidth, color=\pmcolor] (e22a)--(e32a); 
\draw[line width=\pmwidth, color=\pmcolor] (e13)--(e23); \draw[line width=\pmwidth, color=\pmcolor] (e53)--(e03); 
\draw[line width=\pmwidth, color=\pmcolor] (e24)--(e34); 
\draw[line width=\pmwidth, color=\pmcolor] (e15)--(e25); \draw[line width=\pmwidth, color=\pmcolor] (e05)--(e55); 
\draw[line width=\pmwidth, color=\pmcolor] (e06)--(e56); \draw[line width=\pmwidth, color=\pmcolor] (e26a)--(e36a); 
\draw [line width=\pmwidth, color=\pmcolor] (e33a)-- +(-0.5,0) ;

\dimerquotient
\end{tikzpicture}
} } ; 

\node (PM6) at (9,-3) {
\scalebox{0.4}{
\begin{tikzpicture}
\draw[line width=\pmwidth, color=\pmcolor] (e00)--(e10); \draw[line width=\pmwidth, color=\pmcolor] (e20)--(e30); 
\draw[line width=\pmwidth, color=\pmcolor] (e40)--(e50); 
\draw[line width=\pmwidth, color=\pmcolor] (e13)--(e23); 
\draw[line width=\pmwidth, color=\pmcolor] (e14)--(e24); 
\draw[line width=\pmwidth, color=\pmcolor] (e15)--(e25); 
\draw[line width=\pmwidth, color=\pmcolor] (e16)--(e26); \draw[line width=\pmwidth, color=\pmcolor] (e36)--(e46); 
\draw[line width=\pmwidth, color=\pmcolor] (e56)--(e06); 
\draw[line width=\pmwidth, color=\pmcolor] (e16a)--(e26a); \draw[line width=\pmwidth, color=\pmcolor] (e36a)--(e46a); 
\draw [line width=\pmwidth, color=\pmcolor] (e33a)-- +(-0.5,0) ; \draw [line width=\pmwidth, color=\pmcolor] (e32a)-- +(-0.5,0) ;
\draw [line width=\pmwidth, color=\pmcolor] (e03)-- +(0.5,0) ; 

\dimerquotient
\end{tikzpicture}
} } ; 

\end{tikzpicture}
\end{center}

\end{example}

\subsection{Mutations of QPs associated with dimer models} 
\label{subsec_mutation_QP}

As we mentioned in Subsection~\ref{subsec_pm}, a consistent dimer model associated with a given lattice polygon is not unique. 
In order to understand the relationship between consistent dimer models giving the same PM polygon, we introduce the mutations of QPs associated with dimer models. 
To define the mutations of QPs, we consider the complete path algebra. 
For a quiver $Q$, the \emph{complete path algebra} is defined as $\widehat{\kk Q}\coloneqq\prod_{r\ge 0}\kk Q_r$ 
where $\kk Q_r$ is the vector space with a basis $Q_r$. The multiplication is defined by the same way as the path algebra $\kk Q$. 
We note that $\widehat{\kk Q}$ is the $\fkm_Q$-adic completion of $\kk Q$, where $\fkm_Q\coloneqq \prod_{r\ge 1}\kk Q_r$.  
For a subset $V\subseteq\widehat{\kk Q}$, we define the \emph{$\fkm_Q$-adic closure} of $V$ as $\overline{V}\coloneqq \bigcap_{n\ge 0}(V+\fkm_Q^n)$. 
Then, we define the \emph{complete Jacobian algebra} $\widehat{\kk Q}/\overline{\calJ_Q}$, and we denote this by $\widehat{\calP}(Q,W_Q)$. 
Also, the \emph{truncated complete Jacobian algebra} is defined similar to the truncated Jacobian algebra. 

\begin{definition} 
We say that two potentials $W_1, W_2$ of a quiver $Q$ are \emph{cyclically equivalent} if $W_1-W_2$ lies in
the closure of the span of all elements with the form $a_1\cdots a_r-a_2\cdots a_ra_1$, where $a_1\cdots a_r$ is a cyclic path. 

Let $(Q, W), (Q^\prime, W^\prime)$ be QPs. 
We say that $(Q, W)$ and $(Q^\prime, W^\prime)$ are \emph{right equivalent} if $Q_0=Q_0^\prime$ and 
there exists a isomorphism $\varphi:\widehat{\kk Q}\rightarrow\widehat{\kk Q^\prime}$ such that $\varphi\mid_{Q_0}=id$, 
and $\varphi(W)$ and $W^\prime$ are cyclically equivalent. 
\end{definition}

We remark that if QPs $(Q, W), (Q^\prime, W^\prime)$ are right equivalent, then we have $\widehat{\calP}(Q, W)\cong\widehat{\calP}(Q^\prime, W^\prime)$ \cite[Proposition~3.3 and 3.7]{DWZ}, and hence we will consider the right equivalence classes of QPs. 

\begin{definition}
We say that a QP $(Q, W)$ is \emph{trivial} if $W$ consists of cycles of length two and $\widehat{\calP}(Q,W)$ is isomorphic to $\widehat{\kk Q_0}$. 
Also, we say that a QP $(Q, W)$ is \emph{reduced} if $W$ is a linear combination of cycles of length three or more. 
\end{definition}

By the splitting theorem \cite[Theorem~4.6]{DWZ}, any QP $(Q,W)$ is decomposed as a direct sum of a trivial QP $(Q_{\rm triv}, W_{\rm triv})$ and 
a reduced QP $(Q_{\rm red}, W_{\rm red})$: 
\[
(Q, W)\cong (Q_{\rm triv}, W_{\rm triv})\oplus(Q_{\rm red}, W_{\rm red}), 
\]
up to right equivalence where $Q_0=(Q_{\rm triv})_0=(Q_{\rm red})_0$, $Q_1=(Q_{\rm triv})_1\sqcup(Q_{\rm red})_1$, 
and $W=W_{\rm triv}+W_{\rm red}$. 

Using these notions, we can define the mutation of a QP $(Q, W)$ as in \cite[Section~5]{DWZ} (see also \cite[Subsection~1.2]{BIRS}), 
and this mutation is defined for each vertex $k\in Q_0$ not lying on a $2$-cycle and has no loops. 
However, we easily see that the mutation of the QP associated with a dimer model at some vertex is not the dual of a dimer model in general. 
In order to make the mutated QP the dual of a dimer model, we have to impose some restrictions as follows (see e.g., \cite[Lemma~4.6]{Nak}). 

\begin{lemma}
\label{lem_mutatable}
Let $\Gamma$ be a dimer model, and $(Q, W_Q)$ be the QP associated with $\Gamma$.
We assume that $k\in Q_0$ is not lying on $2$-cycles, and has no loops. 
Then, the mutation of $(Q, W_Q)$ at $k\in Q_0$ is the dual of a dimer model if and only if the vertex $k\in Q_0$ has exactly two incoming arrows $($equivalently, exactly two outgoing arrows$)$. 
\end{lemma}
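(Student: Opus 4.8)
The plan is to read off both implications directly from the Derksen--Weyman--Zelevinsky mutation $\mu_k$ recalled in Subsection~\ref{subsec_mutation_QP}, exploiting the characterisation of dimer-model QPs: a QP is the dual of a dimer model exactly when its quiver embeds in $\TT$ with the terms of the potential as the (bipartitely two-coloured) faces, and in particular each arrow then lies in exactly two terms of the potential (the small cycles at the two endpoints of its dual edge). First I would prove the parenthetical equivalence and reduce to the case $m\ge 2$, where $2m$ denotes the number of edges of the face $F_k$ of $\Gamma$ dual to $k$ (even, by bipartiteness). At each node $n$ on $\partial F_k$ the two boundary edges meeting there are dual to a pair of arrows which together form a length-two subpath of the small cycle $\omega_n$ passing through $k$ once (one arrow into $k$, one out of $k$); hence, travelling around $\partial F_k$, the dual arrows alternate between arrows with head $k$ and arrows with tail $k$, so $k$ has exactly $m$ incoming and $m$ outgoing arrows. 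Moreover $m=1$ is impossible under the hypotheses: a digonal $F_k$ would make its two boundary edges dual to a pair of arrows forming a $2$-cycle at $k$ (or a loop), which is excluded. So it remains to show that $\mu_k(Q,W_Q)$ is the dual of a dimer model iff $m=2$.

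For ``$m=2\Rightarrow$ dimer model'' I would compute $\mu_k(Q,W_Q)$ explicitly. Writing the four arrows around $F_k$ as $\alpha_1,\beta_1,\alpha_2,\beta_2$ in cyclic order, with $\alpha_i\colon p_i\to k$ and $\beta_i\colon k\to q_i$, the four small cycles through $k$ are $\alpha_1\beta_1 r_1$, $\alpha_2\beta_1 s_1$, $\alpha_2\beta_2 r_2$, $\alpha_1\beta_2 s_2$ (from the two black and two white boundary nodes, entering $W_Q$ with alternating signs). Running $\mu_k$: reverse $\alpha_i,\beta_i$ to $\alpha_i^*,\beta_i^*$, add the four composites $[\alpha_i\beta_j]\colon p_i\to q_j$, replace each occurrence of $\alpha_i\beta_j$ in the small cycles by $[\alpha_i\beta_j]$, and add $\sum_{i,j}[\alpha_i\beta_j]\beta_j^*\alpha_i^*$; then pass to the reduced part, cancelling the $2$-cycles that appear exactly when a boundary node of $F_k$ is trivalent. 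I would then identify the resulting reduced QP with the dual of the dimer model $\Gamma'$ obtained from $\Gamma$ by the local spider move (urban renewal) at $F_k$, followed where needed by join moves removing the $2$-valent nodes produced; this is \cite[Lemma~4.6]{Nak}, so here I would only indicate the match between the quiver and its small cycles.

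For ``dimer model $\Rightarrow m\le 2$'' I argue by contradiction. Suppose $m\ge 3$ and that $\mu_k(Q,W_Q)$ (reduced) is dual to a dimer model. Among the $m^2$ composite arrows $[\alpha_i\beta_j]$, those with $j\notin\{i,i-1\}$ --- there are $m(m-2)\ge 3$ of them --- occur in $\mu_k(W_Q)$ in exactly one term, namely $[\alpha_i\beta_j]\beta_j^*\alpha_i^*$, since the modified small cycles involve only the composites $[\alpha_a\beta_a]$ and $[\alpha_a\beta_{a-1}]$. The quadratic part of $\mu_k(W_Q)$ consists solely of the $2$-cycles (composite)$\cdot$(old arrow) coming from trivalent boundary nodes of $F_k$, so none of $[\alpha_i\beta_j]$, $\beta_j^*$, $\alpha_i^*$ lies on a $2$-cycle; hence they survive the reduction and the term $[\alpha_i\beta_j]\beta_j^*\alpha_i^*$ is untouched by it. Thus $[\alpha_i\beta_j]$ is an arrow of the reduced quiver lying in exactly one term of the reduced potential, contradicting that every arrow of a dimer-model QP lies in exactly two terms. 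Combined with the previous paragraph this gives the lemma.

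I expect the main obstacle to lie in the ``$m=2$'' direction: making the identification with the spider move fully rigorous, in particular controlling the DWZ reduction when several boundary nodes of $F_k$ are trivalent, where the $2$-cycles must be eliminated and the accompanying substitutions re-route edges near $F_k$, forcing $2$-valent nodes in $\Gamma'$ that are then removed by join moves, and one must check this bookkeeping matches the move on $\Gamma$ exactly. The ``only if'' direction, by contrast, is short once one notices that for $m\ge 3$ the mutation is forced to create composite arrows sitting in only one potential term.
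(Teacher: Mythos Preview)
The paper does not give its own proof of this lemma; it simply cites \cite[Lemma~4.6]{Nak} and immediately moves on to use the result to define $Q_0^\mu$ and the spider-move description of the mutation. So there is no in-paper argument to compare against, and your task is really to give a self-contained proof.

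Your argument is correct and is essentially the standard one. The parenthetical equivalence via the alternation of in/out arrows around $\partial F_k$ is exactly right, and the reduction to $m\ge 2$ is clean. For $m=2$ the explicit computation you outline is precisely what underlies Definition~\ref{def_mutation_dimer}; the paper itself asserts (just after that definition) that the dimer-level spider move coincides with the QP mutation, so your sketch is in line with how the result is used. For $m\ge 3$ your ``orphan composite'' argument is valid: the only quadratic terms in $W_Q'$ are $[\alpha_a\beta_b]\cdot c$ with $b\in\{a,a-1\}$ coming from trivalent boundary nodes, so neither $[\alpha_i\beta_j]$ (for $j\notin\{i,i-1\}$) nor the starred arrows are touched by the explicit DWZ reduction, and the single term $[\alpha_i\beta_j]\beta_j^*\alpha_i^*$ survives unchanged.

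One small point worth making explicit in your write-up: the obstruction ``some arrow lies in exactly one potential term'' is being applied to the \emph{explicit} reduced QP obtained by the concrete cancellation procedure, which is the sense in which the paper (and \cite{Nak}) uses ``is the dual of a dimer model''. If one instead read the statement as ``right equivalent to a dimer QP'', this particular invariant would need more justification, since term-counts are not obviously preserved under right equivalence. A formulation robust to this worry, and just as short, is to observe that each starred arrow $\alpha_i^*$ appears in exactly $m$ terms of the reduced potential (the $m$ triangles $\alpha_i^*[\alpha_i\beta_j]\beta_j^*$, with the ones hit by a reduction merely lengthened, not deleted), whereas in any dimer QP every arrow lies in exactly two small cycles; hence $m=2$. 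Either version suffices for the lemma as stated and used here.
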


We denote by $Q_0^\mu$ the set of vertices in $Q$ satisfying the equivalent conditions in Lemma~\ref{lem_mutatable}. 
Since a vertex $k\in Q_0^\mu$ has two incoming arrows and two outgoing arrows, we can divide them into two pairs 
$\{a_1,b_1\}, \{a_2,b_2\}$ where $a_1, a_2$ (resp. $b_1, b_2$) are incoming (resp. outgoing) arrows incident to $k$. 
We remark that there are two possibilities of a choice of such pairs, that is, they belong to clockwise small cycles or anti-clockwise small cycles. 
In the rest, we assume that $\{a_1,b_1\}, \{a_2,b_2\}$ belong to anti-clockwise small cycles. 
Then, we define the mutation of the QPs associated with dimer models as follows, 
which is a special case of the mutation of QPs introduced in \cite{DWZ}. 
(For more details, see e.g., \cite[Subsection~7.2]{Boc_NCCR}, \cite[Section~4]{Nak}.) 

\begin{definition}
\label{def_mut_QP}
Let $(Q, W_Q)$ be the QP associated with a dimer model $\Gamma$. Let $k\in Q_0^\mu$, and suppose that $a_1, a_2$ (resp. $b_1, b_2$) are incoming (resp. outgoing) arrows as above. 
Since a potential does not depend on the starting point of each cycle, we may assume that no cycles in $W_Q$ start at $k$. 
We first apply the following procedures: 
\begin{itemize}
\item [(A)] $Q^\prime$ is the quiver obtained from $Q$ as follows. 
 \begin{itemize}
 \item [(A-1)] We add new arrows $[a_ib_j]: \tl(a_i)\rightarrow \hd(b_j)$ for any $i, j=1,2$. 
 \item [(A-2)] Replace the arrow $a_i: \tl(a_i)\rightarrow k$ by the new arrow $a_i^*: k\rightarrow \tl(a_i)$ for $i=1,2$. 
 \item [(A-3)] Replace the arrow $b_j: k\rightarrow \hd(b_j)$ by the new arrow $b_j^*: \hd(b_j)\rightarrow k$ for $j=1,2$. 
 \end{itemize}
\[
\begin{tikzpicture} 
\node (original) at (0,0) 
{\scalebox{0.65}{
\begin{tikzpicture}[sarrow/.style={-latex, very thick}]
\node (vk) at (2,2){$k$};\node (hb1) at (0,2){$\bullet$}; \node(hb2) at (4,2){$\bullet$}; \node (ta1) at (2,0){$\bullet$};\node (ta2) at (2,4){$\bullet$};

\draw[sarrow, line width=0.05cm] (vk)--node[above] {$b_1$}(hb1);
\draw[sarrow, line width=0.05cm] (ta1)--node[midway,left] {$a_1$}(vk);
\draw[sarrow, line width=0.05cm] (ta2)--node[midway,left] {$a_2$}(vk); 
\draw[sarrow, line width=0.05cm] (vk)--node[above] {$b_2$}(hb2);
\end{tikzpicture} }}; 

\node (mutated) at (4.5,0) 
{\scalebox{0.65}{
\begin{tikzpicture}[sarrow/.style={-latex, very thick}]
\node (vk) at (2,2){$k$};\node (hb1) at (0,2){$\bullet$}; \node(hb2) at (4,2){$\bullet$}; \node (ta1) at (2,0){$\bullet$};\node (ta2) at (2,4){$\bullet$};

\draw[sarrow, line width=0.05cm] (hb1)--node[above] {$b_1^*$}(vk);
\draw[sarrow, line width=0.05cm] (vk)--node[midway,left] {$a_1^*$}(ta1);
\draw[sarrow, line width=0.05cm] (vk)--node[midway,left] {$a_2^*$}(ta2);
\draw[sarrow, line width=0.05cm] (hb2)--node[above] {$b_2^*$}(vk);
\draw[sarrow, line width=0.05cm] (ta1)--node[midway,left,xshift=-5pt] {$[a_1b_1]$}(hb1);
\draw[sarrow, line width=0.05cm] (ta2)--node[midway,left,xshift=-5pt] {$[a_2b_1]$}(hb1);
\draw[sarrow, line width=0.05cm] (ta1)--node[midway,right,xshift=5pt] {$[a_1b_2]$}(hb2);
\draw[sarrow, line width=0.05cm] (ta2)--node[midway,right,xshift=5pt] {$[a_2b_2]$}(hb2);
\end{tikzpicture} }}; 

\draw[->, line width=0.03cm] (original)--(mutated);
\end{tikzpicture}
\]

\item [(B)] $W_Q^\prime=[W_Q]+\Omega$ where $[W_Q]$ and $\Omega$ are obtained by the following rule. 
 \begin{itemize}
 \item [(B-1)] $[W_Q]$ is obtained by substituting $[a_ib_j]$ for each part $a_ib_j$ in $W_Q$ for any $i, j=1,2$. 
 \item [(B-2)] $\Omega=a_1^*[a_1b_1]b_1^*-a_1^*[a_1b_2]b_2^*+a_2^*[a_2 b_2]b_2^*-a_2^*[a_2 b_1]b_1^*$. 
 \end{itemize}
\end{itemize}
Then, we define the \emph{mutation $\mu_k(Q, W_Q)$ of the QP $(Q, W_Q)$ at $k\in Q_0^\mu$} as a reduced part of $(Q^\prime, W_Q^\prime)$, 
that is, $\mu_k(Q, W_Q)\coloneqq (Q^\prime_{\rm red}, (W_Q^\prime)_{\rm red})$. 
We sometimes denote the quiver part of $\mu_k(Q, W_Q)$ by $\mu_k(Q)$, in which case the potential is denoted by $W_{\mu_k(Q)}$, 
that is, $\mu_k(Q, W_Q)=(\mu_k(Q),W_{\mu_k(Q)})$. 
We also denote the dimer model obtained as the dual of $\mu_k(Q,W_Q)$ by $\mu_k(\Gamma)$. 
\end{definition}

We note that $\mu_{k}\left(\mu_k(Q, W_Q)\right)$ is right equivalent to $(Q, W_Q)$ (see \cite[Theorem~5.7]{DWZ}). 
We say that two dimer models are \emph{mutation equivalent} if the associated QPs are transformed into each other by repeating the mutations. 

\medskip

On the other hand, this mutation can be stated in terms of dimer models. 
To define the mutation of dimer models, we use the operation called \emph{spider move} (see e.g., \cite{GK,Boc_ABC}), which is the operation shown in Figure~\ref{fig_spider}. Note that this operation is an involution. 

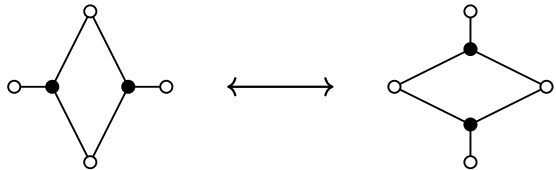
\begin{figure}[H]
\begin{center}
{\scalebox{1}{
\begin{tikzpicture} 
\newcommand{\edgewidth}{0.05cm} 
\newcommand{\nodewidth}{0.05cm} 
\newcommand{\noderad}{0.16} 

\draw[<->, line width=0.03cm] (1.8,0)--(3.2,0); 

\node (mutate_a) at (0,0)
{\scalebox{0.5}{
\begin{tikzpicture} 
\coordinate (B1) at (-1,0); \coordinate (B2) at (1,0); 
\coordinate (W1) at (-2,0); \coordinate (W2) at (0,-2); \coordinate (W3) at (2,0); 
\coordinate (W4) at (0,2); 

\draw[line width=\edgewidth] (W1)--(B1); \draw[line width=\edgewidth] (W2)--(B1);  
\draw[line width=\edgewidth] (W4)--(B1); 
\draw[line width=\edgewidth] (W2)--(B2); \draw[line width=\edgewidth] (W3)--(B2); 
\draw[line width=\edgewidth] (W4)--(B2);

\filldraw  [line width=\nodewidth, fill=black] (B1) circle [radius=\noderad] ; \filldraw  [line width=\nodewidth, fill=black] (B2) circle [radius=\noderad] ;
\draw [line width=\nodewidth, fill=white] (W1) circle [radius=\noderad] ; \draw [line width=\nodewidth, fill=white] (W2) circle [radius=\noderad] ;
\draw [line width=\nodewidth, fill=white] (W3) circle [radius=\noderad] ; \draw [line width=\nodewidth, fill=white] (W4) circle [radius=\noderad] ;
\end{tikzpicture} }} ;

\node (mutate_b) at (5,0)
{\scalebox{0.5}{
\begin{tikzpicture} 
\coordinate (B1) at (0,1); \coordinate (B2) at (0,-1); 
\coordinate (W1) at (-2,0); \coordinate (W2) at (0,-2); \coordinate (W3) at (2,0); 
\coordinate (W4) at (0,2); 

\draw[line width=\edgewidth] (W1)--(B1); \draw[line width=\edgewidth] (W3)--(B1);  
\draw[line width=\edgewidth] (W4)--(B1); 
\draw[line width=\edgewidth] (W1)--(B2); \draw[line width=\edgewidth] (W2)--(B2); 
\draw[line width=\edgewidth] (W3)--(B2);

\filldraw  [line width=\nodewidth, fill=black] (B1) circle [radius=\noderad] ; \filldraw  [line width=\nodewidth, fill=black] (B2) circle [radius=\noderad] ;
\draw [line width=\nodewidth, fill=white] (W1) circle [radius=\noderad] ; \draw [line width=\nodewidth, fill=white] (W2) circle [radius=\noderad] ;
\draw [line width=\nodewidth, fill=white] (W3) circle [radius=\noderad] ; \draw [line width=\nodewidth, fill=white] (W4) circle [radius=\noderad] ;
\end{tikzpicture} }} ;

\end{tikzpicture} 
}}
\caption{Spider move}
\label{fig_spider}
\end{center}
\end{figure}

\begin{definition}
\label{def_mutation_dimer}
Let $\Gamma$ be a dimer model. We pick a quadrangle face $k\in\Gamma_2$. 
Then, the \emph{mutation of a dimer model at $k$}, denoted by $\mu_k(\Gamma)$, is the operation consisting of the following procedures: 
\begin{enumerate}[(1)]
\item We consider two black nodes appearing on the boundary of $k$. 
If there exist black nodes that are not $3$-valent, we apply the split moves to those nodes and make them $3$-valent. 
\item Then, we apply the spider move to $k$ (see Figure~\ref{fig_spider}). 
\item If the resulting dimer model contains $2$-valent nodes, we remove them by applying the join moves. 
\end{enumerate}
\end{definition}

We easily check that the mutated dimer model $\mu_k(\Gamma)$ coincides with the one obtained as the dual of the mutated QP $\mu_k(Q,W_Q)$ in Definition~\ref{def_mut_QP}. 
Furthermore, this mutation preserves the consistency condition and the associated PM polygon. 

\begin{theorem}[{see e.g., \cite[Subsection~7.2]{Boc_NCCR}, \cite[Corollary~4.14]{Nak}}]
\label{mut_preserve_polygon}
Let $(Q,W_Q)$ be the QP associated with a consistent dimer model $\Gamma$. 
Then, for $k\in Q_0^\mu$ the dimer model $\mu_k(\Gamma)$ is also consistent. 
Furthermore, the PM polygon of $\mu_k(\Gamma)$ coincides with that of $\Gamma$. 
\end{theorem}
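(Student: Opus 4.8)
Theorem \ref{mut_preserve_polygon} asserts that for a consistent dimer model $\Gamma$ and $k\in Q_0^\mu$, the mutated dimer model $\mu_k(\Gamma)$ is again consistent and has the same PM polygon. The plan is to work entirely at the level of zigzag paths, since Proposition~\ref{zigzag_sidepolygon} tells us that the PM polygon is completely determined (up to translation) by the multiset of slopes of zigzag paths, and one of the equivalent consistency conditions (Definition~\ref{def_consistent} via \cite{IU2,Boc_consist}) is precisely a condition on zigzag paths — namely that every zigzag path has nontrivial primitive slope and no self-intersection on the universal cover.

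First I would set up the local picture around the quadrilateral face $k$. By Definition~\ref{def_mutation_dimer}, the mutation is performed by: (1) applying split moves to the two black boundary nodes to make them trivalent, (2) applying the spider move of Figure~\ref{fig_spider}, and (3) removing any $2$-valent nodes by join moves. Steps (1) and (3) are split/join moves, which by Remark~\ref{rem_bivalent} do not change the Jacobian algebra up to isomorphism and — crucially for us — do not change the slopes of zigzag paths, as noted in Subsection~\ref{subsec_zigzag}. So the entire content reduces to analyzing the spider move in step (2) on the configuration of four trivalent nodes in Figure~\ref{fig_spider}. The key combinatorial claim is a \emph{local} one: any zigzag path entering the region affected by the spider move exits it with the same endpoints and, after the move, continues so that its global homology class is unchanged. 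Concretely, a zigzag path interacts with this region by passing through one of the edges incident to $W_1,\dots,W_4$; I would check case by case (there are only finitely many local segments a zigzag path can trace through a quadrilateral, essentially $4$ up to symmetry) that the spider move reroutes each such local segment into a local segment with the same entry and exit half-edges. This is a finite check on pictures, exactly the kind of verification carried out in \cite[Section~4]{Nak} and \cite[Subsection~7.2]{Boc_NCCR}.

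Once the local zigzag analysis is in hand, the global conclusions follow formally. Since every zigzag path of $\mu_k(\Gamma)$ is obtained from a zigzag path of $\Gamma$ by this slope-preserving local surgery (and vice versa, since $\mu_k$ is an involution up to right equivalence by \cite[Theorem~5.7]{DWZ}), the multiset of slopes is preserved; hence by Proposition~\ref{zigzag_sidepolygon} the PM polygon is preserved. For consistency: a zigzag path $z'$ of $\mu_k(\Gamma)$ with a self-intersection or trivial slope on the universal cover would, by pulling back through the local surgery, produce a zigzag path of $\Gamma$ with the same pathology, contradicting consistency of $\Gamma$; so $\mu_k(\Gamma)$ satisfies the zigzag consistency condition of \cite[Definition~3.5]{IU2}, which is equivalent to Definition~\ref{def_consistent}. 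Alternatively — and this is the cleaner route if one does not want to re-derive the zigzag bookkeeping — one can cite \cite[Theorem~5.7]{DWZ} that QP mutation is well-defined and invertible, combine it with Lemma~\ref{lem_mutatable} to know $\mu_k(Q,W_Q)$ is again dual to a dimer model, and then invoke that the center of the (complete) Jacobian algebra is a mutation invariant (QP mutation preserves the derived/Calabi-Yau structure, hence the center), so the associated toric singularity and therefore its toric diagram $=\Delta$ is unchanged; consistency of the mutated dimer model can then be extracted from the fact that the Jacobian algebra remains an NCCR of that Gorenstein toric singularity.

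The main obstacle is the case analysis in the local zigzag/spider-move verification: one must be careful that the split moves in step (1) are applied to the \emph{black} nodes (the asymmetry between black and white matters because zigzags turn maximally right at white and maximally left at black nodes), and that the two ways of pairing the incoming/outgoing arrows at $k$ — the clockwise versus anti-clockwise choice discussed before Definition~\ref{def_mut_QP} — are handled consistently; choosing the anti-clockwise convention as the excerpt does fixes this. I expect no conceptual difficulty beyond this bookkeeping, since both the mutation invariance of the PM polygon and the preservation of consistency are established in \cite{Nak,Boc_NCCR}, so in the actual writeup I would state the local lemma, indicate the finite check, and cite those references for the details rather than reproducing all the pictures.
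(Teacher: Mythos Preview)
The paper does not give its own proof of this theorem: it is stated as a citation of \cite[Subsection~7.2]{Boc_NCCR} and \cite[Corollary~4.14]{Nak}, with no proof environment following it. So there is nothing in the paper to compare your argument against directly.

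That said, your outline is the right one and matches the strategy of the cited references, particularly \cite{Nak}: reduce to the spider move via split/join (which preserve zigzag slopes by the remark in Subsection~\ref{subsec_zigzag}), verify by a finite local check that the spider move reroutes each zigzag segment through the quadrilateral without changing its entry/exit half-edges, and then invoke Proposition~\ref{zigzag_sidepolygon} for the PM polygon and the zigzag formulation of consistency for the first claim. One small caution: the zigzag consistency condition you appeal to (equivalently, Gulotta's ``properly ordered'' condition) is slightly more than ``nontrivial primitive slope and no self-intersection on the universal cover''---it also constrains how distinct zigzag paths are cyclically ordered around each node---so the sentence ``a self-intersection\ldots would pull back to the same pathology'' does not by itself cover all of it. In practice the same local picture-check that preserves slopes also preserves this cyclic ordering, and this is exactly what the references verify, so your plan to state the local lemma and cite \cite{Nak,Boc_NCCR} for the case analysis is appropriate. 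Your alternative route via invariance of the center under QP mutation is also legitimate and closer in spirit to \cite{Boc_NCCR}.
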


\subsection{Mutations of graded QPs associated with dimer models} 
In the previous subsection, we saw that using the mutation we have another consistent dimer model giving the same PM polygon. 
Thus, we can consider internal perfect matchings of the original dimer model and those of the mutated one corresponding to the same interior lattice point. 
In the following, we will consider $2$-representation infinite algebras arising from such internal perfect matchings. 
We remark that the main theorem of this subsection (Theorem~\ref{main_thm_derived_eq}) is also obtained via \cite[Theorem~7.2]{IU5} (see Remark~\ref{rem_main_derived_eq} for more details), but we give a representation theoretic proof because it has its own interests. 

\medskip

We recall that the degree $d_\sfD$ defined by a perfect matching $\sfD$ makes the Jacobian algebra a $\ZZ$-graded algebra. 
Thus, we call $(Q,W_Q,d_\sfD)$ a graded QP associated with $\sfD$. 
Then, we also define the \emph{mutation of a graded QP}, which is a special version of \cite[Section~6]{AO}, \cite[Definition~3.2]{Miz}. 

\begin{definition}
\label{def_mutation_gQP}
Let $(Q,W_Q,d_\sfD)$ be a graded QP associated with a consistent dimer model and a perfect matching $\sfD$ of $Q$. 
Let $k\in Q_0^\mu$. 
Then, we define the two kinds of degrees $d^\prime_\rmL$ and $d^\prime_\rmR$ as follows. 
 \begin{itemize}
 \item[] (The degree $d^\prime_\rmL$)
 \begin{itemize}
 \item[$\bullet$] $d_\rmL^\prime(a)=d_\sfD(a)$ for each arrow $a\in Q\cap Q^\prime$, 
 \item[$\bullet$] $d_\rmL^\prime(a_i^*)=-d_\sfD(a_i)+1$ where $a_i$ is an incoming arrow incident to $k$ $(i=1,2)$, 
 \item[$\bullet$] $d_\rmL^\prime(b_j^*)=-d_\sfD(b_j)$ where $b_j$ is an outgoing arrow incident to $k$ $(j=1,2)$, 
 \item[$\bullet$] $d_\rmL^\prime([a_ib_j])=d_\sfD(a_i)+d_\sfD(b_j)$ for the arrow $[a_ib_j]: \tl(a_i)\rightarrow \hd(b_j)$ where $i,j=1,2$. 
 \end{itemize}
 
 \smallskip
  \item[] (The degree $d^\prime_\rmR$)
   \begin{itemize}
 \item[$\bullet$] $d_\rmR^\prime(a)=d_\sfD(a)$ for each arrow $a\in Q\cap Q^\prime$, 
 \item[$\bullet$] $d_\rmR^\prime(a_i^*)=-d_\sfD(a_i)$ where $a_i$ is an incoming arrow incident to $k$ $(i=1,2)$, 
 \item[$\bullet$] $d_\rmR^\prime(b_j^*)=-d_\sfD(b_j)+1$ where $b_j$ is an outgoing arrow incident to $k$ $(j=1,2)$, 
 \item[$\bullet$] $d_\rmR^\prime([a_ib_j])=d_\sfD(a_i)+d_\sfD(b_j)$ for the arrow $[a_ib_j]: \tl(a_i)\rightarrow \hd(b_j)$ where $i,j=1,2$. 
 \end{itemize}
 \end{itemize}
We define the new graded QP $(Q^\prime,W_{Q^\prime},d^\prime)$ as $(Q^\prime,W_{Q^\prime})=\mu_k(Q,W_Q)$ 
and the new degree $d^\prime$ is determined as follows: 
\begin{itemize}
\item if at least one of $a_1, a_2$ is contained in $\sfD$, then we set $d^\prime=d^\prime_\rmL$, 
\item if at least one of $b_1, b_2$ is contained in $\sfD$, then we set $d^\prime=d^\prime_\rmR$,  
\item if none of $a_1,a_2,b_2,b_2$ are contained in $\sfD$, we can choose either $d^\prime_\rmL, d^\prime_\rmR$ as $d^\prime$. 
\end{itemize}

When $d^\prime=d_\rmL^\prime$ (resp. $d^\prime=d_\rmR^\prime$), 
we call $(Q^\prime,W_{Q^\prime},d^\prime)$ the \emph{left mutation} (resp. \emph{right mutation}) of a graded QP $(Q,W_Q,d_\sfD)$ at $k\in Q_0^\mu$, 
in which case the QP $(Q^\prime,W_{Q^\prime},d^\prime)$ is denoted by $\mu_k^\rmL(Q,W_Q,d_\sfD)$ (resp. $\mu_k^\rmR(Q,W_Q,d_\sfD)$). 
Furthermore, in any case, we easily see that the degree $d^\prime$ can be given by some perfect matching $\sfD^\prime$ of $\mu_k(Q)$, 
that is, $d^\prime=d_{\sfD^\prime}$. 
Thus, when $d^\prime=d_\rmL^\prime$ (resp. $d^\prime=d_\rmR^\prime$), 
we denote such a perfect matching $\sfD^\prime$ by $\mu_k^\rmL(\sfD)$ (resp. $\mu_k^\rmR(\sfD)$). 
Namely, $\mu_k^\rmL(Q,W_Q,d_\sfD) =(\mu_k(Q), W_{\mu_k(Q)},d_{\mu_k^\rmL(\sfD)})$ and $\mu_k^\rmR(Q,W_Q,d_\sfD) =(\mu_k(Q), W_{\mu_k(Q)},d_{\mu_k^\rmR(\sfD)})$. 
\end{definition}

Moreover, we easily have the following lemma by definition. 

\begin{lemma}
\label{mutated_deg_pm}
Let the notation be the same as Definition~{\rm\ref{def_mut_QP}} and {\rm\ref{def_mutation_gQP}}. 
Let $\sfD, \sfE$ be perfect matchings of $Q$. 
We assume that $a_1,a_2\in\sfD$ and $b_1,b_2\in\sfE$, that is, $k\in Q_0^\mu$ is a strict source of $(Q,\sfD)$ and is a strict sink of $(Q,\sfE)$. 
Then, $\mu_k^\rmL(\sfD)=\mu_k^\rmR(\sfE)$ holds, in which case we simply denote it by $\mu_k(\sfD)=\mu_k(\sfE)$. 
\end{lemma}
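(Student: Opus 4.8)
\textbf{Proof plan for Lemma~\ref{mutated_deg_pm}.}

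The plan is to unwind both sides of the claimed equality $\mu_k^\rmL(\sfD) = \mu_k^\rmR(\sfE)$ directly from Definition~\ref{def_mutation_gQP}, arrow by arrow, using the hypotheses $a_1,a_2\in\sfD$ and $b_1,b_2\in\sfE$ to pin down the relevant degree values. First I would recall that the perfect matching $\mu_k^\rmL(\sfD)$ (resp. $\mu_k^\rmR(\sfE)$) is, by the last paragraph of Definition~\ref{def_mutation_gQP}, the unique perfect matching of $\mu_k(Q)$ whose associated degree function via~(\ref{degree_pm}) equals $d^\prime_\rmL$ computed from $d_\sfD$ (resp. $d^\prime_\rmR$ computed from $d_\sfE$). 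Since a perfect matching of $\mu_k(Q)$ is determined by the set of arrows on which the degree is $1$, it suffices to show that $d^\prime_\rmL$ built from $d_\sfD$ and $d^\prime_\rmR$ built from $d_\sfE$ agree as functions on $(\mu_k(Q))_1 = (Q\cap Q^\prime)_1 \sqcup \{a_1^*,a_2^*,b_1^*,b_2^*\} \sqcup \{[a_ib_j]\mid i,j=1,2\}$ (after passing to the reduced part, where the arrows $[a_ib_j]$ that get cancelled against $2$-cycles are discarded consistently on both sides — I would remark that the reduction only removes arrows, and removes the same ones, since the underlying quiver $\mu_k(Q)$ and its potential are the same for both graded QPs).

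The key step is the case analysis on the four arrows $a_1,a_2,b_1,b_2$ relative to $\sfD$ and $\sfE$. Since $k$ is a strict source of $(Q,\sfD)$, we have $d_\sfD(a_1)=d_\sfD(a_2)=1$ and $d_\sfD(b_1)=d_\sfD(b_2)=0$; since $k$ is a strict sink of $(Q,\sfE)$, we have $d_\sfE(a_1)=d_\sfE(a_2)=0$ and $d_\sfE(b_1)=d_\sfE(b_2)=1$. Moreover, for any arrow $a\in Q\cap Q^\prime$ I must check $d_\sfD(a)=d_\sfE(a)$: this follows because $\sfD$ and $\sfE$ differ only in the arrows incident to $k$ — indeed one should observe that $\sfE = \lambda_k^+(\sfD)$ in the notation of Definition~\ref{def_mutation_pm}, so $\sfD$ and $\sfE$ coincide away from the star of $k$. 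Then I plug in: on $Q\cap Q^\prime$ both degrees equal the common value; on $a_i^*$, $d^\prime_\rmL(a_i^*) = -d_\sfD(a_i)+1 = -1+1 = 0$ and $d^\prime_\rmR(a_i^*) = -d_\sfE(a_i) = -0 = 0$; on $b_j^*$, $d^\prime_\rmL(b_j^*) = -d_\sfD(b_j) = 0$ and $d^\prime_\rmR(b_j^*) = -d_\sfE(b_j)+1 = -1+1 = 0$; on $[a_ib_j]$, $d^\prime_\rmL([a_ib_j]) = d_\sfD(a_i)+d_\sfD(b_j) = 1+0 = 1$ and $d^\prime_\rmR([a_ib_j]) = d_\sfE(a_i)+d_\sfE(b_j) = 0+1 = 1$. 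So the two degree functions agree on every arrow, hence $\mu_k^\rmL(\sfD) = \mu_k^\rmR(\sfE)$.

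The main obstacle, such as it is, is not the arithmetic but making precise that the recipe of Definition~\ref{def_mutation_gQP} does select $d^\prime_\rmL$ on the left-hand side and $d^\prime_\rmR$ on the right-hand side: for $\mu_k^\rmL(\sfD)$ we are in the branch ``at least one of $a_1,a_2$ is in $\sfD$'' (both are), and for $\mu_k^\rmR(\sfE)$ we are in the branch ``at least one of $b_1,b_2$ is in $\sfE$'' (both are), so the notation $\mu_k^\rmL(\sfD)$ and $\mu_k^\rmR(\sfE)$ is unambiguous and matches. A minor point worth spelling out is that the resulting perfect matching of $\mu_k(Q)$ really exists and is unique with the prescribed degree — but this is exactly the content asserted at the end of Definition~\ref{def_mutation_gQP}, which we may invoke. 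I would close by noting that this common perfect matching is by definition what we denote $\mu_k(\sfD) = \mu_k(\sfE)$, justifying the uniform notation used in the sequel.
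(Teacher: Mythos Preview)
Your overall approach --- unwinding Definition~\ref{def_mutation_gQP} arrow by arrow --- is exactly the ``by definition'' computation the paper has in mind, and your arithmetic on $a_i^*$, $b_j^*$, and $[a_ib_j]$ is correct. There is, however, a genuine gap at the step where you handle arrows $a\in Q\cap Q'$. You write that $d_\sfD(a)=d_\sfE(a)$ ``follows because $\sfD$ and $\sfE$ differ only in the arrows incident to $k$ --- indeed one should observe that $\sfE=\lambda_k^+(\sfD)$.'' But this does \emph{not} follow from the stated hypotheses: the lemma only assumes $a_1,a_2\in\sfD$ and $b_1,b_2\in\sfE$, i.e.\ that $k$ is a strict source of $(Q,\sfD)$ and a strict sink of $(Q,\sfE)$. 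Nothing forces $\sfD$ and $\sfE$ to agree away from the star of $k$. Indeed, if the lemma held in the stated generality, then fixing $\sfE$ would force $\mu_k^\rmL(\sfD)$ to be independent of $\sfD$, contradicting $d'_\rmL(a)=d_\sfD(a)$ for $a\in Q\cap Q'$.

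What is really going on is that the lemma as printed is slightly imprecise: the intended hypothesis --- and the only one under which the conclusion is correct --- is $\sfE=\lambda_k^+(\sfD)$, which is exactly the situation in the commutative diagram immediately following the lemma and the only way the lemma is used. Under that reading your proof is complete and matches the paper's intent. So rather than asserting $\sfE=\lambda_k^+(\sfD)$ as a consequence, you should state it as the (implicit) hypothesis, or equivalently restrict the lemma to that case; then your arrow-by-arrow verification goes through verbatim.
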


In the rest, we assume that the PM polygon of a consistent dimer model $\Gamma$ contains an interior lattice point. 
Thus, $\Gamma$ admits an internal perfect matching $D$ and there exists a strict source and sink of $(Q,\sfD)$ by Proposition~\ref{findim_internal}. 
We suppose that $k\in Q_0^\mu$ is a strict source of $(Q,\sfD)$. Then, $k$ is a strict sink of $(Q, \lambda_k^+(\sfD))$. 
Thus, we have the following diagram by Lemma~\ref{mutated_deg_pm}: 
\begin{center}
\begin{tikzcd}
(Q,W_Q,d_\sfD) \arrow[rd, "\mu_k^\rmL", bend left=20] \arrow[dd, shift right=0.2cm,"\lambda^+_k" '] & \\
&(\mu_k(Q,W_Q),d_{\mu_k(\sfD)}) \\
(Q,W_Q,d_{\lambda_k^+(\sfD)}) \arrow[ru, "\mu_k^\rmR" ', bend right=20] \arrow[uu, shift right=0.2cm, "\lambda^-_k" '] & 
\end{tikzcd}
\end{center}

\begin{lemma}
\label{lem_same_pt}
Let the notation be the same as above, especially $\sfD$ is an internal perfect matching of $Q$ and $k\in Q_0^\mu$ is a strict source of $(Q,\sfD)$. 
Let $D$ $($resp. $\mu_k(D)$$)$ be the perfect matching of $\Gamma$ $($resp. $\mu_k(\Gamma)$$)$ corresponding to $\sfD$ $($resp. $\mu_k(\sfD)$$)$. 
Then, the lattice point of the PM polygon $\Delta_{\mu_k(\Gamma)}$ corresponding to $\mu_k(D)$ is 
the same as the one of the PM polygon $\Delta_\Gamma$ corresponding to $D$. 
\end{lemma}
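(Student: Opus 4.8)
The plan is to track how the class $[D-D_0]$ changes under the combined operations of mutation of the dimer model (spider move on the quadrilateral face $k$) and mutation of the perfect matching, and to show that the net effect on the associated lattice point is trivial. The key point is that $\mu_k^{\rmL}$ factors, up to the identification of $\Gamma$ with its mutation, as: first pass to $\lambda_k^+(\sfD)$ inside $\Gamma$ (which by Lemma~\ref{basic_mutation_pm2} does not change the lattice point, since $D-\lambda_k^+(D)$ is homologically trivial), and then translate $\lambda_k^+(D)$ across the spider move. So the real content is: the spider move at $k$ sends the perfect matching $\lambda_k^+(D)$ of $\Gamma$ to the perfect matching $\mu_k(D)$ of $\mu_k(\Gamma)$ \emph{without changing the homology class of its difference with a fixed reference matching}, once we use compatible reference matchings on the two sides.

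First I would fix a reference perfect matching $D_0$ of $\Gamma$ that is disjoint from the four edges of the quadrilateral face $k$ (equivalently, the two pairs $\{a_1,b_1\},\{a_2,b_2\}$ of arrows at $k$; such a $D_0$ exists, e.g. by choosing a corner perfect matching whose associated zigzag data avoids $k$, or simply by a local adjustment via join/split moves that does not affect homology classes). The spider move only modifies edges inside a small disk $U$ around $k$; outside $U$ the edge sets of $\Gamma$ and $\mu_k(\Gamma)$ are literally identified, and $H_1(\TT)$ is computed by cycles we may push off $U$. Under this identification $D_0$ becomes a reference perfect matching $D_0'$ of $\mu_k(\Gamma)$. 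Then I would compute, directly from Definition~\ref{def_mutation_gQP} and the explicit description in Observation~\ref{obs_mutationPM} of how arrows at a strict sink are replaced, which edges of $\mu_k(\Gamma)$ lie in $\mu_k(D)$: the arrows $[a_ib_j]$ inherit the degree $d_\sfD(a_i)+d_\sfD(b_j)$, and for $\sfD$ internal with $k$ a strict source one has $a_1,a_2\in\sfD$, $b_1,b_2\notin\sfD$, so in $\lambda_k^+(\sfD)$ one has $a_1,a_2\notin\lambda_k^+(\sfD)$ and $b_1,b_2\in\lambda_k^+(\sfD)$; translating through (A-1)--(A-3) and the degree rules gives the explicit edge set of $\mu_k(D)$ inside $U$.

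The heart of the argument is then the local computation: the $1$-cycle $\mu_k(D)-D_0'$ and the $1$-cycle $\lambda_k^+(D)-D_0$ differ only by an integral chain supported in $U$, and since $U$ is a disk this chain is a boundary, hence $[\mu_k(D)-D_0'] = [\lambda_k^+(D)-D_0]$ in $H_1(\TT)$. Combining with $[\lambda_k^+(D)-D_0]=[D-D_0]$ from Lemma~\ref{basic_mutation_pm2}, and using that the perfect matching polygon is determined up to translation so that the identification $D_0\leftrightarrow D_0'$ identifies $\Delta_\Gamma$ with $\Delta_{\mu_k(\Gamma)}$ compatibly, we conclude that $\mu_k(D)$ and $D$ correspond to the same lattice point. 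The symmetric case where $k$ is a strict sink of $(Q,\sfD)$ — using $\mu_k^{\rmR}$ and $\lambda_k^-$ — follows by the same reasoning, or formally by Lemma~\ref{mutated_deg_pm}, which says $\mu_k^{\rmL}(\lambda_k^+(\sfD))$-side data and $\mu_k^{\rmR}$-side data coincide.

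I expect the main obstacle to be the bookkeeping in the local computation: one must pin down precisely, in terms of black/white incidences, which of the new edges near $k$ belong to $\mu_k(D)$, and verify that the resulting local chain really is a boundary disk rather than something that wraps around (it cannot, because $k$ is a \emph{quadrilateral face}, so $U$ is genuinely a disk, but one should check the spider move does not create a monogon/bigon that survives). A clean way to finesse this is to observe that $D-D_0$ and $\mu_k(D)-D_0'$ are both differences of perfect matchings, hence honest $1$-cycles with well-defined classes, and that the spider move together with the mutation of the perfect matching is manifestly supported in a contractible neighborhood; so the difference of the two cycles is a $1$-chain supported in a disk, which is automatically nullhomologous in $\TT$. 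This reduces the whole statement to the already-established homological triviality of $D-\lambda_k^+(D)$ in Lemma~\ref{basic_mutation_pm2} plus the locality of the spider move.
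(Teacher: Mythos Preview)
Your locality strategy---push all changes into a contractible disk $U$ around $k$, so that any two $1$-cycles agreeing outside $U$ are homologous---is correct and would give a clean argument once the reference matching is pinned down. The gap is precisely at the point you yourself flag: you need a reference perfect matching $D_0$ on $\Gamma$ that passes unchanged to a perfect matching $D_0'$ on $\mu_k(\Gamma)$. Your suggested construction, a corner perfect matching ``whose associated zigzag data avoids $k$'', does not work in general: in the dimer model of Figure~\ref{ex_quiver4a}, each of the four corner matchings $D_1,\dots,D_4$ contains an edge of the central square (cf.\ Figure~\ref{pm_4a}); only the internal matchings $D_7,D_8$ avoid all four edges. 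The fallback ``local adjustment via join/split moves'' is too vague to carry the argument. A further subtlety is that, even given such a $D_0$, you must check that the identification $\Delta_\Gamma=\Delta_{\mu_k(\Gamma)}$ it induces agrees with the canonical one from Theorem~\ref{mut_preserve_polygon}; otherwise you have proved the right equality of homology classes but under a possibly different labelling of lattice points.

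The paper takes a different route that sidesteps both issues. Rather than looking for a reference that avoids $k$, it takes as reference a \emph{corner} perfect matching $D_1$ of $\Gamma$ built from a zigzag path $z$ passing \emph{through} $k$; since the spider move preserves slopes of zigzag paths (this is how Theorem~\ref{mut_preserve_polygon} is proved in the cited references), there is a canonically corresponding corner matching $D_1'$ of $\mu_k(\Gamma)$, and the identification of polygons is automatically the right one. The paper then verifies, by drawing the local picture explicitly (Figures~\ref{fig_keylem_1}--\ref{fig_keylem_5}), that $D_1-D$ and $D_1'-\mu_k(D)$ enter and leave the disk $U$ through the same white nodes and agree outside, so their classes in $\rmH_1(\TT)$ coincide. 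Your locality principle is exactly what justifies this last step; what you are missing is the zigzag/corner-matching anchor that makes the comparison between the two polygons meaningful.
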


\begin{proof}
We consider the face on $\Gamma$ corresponding to $k\in Q_0^\mu$, and use the same notation $k\in\Gamma_2$ for this face. 
We denote the white (resp. black) nodes appearing in the boundary of $k$ by $W_1,W_2$ (resp. $B_1$, $B_2$). 
We make black nodes $B_1,B_2$ $3$-valent using split moves as shown in Figure~\ref{setting_fig_mutation}. 
(We remark that even if they are $3$-valent from the beginning, we apply the split move because it does not affect our purpose and it makes our argument simpler). 
We apply the spider move to $k$ and have Figure~\ref{fig_mut_pm}, where grayed edges are the ones contained in $D, \mu_k(D)$ respectively. 

\begin{figure}[H]
\begin{center}
\begin{tabular}{c}
\begin{minipage}{0.3\hsize}
\begin{center}
{\scalebox{0.5}{
\begin{tikzpicture} 
\newcommand{\edgewidth}{0.05cm} 
\newcommand{\nodewidth}{0.05cm} 
\newcommand{\noderad}{0.16} 

\coordinate (B1) at (-1,0); \coordinate (B2) at (1,0); 
\coordinate (W1) at (-2,0); \coordinate (W2) at (0,-2); \coordinate (W3) at (2,0); 
\coordinate (W4) at (0,2); 

\coordinate (B3) at (-3,0); \coordinate (B4) at (3,0); 

\draw[line width=\edgewidth] (W1)--(B1); \draw[line width=\edgewidth] (W2)--(B1);  
\draw[line width=\edgewidth] (W4)--(B1); 
\draw[line width=\edgewidth] (W2)--(B2); \draw[line width=\edgewidth] (W3)--(B2); 
\draw[line width=\edgewidth] (W4)--(B2);
\draw[line width=\edgewidth] (W1)--(B3); \draw[line width=\edgewidth] (W3)--(B4);

\filldraw  [line width=\nodewidth, fill=black] (B1) circle [radius=\noderad] ; \filldraw  [line width=\nodewidth, fill=black] (B2) circle [radius=\noderad] ;
\filldraw  [line width=\nodewidth, fill=black] (B3) circle [radius=\noderad] ; \filldraw  [line width=\nodewidth, fill=black] (B4) circle [radius=\noderad] ;

\draw [line width=\nodewidth, fill=white] (W1) circle [radius=\noderad] ; \draw [line width=\nodewidth, fill=white] (W2) circle [radius=\noderad] ;
\draw [line width=\nodewidth, fill=white] (W3) circle [radius=\noderad] ; \draw [line width=\nodewidth, fill=white] (W4) circle [radius=\noderad] ;
\node at (0,0) {{\LARGE$k$}} ; \node at (-1.2,0.45) {{\Large$B_1$}} ; \node at (1.2,0.45) {{\Large$B_2$}} ; 
\node at (-0.5,2) {{\Large$W_1$}} ; \node at (-0.5,-2) {{\Large$W_2$}} ; 
\end{tikzpicture} 
}}
\end{center}
\caption{}
\label{setting_fig_mutation}
\end{minipage}

\begin{minipage}{0.65\hsize}
\begin{center}
{\scalebox{1}{
\begin{tikzpicture} 
\newcommand{\edgewidth}{0.05cm} 
\newcommand{\nodewidth}{0.05cm} 
\newcommand{\noderad}{0.16} 

\newcommand{\pmwidth}{0.4cm} 
\newcommand{\pmcolor}{gray} 

\draw[->, line width=0.03cm] (2,0)--(3,0); 

\node (mutate_a) at (0,0)
{\scalebox{0.5}{
\begin{tikzpicture} 
\coordinate (B1) at (-1,0); \coordinate (B2) at (1,0); 
\coordinate (W1) at (-2,0); \coordinate (W2) at (0,-2); \coordinate (W3) at (2,0); 
\coordinate (W4) at (0,2); 
\coordinate (B3) at (-3,0); \coordinate (B4) at (3,0); 

\draw[line width=\pmwidth, color=\pmcolor] (W4)--(B2); \draw[line width=\pmwidth, color=\pmcolor] (W2)--(B1); 
\draw[line width=\pmwidth, color=\pmcolor] (W1)--(B3); \draw[line width=\pmwidth, color=\pmcolor] (W3)--(B4); 

\draw[line width=\edgewidth] (W1)--(B1); \draw[line width=\edgewidth] (W2)--(B1);  
\draw[line width=\edgewidth] (W4)--(B1); 
\draw[line width=\edgewidth] (W2)--(B2); \draw[line width=\edgewidth] (W3)--(B2); 
\draw[line width=\edgewidth] (W4)--(B2);
\draw[line width=\edgewidth] (W1)--(B3); \draw[line width=\edgewidth] (W3)--(B4);

\filldraw  [line width=\nodewidth, fill=black] (B1) circle [radius=\noderad] ; \filldraw  [line width=\nodewidth, fill=black] (B2) circle [radius=\noderad] ;
\filldraw  [line width=\nodewidth, fill=black] (B3) circle [radius=\noderad] ; \filldraw  [line width=\nodewidth, fill=black] (B4) circle [radius=\noderad] ;

\draw [line width=\nodewidth, fill=white] (W1) circle [radius=\noderad] ; \draw [line width=\nodewidth, fill=white] (W2) circle [radius=\noderad] ;
\draw [line width=\nodewidth, fill=white] (W3) circle [radius=\noderad] ; \draw [line width=\nodewidth, fill=white] (W4) circle [radius=\noderad] ;

\node at (0,0) {{\LARGE$k$}} ; 
\end{tikzpicture} }} ;

\node (mutate_b) at (5,0)
{\scalebox{0.5}{
\begin{tikzpicture} 
\coordinate (B1) at (0,1); \coordinate (B2) at (0,-1); 
\coordinate (W1) at (-2,0); \coordinate (W2) at (0,-2); \coordinate (W3) at (2,0); 
\coordinate (W4) at (0,2); 
\coordinate (B3) at (-3,0); \coordinate (B4) at (3,0); 

\draw[line width=\pmwidth, color=\pmcolor] (W2)--(B2); \draw[line width=\pmwidth, color=\pmcolor] (W4)--(B1); 
\draw[line width=\pmwidth, color=\pmcolor] (W1)--(B3); \draw[line width=\pmwidth, color=\pmcolor] (W3)--(B4); 

\draw[line width=\edgewidth] (W1)--(B1); \draw[line width=\edgewidth] (W3)--(B1);  
\draw[line width=\edgewidth] (W4)--(B1); 
\draw[line width=\edgewidth] (W1)--(B2); \draw[line width=\edgewidth] (W2)--(B2); 
\draw[line width=\edgewidth] (W3)--(B2);
\draw[line width=\edgewidth] (W1)--(B3); \draw[line width=\edgewidth] (W3)--(B4);

\filldraw  [line width=\nodewidth, fill=black] (B1) circle [radius=\noderad] ; \filldraw  [line width=\nodewidth, fill=black] (B2) circle [radius=\noderad] ;
\filldraw  [line width=\nodewidth, fill=black] (B3) circle [radius=\noderad] ; \filldraw  [line width=\nodewidth, fill=black] (B4) circle [radius=\noderad] ;

\draw [line width=\nodewidth, fill=white] (W1) circle [radius=\noderad] ; \draw [line width=\nodewidth, fill=white] (W2) circle [radius=\noderad] ;
\draw [line width=\nodewidth, fill=white] (W3) circle [radius=\noderad] ; \draw [line width=\nodewidth, fill=white] (W4) circle [radius=\noderad] ;

\node at (0,0) {{\LARGE$k$}} ; 
\end{tikzpicture} }} ;

\end{tikzpicture} 
}}
\end{center}
\caption{The spider move and the induced perfect matching}
\label{fig_mut_pm}

\end{minipage}
\end{tabular}\end{center}
\end{figure}

Here, we recall that $\Delta_\Gamma=\Delta_{\mu_k(\Gamma)}$ by Theorem~\ref{mut_preserve_polygon}. 
We take perfect matchings $\widetilde{D}$ of $\Gamma$ and $\widetilde{D}^\prime$ of $\mu_k(\Gamma)$, 
and assume that $\widetilde{D}$ and $\widetilde{D}^\prime$ correspond to the same point of $\Delta_\Gamma=\Delta_{\mu_k(\Gamma)}$. 
To prove our assertion, it is enough to show that $[\widetilde{D}-D]=[\widetilde{D}^\prime-\mu_k(D)]\in\rmH_1(\TT)$. 
In particular, we will consider the following corner perfect matchings $D_1, D_1^\prime$ as $\widetilde{D}, \widetilde{D}^\prime$ respectively. 

We consider the zigzag path $z$ shown in Figure~\ref{fig_keylem_1} (left), which passes through the boundary of $k$. 
By Proposition~\ref{zigzag_sidepolygon}, there exists a unique corner perfect matching containing all zig (or all zag) of $z$. 
The corner perfect matching containing all zig (resp. all zag) of $z$ takes the form as shown in Figure~\ref{fig_keylem_1} around the face $k$. 
By Proposition~\ref{zigzag_sidepolygon}, these corner perfect matchings are adjacent, and we denote these by $D_1,D_2$ respectively. 
After applying the spider move, we have the dimer model $\mu_k(\Gamma)$ and the zigzag path $z^\prime$ with the slope $[z^\prime]=[z]$ 
as shown in Figure~\ref{fig_keylem_2} (left). 
By Proposition~\ref{zigzag_sidepolygon}, 
there exist adjacent corner perfect matchings $D_1^\prime,D_2^\prime$ of $\mu_k(\Gamma)$ 
such that they correspond to the same vertex of $\Delta_\Gamma=\Delta_{\mu_k(\Gamma)}$ as $D_1,D_2$ respectively, 
and the difference of $D_1^\prime$ and $D_2^\prime$ forms $z^\prime$ (see Figure~\ref{fig_keylem_2}). 
(We note that $D_1^\prime=\mu^\rmR_k(D_1)$ and $D_2^\prime=\mu^\rmL_k(D_2)$). 

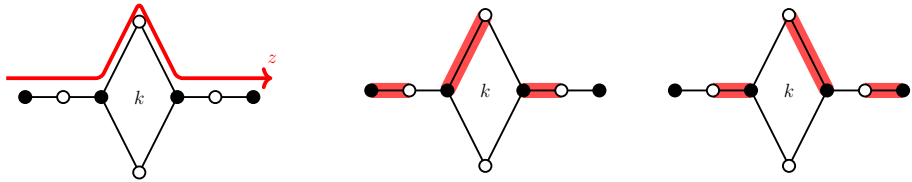
\begin{figure}[H]
\begin{center}
{\scalebox{1}{
\begin{tikzpicture} 
\newcommand{\edgewidth}{0.05cm} 
\newcommand{\nodewidth}{0.05cm} 
\newcommand{\noderad}{0.16} 

\newcommand{\pmwidth}{0.4cm} 
\newcommand{\pmcolor}{red!70} 

\node at (-4.5,0)
{\scalebox{0.5}{
\begin{tikzpicture} 
\coordinate (B1) at (-1,0); \coordinate (B2) at (1,0); 
\coordinate (W1) at (-2,0); \coordinate (W2) at (0,-2); \coordinate (W3) at (2,0); \coordinate (W4) at (0,2); 

\coordinate (B3) at (-3,0); \coordinate (B4) at (3,0); 

\draw[line width=\edgewidth] (W1)--(B1); \draw[line width=\edgewidth] (W2)--(B1);  
\draw[line width=\edgewidth] (W4)--(B1); 
\draw[line width=\edgewidth] (W2)--(B2); \draw[line width=\edgewidth] (W3)--(B2); 
\draw[line width=\edgewidth] (W4)--(B2);
\draw[line width=\edgewidth] (W1)--(B3); \draw[line width=\edgewidth] (W3)--(B4);

\filldraw  [line width=\nodewidth, fill=black] (B1) circle [radius=\noderad] ; \filldraw  [line width=\nodewidth, fill=black] (B2) circle [radius=\noderad] ;
\filldraw  [line width=\nodewidth, fill=black] (B3) circle [radius=\noderad] ; \filldraw  [line width=\nodewidth, fill=black] (B4) circle [radius=\noderad] ;

\draw [line width=\nodewidth, fill=white] (W1) circle [radius=\noderad] ; \draw [line width=\nodewidth, fill=white] (W2) circle [radius=\noderad] ;
\draw [line width=\nodewidth, fill=white] (W3) circle [radius=\noderad] ; \draw [line width=\nodewidth, fill=white] (W4) circle [radius=\noderad] ;

\path (B1) ++(90:0.5cm) coordinate (B1+); 
\path (B2) ++(90:0.5cm) coordinate (B2+); 
\path (-3.5,0) ++(90:0.5cm) coordinate (B3+); 
\path (3.5,0) ++(90:0.5cm) coordinate (B4+); 
\path (W4) ++(90:0.5cm) coordinate (W4+); 

\draw [->, rounded corners, line width=0.1cm, red] (B3+)--(B1+)--(W4+)--(B2+)--(B4+) ; 

\node[red] at (3.5,1) {{\LARGE$z$}} ; 
\node at (0,0) {{\LARGE$k$}} ;
\end{tikzpicture} 
}};

\node (PM_D1) at (0,0)
{\scalebox{0.5}{
\begin{tikzpicture} 
\coordinate (B1) at (-1,0); \coordinate (B2) at (1,0); 
\coordinate (W1) at (-2,0); \coordinate (W2) at (0,-2); \coordinate (W3) at (2,0); \coordinate (W4) at (0,2); 
\coordinate (B3) at (-3,0); \coordinate (B4) at (3,0); 

\draw[line width=\pmwidth, color=\pmcolor] (W4)--(B1); \draw[line width=\pmwidth, color=\pmcolor] (W3)--(B2); 
\draw[line width=\pmwidth, color=\pmcolor] (W1)--(B3); 

\draw[line width=\edgewidth] (W1)--(B1); \draw[line width=\edgewidth] (W2)--(B1);  
\draw[line width=\edgewidth] (W4)--(B1); 
\draw[line width=\edgewidth] (W2)--(B2); \draw[line width=\edgewidth] (W3)--(B2); 
\draw[line width=\edgewidth] (W4)--(B2);
\draw[line width=\edgewidth] (W1)--(B3); \draw[line width=\edgewidth] (W3)--(B4);

\filldraw  [line width=\nodewidth, fill=black] (B1) circle [radius=\noderad] ; \filldraw  [line width=\nodewidth, fill=black] (B2) circle [radius=\noderad] ;
\filldraw  [line width=\nodewidth, fill=black] (B3) circle [radius=\noderad] ; \filldraw  [line width=\nodewidth, fill=black] (B4) circle [radius=\noderad] ;

\draw [line width=\nodewidth, fill=white] (W1) circle [radius=\noderad] ; \draw [line width=\nodewidth, fill=white] (W2) circle [radius=\noderad] ;
\draw [line width=\nodewidth, fill=white] (W3) circle [radius=\noderad] ; \draw [line width=\nodewidth, fill=white] (W4) circle [radius=\noderad] ;

\node at (0,0) {{\LARGE$k$}} ; 
\end{tikzpicture} }} ;

\node (PM_D2) at (4,0)
{\scalebox{0.5}{
\begin{tikzpicture} 
\draw[line width=\pmwidth, color=\pmcolor] (W4)--(B2); \draw[line width=\pmwidth, color=\pmcolor] (W1)--(B1); 
\draw[line width=\pmwidth, color=\pmcolor] (W3)--(B4);  

\draw[line width=\edgewidth] (W1)--(B1); \draw[line width=\edgewidth] (W2)--(B1);  
\draw[line width=\edgewidth] (W4)--(B1); 
\draw[line width=\edgewidth] (W2)--(B2); \draw[line width=\edgewidth] (W3)--(B2); 
\draw[line width=\edgewidth] (W4)--(B2);
\draw[line width=\edgewidth] (W1)--(B3); \draw[line width=\edgewidth] (W3)--(B4);

\filldraw  [line width=\nodewidth, fill=black] (B1) circle [radius=\noderad] ; \filldraw  [line width=\nodewidth, fill=black] (B2) circle [radius=\noderad] ;
\filldraw  [line width=\nodewidth, fill=black] (B3) circle [radius=\noderad] ; \filldraw  [line width=\nodewidth, fill=black] (B4) circle [radius=\noderad] ;

\draw [line width=\nodewidth, fill=white] (W1) circle [radius=\noderad] ; \draw [line width=\nodewidth, fill=white] (W2) circle [radius=\noderad] ;
\draw [line width=\nodewidth, fill=white] (W3) circle [radius=\noderad] ; \draw [line width=\nodewidth, fill=white] (W4) circle [radius=\noderad] ;

\node at (0,0) {{\LARGE$k$}} ; 
\end{tikzpicture} }} ;

\end{tikzpicture} 
}}
\end{center}
\caption{\small The zigzag path $z$ around $k$ (left), and the corner perfect matchings $D_1$ (center) and $D_2$ (right) giving $z$}
\label{fig_keylem_1}
\end{figure}

\begin{figure}[H]
\begin{center}
{\scalebox{1}{
\begin{tikzpicture} 
\newcommand{\edgewidth}{0.05cm} 
\newcommand{\nodewidth}{0.05cm} 
\newcommand{\noderad}{0.16} 

\newcommand{\pmwidth}{0.4cm} 
\newcommand{\pmcolor}{red!70} 

\node at (-4.5,0)
{\scalebox{0.5}{
\begin{tikzpicture} 
\coordinate (B1) at (0,1); \coordinate (B2) at (0,-1); 
\coordinate (W1) at (-2,0); \coordinate (W2) at (0,-2); \coordinate (W3) at (2,0); 
\coordinate (W4) at (0,2); 
\coordinate (B3) at (-3,0); \coordinate (B4) at (3,0); 

\draw[line width=\edgewidth] (W1)--(B1); \draw[line width=\edgewidth] (W3)--(B1);  
\draw[line width=\edgewidth] (W4)--(B1); 
\draw[line width=\edgewidth] (W1)--(B2); \draw[line width=\edgewidth] (W2)--(B2); 
\draw[line width=\edgewidth] (W3)--(B2);
\draw[line width=\edgewidth] (W1)--(B3); \draw[line width=\edgewidth] (W3)--(B4);

\filldraw  [line width=\nodewidth, fill=black] (B1) circle [radius=\noderad] ; \filldraw  [line width=\nodewidth, fill=black] (B2) circle [radius=\noderad] ;
\filldraw  [line width=\nodewidth, fill=black] (B3) circle [radius=\noderad] ; \filldraw  [line width=\nodewidth, fill=black] (B4) circle [radius=\noderad] ;

\draw [line width=\nodewidth, fill=white] (W1) circle [radius=\noderad] ; \draw [line width=\nodewidth, fill=white] (W2) circle [radius=\noderad] ;
\draw [line width=\nodewidth, fill=white] (W3) circle [radius=\noderad] ; \draw [line width=\nodewidth, fill=white] (W4) circle [radius=\noderad] ;

\path (B2) ++(-90:0.3cm) coordinate (B2-); 
\path (-3.4,0) ++(-90:0.3cm) coordinate (B3-); \path (3.4,0) ++(-90:0.3cm) coordinate (B4-); 
\path (W1) ++(-90:0.3cm) coordinate (W1-); \path (W3) ++(-90:0.3cm) coordinate (W3-); 
\draw [->, rounded corners, line width=0.1cm, red] (B3-)--(W1-)--(B2-)--(W3-)--(B4-); 

\node at (0,0) {{\LARGE$k$}} ; 
\node[red] at (3.4,-0.8) {{\LARGE$z^\prime$}} ; 
\end{tikzpicture} 
}}; 

\node (PM_D1) at (0,0)
{\scalebox{0.5}{
\begin{tikzpicture} 
\coordinate (B1) at (0,1); \coordinate (B2) at (0,-1); 
\coordinate (W1) at (-2,0); \coordinate (W2) at (0,-2); \coordinate (W3) at (2,0); 
\coordinate (W4) at (0,2); 
\coordinate (B3) at (-3,0); \coordinate (B4) at (3,0); 

\draw[line width=\pmwidth, color=\pmcolor] (W4)--(B1); \draw[line width=\pmwidth, color=\pmcolor] (W3)--(B2); 
\draw[line width=\pmwidth, color=\pmcolor] (W1)--(B3); 

\draw[line width=\edgewidth] (W1)--(B1); \draw[line width=\edgewidth] (W3)--(B1);  
\draw[line width=\edgewidth] (W4)--(B1); 
\draw[line width=\edgewidth] (W1)--(B2); \draw[line width=\edgewidth] (W2)--(B2); 
\draw[line width=\edgewidth] (W3)--(B2);
\draw[line width=\edgewidth] (W1)--(B3); \draw[line width=\edgewidth] (W3)--(B4);

\filldraw  [line width=\nodewidth, fill=black] (B1) circle [radius=\noderad] ; \filldraw  [line width=\nodewidth, fill=black] (B2) circle [radius=\noderad] ;
\filldraw  [line width=\nodewidth, fill=black] (B3) circle [radius=\noderad] ; \filldraw  [line width=\nodewidth, fill=black] (B4) circle [radius=\noderad] ;

\draw [line width=\nodewidth, fill=white] (W1) circle [radius=\noderad] ; \draw [line width=\nodewidth, fill=white] (W2) circle [radius=\noderad] ;
\draw [line width=\nodewidth, fill=white] (W3) circle [radius=\noderad] ; \draw [line width=\nodewidth, fill=white] (W4) circle [radius=\noderad] ;

\node at (0,0) {{\LARGE$k$}} ; 
\end{tikzpicture} }} ;

\node (PM_D2) at (4,0)
{\scalebox{0.5}{
\begin{tikzpicture} 
\draw[line width=\pmwidth, color=\pmcolor] (W1)--(B2); \draw[line width=\pmwidth, color=\pmcolor] (W3)--(B4); 
\draw[line width=\pmwidth, color=\pmcolor] (W4)--(B1);  

\draw[line width=\edgewidth] (W1)--(B1); \draw[line width=\edgewidth] (W3)--(B1);  
\draw[line width=\edgewidth] (W4)--(B1); 
\draw[line width=\edgewidth] (W1)--(B2); \draw[line width=\edgewidth] (W2)--(B2); 
\draw[line width=\edgewidth] (W3)--(B2);
\draw[line width=\edgewidth] (W1)--(B3); \draw[line width=\edgewidth] (W3)--(B4);

\filldraw  [line width=\nodewidth, fill=black] (B1) circle [radius=\noderad] ; \filldraw  [line width=\nodewidth, fill=black] (B2) circle [radius=\noderad] ;
\filldraw  [line width=\nodewidth, fill=black] (B3) circle [radius=\noderad] ; \filldraw  [line width=\nodewidth, fill=black] (B4) circle [radius=\noderad] ;

\draw [line width=\nodewidth, fill=white] (W1) circle [radius=\noderad] ; \draw [line width=\nodewidth, fill=white] (W2) circle [radius=\noderad] ;
\draw [line width=\nodewidth, fill=white] (W3) circle [radius=\noderad] ; \draw [line width=\nodewidth, fill=white] (W4) circle [radius=\noderad] ;

\node at (0,0) {{\LARGE$k$}} ; \end{tikzpicture} }} ;

\end{tikzpicture} 
}}
\end{center}
\caption{\small The zigzag path $z^\prime$ with $[z]=[z^\prime]$ (left), and the corner perfect matchings $D_1^\prime$ (center) and $D_2^\prime$ (right) giving $z^\prime$}
\label{fig_keylem_2}
\end{figure}
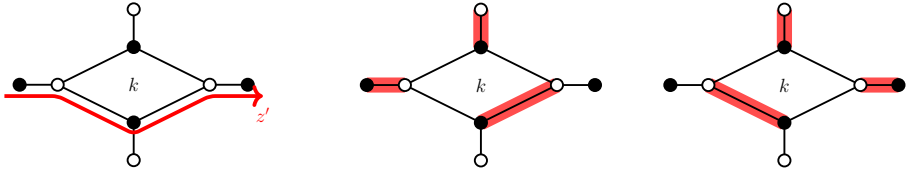

We then obtain Figure~\ref{fig_keylem_5} which shows the differences $D_1-D$ and $D_1^\prime-\mu_k(D)$. 
The $1$-cycle appearing in $D_1-D$ connects $W_4$ to $W_2$ with this order, and so does the $1$-cycle appearing in $D_1^\prime-\mu_k(D)$. 
Since the mutation is a local operation around $k$, the remaining parts of $\Gamma$ and $\mu_k(\Gamma)$ are preserved, 
thus we have $[D_1-D]=[D_1^\prime-\mu_k(D)]\in\rmH_1(\TT)$. 

\begin{figure}[H]
\begin{center}
{\scalebox{1}{
\begin{tikzpicture} 
\newcommand{\edgewidth}{0.05cm} 
\newcommand{\nodewidth}{0.05cm} 
\newcommand{\noderad}{0.16} 

\newcommand{\pmwidth}{0.4cm} 
\newcommand{\pmcolor}{gray} 
\newcommand{\pmcolorI}{red!70} 

\draw[->, line width=0.03cm] (2,0)--(3,0); 

\node (mutate_a) at (0,0)
{\scalebox{0.5}{
\begin{tikzpicture} 
\coordinate (B1) at (-1,0); \coordinate (B2) at (1,0); 
\coordinate (W1) at (0,2); \coordinate (W2) at (0,-2); \coordinate (W3) at (-2,0); 
\coordinate (W4) at (2,0); 
\coordinate (B3) at (-3,0); \coordinate (B4) at (3,0); 

\draw[line width=\pmwidth, color=\pmcolorI] (W1)--(B1); 
\draw[line width=\pmwidth, color=\pmcolorI] (W4)--(B2); 

\draw[line width=\pmwidth, color=\pmcolor] (W1)--(B2); 
\draw[line width=\pmwidth, color=\pmcolor] (W2)--(B1); 
\draw[line width=\pmwidth, color=\pmcolor] (W4)--(B4); 

\draw[line width=\edgewidth] (W3)--(B1); \draw[line width=\edgewidth] (W2)--(B1);  
\draw[line width=\edgewidth] (W1)--(B1); 
\draw[line width=\edgewidth] (W2)--(B2); \draw[line width=\edgewidth] (W4)--(B2); 
\draw[line width=\edgewidth] (W1)--(B2);
\draw[line width=\edgewidth] (W3)--(B3); \draw[line width=\edgewidth] (W4)--(B4);

\filldraw  [line width=\nodewidth, fill=black] (B1) circle [radius=\noderad] ; \filldraw  [line width=\nodewidth, fill=black] (B2) circle [radius=\noderad] ;
\filldraw  [line width=\nodewidth, fill=black] (B3) circle [radius=\noderad] ; \filldraw  [line width=\nodewidth, fill=black] (B4) circle [radius=\noderad] ;

\draw [line width=\nodewidth, fill=white] (W1) circle [radius=\noderad] ; \draw [line width=\nodewidth, fill=white] (W2) circle [radius=\noderad] ;
\draw [line width=\nodewidth, fill=white] (W3) circle [radius=\noderad] ; \draw [line width=\nodewidth, fill=white] (W4) circle [radius=\noderad] ;

\node at (0,0) {{\LARGE$k$}} ; 
\node at (-0.7,2) {{\Large$W_1$}} ; \node at (-0.7,-2) {{\Large$W_2$}} ; 
\node at (-2,0.5) {{\Large$W_3$}} ; \node at (2,0.5) {{\Large$W_4$}} ; 
\end{tikzpicture} }} ;

\node (mutate_b) at (5,0)
{\scalebox{0.5}{
\begin{tikzpicture} 
\coordinate (B1) at (0,1); \coordinate (B2) at (0,-1); 
\coordinate (W1) at (0,2); \coordinate (W2) at (0,-2); \coordinate (W3) at (-2,0); 
\coordinate (W4) at (2,0); 
\coordinate (B3) at (-3,0); \coordinate (B4) at (3,0); 

\draw[line width=\pmwidth, color=\pmcolorI] (W4)--(B2); 
\draw[line width=\pmwidth, color=\pmcolor] (W2)--(B2); 
\draw[line width=\pmwidth, color=\pmcolor] (W4)--(B4); 

\draw[line width=\edgewidth] (W3)--(B1); \draw[line width=\edgewidth] (W4)--(B1);  
\draw[line width=\edgewidth] (W1)--(B1); 
\draw[line width=\edgewidth] (W3)--(B2); \draw[line width=\edgewidth] (W2)--(B2); 
\draw[line width=\edgewidth] (W4)--(B2);
\draw[line width=\edgewidth] (W3)--(B3); \draw[line width=\edgewidth] (W4)--(B4);

\filldraw  [line width=\nodewidth, fill=black] (B1) circle [radius=\noderad] ; \filldraw  [line width=\nodewidth, fill=black] (B2) circle [radius=\noderad] ;
\filldraw  [line width=\nodewidth, fill=black] (B3) circle [radius=\noderad] ; \filldraw  [line width=\nodewidth, fill=black] (B4) circle [radius=\noderad] ;

\draw [line width=\nodewidth, fill=white] (W1) circle [radius=\noderad] ; \draw [line width=\nodewidth, fill=white] (W2) circle [radius=\noderad] ;
\draw [line width=\nodewidth, fill=white] (W3) circle [radius=\noderad] ; \draw [line width=\nodewidth, fill=white] (W4) circle [radius=\noderad] ;

\node at (0,0) {{\LARGE$k$}} ; 
\node at (-0.7,2) {{\Large$W_1$}} ; \node at (-0.7,-2) {{\Large$W_2$}} ; 
\node at (-2,0.5) {{\Large$W_3$}} ; \node at (2,0.5) {{\Large$W_4$}} ; 
\end{tikzpicture} }} ;

\end{tikzpicture} 
}}
\end{center}
\caption{The differences of perfect matchings $D_1-D$ (left) and  $D_1^\prime-\mu_k(D)$ (right)}
\label{fig_keylem_5}
\end{figure}
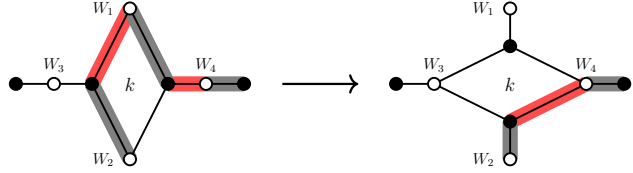

\end{proof}

We are now ready to show the following theorem. 

\begin{theorem}
\label{main_thm_derived_eq}
Let $\Gamma,\Gamma^\prime$ be consistent dimer models associated with the same $3$-dimensional Gorenstein toric singularity $R$. 
Let $(Q,W_Q),(Q^\prime,W_{Q^\prime})$ be QPs associated with $\Gamma,\Gamma^\prime$ respectively, and 
we suppose that $(Q,W_Q)$ and $(Q^\prime,W_{Q^\prime})$ are transformed into each other by repeating the mutations of QPs in the sense of Definition~\ref{def_mut_QP}. 
We assume that the toric diagram $\Delta$ of $R$, which coincides with both of the PM polygon of $\Gamma$ and $\Gamma^\prime$, contains an interior lattice point. 
Let $\sfD$ $($resp. $\sfD^\prime$$)$ be an internal perfect matching of $Q$ $($resp. $Q^\prime$$)$, and assume that $\sfD$ and $\sfD^\prime$ correspond to the same interior lattice point of $\Delta$. 
Then, we have an equivalence 
$$
\calD^\rmb(\mc \,\widehat{\calP}(Q,W_Q)_\sfD)\cong\calD^\rmb(\mc \,\widehat{\calP}(Q^\prime,W_{Q^\prime})_{\sfD^\prime}).
$$
\end{theorem}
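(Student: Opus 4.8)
The plan is to reduce the assertion to the case of a single mutation $\mu_k$ at a vertex $k\in Q_0^\mu$, and then chain together the derived equivalences produced along the way. Since $(Q,W_Q)$ and $(Q^\prime,W_{Q^\prime})$ are related by a finite sequence of mutations of QPs, it suffices to treat one step: assume $(Q^\prime,W_{Q^\prime})=\mu_k(Q,W_Q)$ for some $k\in Q_0^\mu$, where $\Gamma^\prime=\mu_k(\Gamma)$ is again a consistent dimer model with the same PM polygon $\Delta$ (Theorem~\ref{mut_preserve_polygon}). The internal perfect matchings $\sfD$ of $Q$ and $\sfD^\prime$ of $Q^\prime$ are both assumed to correspond to the same interior lattice point $p$ of $\Delta$. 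The key technical input is Lemma~\ref{lem_same_pt}: if $\sfD$ is internal with $k$ a strict source of $(Q,\sfD)$, then the mutated degree $\mu_k(\sfD)$ (which exists by Lemma~\ref{mutated_deg_pm}) is an internal perfect matching of $Q^\prime$ corresponding to the same interior lattice point $p$. By Theorem~\ref{derivedequ_pm} applied inside $\Gamma^\prime$, the $2$-representation infinite algebras $\widehat{\calP}(Q^\prime,W_{Q^\prime})_{\mu_k(\sfD)}$ and $\widehat{\calP}(Q^\prime,W_{Q^\prime})_{\sfD^\prime}$ are derived equivalent since $\mu_k(\sfD)$ and $\sfD^\prime$ both sit over $p$. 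So the heart of the matter is establishing $\calD^\rmb(\mc\,\widehat{\calP}(Q,W_Q)_\sfD)\cong\calD^\rmb(\mc\,\widehat{\calP}(Q^\prime,W_{Q^\prime})_{\mu_k(\sfD)})$.

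First I would reduce to the situation where $k$ is a strict source (or sink) of $(Q,\sfD)$. Since $\sfD$ is internal, $Q_\sfD$ is acyclic (Proposition~\ref{findim_internal}), so there exists at least one strict source $\ell$ of $(Q,\sfD)$. If $k$ itself is already of the required form (two incoming arrows both in $\sfD$, or two outgoing arrows both in $\sfD$) we proceed directly; otherwise I would first replace $\sfD$ by a mutation-equivalent internal perfect matching $\widetilde{\sfD}$ (all corresponding to $p$, by Theorem~\ref{pm_mutation_equiv}) for which $k$ \emph{is} a strict source or strict sink — using a sequence of $\lambda^\pm$ moves at faces other than $k$, which by Observation~\ref{obs_mutationPM} and the $2$-APR-tilting argument in the proof of Theorem~\ref{derivedequ_pm} only changes $\widehat{\calP}(Q,W_Q)_\sfD$ into a derived-equivalent algebra. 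Thus without loss of generality $k\in Q_0^\mu$ is a strict source of $(Q,\sfD)$, so $d_\sfD(a_1)=d_\sfD(a_2)=1$ and $d_\sfD(b_1)=d_\sfD(b_2)=0$, and $\mu_k^\rmL(\sfD)=\mu_k(\sfD)$ is well-defined by Lemma~\ref{mutated_deg_pm}.

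Next I would show directly that $\widehat{\calP}(Q,W_Q)_\sfD$ and $\widehat{\calP}(Q^\prime,W_{Q^\prime})_{\mu_k(\sfD)}$ are derived equivalent via a tilting module, which is the main obstacle. The strategy: describe $B\coloneqq\widehat{\calP}(Q^\prime,W_{Q^\prime})_{\mu_k(\sfD)}$ as the endomorphism algebra of a $2$-APR-type tilting $A$-module over $A\coloneqq\widehat{\calP}(Q,W_Q)_\sfD$. Concretely, with $d^\prime=d_\rmL^\prime$ the arrows $a_i^*,b_j^*$ have degree $0$ and $[a_ib_j]$ has degree $1$, so the arrows surviving in $Q^\prime_{\mu_k(\sfD)}=Q^\prime_{d^\prime}$ around $k$ are exactly $a_1^*,a_2^*,b_1^*,b_2^*$ together with the degree-zero arrows away from $k$; in $Q_\sfD$ the arrows $a_1,a_2$ are deleted and $b_1,b_2$ survive. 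This is precisely the local picture of APR mutation at the source vertex $k$: $P_k$ (the simple projective $A$-module at $k$) is replaced by $\tau_2^-P_k$, and the truncation relations transform exactly as in Observation~\ref{obs_mutationPM}. I would therefore invoke \cite[Theorem~3.11]{IO} (as in the proof of Theorem~\ref{derivedequ_pm}) to identify $\End_A(T_k)\cong B$ where $T_k=(\tau_2^-P_k)\oplus U$ is the $2$-APR tilting module at $P_k$ — after checking that $A$ is $2$-representation infinite (Proposition~\ref{findim_internal}), that $P_k$ is simple projective (since $k$ is a source of $Q_\sfD$), and that the combinatorial matching between the $2$-APR-mutated quiver-with-relations and $(Q^\prime_{\mu_k(\sfD)},\mathfrak{R}_{\mu_k(\sfD)})$ holds. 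The delicate point is verifying this last matching at the level of \emph{complete} Jacobian algebras — one must check that the $\mathfrak m$-adic closures of the relation ideals correspond under the identification, so that the isomorphism $\End_A(T_k)\cong B$ is literally between the truncated complete Jacobian algebras, not merely their non-completed analogues. Once this is done, $T_k$ being a tilting $A$-module gives $\calD^\rmb(\mc A)\cong\calD^\rmb(\mc B)$ by \cite{Hap,Ric}, and composing with the equivalence $\calD^\rmb(\mc B)\cong\calD^\rmb(\mc\,\widehat{\calP}(Q^\prime,W_{Q^\prime})_{\sfD^\prime})$ from Theorem~\ref{derivedequ_pm} finishes one mutation step; iterating over the sequence of QP-mutations relating $\Gamma$ to $\Gamma^\prime$ (re-choosing internal perfect matchings over $p$ at each stage, which is legitimate by Theorem~\ref{pm_mutation_equiv} and Lemma~\ref{lem_same_pt}) yields the claim.
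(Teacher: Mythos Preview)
Your overall architecture (reduce to a single mutation $\mu_k$, then compose with Theorem~\ref{derivedequ_pm}) matches the paper, but the core step is wrong: you are using the wrong tilting module.

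You claim that when $k$ is a strict source of $(Q,\sfD)$, the $2$-APR tilting module $T_k=(\tau_2^-P_k)\oplus U$ over $A=\widehat{\calP}(Q,W_Q)_\sfD$ has $\End_A(T_k)\cong\widehat{\calP}(\mu_k(Q,W_Q))_{\mu_k(\sfD)}$. This is false. By \cite[Theorem~3.11]{IO} (exactly as in the proof of Theorem~\ref{derivedequ_pm}), the $2$-APR tilt at $P_k$ gives $\End_A(T_k)\cong\calA_{\lambda_k^+(\sfD)}=\widehat{\calP}(Q,W_Q)_{\lambda_k^+(\sfD)}$: the \emph{same} Jacobian algebra with a different grading. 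In $Q_{\lambda_k^+(\sfD)}$ the vertex $k$ is a sink (Observation~\ref{obs_mutationPM}), whereas in $\mu_k(Q)_{\mu_k(\sfD)}$ the vertex $k$ has two incoming arrows $b_1^*,b_2^*$ and two outgoing arrows $a_1^*,a_2^*$; the underlying quivers are different, so the algebras are not isomorphic and your local-picture matching does not hold. The operation $\mu_k$ genuinely changes $(Q,W_Q)$, and $2$-APR tilting cannot see this. The paper instead invokes \cite[Theorem~3.1]{Miz}, which says precisely that the left mutation of a graded QP at $k$ is realised by a \emph{$1$-APR} tilting module: $\End_A(T_k)\cong\widehat{\calP}(\mu_k(Q),W_{\mu_k(Q)})_{\mu_k(\sfD)}$ for the $1$-APR tilt $T_k$. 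This is the correct bridge between the two truncated Jacobian algebras.

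There is also a smaller gap in your reduction: you assert that one can reach, via $\lambda^\pm$ moves at faces other than $k$, an internal perfect matching $\widetilde{\sfD}$ over $p$ for which $k$ is a strict source, but you give no argument that such a $\widetilde{\sfD}$ exists. The paper handles this cleanly by choosing a generic stability parameter $\theta$ with $\theta_k<0$ and $\theta_i>0$ for $i\neq k$; the $\theta$-stable perfect matching over $p$ then automatically has $k$ as a source (reachability of every vertex from $k$ together with acyclicity of $Q_\sfD$ forces this), and Theorem~\ref{pm_mutation_equiv} lets you replace $\sfD$ by it.
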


\begin{proof}
We may assume that $(Q^\prime,W_{Q^\prime})=\mu_k(Q,W_Q)$ for some $k\in Q_0^\mu$. 
Let $\theta=(\theta_i)_{i\in Q_0}$ be a stability parameter satisfying $\theta_k<0$ and $\theta_i>0$ for all $i\neq k$. 
Then, we can check that $\theta$ is generic. 
Also, we see that if a representation $V$ is $\theta$-stable, then every vertex $i\in Q_0$ is reachable from $k$ passing through non-zero path. 
By Proposition~\ref{corresp_pm}, we can assign the $\theta$-stable perfect matching to each lattice point in $\Delta$. 
By Theorem~\ref{pm_mutation_equiv} and \ref{derivedequ_pm}, we may assume that the internal perfect matching $\sfD$ is $\theta$-stable. 
Then, $k$ must be a strict source of $(Q,\sfD)$ by a choice of $\theta$. 
Collectively, we have $\mu_k^\rmL(Q,W_Q,d_\sfD)=(Q^\prime,W_{Q^\prime},d_{\mu_k(\sfD)})$. 
Then, by \cite[Theorem~3.1]{Miz}, we have a $1$-APR tilting module $T_k$ over $\calA_\sfD=\widehat{\calP}(Q,W_Q)_\sfD$ such that 
$\End_{\calA_\sfD}(T_k)\cong\widehat{\calP}(Q^\prime,W_{Q^\prime})_{d_{\mu_k(\sfD)}}$. Thus, we have an equivalence 
\begin{equation}
\label{isom_for_mainthm}
\calD^\rmb(\mc \,\calA_\sfD)\cong\calD^\rmb(\mc \,\widehat{\calP}(Q^\prime,W_{Q^\prime})_{d_{\mu_k(\sfD)}}).
\end{equation}
By Lemma~\ref{lem_same_pt}, the interior lattice point corresponding to $\mu_k(\sfD)$ is the same as $\sfD$, and hence it is the same as $\sfD^\prime$. 
Thus, we have the assertion by combining Theorem~\ref{derivedequ_pm} and (\ref{isom_for_mainthm}). 
\end{proof}

\begin{remark}
\label{rem_main_derived_eq}
In our proof of Theorem~\ref{main_thm_derived_eq}, we need the assumption that $(Q,W_Q)$ and $(Q^\prime,W_{Q^\prime})$ are transformed into each other by the mutations, because the problem whether all consistent dimer models associated with the same lattice polygon are transformed into each other by  the mutations is still open in general (see \cite[pp396--397]{Boc_ABC}). 
We note that partial answers were given in several papers, see e.g., \cite{Boc_NCCR,GK,Nak}. 
For example, if the toric diagram $\Delta$ is a reflexive polygon (i.e., the origin is the unique interior lattice point of $\Delta$),  then we have the affirmative answer. 
On the other hand, if we consider the toric Deligne-Mumford stack $\calX_{\bf \Sigma}$, which was explained in the first part of this section, we can drop this assumption by using \cite[Theorem~7.2]{IU5}. 
Thus, Theorem~\ref{main_thm_derived_eq} is compatible with the conjecture. 
\end{remark}

We finish this paper with the following example. 

\begin{example}
\label{ex_mutation_dimer}
We consider the quiver with potential $(Q,W_Q)$ associated with the dimer model $\Gamma$ given in Example~\ref{ex_mut_pm1} (the left of Figure~\ref{ex_mut_gQP}), 
and the red arrows form the perfect matching $\sfD_7$ of $Q$ dual to the internal perfect matching $D_7$ of $\Gamma$, especially $0$ is a strict source of $(Q,\sfD_7)$. 
We apply the left mutation $\mu_0^\rmL$ to $(Q, W_Q, d_{\sfD_7})$, and have the right of Figure~\ref{ex_mut_gQP}. 
Here, the red arrows form the internal perfect matching $\mu_0(\sfD_7)$ of $\mu_0(Q)$, and $\sfD_7$ and $\mu_0(\sfD_7)$ correspond the same interior lattice point by Lemma~\ref{lem_same_pt}. 

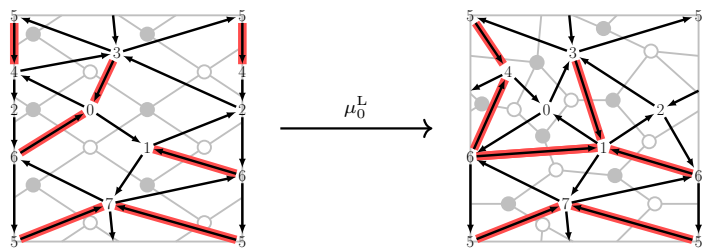
\begin{figure}[H]
\begin{center}
\begin{tikzpicture}
\newcommand{\edgewidth}{0.1cm} 
\newcommand{\nodewidth}{0.1cm} 
\newcommand{\noderad}{0.35} 
\newcommand{\nodecolor}{lightgray} 
\newcommand{\arrowwidth}{0.13cm} 

\newcommand{\pmwidth}{0.45cm} 
\newcommand{\pmcolor}{red!70} 

\coordinate (B1) at (1,3); \coordinate (B2) at (7,3); 
\coordinate (B3) at (1,7); \coordinate (B4) at (7,7); 
\coordinate (B5) at (1,11); \coordinate (B6) at (7,11); 

\coordinate (W1) at (4,1); \coordinate (W2) at (10,1); 
\coordinate (W3) at (4,5); \coordinate (W4) at (10,5); 
\coordinate (W5) at (4,9); \coordinate (W6) at (10,9); 
\node at (0,0)
{\scalebox{0.28}{
\begin{tikzpicture}[sarrow/.style={black, -latex, very thick}, ssarrow/.style={black, latex-, very thick}]

\draw[line width=0.1cm, \nodecolor]  (0,0) rectangle (12,12); 

\node [font=\fontsize{40pt}{60pt}\selectfont] (V0) at (4,7) {$0$} ; 
\node [font=\fontsize{40pt}{60pt}\selectfont] (V1) at (7,5) {$1$} ; 
\node [font=\fontsize{40pt}{60pt}\selectfont] (V2a) at (0,7) {$2$} ; 
\node [font=\fontsize{40pt}{60pt}\selectfont] (V2b) at (12,7) {$2$} ; 
\node [font=\fontsize{40pt}{60pt}\selectfont] (V3) at (5.4,10) {$3$} ; 
\node [font=\fontsize{40pt}{60pt}\selectfont] (V4a) at (0,9) {$4$} ; 
\node [font=\fontsize{40pt}{60pt}\selectfont] (V4b) at (12,9) {$4$} ; 
\node [font=\fontsize{40pt}{60pt}\selectfont] (V5a) at (0,0) {$5$} ; 
\node [font=\fontsize{40pt}{60pt}\selectfont] (V5b) at (12,0) {$5$} ; 
\node [font=\fontsize{40pt}{60pt}\selectfont] (V5c) at (12,12) {$5$} ; 
\node [font=\fontsize{40pt}{60pt}\selectfont] (V5d) at (0,12) {$5$} ; 
\node [font=\fontsize{40pt}{60pt}\selectfont] (V6a) at (0,4.5) {$6$} ; 
\node [font=\fontsize{40pt}{60pt}\selectfont] (V6b) at (12,3.5) {$6$} ; 
\node [font=\fontsize{40pt}{60pt}\selectfont] (V7) at (5,2) {$7$} ; 

\draw[line width=\edgewidth, \nodecolor] (B1)--(W1); \draw[line width=\edgewidth, \nodecolor] (B1)--(W3); 
\draw[line width=\edgewidth, \nodecolor] (B2)--(W2); \draw[line width=\edgewidth, \nodecolor] (B2)--(W3); 
\draw[line width=\edgewidth, \nodecolor] (B2)--(W4); 
\draw[line width=\edgewidth, \nodecolor] (B3)--(W3); \draw[line width=\edgewidth, \nodecolor] (B3)--(W5); 
\draw[line width=\edgewidth, \nodecolor] (B4)--(W3); \draw[line width=\edgewidth, \nodecolor] (B4)--(W4); 
\draw[line width=\edgewidth, \nodecolor] (B4)--(W5); \draw[line width=\edgewidth, \nodecolor] (B4)--(W6); 
\draw[line width=\edgewidth, \nodecolor] (B5)--(W5); \draw[line width=\edgewidth, \nodecolor] (B6)--(W6); 

\draw[line width=\edgewidth, \nodecolor] (W1)--(2.5,0); \draw[line width=\edgewidth, \nodecolor] (B5)--(2.5,12); 
\draw[line width=\edgewidth, \nodecolor] (W1)--(5.5,0); \draw[line width=\edgewidth, \nodecolor] (B6)--(5.5,12); 
\draw[line width=\edgewidth, \nodecolor] (W2)--(12,2.3334); \draw[line width=\edgewidth, \nodecolor] (B1)--(0,2.3334); 
\draw[line width=\edgewidth, \nodecolor] (W2)--(8.5,0); \draw[line width=\edgewidth, \nodecolor] (B6)--(8.5,12); 
\draw[line width=\edgewidth, \nodecolor] (W4)--(12,6.3334); \draw[line width=\edgewidth, \nodecolor] (B3)--(0,6.3334);
\draw[line width=\edgewidth, \nodecolor] (W6)--(12,7.6666); \draw[line width=\edgewidth, \nodecolor] (B3)--(0,7.6666); 
\draw[line width=\edgewidth, \nodecolor] (W6)--(12,10.3334); \draw[line width=\edgewidth, \nodecolor] (B5)--(0,10.3334); 

\draw  [line width=\nodewidth, fill=\nodecolor, \nodecolor] (B1) circle [radius=\noderad] ; 
\draw  [line width=\nodewidth, fill=\nodecolor, \nodecolor] (B2) circle [radius=\noderad] ;
\draw  [line width=\nodewidth, fill=\nodecolor, \nodecolor] (B3) circle [radius=\noderad] ; 
\draw  [line width=\nodewidth, fill=\nodecolor, \nodecolor] (B4) circle [radius=\noderad] ;
\draw  [line width=\nodewidth, fill=\nodecolor, \nodecolor] (B5) circle [radius=\noderad] ; 
\draw  [line width=\nodewidth, fill=\nodecolor, \nodecolor] (B6) circle [radius=\noderad] ;
\draw [line width=\nodewidth, \nodecolor, fill=white] (W1) circle [radius=\noderad] ; 
\draw [line width=\nodewidth, \nodecolor, fill=white] (W2) circle [radius=\noderad] ;
\draw [line width=\nodewidth, \nodecolor, fill=white] (W3) circle [radius=\noderad] ; 
\draw [line width=\nodewidth, \nodecolor, fill=white] (W4) circle [radius=\noderad] ;
\draw [line width=\nodewidth, \nodecolor, fill=white] (W5) circle [radius=\noderad] ; 
\draw [line width=\nodewidth, \nodecolor, fill=white] (W6) circle [radius=\noderad] ;

\draw[line width=\pmwidth, color=\pmcolor] (V3)--(V0); \draw[line width=\pmwidth, color=\pmcolor] (V6a)--(V0); 
\draw[line width=\pmwidth, color=\pmcolor] (V6b)--(V1); 
\draw[line width=\pmwidth, color=\pmcolor] (V5a)--(V7); \draw[line width=\pmwidth, color=\pmcolor] (V5b)--(V7); 
\draw[line width=\pmwidth, color=\pmcolor] (V5c)--(V4b); \draw[line width=\pmwidth, color=\pmcolor] (V5d)--(V4a); 

\draw [sarrow, line width=\arrowwidth] (V0)--(V1); \draw [sarrow, line width=\arrowwidth] (V0)--(V4a); 
\draw [sarrow, line width=\arrowwidth] (V3)--(V0); \draw [sarrow, line width=\arrowwidth] (V6a)--(V0); 
\draw [sarrow, line width=\arrowwidth] (V1)--(V2b); \draw [sarrow, line width=\arrowwidth] (V1)--(V7); 
\draw [sarrow, line width=\arrowwidth] (V6b)--(V1); \draw [sarrow, line width=\arrowwidth] (V2b)--(V3); 
\draw [sarrow, line width=\arrowwidth] (V2b)--(V6b); \draw [sarrow, line width=\arrowwidth] (V4a)--(V3); 
\draw [sarrow, line width=\arrowwidth] (V3)--(V5c); \draw [sarrow, line width=\arrowwidth] (V5a)--(V7); 
\draw [sarrow, line width=\arrowwidth] (V6a)--(V5a); \draw [sarrow, line width=\arrowwidth] (V7)--(V6a); 
\draw [sarrow, line width=\arrowwidth] (V7)--(V6b); \draw [sarrow, line width=\arrowwidth] (V2a)--(V6a); 
\draw [sarrow, line width=\arrowwidth] (V4a)--(V2a); \draw [sarrow, line width=\arrowwidth] (V4b)--(V2b); 
\draw [sarrow, line width=\arrowwidth] (V6b)--(V5b); 
\draw [sarrow, line width=\arrowwidth] (V5d)--(V4a); \draw [sarrow, line width=\arrowwidth] (V5c)--(V4b); 
\draw [sarrow, line width=\arrowwidth] (V3)--(V5d); \draw [sarrow, line width=\arrowwidth] (V5b)--(V7); 
\draw [sarrow, line width=\arrowwidth] (V7)--(5.2,0); \draw [sarrow, line width=\arrowwidth] (5.2,12)--(V3); 
\end{tikzpicture}
} };

\draw[->, line width=0.03cm] (2.2,0)--node[above] {{\scriptsize$\mu_0^\rmL$}} (3.8,0);

\node at (6,0) 
{\scalebox{0.28}{
\begin{tikzpicture}[sarrow/.style={black, -latex, very thick}, ssarrow/.style={black, latex-, very thick}]
\coordinate (B1) at (2,2); \coordinate (B2) at (7.6,3.5); 
\coordinate (B3) at (0.6,7.3); \coordinate (B4) at (7.5,7.5); 
\coordinate (B5) at (3.5,9.5); \coordinate (B6) at (7,11.3); 
\coordinate (B7) at (3.8,5.6); 

\coordinate (W1) at (3.8,0.9); \coordinate (W2) at (9.7,2); 
\coordinate (W3) at (4.5,3.8); \coordinate (W4) at (9.4,5.3); 
\coordinate (W5) at (5.3,7.5); \coordinate (W6) at (9.5,10.1); 
\coordinate (W7) at (2.3,7); 
\draw[line width=0.1cm, \nodecolor]  (0,0) rectangle (12,12); 
\node [font=\fontsize{40pt}{60pt}\selectfont] (V0) at (4,7) {$0$} ; 
\node [font=\fontsize{40pt}{60pt}\selectfont] (V1) at (7,5) {$1$} ; 
\node [font=\fontsize{40pt}{60pt}\selectfont] (V2b) at (10,7) {$2$} ; 
\node [font=\fontsize{40pt}{60pt}\selectfont] (V3) at (5.4,10) {$3$} ; 
\node [font=\fontsize{40pt}{60pt}\selectfont] (V4a) at (2,9) {$4$} ; 
\node [font=\fontsize{40pt}{60pt}\selectfont] (V5a) at (0,0) {$5$} ; 
\node [font=\fontsize{40pt}{60pt}\selectfont] (V5b) at (12,0) {$5$} ; 
\node [font=\fontsize{40pt}{60pt}\selectfont] (V5c) at (12,12) {$5$} ; 
\node [font=\fontsize{40pt}{60pt}\selectfont] (V5d) at (0,12) {$5$} ; 
\node [font=\fontsize{40pt}{60pt}\selectfont] (V6a) at (0,4.5) {$6$} ; 
\node [font=\fontsize{40pt}{60pt}\selectfont] (V6b) at (12,3.5) {$6$} ; 
\node [font=\fontsize{40pt}{60pt}\selectfont] (V7) at (5,2) {$7$} ; 

\draw[line width=\edgewidth, \nodecolor] (B1)--(W1); \draw[line width=\edgewidth, \nodecolor] (B1)--(W3); 
\draw[line width=\edgewidth, \nodecolor] (B2)--(W2); \draw[line width=\edgewidth, \nodecolor] (B2)--(W3); 
\draw[line width=\edgewidth, \nodecolor] (B2)--(W4); \draw[line width=\edgewidth, \nodecolor] (B4)--(W4); 
\draw[line width=\edgewidth, \nodecolor] (B4)--(W5); \draw[line width=\edgewidth, \nodecolor] (B4)--(W6); 
\draw[line width=\edgewidth, \nodecolor] (B5)--(W5); \draw[line width=\edgewidth, \nodecolor] (B6)--(W6); 

\draw[line width=\edgewidth, \nodecolor] (W3)--(B7); \draw[line width=\edgewidth, \nodecolor] (W5)--(B7); 
\draw[line width=\edgewidth, \nodecolor] (W7)--(B3); \draw[line width=\edgewidth, \nodecolor] (W7)--(B5); 
\draw[line width=\edgewidth, \nodecolor] (W7)--(B7); 

\draw[line width=\edgewidth, \nodecolor] (W1)--(3.725,0); \draw[line width=\edgewidth, \nodecolor] (B5)--(3.725,12); 

\draw[line width=\edgewidth, \nodecolor] (W1)--(5.4,0); \draw[line width=\edgewidth, \nodecolor] (B6)--(5.4,12); 
\draw[line width=\edgewidth, \nodecolor] (W2)--(12,2); \draw[line width=\edgewidth, \nodecolor] (B1)--(0,2); 
\draw[line width=\edgewidth, \nodecolor] (W2)--(7.7,0); \draw[line width=\edgewidth, \nodecolor] (B6)--(7.7,12); 
\draw[line width=\edgewidth, \nodecolor] (W4)--(12,6.925); \draw[line width=\edgewidth, \nodecolor] (B3)--(0,6.925); 
\draw[line width=\edgewidth, \nodecolor] (W6)--(12,9.85); \draw[line width=\edgewidth, \nodecolor] (B5)--(0,9.85); 
\draw[line width=\edgewidth, \nodecolor] (W6)--(12,7.842); \draw[line width=\edgewidth, \nodecolor] (B3)--(0,7.842); 

\draw  [line width=\nodewidth, fill=\nodecolor, \nodecolor] (B1) circle [radius=\noderad] ; 
\draw  [line width=\nodewidth, fill=\nodecolor, \nodecolor] (B2) circle [radius=\noderad] ;
\draw  [line width=\nodewidth, fill=\nodecolor, \nodecolor] (B3) circle [radius=\noderad] ; 
\draw  [line width=\nodewidth, fill=\nodecolor, \nodecolor] (B4) circle [radius=\noderad] ;
\draw  [line width=\nodewidth, fill=\nodecolor, \nodecolor] (B5) circle [radius=\noderad] ; 
\draw  [line width=\nodewidth, fill=\nodecolor, \nodecolor] (B6) circle [radius=\noderad] ;
\draw  [line width=\nodewidth, fill=\nodecolor, \nodecolor] (B7) circle [radius=\noderad] ;
\draw [line width=\nodewidth, \nodecolor, fill=white] (W1) circle [radius=\noderad] ; 
\draw [line width=\nodewidth, \nodecolor, fill=white] (W2) circle [radius=\noderad] ;
\draw [line width=\nodewidth, \nodecolor, fill=white] (W3) circle [radius=\noderad] ; 
\draw [line width=\nodewidth, \nodecolor, fill=white] (W4) circle [radius=\noderad] ;
\draw [line width=\nodewidth, \nodecolor, fill=white] (W5) circle [radius=\noderad] ; 
\draw [line width=\nodewidth, \nodecolor, fill=white] (W6) circle [radius=\noderad] ;
\draw [line width=\nodewidth, \nodecolor, fill=white] (W7) circle [radius=\noderad] ;

\draw[line width=\pmwidth, color=\pmcolor] (V3)--(V1); \draw[line width=\pmwidth, color=\pmcolor] (V6a)--(V1); 
\draw[line width=\pmwidth, color=\pmcolor] (V6a)--(V1); \draw[line width=\pmwidth, color=\pmcolor] (V6b)--(V1); 
\draw[line width=\pmwidth, color=\pmcolor] (V5a)--(V7); \draw[line width=\pmwidth, color=\pmcolor] (V5b)--(V7); 
\draw[line width=\pmwidth, color=\pmcolor] (V5d)--(V4a); \draw[line width=\pmwidth, color=\pmcolor] (V6a)--(V4a); 

\draw [ssarrow, line width=\arrowwidth] (V0)--(V1); \draw [ssarrow, line width=\arrowwidth] (V0)--(V4a); 
\draw [ssarrow, line width=\arrowwidth] (V3)--(V0); \draw [ssarrow, line width=\arrowwidth] (V6a)--(V0); 

\draw [sarrow, line width=\arrowwidth] (V3)--(V1); \draw [sarrow, line width=\arrowwidth] (V6a)--(V1); 
\draw [sarrow, line width=\arrowwidth] (V6a)--(V4a); 
\draw [sarrow, line width=\arrowwidth] (V1)--(V2b); \draw [sarrow, line width=\arrowwidth] (V1)--(V7); 
\draw [sarrow, line width=\arrowwidth] (V6b)--(V1); \draw [sarrow, line width=\arrowwidth] (V2b)--(V3); 
\draw [sarrow, line width=\arrowwidth] (V2b)--(V6b); 
\draw [sarrow, line width=\arrowwidth] (V3)--(V5c); \draw [sarrow, line width=\arrowwidth] (V5a)--(V7); 
\draw [sarrow, line width=\arrowwidth] (V6a)--(V5a); \draw [sarrow, line width=\arrowwidth] (V7)--(V6a); 
\draw [sarrow, line width=\arrowwidth] (V7)--(V6b); 
\draw [sarrow, line width=\arrowwidth] (V6b)--(V5b); 
\draw [sarrow, line width=\arrowwidth] (V5d)--(V4a); 
\draw [sarrow, line width=\arrowwidth] (V3)--(V5d); \draw [sarrow, line width=\arrowwidth] (V5b)--(V7); 
\draw [sarrow, line width=\arrowwidth] (V7)--(5.2,0); \draw [sarrow, line width=\arrowwidth] (5.2,12)--(V3); 
\draw [sarrow, line width=\arrowwidth] (V4a)--(0,8); \draw [sarrow, line width=\arrowwidth] (12,8)--(V2b); 
\end{tikzpicture}
} }; 
\end{tikzpicture}
\end{center}
\caption{The left mutation of the graded QP $(Q,W_Q,d_{\sfD_7})$ at $0\in Q_0^\mu$}
\label{ex_mut_gQP}
\end{figure}

By Theorem~\ref{pm_mutation_equiv} and \ref{main_thm_derived_eq}, for any internal perfect matching $\sfD$ (resp. $\sfD^\prime$) of $Q$ (resp. $\mu_0(Q)$) that is mutation equivalent to $\sfD_7$ (resp. $\mu_0(\sfD_7)$), 
$\widehat{\calP}(Q,W_Q)_\sfD$ and $\widehat{\calP}(\mu_0(Q,W_Q))_{\sfD^\prime}$ are derived equivalent. 
\end{example}

\subsection*{Acknowledgement} 
The author would like to thank Osamu Iyama for valuable comments and suggestions, especially for pointing out \ref{rem_conifold} and \ref{existence_tilting}. 
The author also thank Yuya Mizuno for stimulating discussions concerning $n$-representation infinite algebras. 
The author would like to thank the anonymous referees for numerous valuable comments and suggestions, 
especially the ones for improving the proof of Theorem~\ref{pm_mutation_equiv}. 

The author is supported by World Premier International Research Center Initiative (WPI initiative) MEXT Japan, JSPS Grant-in-Aid for Young Scientists (B) 17K14159, and JSPS Grant-in-Aid for Early-Career Scientists 20K14279. 


\end{document}